\numberwithin{equation}{section}
\numberwithin{figure}{section}
\newtheorem{theorem}{Theorem}[section]
\newtheorem{lemma}[theorem]{Lemma}
\newtheorem{proposition}[theorem]{Proposition}
\newtheorem{definition}[theorem]{Definition}
\newtheorem{assumption}[theorem]{Assumption}
\let\C\relax
\newcommand{\C}{\mathbf{C}}
\newcommand{\D}{\mathbf{D}}
\newcommand{\h}{\mathbf{H}}
\newcommand{\N}{\mathbf{N}}
\newcommand{\Z}{\mathbf{Z}}
\newcommand{\p}{\mathbf{P}}
\newcommand{\Q}{\mathbf{Q}}
\newcommand{\R}{\mathbf{R}}
\newcommand{\CC}{\mathcal {C}}
\newcommand{\CF}{\mathcal {F}}
\newcommand{\CI}{\mathcal {I}}
\newcommand{\CJ}{\mathcal {J}}
\newcommand{\CK}{\mathcal {K}}
\newcommand{\CL}{\mathcal {L}}
\newcommand{\CQ}{\mathcal {Q}}
\newcommand{\CS}{\mathcal {S}}
\newcommand{\CX}{\mathcal {X}}
\newcommand{\CG}{\mathcal {G}}
\newcommand{\SLE}{{\rm SLE}}
\newcommand{\CLE}{{\rm CLE}}
\newcommand{\dist}{\mathrm{dist}}
\newcommand{\diam}{\mathrm{diam}}
\newcommand{\re}{\mathrm{Re}}
\newcommand{\one}{{\bf 1}}
\newcommand{\wt}{\widetilde}
\newcommand{\wh}{\widehat}
\newcommand{\ol}{\overline}
\newcommand{\ul}{\underline}
\newcommand{\quant}[3][]{{{\mathfrak q}}_{#3}\if\relax\detokenize{#1}\relax\else^{#1}\fi(#2)}
\newcommand{\median}[2][]{{\mathfrak m}_{#2}\if\relax\detokenize{#1}\relax\else^{#1}\fi}
\newcommand{\mediant}[2][]{\wt{\mathfrak m}_{#2}\if\relax\detokenize{#1}\relax\else^{#1}\fi}
\newcommand{\Fd}{\mathfrak d}
\newcommand{\met}[3]{\Fd(#1,#2;#3)}
\newcommand{\mett}[3]{\wt{\Fd}(#1,#2;#3)}
\newcommand{\metres}[4]{\Fd^{#1}(#2,#3;#4)}
\newcommand{\mettres}[4]{\wt{\Fd}^{#1}(#2,#3;#4)}
\newcommand{\bIn}{\partial_{\mathrm{in}}}
\newcommand{\bOut}{\partial_{\mathrm{out}}}
\DeclarePairedDelimiter\abs{\lvert}{\rvert}
\newcommand*{\defeq}{\mathrel{\mathop:}=}
\newcommand*{\mmiddle}[1]{\mathrel{}\middle#1\mathrel{}}
\newcommand*{\Fill}{\operatorname{fill}}
\newcommand*{\sle}[1]{$\SLE_{#1}$}
\newcommand*{\slek}{\sle{\kappa}}
\newcommand*{\slekp}{\sle{\kappa'}}
\newcommand*{\slekr}[1]{$\SLE_{\kappa}(#1)$}
\newcommand*{\slekpr}[1]{$\SLE_{\kappa'}(#1)$}
\newcommand*{\cle}[1]{$\CLE_{#1}$}
\newcommand*{\clek}{\cle{\kappa}}
\newcommand*{\clekp}{\cle{\kappa'}}
\newcommand{\domainpair}[1]{{\mathfrak {P}}_{#1}}
\newcommand{\outside}{{\mathrm{out}}}
\newcommand{\inside}{{\mathrm{in}}}
\newcommand{\resampled}{{\mathrm{res}}}
\newcommand{\dGHf}{d_{\mathrm {GHf}}}
\newcommand*{\metregions}[1][]{\mathfrak{C}\if\relax\detokenize{#1}\relax\else_{#1}\fi}
\newcommand*{\distE}{\operatorname{dist_E}}
\newcommand*{\diamE}{\operatorname{diam_E}}
\newcommand*{\dpath}[1][]{d_{\mathrm{path}}\if\relax\detokenize{#1}\relax\else^{#1}\fi}
\newcommand*{\len}[2]{L_{#1}(#2)}
\newcommand*{\lenmetres}[2]{\len{\Fd^{#1}}{#2}}
\newcommand*{\paths}[4]{P(#1,#2;#3;#4)}
\newcommand{\lmet}[1]{L_{\Fd}(#1)}
\newcommand{\lmett}[1]{L_{\wt{\Fd}}(#1)}
\newcommand{\lmetconf}[2]{L^{#1}_{\Fd}(#2)}
\newcommand{\lmettconf}[2]{L^{#1}_{\wt{\Fd}}(#2)}
\newcommand*{\double}{{\mathrm{dbl}}}
\newcommand*{\ddouble}{{d_{\double}}}
\newcommand*{\angledouble}{{\theta_{\double}}}
\newcommand{\bad}{{\mathrm{bad}}}
\title[The conformally covariant metric on non-simple CLEs]{Existence and uniqueness\\ of the conformally covariant geodesic metric on non-simple conformal loop ensemble gaskets}
\author{Jason Miller}
\author{Yizheng Yuan}
\address{Department of Pure Mathematics and Mathematical Statistics, University of Cambridge}
\begin{document}

\date{\today}
\setcounter{tocdepth}{1}

\parindent 0 pt
\setlength{\parskip}{0.20cm plus1mm minus1mm}

\begin{abstract}
We construct the canonical geodesic metric on the gasket of conformal loop ensembles (CLE$_\kappa$) in the regime $\kappa \in (4,8)$ where the loops intersect themselves, each other, and the domain boundary. Previous work of the authors and V.\ Ambrosio showed that the subsequential limits associated with certain approximation procedures for such a metric exist and are non-trivial. In this work, we show that the limit exists by proving that there is at most one geodesic metric on the CLE$_\kappa$ gasket which satisfies certain properties. Further, we obtain that the limit is conformally covariant. This paper is the foundation of future work which show that the metric for $\kappa=6$ is the continuum scaling limit of the chemical distance metric for critical percolation in two dimensions. We further conjecture that for $\kappa \in (4,8)$, the geodesic CLE$_\kappa$ metric is the scaling limit of the chemical distance metric associated with discrete models that converge to CLE$_\kappa$.
\end{abstract}

\maketitle

\tableofcontents

\section{Introduction}
\label{sec:intro}

\subsection{Overview}
\label{subsec:overview}

The \emph{Schramm-Loewner evolution} ($\SLE$) and the \emph{conformal loop ensemble} ($\CLE$) are canonical models that describe conformally invariant random geometries in the plane. SLE was introduced by Schramm in \cite{s2000sle} and it has been conjectured to describe the scaling limits of interfaces that arise from many lattice models in two dimensions and such convergence results have now been proved in a number of cases \cite{s2001percolation,ss2009dgff,s2010ising,lsw2004lerw}. CLE was introduced in \cite{s2009cle,sw2012cle} and is conjectured to describe the joint scaling limit of all of the interfaces in such models and such convergence results have also been established in a number of cases \cite{bh2019ising,cn2006cle,ks2019fkising,lsw2004lerw}.  We remark that in the setting of random lattices a number of convergence results towards $\SLE$ and $\CLE$ have been proved \cite{s2016hc,kmsw2019bipolar,gm2021percolation,gm2021saw,gkmw2018active,lsw2017schnyder} using the framework developed in \cite{s2016zipper,dms2021mating}.

The $\SLE$ curves are indexed by a parameter $\kappa \geq 0$ ($\SLE_\kappa$) which determines the roughness of the curve.  For $\kappa = 0$, it is a smooth curve and as $\kappa$ increases it becomes increasingly fractal.  In particular, $\kappa \in (0,4]$ it is a simple curve, for $\kappa \in (4,8)$ it is self-intersecting but not space-filling, and for $\kappa \geq 8$ it is space filling \cite{rs2005basic}.  The $\CLE_\kappa$ are defined for $\kappa \in (8/3,8)$ and consist of a countable collection of loops each of which locally look like an $\SLE_\kappa$.  At the extreme $\kappa =8/3$ (resp.\ $\kappa=8$) it corresponds to the empty collection of loops (resp.\ a single space-filling loop).  For $\kappa \in (8/3,4]$, the loops are simple and do not intersect each other while for $\kappa \in (4,8)$ they are self-intersecting, intersect each other, and also intersect the domain boundary.  These are the same phases as for $\SLE_\kappa$ which were determined in \cite{rs2005basic}.  In this work, we will focus on the \emph{non-simple regime}, which is the regime that $\kappa \in (4,8)$.

The purpose of the present paper is show the existence and uniqueness of a \emph{geodesic metric} on the \clek{} gasket for each $\kappa \in (4,8)$. Concretely, we prove that there exists a unique (up to multiplicative constants) \emph{geodesic} metric on the \clek{} gasket satisfying the \emph{domain Markov property}. We further prove that this metric is conformally covariant. Previous work of the authors and V.\ Ambrosio \cite{amy2025tightness} initiated the program of constructing natural metrics in the gasket of a $\CLE_\kappa$ for $\kappa \in (4,8)$. In particular, it is shown in \cite{amy2025tightness} that certain approximations are tight and give rise to non-trivial geodesic metrics as subsequential limits. The present work builds on \cite{amy2025tightness} and shows that these approximations converge to the geodesic \clek{} metric. We conjecture that the geodesic \clek{} metric is the scaling limit of the chemical distance metric for discrete models in two dimensions that converge to $\CLE_\kappa$. In the case $\kappa = 6$, building on the result of this paper, we will show in future work \cite{dmmy2025percolation} that it is the scaling limit of the chemical distance metric for critical percolation on the triangular lattice.

\begin{figure}[ht]
\centering
\includegraphics[width=0.3\textwidth]{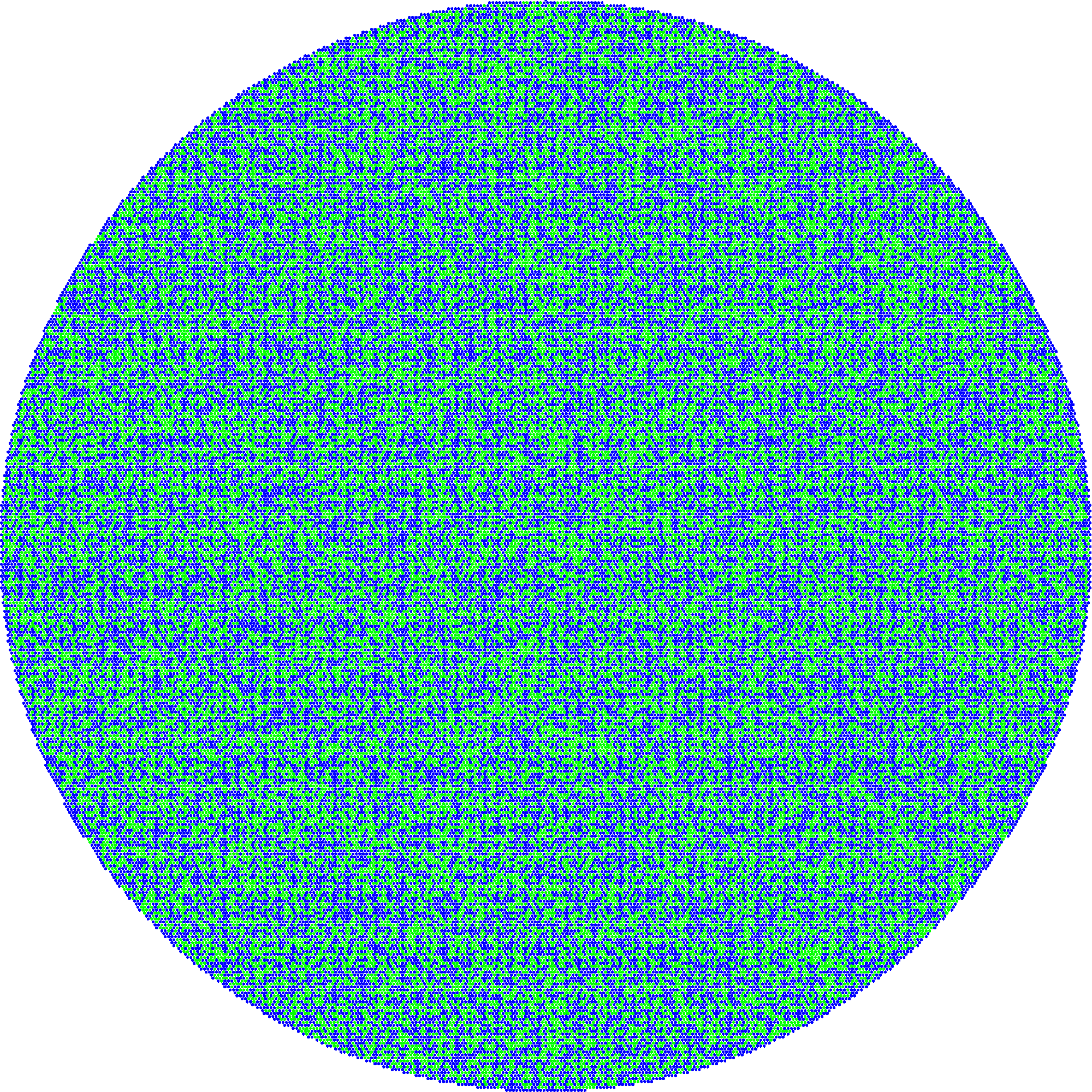} \hspace{0.01\textwidth} \includegraphics[width=0.3\textwidth]{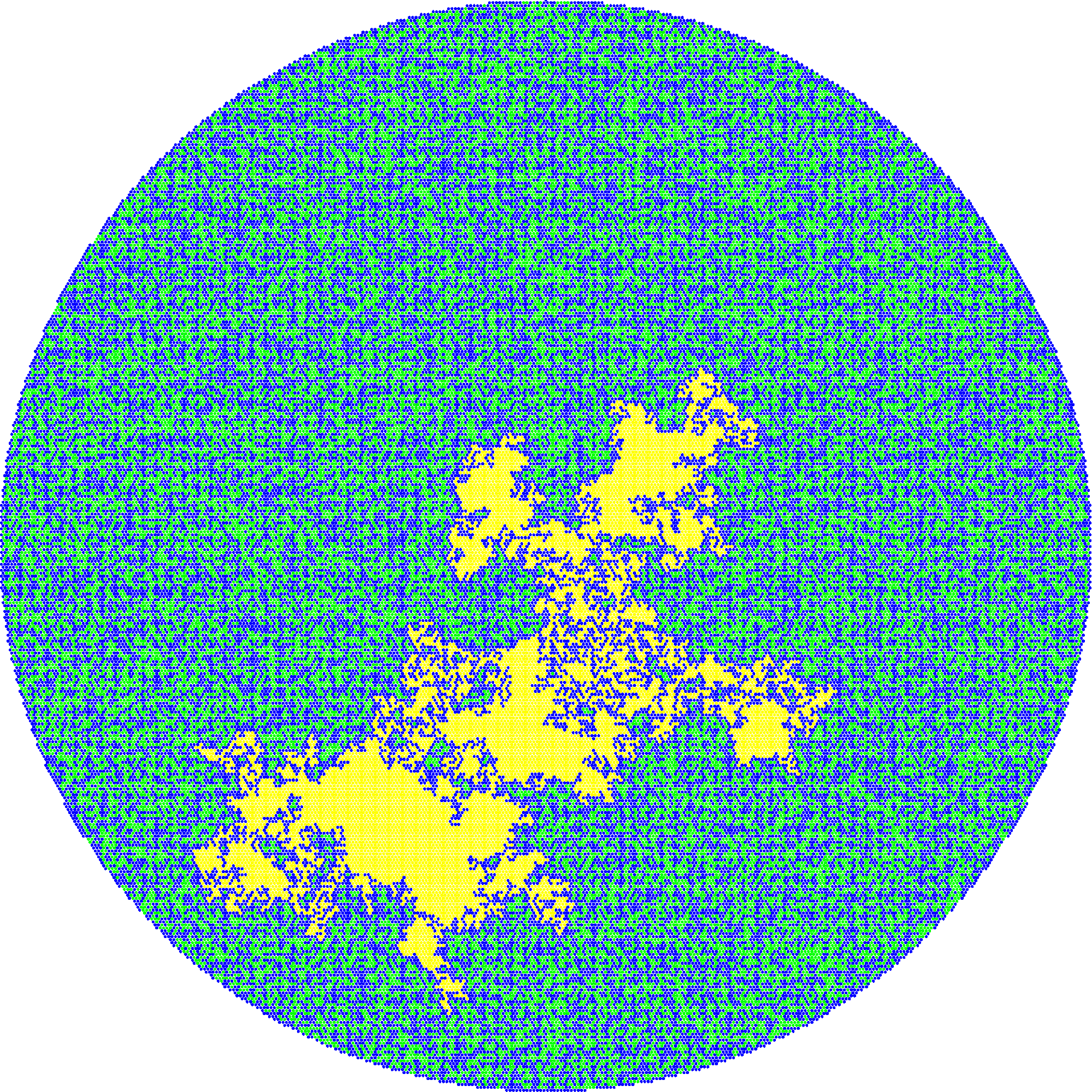} \hspace{0.01\textwidth} \includegraphics[width=0.3\textwidth]{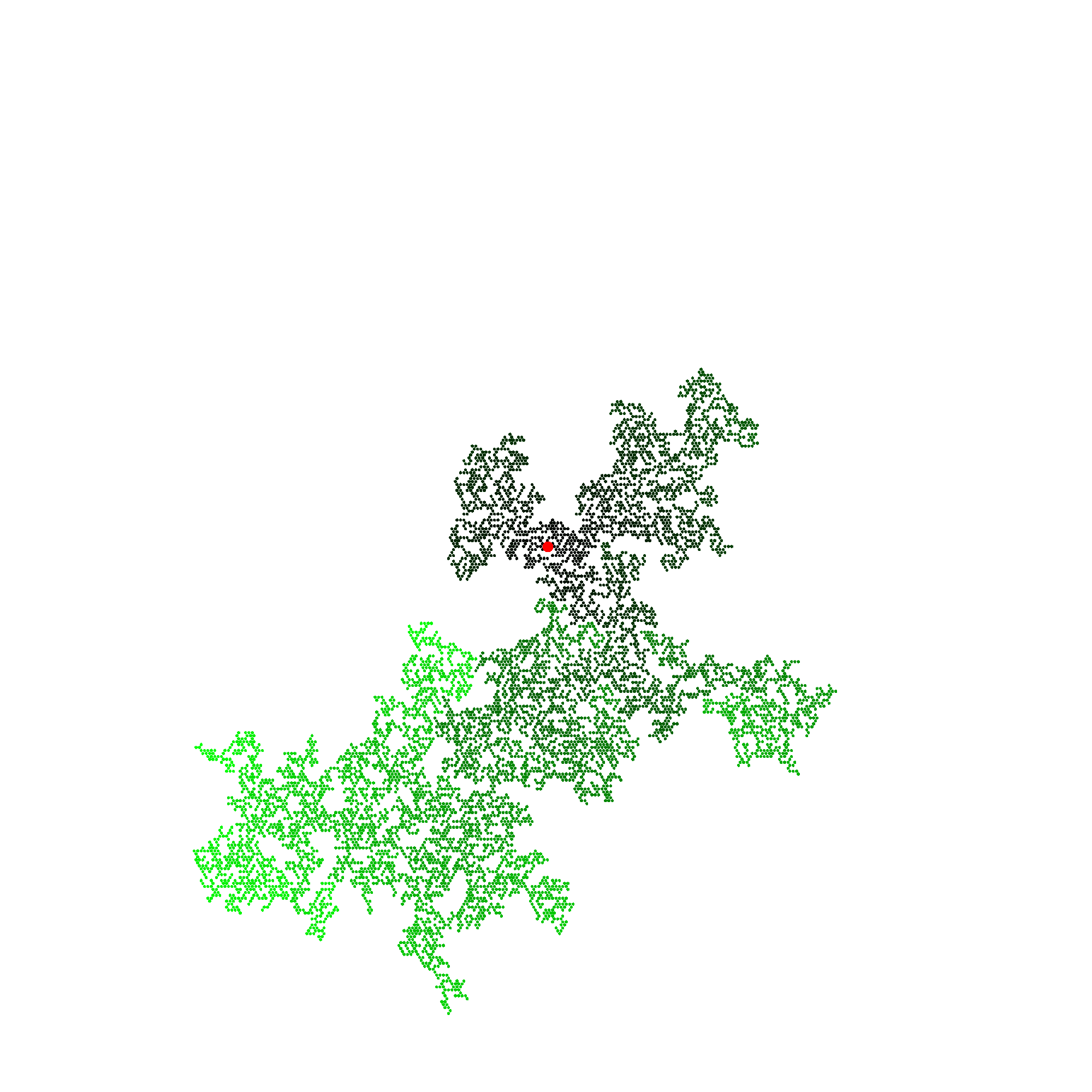}
\caption{{\bf Left:} A sample of critical percolation on the triangular lattice in the unit disk. {\bf Middle:} The regions surrounded by the outermost open cluster that surrounds the origin are shown in yellow. {\bf Right:} The corresponding cluster, colored according to their graph distance to a point near the center.}
\end{figure}

In the remainder of this article, we use $\kappa' \in (4,8)$ to denote the parameter for $\SLE$ and $\CLE$ in the non-simple regime and let $\kappa=16/\kappa' \in (2,4)$ be the dual value. We describe the setup for our main results in Section~\ref{se:setup_main}. The definition of a geodesic \clekp{} metric is given in Section~\ref{se:geodesic_metric}. We then state our main results in Section~\ref{se:main_results}.  There has been a substantial amount of previous work on chemical distance metrics in related settings which we will review in Section~\ref{subsec:other_work}.

\subsection{Setup}
\label{se:setup_main}

Before we state our main results, we need to describe the setup and give the definition of a geodesic \clekp{} metric. We will give two definitions of a geodesic \clekp{} metric. The definition of a \emph{weak} geodesic \clekp{} metric will be a special case of the class of metrics considered in \cite{amy2025tightness} and it has been proved in \cite{amy2025tightness} that the subsequential limits of geodesic approximation schemes satisfy the definition. We will then show that such a metric satisfies some stronger properties, and we will call them a \emph{(strong)} geodesic \clekp{} metric.

\begin{definition}
Let $\Gamma$ be a collection of loops in $\C$. We say that a path $\gamma$ is \emph{admissible for $\Gamma$} if it does not cross any loop of $\Gamma$.\footnote{This means that for any subsegment $\ell \subseteq \CL$ of a loop $\CL \in \Gamma$, if we let $\varphi$ be a conformal transformation from the unbounded connected component of $\C\setminus\ell$ to $\C\setminus\ol{\D}$, then $\varphi(\gamma \setminus \Fill(\ell))$ extends to a continuous path in $\C\setminus\D$.} For each $U \subseteq \C$, we let $\paths{x}{y}{U}{\Gamma}$ denote the set of admissible paths for $\Gamma$ from $x$ to $y$ within $U$.
\end{definition}

We need to describe the topology in which we view the \clekp{} gaskets, as considered in \cite{amy2025tightness}. Recall that in the regime $\kappa' \in (4,8)$ the loops of a \clekp{} intersect themselves, and hence there are points that can be accessed from two different sides of a loop. Therefore we do not view the \clekp{} gasket as merely a subset of the plane, but rather as a metric space with an embedding in the plane (distinguishing points that are accessible from different sides of a loop).

Suppose that $\CL$ is a random non-self-crossing loop in $\C$ (we will always be considering the case where $\CL$ is a particular \clekp{} loop), and let $C$ be the regions surrounded by $\CL$ (i.e.\ the union of connected components of $\C \setminus \CL$ with winding number $1$). Given $\CL$, let $\Gamma_C$ be a conditionally independent \clekp{} in each connected component of $C$, and let $\Gamma = \{\CL\} \cup \Gamma_C$. For $U \subseteq \C$ and $x,y \in U$, we let (with $\diamE$ denoting Euclidean diameter)
\begin{equation}\label{eq:dpath}
 \dpath[U](x,y) = \inf\{ \diamE(\gamma) : \gamma \in \paths{x}{y}{U}{\Gamma} \} .
\end{equation}
We write $\dpath(x,y) = \dpath[\ol{C}](x,y)$.

Let $\wt{\Upsilon}_\Gamma \subseteq C$ be the points that do not lie on and are not surrounded by any loop of $\Gamma_C$. Let $(\Upsilon_\Gamma, \dpath)$ be the metric space completion of $(\wt{\Upsilon}_\Gamma, \dpath)$, and equip it with the natural embedding map $\Pi\colon \Upsilon_\Gamma \to \C$. We call $\Upsilon_\Gamma$ the gasket of $\Gamma_C$, and when no confusion occurs, we identify its points with points on $\C$ via the embedding map $\Pi$.

For each open, simply connected $U \subseteq \C$, let $\Gamma_{U^*} \subseteq \Gamma_C$ be the collection of loops that are entirely contained in $U$, and let $U^* \subseteq U \cap C$ be the set of points that are not on or inside any loop of $\Gamma_C\setminus\Gamma_{U^*}$. We view $U^*$ as the metric space equipped with the metric
\[ d_{U^*}(x,y) = \inf\{ \diamE(\gamma) : \gamma \in \paths{x}{y}{U}{\Gamma\setminus\Gamma_{U^*}} \} . \]
Recall \cite[Lemma~3.2]{gmq2021sphere} that the conditional law of $\Gamma_{U^*}$ given $\Gamma \setminus \Gamma_{U^*}$ is that of an independent collection of \clekp{} in each connected component of $U^*$. The gasket $\Upsilon_{\Gamma_{U^*}}$ of $\Gamma_{U^*}$ is defined analogously as a metric space $(\Upsilon_{\Gamma_{U^*}}, \dpath[U])$ equipped with an embedding map $\Pi\colon \Upsilon_{\Gamma_{U^*}} \to \C$.

The Gromov-Hausdorff-function (GHf) metric between compact metric spaces equipped with continuous functions \cite[Appendix~A]{amy2025tightness} is defined as follows. Suppose that $(X_i,d_i)$ for $i=1,2$ are non-empty compact metric spaces, and $f_i\colon X_i \to \R^n$, $i=1,2$, are continuous functions.\footnote{The GHf metric can be defined more generally, but for the sake of simplicity we restrict to this case here.} Then
\begin{equation}\label{eq:ghf}
\dGHf( (X_1,d_1,f_1), (X_2,d_2,f_2) ) = \inf d_\infty( f_1 \circ \psi_1^{-1}, f_2 \circ \psi_2^{-1}) 
\end{equation}
where the infimum is over all metric spaces $(W,d_W)$ and isometric embeddings $\psi_i \colon X_i \to W$ for $i=1,2$, and
\[ 
d_\infty( f_1 \circ \psi_1^{-1}, f_2 \circ \psi_2^{-1}) = \inf\left\{ \delta > 0 : \parbox{.55\linewidth}{For each $i=1,2$ and $x \in X_i$ there is $x' \in X_{3-i}$ with $d_W(\psi_i(x),\psi_{3-i}(x')) \le \delta$ and $\abs{f_i(x)-f_{3-i}(x')} \le \delta$} \right\} .
\]
It is shown in \cite[Appendix~A]{amy2025tightness} that $\dGHf( (X_1,d_1,f_1), (X_2,d_2,f_2) ) = 0$ if and only if there is an isometry $\psi$ between $(X_1,d_1)$ and $(X_2,d_2)$ with $f_1 = f_2 \circ \psi$. It is also shown that a collection of (isometry classes of) non-empty compact metric spaces equipped with continuous functions is relatively compact in the GHf topology if the metric spaces are relatively compact in the GH topology and the functions are uniformly bounded and equicontinuous.

\subsection{The geodesic \clekp{} metric}
\label{se:geodesic_metric}

Suppose that $\met{\cdot}{\cdot}{\Gamma}$ is a random geodesic metric on $\Upsilon_\Gamma$ that is continuous with respect to $\dpath$. The metric is not required to be determined by $\Gamma$, and we view $(\Gamma,\met{\cdot}{\cdot}{\Gamma})$ as a random variable with values in the product space of loop ensembles and metrics on their gaskets. We recall the definition of a geodesic metric.

\textbf{Geodesic:} For each $x,y \in \Upsilon_\Gamma$ there is an admissible path $\gamma$ from $x$ to $y$ with $\lmet{\gamma} = \met{x}{y}{\Gamma}$ where $\lmet{\gamma}$ denotes the length of the path $\gamma$ under the metric $\met{\cdot}{\cdot}{\Gamma}$.

For each subset $U \subseteq \C$ we let
\begin{equation}\label{eq:internal_metric}
 \metres{U}{x}{y}{\Gamma} = \inf_{\gamma \in \paths{x}{y}{U}{\Gamma}} \lmet{\gamma} ,\quad x,y \in U \cap \Upsilon_\Gamma .
\end{equation}
We call it the \emph{internal metric within $U$}. We remark that in \cite{amy2025tightness} the internal metrics are only defined in regions $\ol{V}$ for $V \in \metregions$, but since we are considering geodesic metrics, it does not make any difference here.

We further suppose that the following properties hold.

\textbf{Markovian property:} Let $U \subseteq \C$ be open, simply connected. The conditional law of $\metres{U}{\cdot}{\cdot}{\Gamma}$ given $\Gamma\setminus\Gamma_{U^*}$ and $\metres{\C\setminus\ol{U}}{\cdot}{\cdot}{\Gamma}$ is almost surely measurable with respect to $U^*$ (recall from just above that we view $U^*$ as a metric space).

\textbf{Translation invariance:} Let $U \subseteq \C$ be open, simply connected. There exists a probability kernel $\mu^{U^*}$ such that for each $z \in \C$, the conditional law of $\metres{U+z}{\cdot}{\cdot}{\Gamma}$ given $(U+z)^*$ is $(T_z)_* \mu^{U^*}(\cdot -z)$ where $T_z \Fd(x,y) = \Fd(x-z,y-z)$ denotes translation by $z$.

\begin{definition}
\label{def:cle_metric}
 Suppose that $\CL$ is a random non-self-crossing loop in $\C$, and let $C$ be the regions surrounded by $\CL$. Given $\CL$, let $\Gamma_C$ be a conditionally independent \clekp{} in each connected component of $C$, and let $\Gamma = \{\CL\} \cup \Gamma_C$. We call a random metric $\met{\cdot}{\cdot}{\Gamma}$ coupled with~$\Gamma$ that satisfies the assumptions above a \emph{weak geodesic \clekp{} metric}.
\end{definition}

Note that this definition implies that the collection $(\metres{\ol{V}}{\cdot}{\cdot}{\Gamma})_{V \in \metregions}$ defines a \clekp{} metric in the sense of \cite[Definition~1.5]{amy2025tightness} ($\metregions$ is defined in \cite{amy2025tightness}). In particular, all the results from \cite{amy2025tightness} apply to $\met{\cdot}{\cdot}{\Gamma}$.

We now give the definition of a (strong) \emph{geodesic \clekp{} metric}. Suppose that $\met{\cdot}{\cdot}{\Gamma}$ is a geodesic metric on $\Upsilon_\Gamma$ that is continuous with respect to $\dpath$. (In contrast to the weak geodesic \clekp{} metric, the (strong) geodesic \clekp{} metric is determined by $\Gamma$ by definition.) We say that $\met{\cdot}{\cdot}{\Gamma}$ is a geodesic \clekp{} metric if it satisfies the following properties.

\textbf{Geodesic:} For each $x,y \in \Upsilon_\Gamma$ there is an admissible path $\gamma$ from $x$ to $y$ with $\lmet{\gamma} = \met{x}{y}{\Gamma}$ where $\lmet{\gamma}$ denotes the length of the path $\gamma$ under the metric $\met{\cdot}{\cdot}{\Gamma}$.

\textbf{Locally determined:} For each open, simply connected $U \subseteq \C$ there is a measurable function $F^U$ of $(U^*,\Gamma_{U^*})$ such that $\metres{U}{\cdot}{\cdot}{\Gamma} = F^U(U^*,\Gamma_{U^*})$ almost surely.

\textbf{Translation invariance:} For each open, simply connected $U \subseteq \C$ and $z \in \C$, the function $F^U$ defined above satisfies $F^{U+z} = T_z \circ F^U(\cdot -z)$ where $T_z \Fd(x,y) = \Fd(x-z,y-z)$ denotes translation by $z$.

\textbf{Scale covariance:} There exists a constant $\alpha > 0$ such that for each open, simply connected $U \subseteq \C$ and $\lambda > 0$, the function $F^U$ defined above satisfies $F^{\lambda U} = \lambda^\alpha S_\lambda \circ F^U(\cdot/\lambda)$ where $S_\lambda \Fd(x,y) = \Fd(\lambda^{-1}x,\lambda^{-1}y)$.

\begin{definition}\label{def:strong_cle_metric}
 Suppose that $\CL$ is a random non-self-crossing loop in $\C$, and let $C$ be the regions surrounded by $\CL$. Given $\CL$, let $\Gamma_C$ be a conditionally independent \clekp{} in each connected component of $C$, and let $\Gamma = \{\CL\} \cup \Gamma_C$. We call a random metric $\met{\cdot}{\cdot}{\Gamma}$ coupled with~$\Gamma$ that satisfies the assumptions above a \emph{geodesic \clekp{} metric}.
\end{definition}

\subsection{Main results}
\label{se:main_results}

We assume throughout that $\kappa' \in (4,8)$. In our main Theorems~\ref{thm:uniqueness}--\ref{thm:conformal_covariance} we consider the following setup. Let $\Gamma_\D$ be a nested $\CLE_{\kappa'}$ in $\D$, let $\CL$ be the outermost loop of $\Gamma_\D$ that surrounds~$0$, and let $C$ be the regions surrounded by $\CL$. Let $\Gamma_C$ be the loops of $\Gamma_\D$ contained in $\ol{C}$, and let $\Gamma = \{\CL\} \cup \Gamma_C$. Further, we \emph{condition on the event that $\CL \subseteq \D$} (i.e., $\CL \cap \partial \D = \varnothing$), and suppose that $\met{\cdot}{\cdot}{\Gamma}$ is a weak geodesic \clekp{} metric coupled with~$\Gamma$. (It is shown in \cite[Theorem~1.17]{amy2025tightness} that such a metric exists.)

We note that proving our main theorems in this concrete setup is enough, and the results generalize to all \emph{interior} clusters of a \clekp{} in a general simply connected domain, and of a whole-plane \clekp{}; this will be shown in the Theorems~\ref{th:whole_plane_metric}--\ref{th:metric_general_domain} below. We emphasize however the importance of considering an interior cluster. If we defined a geodesic metric on the gasket of a \clekp{} in a smooth domain, then the metric would degenerate along the smooth boundary since we expect the geodesics in the interior to have dimension larger than $1$, so that smooth paths would have zero length.

\begin{theorem}
\label{thm:uniqueness}
The following hold.
\begin{itemize}
\item Every weak geodesic \clekp{} metric is a geodesic \clekp{} metric. 
\item Suppose that $\met{\cdot}{\cdot}{\Gamma}$, $\mett{\cdot}{\cdot}{\Gamma}$ are geodesic $\CLE_{\kappa'}$ metrics.  There exists a constant $c > 0$ so that $\met{\cdot}{\cdot}{\Gamma} = c \mett{\cdot}{\cdot}{\Gamma}$.
\end{itemize}
\end{theorem}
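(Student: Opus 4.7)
The plan is to first establish a uniqueness theorem up to a multiplicative constant for weak geodesic \clekp{} metrics, and then bootstrap it to deduce local determinism, scale covariance, and pointwise translation invariance. The overall strategy follows the philosophy of the LQG metric uniqueness program, adapted to the CLE gasket setting using the tightness and regularity estimates from \cite{amy2025tightness}.

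For the uniqueness step, given two weak geodesic \clekp{} metrics $\Fd$ and $\wt{\Fd}$ coupled (via a joint measure) to the same $\Gamma$, I would use the optimal bi-Lipschitz constants method. The moment estimates and diameter bounds in \cite{amy2025tightness} imply that the ratio of distances between fixed macroscopic pairs of points is a.s.\ bounded above and below, so that $C^* = \operatorname{esssup}(\wt{\Fd}/\Fd)$ and $C_* = \operatorname{essinf}(\wt{\Fd}/\Fd)$ are a.s.\ finite and positive. The Markovian property applied to a nested sequence of annuli, combined with translation invariance, yields a zero-one argument showing that $C^*$ and $C_*$ are deterministic constants. The crucial step is then showing $C^* = C_*$: assuming $C_* < C^*$, I would use scale and translation invariance of the CLE law to produce, with positive probability, independent mesoscopic regions where the ratio $\wt{\Fd}/\Fd$ is close to each of the two extrema, and then invoke confluence of geodesics (obtained from the Markov property together with the tail estimates of \cite{amy2025tightness}) to concatenate near-optimal path segments into a single path whose average ratio lies strictly between $C_*$ and $C^*$ with positive probability at arbitrarily small scales, contradicting one of the extremality definitions.

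With uniqueness available, the remaining properties follow by applying it to suitable couplings. For \emph{local determinism}, given a weak metric $\Fd$, the Markovian property allows (by iteration over a countable family of simply connected regions) the construction of two conditionally independent copies $\Fd^{(1)}, \Fd^{(2)}$ of $\Fd$ given the CLE data. Each marginal is itself a weak geodesic \clekp{} metric, so uniqueness gives $\Fd^{(1)} = c \Fd^{(2)}$ a.s.\ for a constant $c > 0$, and exchangeability of the pair forces $c = 1$. Hence the conditional law of $\Fd$ given $\Gamma$ is a point mass, yielding the measurable function $F^U$. For \emph{scale covariance}, the scale invariance of the law of $\Gamma$ implies that the scaled metric is again a weak geodesic \clekp{} metric; uniqueness then yields a multiplicative constant $c(\lambda)$ depending on the scale, and a multiplicativity-in-$\lambda$ argument identifies a unique exponent $\alpha > 0$ with $c(\lambda) = \lambda^\alpha$ for all $\lambda > 0$. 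The analogous (and simpler) argument for translations yields the pointwise translation invariance of $F^U$.

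The main obstacle is the improvement step $C^* = C_*$. In the LQG setting this relies on precise confluence of geodesics and on the ability to locally perturb the metric via Weyl scaling. In the \clekp{} setting, both inputs must be extracted from the weak axioms together with the quantitative estimates of \cite{amy2025tightness}; the absence of a pointwise Weyl-type scaling requires the improvement to be carried out through resampling of the CLE inside small sub-regions and careful control of geodesics crossing such regions. The non-simple regime further complicates matters because the gasket is an embedded metric space in the GHf topology rather than a subset of the plane, so throughout the argument points accessible from different sides of a loop must be carefully distinguished.
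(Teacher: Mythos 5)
Your high-level architecture (bi-Lipschitz equivalence with optimal constants, an improvement step forcing the constants to coincide, conditionally independent copies for local determinism, and multiplicativity in $\lambda$ for the exponent) matches the paper's. However, there are three concrete gaps in the hard part of the argument.

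First, the normalization issue. You assert that the moment estimates of \cite{amy2025tightness} make $C^*=\operatorname{esssup}(\wt{\Fd}/\Fd)$ and $C_*=\operatorname{essinf}(\wt{\Fd}/\Fd)$ a.s.\ finite and positive. This does not follow: a.s.\ finiteness of the ratio for each fixed pair of points does not bound the supremum over all pairs, and in fact two weak geodesic metrics could a priori have different scaling exponents, in which case the ratio degenerates to $0$ or $\infty$ as the points come together and no bi-Lipschitz bound holds. The paper must first \emph{assume} that the medians $\median[\delta]{}$ and $\mediant[\delta]{}$ of annulus-crossing distances are comparable uniformly in $\delta$ (equation~\eqref{eq:same_medians_ass}), prove uniqueness under that hypothesis, and only afterwards remove the hypothesis by a separate argument (Lemma~\ref{le:exponent_unique}) showing that if the exponents differed then one metric would be identically zero. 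Your proposal has no mechanism for handling mismatched normalizations across scales. Moreover, even granting comparable medians, the finiteness of the global Lipschitz constant is the content of the nontrivial Proposition~\ref{pr:bilipschitz}, whose proof needs the ball-crossing estimate of Section~\ref{se:ball_crossing}: because of the gasket's topology, a geodesic crossing an annulus may be forced through the inner ball, so one must control lengths in \emph{all} subsequent nested annuli simultaneously, which is where the superpolynomial tail bounds enter.

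Second, the improvement step. You invoke confluence of geodesics, imported from the LQG program, to concatenate near-extremal segments. Confluence is neither established nor used in this paper, and it is not clear it can be extracted from the weak axioms. What replaces it is specific to the non-simple CLE gasket: by \cite[Lemma~5.14]{amy-cle-resampling} there is a finite set of pinch points (intersection points of loop strands) separating $\bIn A_{z,j}$ from $\bOut A_{z,j}$, so \emph{every} admissible crossing of the annulus is forced through small regions $U_i$ bounded between intersecting strands (the event $\Eloopchainreg_{z,j}$). The contradiction argument then shows that with probability close to one each such $U_i$ contains a shortcut where $\wt{\Fd}/\Fd \le c_2-\epsilon$, using the bichordal \slekp{} description of the strand pairs, GFF flow-line localization, and independence across scales to boost a positive-probability event to an overwhelming-probability one. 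Without this pinch-point mechanism (or a genuine proof of confluence), your concatenation step has no way to guarantee that geodesics actually pass through the regions where the ratio is near-extremal.
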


As a consequence of Theorem~\ref{thm:uniqueness}, we can use the scaling property to define the \clekp{} metric in the interior clusters of the \clekp{} in any domain, and for the whole-plane \clekp{}. The following shows that the scaling exponent is unique. Let $d_\SLE$ be the dimension of an \slek{} curve \cite{rs2005basic,b2008dimension} and $\ddouble$ be the dimension of the double points of an \slekp{} curve \cite{mw2017intersections} (see~\eqref{eq:dsle}, \eqref{eq:ddouble} for their precise values).

\begin{theorem}
\label{thm:exponent}
There exists a constant $\alpha \in [\ddouble \vee 1, d_\SLE]$ depending only on $\kappa'$ such that every \clekp{} metric satisfies the scale covariance property with this exponent $\alpha$.
\end{theorem}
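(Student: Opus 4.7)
The plan has two steps. First, I use Theorem~\ref{thm:uniqueness} to show that $\alpha$ is uniquely determined by $\kappa'$. Second, I combine scale covariance with a priori Hölder-type estimates on the metric from \cite{amy2025tightness} to place $\alpha$ in the claimed range $[\ddouble \vee 1, d_\SLE]$.

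\emph{Uniqueness of $\alpha$.} Let $\Fd$ and $\wt\Fd$ be two geodesic \clekp{} metrics with scale covariance exponents $\alpha$ and $\wt\alpha$. By Theorem~\ref{thm:uniqueness} there exists a constant $c > 0$ such that $\wt\Fd = c\Fd$ almost surely, so $F^U_{\wt\Fd} = c F^U_\Fd$ for every open simply connected $U \subseteq \C$. Then for any such $U$ and any $\lambda > 0$, applying scale covariance to each metric and using $F^U_{\wt\Fd} = c F^U_\Fd$ gives
\[
c \lambda^\alpha S_\lambda \circ F^U_\Fd(\cdot/\lambda) = c F^{\lambda U}_\Fd = F^{\lambda U}_{\wt\Fd} = \lambda^{\wt\alpha} S_\lambda \circ F^U_{\wt\Fd}(\cdot/\lambda) = c \lambda^{\wt\alpha} S_\lambda \circ F^U_\Fd(\cdot/\lambda).
\]
Comparing the two expressions for arbitrary $\lambda > 0$ yields $\lambda^\alpha = \lambda^{\wt\alpha}$, hence $\alpha = \wt\alpha$, and so $\alpha$ depends only on $\kappa'$.

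\emph{Bounds on $\alpha$.} The plan is to derive from \cite{amy2025tightness} high-probability estimates of the form
\[
c\, r^{d_\SLE} \;\le\; \met{x}{y}{\Gamma} \;\le\; C\, r^{\ddouble \vee 1} \quad\text{with } r = |x - y|
\]
for typical pairs of points $x, y$ in the gasket at small Euclidean distance. The lower exponent $d_\SLE$ should reflect the intrinsic roughness of admissible paths, which must navigate the \clekp{} cluster structure and hence include \slek{}-type trunks of Hausdorff dimension $d_\SLE$. The upper exponent $\ddouble \vee 1$ should reflect the existence of efficient paths: either threading the double-point set of $\CL$ (of Hausdorff dimension $\ddouble$) or, when $\ddouble < 1$, using short nearly-Euclidean segments. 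Since scale covariance with exponent $\alpha$ forces $\met{x}{y}{\Gamma}$ at Euclidean distance $r$ to scale like $r^\alpha$ relative to a unit-scale reference metric (by blowing up a small region by $r^{-1}$), matching against the estimates above as $r \downarrow 0$ gives $\alpha \le d_\SLE$ (from the lower bound, since $r^\alpha \gtrsim r^{d_\SLE}$ for all small $r$ requires $\alpha \le d_\SLE$) and $\alpha \ge \ddouble \vee 1$ (from the upper bound analogously).

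The main obstacle will be verifying that the a priori estimates from \cite{amy2025tightness} give exactly the sharp SLE exponents $d_\SLE$ and $\ddouble \vee 1$ rather than weaker bounds. The tightness paper provides polynomial-type modulus-of-continuity estimates, but pinning down the precise dimensions requires a careful reduction to the known SLE/CLE dimension results \cite{rs2005basic,b2008dimension,mw2017intersections}, together with leveraging the resampling/Markov structure of $\CLE_{\kappa'}$ to propagate estimates uniformly across scales and typical configurations.
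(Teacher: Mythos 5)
Your first step (uniqueness of $\alpha$) is algebraically fine \emph{if} you are entitled to cite Theorem~\ref{thm:uniqueness}, but in the paper's logical architecture you are not: the proportionality statement of Theorem~\ref{thm:uniqueness} is only established (Proposition~\ref{pr:uniqueness}) under the a priori hypothesis~\eqref{eq:same_medians_ass} that the median crossing lengths $\median[\delta]{}$ and $\mediant[\delta]{}$ are comparable uniformly in $\delta$, and it is precisely the equality of the two scaling exponents that is used to remove that hypothesis and complete the proof of Theorem~\ref{thm:uniqueness}. So deducing $\alpha=\wt\alpha$ from Theorem~\ref{thm:uniqueness} is circular. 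The paper instead proves exponent uniqueness directly (Lemma~\ref{le:exponent_unique}): if $\wt\alpha>\alpha$, then $\mediant[2^{-j}]{}/\median[2^{-j}]{}\to 0$, and combining the upper bound on $\wt{\Fd}$-lengths of ball crossings (Proposition~\ref{pr:ball_crossing}) with the lower bound on $\Fd$-lengths of annulus crossings (Lemma~\ref{le:len_lb_conditional}) in the covering argument of Proposition~\ref{pr:bilipschitz} forces $\wt{\Fd}\equiv 0$, a contradiction. You would need some version of this direct argument; your proposal does not contain it.

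The second step is where the genuine gap lies. You propose to establish two-sided pointwise estimates $c\,r^{d_\SLE}\le\met{x}{y}{\Gamma}\le C\,r^{\ddouble\vee 1}$ with the sharp SLE exponents, and you yourself flag that extracting these sharp exponents from the tightness paper is the "main obstacle" --- i.e.\ the step is not actually carried out, and it is also not the right target. No new dimension estimates are needed: the scaling relation $\lambda^{d_\SLE+o(1)}\median[\delta]{}\le\median[\lambda\delta]{}\le\lambda^{\ddouble+o(1)}\median[\delta]{}$ is already available as Proposition~\ref{pr:quantiles_metric} (quoted from \cite{amy2025tightness}), and since scale covariance gives $\median[\lambda\delta]{}=\lambda^\alpha\median[\delta]{}$, comparing exponents as $\lambda\downarrow 0$ immediately yields $\alpha\in[\ddouble,d_\SLE]$. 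Moreover, the remaining bound $\alpha\ge 1$ does not come from exhibiting efficient "nearly-Euclidean" paths of length $O(r)$; it comes from the geodesic property in the opposite direction: a geodesic joining two macroscopically separated points must cross at least order $2^{j}$ annuli at scale $2^{-j}$ (a continuous path has dimension at least $1$), each such crossing costs at least $m\,\median[2^{-j}]{}$ with high probability by Lemma~\ref{le:len_lb_conditional}, and the finiteness of the geodesic's total length then forces $\median[2^{-j}]{}\lesssim 2^{-j}$, i.e.\ $\alpha\ge 1$. As written, your proposal neither identifies the correct (and already available) input for $[\ddouble,d_\SLE]$ nor gives a workable mechanism for $\alpha\ge 1$.
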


The following gives that this metric is conformally covariant with the same exponent $\alpha$ as in Theorem~\ref{thm:exponent}.

\begin{theorem}
\label{thm:conformal_covariance}
Suppose that $\met{\cdot}{\cdot}{\Gamma}$ is a geodesic \clekp{} metric. Suppose that $U, \wt{U} \subseteq \D$ are simply connected domains and $\varphi \colon U \to \wt{U}$ is a conformal map. For each admissible path $\gamma\colon [0,1] \to U \cap \Upsilon_\Gamma$, let $\lmet{\gamma}$ be its length with respect to $\metres{U}{\cdot}{\cdot}{\Gamma}$ and $\lmett{\varphi(\gamma)}$ the length of $\varphi(\gamma)$ with respect to $\metres{\wt{U}}{\cdot}{\cdot}{\varphi(\Gamma)}$.\footnote{It is explained in Section~\ref{sec:conformally_covariant} why the latter is well-defined.} Then we have
\[
 \lmett{\varphi(\gamma)} = \int_0^1 \abs{\varphi'(\gamma(t))}^\alpha \,d\lmet{\gamma}
\]
where $\alpha$ is the constant from Theorem~\ref{thm:exponent}.
\end{theorem}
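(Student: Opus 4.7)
I reduce the integral identity to a pointwise infinitesimal scaling at each $z_0 \in U$ via a Riemann-sum argument, and prove that scaling by approximating $\varphi$ by its affine linearization combined with already-established covariance properties.

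\emph{Step 1 (affine covariance).} The first step is to upgrade translation invariance and the scale covariance of Theorem~\ref{thm:exponent} to covariance under all complex affine maps $A(w) = a + \lambda e^{i\theta}(w - z_0)$; only the rotational part is missing. For a rotation $R_\theta$, the pair $(R_\theta(\Gamma), (R_\theta)_*\met{\cdot}{\cdot}{\Gamma})$ satisfies all the geodesic \clekp{} metric axioms, and since $R_\theta(\Gamma) \stackrel{d}{=} \Gamma$, Theorem~\ref{thm:uniqueness} in a coupling with the canonical metric on $R_\theta(\Gamma)$ forces $(R_\theta)_*\met{\cdot}{\cdot}{\Gamma} = c(\theta)\met{\cdot}{\cdot}{R_\theta(\Gamma)}$ for a deterministic $c(\theta) > 0$. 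The group law $c(\theta_1 + \theta_2) = c(\theta_1) c(\theta_2)$, periodicity $c(2\pi) = 1$, and a routine continuity argument force $c \equiv 1$. Combined with translation and scale covariance this yields, for each such $A$,
\[
\metres{A(U)}{A(x)}{A(y)}{A(\Gamma)} = |\lambda|^\alpha \metres{U}{x}{y}{\Gamma}.
\]

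\emph{Step 2 (local comparison via Koebe).} Fix $z_0 \in U$ and let $A_{z_0}(w) = \varphi(z_0) + \varphi'(z_0)(w - z_0)$ be the linearization of $\varphi$. Koebe's distortion theorem gives $\sup_{w \in B(z_0, \epsilon)} |\varphi(w) - A_{z_0}(w)| = o(\epsilon) \cdot |\varphi'(z_0)|$ as $\epsilon \downarrow 0$, so the local data $(\varphi(B(z_0, \epsilon))^*, \varphi(\Gamma)_{\varphi(B(z_0, \epsilon))^*})$ and $(A_{z_0}(B(z_0, \epsilon))^*, A_{z_0}(\Gamma)_{A_{z_0}(B(z_0, \epsilon))^*})$ differ by a relatively small perturbation after the natural identification. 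The locally determined property in Definition~\ref{def:strong_cle_metric} expresses both $\metres{\varphi(B(z_0, \epsilon))}{\cdot}{\cdot}{\varphi(\Gamma)}$ and $\metres{A_{z_0}(B(z_0, \epsilon))}{\cdot}{\cdot}{A_{z_0}(\Gamma)}$ via the same deterministic functional $F^{\cdot}$. Combining with Step~1 then yields the infinitesimal scaling
\[
\metres{\varphi(B(z_0, \epsilon))}{\varphi(x)}{\varphi(y)}{\varphi(\Gamma)} = (1 + o_\epsilon(1)) |\varphi'(z_0)|^\alpha \metres{B(z_0, \epsilon)}{x}{y}{\Gamma}
\]
uniformly in $x, y \in B(z_0, \epsilon)$ on a high-probability event.

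\emph{Step 3 (integration) and main obstacle.} For fixed $\eta > 0$ take a partition $0 = t_0 < \cdots < t_N = 1$ fine enough that each $\gamma|_{[t_{i-1}, t_i]}$ lies in a ball $B(\gamma(t_i^*), \epsilon_i)$ small enough that Step~2 applies with relative error at most $\eta$. Summing over the partition yields
\[
\lmett{\varphi\gamma} = \sum_{i=1}^N \lmett{\varphi(\gamma|_{[t_{i-1}, t_i]})} = (1 + O(\eta)) \sum_{i=1}^N |\varphi'(\gamma(t_i^*))|^\alpha \lmet{\gamma|_{[t_{i-1}, t_i]}},
\]
and uniform continuity of $|\varphi'|^\alpha$ on a compact neighborhood of the image of $\gamma$ in $U$ makes the Riemann sum converge to $\int_0^1 |\varphi'(\gamma(t))|^\alpha \,d\lmet{\gamma}(t)$ as the partition refines. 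The principal obstacle is the quantitative continuity of $F^{\cdot}$ in its loop and domain arguments needed in Step~2, which must absorb the Koebe error into the $o_\epsilon(1)$ factor. I expect to obtain this by combining the GHf tightness and continuity estimates of \cite{amy2025tightness} with scale covariance to reduce to a unit-scale comparison on a favorable event of probability tending to $1$.
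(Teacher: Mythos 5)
Your Step 1 is exactly the paper's Lemma~\ref{le:rotational_invariance}, and your overall heuristic (linearize $\varphi$ at each point and integrate) is the right motivation. However, there are two genuine gaps in the execution.

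First, the obstacle you flag at the end of Step 2 is the actual crux, and your proposed resolution would not work. The locally determined property only gives a \emph{measurable} functional $F^U$; nothing in the tightness results of \cite{amy2025tightness} makes $F^U$ continuous in its configuration argument, and GHf tightness of the metrics gives no control on how $F^U$ differs on two configurations that are merely close in a Hausdorff-type sense. The paper's resolution (Lemma~\ref{le:distorted_metric}) is different in kind: it applies Lusin's theorem to replace $F$ by a uniformly continuous function off a set of small measure, and then crucially uses the \emph{total variation} continuity of the conditional multichordal \clekp{} law under Carath\'eodory perturbation of the marked domain (Proposition~\ref{pr:mccle_gasket_tv_convergence}) so that the configurations generated by $\varphi$ and by its linearization can be coupled to land in the good Lusin set simultaneously. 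Without a total-variation statement, a measurable $F$ can behave arbitrarily differently on two nearby configurations, so the Koebe error cannot be "absorbed."

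Second, Step 3 does not follow from Step 2 even granting it. Step 2 compares two-point \emph{internal distances} on a single ball on an event of probability $1-\epsilon$ with $\epsilon$ fixed; your Riemann sum needs (i) a comparison of \emph{lengths} of the specific subpaths $\gamma|_{[t_{i-1},t_i]}$ (which for a non-geodesic path can far exceed the distance between endpoints), and (ii) the good event to hold simultaneously for all $N\to\infty$ balls of the partition, which a union bound does not give. This is precisely why the paper does not run a direct Riemann-sum argument: it first establishes a deterministic bi-Lipschitz bound $c_1\le\cdot\le c_2$ (via Proposition~\ref{pr:ball_crossing} and Lemma~\ref{le:len_lb_conditional}, i.e.\ upper and lower crossing estimates together with the multi-scale independence giving a positive fraction of good scales at \emph{every} point), and then improves $c_1,c_2$ to $1$ by the contradiction scheme of Proposition~\ref{pr:uniqueness}, using Lemma~\ref{le:distorted_metric} to guarantee that a definite fraction of every path's length accrues in regions where the two metrics agree up to $1+\epsilon$. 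You would need to import essentially all of that machinery to close Step 3.
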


Finally, we explain that the setup considered above can be extended to define a geodesic metric on all interior clusters of the \clekp{} in a general domain and of the whole-plane \clekp{}. We recall that a whole-plane (nested) $\CLE_{\kappa'}$ can be defined as an appropriate limit of (nested) $\CLE_{\kappa'}$ in any sequence of domains $D_n$ that tend to $\C$ in the sense that for every $R > 0$ there exists $n_0 \in \N$ such that $n \geq n_0$ implies that $B(0,R) \subseteq D_n$ \cite{mww2016extreme}.

Suppose that $D \subseteq \C$ is a simply connected domain or $D = \C$. Let $\Gamma^D$ be a nested \clekp{} in $D$ (resp.\ a whole-plane nested \clekp{}). We define the collection of its interior gaskets as the metric space completions with respect to $\dpath[D]$ of the set of points that do not lie on any loop of $\Gamma^D$. Note that in the case $D \subsetneq \C$, this includes the points on the exterior gasket (i.e.\ the points connected to $\partial D$), but the exterior gasket is split up by $\partial D$ into smaller gaskets which do not contain positive intervals of $\partial D$.

Let $\Upsilon_1,\Upsilon_2,\ldots$ be an enumeration of the interior gaskets of $\Gamma^D$ (we will also refer to them as clusters). We consider a metric $\met{\cdot}{\cdot}{\Gamma^D}$ as a countable collection of metrics defined on each cluster. For $U \subseteq \C$ open, we consider the internal metric $\metres{U}{\cdot}{\cdot}{\Gamma^D}$ defined as follows. For each $i \in \N$, if the cluster $\Upsilon_i$ intersects $U$, then $\Upsilon_i \cap U$ decomposes into at most countably many connected components (with respect to $\dpath[U]$). On each component, the internal metric is defined via~\eqref{eq:internal_metric}. Therefore we can view $\metres{U}{\cdot}{\cdot}{\Gamma^D}$ as a countable sequence of metric spaces, each equipped with an embedding into the plane. Further, we let $\Gamma^D\big|_U$ be the countable collection of loops and strands of $\CL \cap U$ for each $\CL \in \Gamma^D$. (The number of strands is at most countable because $\CL$ is a continuous path.)

\begin{theorem}\label{th:whole_plane_metric}
 Consider the setup described just above with $D = \C$. There exists a unique (up to a deterministic constant) metric $\met{\cdot}{\cdot}{\Gamma^\C}$, measurable with respect to $\Gamma^\C$, that is continuous with respect to $\dpath$ and satisfies the following properties:
 \begin{itemize}
  \item \textbf{Geodesic:} For each $i \in \N$ and $x,y \in \Upsilon_i$ there is a path $\gamma$ in $\Upsilon_i$ from $x$ to $y$ with $\lmet{\gamma} = \met{x}{y}{\Gamma^\C}$.
  \item \textbf{Locally determined:} For each open $U \subseteq \C$, the internal metric $\metres{U}{\cdot}{\cdot}{\Gamma^\C}$ is almost surely a measurable function of $\Gamma^\C\big|_U$.
  \item \textbf{Translation invariance:} For each $z \in \C$, we have $\met{\cdot+z}{\cdot+z}{\Gamma^\C+z} = \met{\cdot}{\cdot}{\Gamma^\C}$ almost surely.
 \end{itemize}
 Moreover, there is a constant $\alpha \in [\ddouble \vee 1, d_\SLE]$ depending only on $\kappa'$ such that the metric $\met{\cdot}{\cdot}{\Gamma^\C}$ satisfies
 \begin{itemize}
  \item \textbf{Scale covariance:} For each $\lambda > 0$ we have $\met{\lambda\cdot}{\lambda\cdot}{\lambda\Gamma^\C} = \lambda^\alpha\met{\cdot}{\cdot}{\Gamma^\C}$ almost surely.
  \item \textbf{Conformal covariance:} Suppose $U \subseteq \C$ is a simply connected domain and $\varphi\colon U \to \wt{U}$ is a conformal transformation. Then almost surely, for each cluster $\Upsilon_i$ of $\Gamma^\C$, each $x,y \in \Upsilon_i \cap U$, and each $\gamma \in \paths{x}{y}{U}{\Gamma^\C}$,
  \begin{align*}
   \lenmetres{\wt{U}}{\varphi(\gamma)} &= \int \abs{\varphi'(\gamma(t))}^\alpha \,d\lenmetres{U}{\gamma} \\
   \text{and}\quad \metres{\wt{U}}{\varphi(x)}{\varphi(y)}{\varphi(\Gamma^\C\big|_U)} &= \inf_{\gamma \in \paths{x}{y}{U}{\Gamma^\C}} \lenmetres{\wt{U}}{\varphi(\gamma)} .
  \end{align*}
 \end{itemize}
\end{theorem}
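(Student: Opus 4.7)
The plan is to reduce Theorem~\ref{th:whole_plane_metric} to the single-cluster results already stated as Theorems~\ref{thm:uniqueness}, \ref{thm:exponent}, and \ref{thm:conformal_covariance}, and then to glue the resulting cluster metrics using the translation and scale invariance of the whole-plane $\CLE_{\kappa'}$. For the existence part I would enumerate the interior clusters as $\Upsilon_1,\Upsilon_2,\ldots$, writing $\CL_i$ for the outermost loop of $\Upsilon_i$ and $C_i$ for the region it surrounds. Conditional on $\CL_i$ and the loops of $\Gamma^\C$ exterior to it, the loops contained in $\ol{C_i}$ form a nested $\CLE_{\kappa'}$ in $C_i$, and because $\Upsilon_i$ is interior, $\CL_i$ is a.s.\ strictly inside its parent cluster; after a conformal map $C_i \to \D$ this is exactly the setup of Theorems~\ref{thm:uniqueness}--\ref{thm:conformal_covariance}. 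Thus there is a geodesic $\CLE_{\kappa'}$ metric on $\Upsilon_i$, which by the locally determined property (part of Theorem~\ref{thm:uniqueness}) is a deterministic measurable functional $F$ of the local CLE data, and by translation and scale covariance $F$ commutes with translations and rescalings.

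Applying $F$ at every cluster simultaneously defines the random collection of metrics $\Fd(\cdot,\cdot;\Gamma^\C)$. Measurability with respect to $\Gamma^\C$, the locally determined property, and translation invariance are immediate from the construction and from the translation invariance of the law of $\Gamma^\C$. Scale covariance with the same exponent $\alpha \in [\ddouble \vee 1, d_\SLE]$ as in Theorem~\ref{thm:exponent} follows by applying it cluster by cluster. Conformal covariance reduces to Theorem~\ref{thm:conformal_covariance}: for $\varphi \colon U \to \wt U$ and an admissible path $\gamma \in \paths{x}{y}{U}{\Gamma^\C}$ contained in a single cluster $\Upsilon_i$, the identity is exactly Theorem~\ref{thm:conformal_covariance} applied to the internal metric on $\Upsilon_i \cap U$; a general admissible $\gamma$ meets only one cluster at a time, and its length integral splits additively over the (at most countably many) maximal subsegments in each cluster of $\Gamma^\C \big|_U$, reducing to the single-cluster case.

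For uniqueness, suppose $\Fd,\wt\Fd$ both satisfy the listed properties. Conditional on everything outside a fixed cluster $\Upsilon_i$, both are weak geodesic $\CLE_{\kappa'}$ metrics in the sense of Definition~\ref{def:cle_metric}, so by Theorem~\ref{thm:uniqueness} one has $\Fd|_{\Upsilon_i} = c_i \wt\Fd|_{\Upsilon_i}$ for some random $c_i > 0$. The locally determined property expresses $c_i$ as a measurable function of $\Gamma^\C$ restricted to a bounded neighborhood of $\Upsilon_i$. Translation invariance of the joint law of $(\Fd,\wt\Fd,\Gamma^\C)$ then forces the $c_i$ to have a common distribution, and the tail-triviality of whole-plane CLE under translations (a consequence of the asymptotic independence of distant regions of $\Gamma^\C$) pins them almost surely to a single deterministic constant $c$.

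The main obstacle is this last step: upgrading the cluster-by-cluster uniqueness constants $c_i$ produced by Theorem~\ref{thm:uniqueness} to a single global constant on an infinite collection of clusters. Once Theorem~\ref{thm:uniqueness} is in hand, the rest of the argument is largely bookkeeping, but the ergodicity input needs to be articulated carefully because $\CLE_{\kappa'}$ has long-range correlations through its largest loops; the way around this is to observe that $c_i$ is a local functional of $\Gamma^\C$ near $\Upsilon_i$ (by the locally determined property), so the standard independence of $\Gamma^\C$ restricted to disjoint bounded regions, together with translation invariance, suffices to conclude.
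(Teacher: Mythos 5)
Your existence argument is essentially the paper's, though one mechanism is off: you propose to put each interior cluster into the reference setup of Theorems~\ref{thm:uniqueness}--\ref{thm:conformal_covariance} ``after a conformal map $C_i \to \D$''. The region $C_i$ surrounded by a non-simple loop is not simply connected, and conformal covariance is not available at the stage where you need it (it is one of the conclusions being established). The paper instead transfers the metric to each cluster, and to each local piece $K_{U,z}$ for $U$ in a countable family of dyadic Jordan domains, by \emph{translation invariance and absolute continuity} (the resampling argument of Lemma~\ref{lem:conf_map_abs_cont} and the remark before Lemma~\ref{le:scaling_covariance}), and then checks consistency of overlapping pieces via Lemma~\ref{le:internal_metrics_compatibility}. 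This consistency check is what you gloss over as ``bookkeeping'', but it is precisely what makes the uniqueness step work.

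The genuine gap is in your uniqueness argument. First, the ``standard independence of $\Gamma^\C$ restricted to disjoint bounded regions'' that you invoke is false: restrictions of a whole-plane $\CLE_{\kappa'}$ to disjoint bounded regions are correlated (large loops pass through both), and only asymptotic independence/absolute continuity statements are available. Second, the ergodicity machinery is not needed, because the constants you call ``random $c_i$'' are already deterministic: by the locally determined axiom, $\metres{U}{\cdot}{\cdot}{\Gamma^\C}$ is a \emph{deterministic} measurable function of $\Gamma^\C\big|_U$, and applying Theorem~\ref{thm:uniqueness} (via absolute continuity) to this function yields a deterministic constant $c_U$ for each $U$; translation invariance makes $c_U$ depend only on the translation class of $U$, and the consistency of overlapping local pieces from the construction forces all the $c_U$ to coincide. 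This is how the paper argues: it notes that by Part~1 of its proof the entire metric is recovered from its restriction to a single cluster (the one bounded by the outermost loop contained in $\D$), so pinning down the deterministic constant there pins it down globally, with no appeal to tail triviality. As written, your final step does not close; replace it with the locality/consistency argument.
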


\begin{theorem}\label{th:metric_general_domain}
 Consider the setup described just above. There is a collection of metrics $\met{\cdot}{\cdot}{\Gamma^D}$ for each simply connected domain $D \subseteq \C$, each measurable with respect to $\Gamma^D$, respectively, that is continuous with respect to $\dpath$ and satisfies the following properties:
 \begin{itemize}
  \item \textbf{Geodesic:} For each $i \in \N$ and $x,y \in \Upsilon_i$ there is a path $\gamma$ in $\Upsilon_i$ from $x$ to $y$ with $\lmet{\gamma} = \met{x}{y}{\Gamma^D}$.
  \item \textbf{Locally determined:} For each open $U \subseteq \C$, the internal metric $\metres{U}{\cdot}{\cdot}{\Gamma^D}$ is almost surely a measurable function of $\Gamma^D\big|_U$ that does not depend on the choice of $D$.
  \item \textbf{Translation invariance:} For each simply connected domain $D \subseteq \C$ and $z \in \C$, the two metrics $\met{\cdot}{\cdot}{\Gamma^D}$ and $\met{\cdot}{\cdot}{\Gamma^{D+z}}$ satisfy $\met{\cdot+z}{\cdot+z}{\Gamma^D+z} = \met{\cdot}{\cdot}{\Gamma^D}$ almost surely.
 \end{itemize}
 Any two such collections of metrics differ by a global deterministic constant. Such a collection of metrics is conformally covariant in the following sense. There is a constant $\alpha \in [\ddouble \vee 1, d_\SLE]$ depending only on $\kappa'$ such that if $\varphi\colon D \to \wt{D}$ is a conformal transformation, then almost surely, for each admissible path $\gamma \subseteq D$,
 \[
  \len{\met{\cdot}{\cdot}{\varphi(\Gamma^D)}}{\varphi(\gamma)} = \int \abs{\varphi'(\gamma(t))}^\alpha \,d\len{\met{\cdot}{\cdot}{\Gamma^D}}{\gamma} .
 \]
\end{theorem}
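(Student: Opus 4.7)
The plan is to reduce Theorem~\ref{th:metric_general_domain} to Theorem~\ref{th:whole_plane_metric} by using locality to transplant the whole-plane metric to arbitrary simply connected domains. By Theorem~\ref{th:whole_plane_metric} we have, for each open $U \subseteq \C$, a measurable function $F^U$ such that $\metres{U}{\cdot}{\cdot}{\Gamma^\C} = F^U(\Gamma^\C\big|_U)$ almost surely; these functions are automatically consistent under enlargement of $U$, translation equivariant in the sense that $F^{U+z} = T_z \circ F^U(\cdot - z)$, and conformally covariant at the level of path lengths. I would \emph{define} the metrics on general domains by evaluating these same functions on $\Gamma^D\big|_U$. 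Concretely, given a simply connected $D \subseteq \C$ and a nested \clekp{} $\Gamma^D$ in $D$, set $\metres{U}{\cdot}{\cdot}{\Gamma^D} \defeq F^U(\Gamma^D\big|_U)$ for each open $U$ with $\ol{U} \subseteq D$. For each interior cluster $\Upsilon_i$, its outermost loop $\CL_i$ is compactly contained in $D$ and any admissible path within $\Upsilon_i$ lies in the region enclosed by $\CL_i$, so $\met{\cdot}{\cdot}{\Gamma^D}\big|_{\Upsilon_i}$ may be defined by restricting $F^U(\Gamma^D\big|_U)$ to $\Upsilon_i$ for any bounded $U \supseteq \CL_i$ with $\ol{U} \subseteq D$, and the consistency of the $F^U$'s makes this independent of the choice.

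The four listed properties then follow directly. The geodesic and locally determined properties are built into the construction, using that in the whole-plane case $F^U$ almost surely produces a geodesic internal metric and the definition depends only on $\Gamma^D\big|_U$. Translation invariance is immediate from the corresponding property of $F^U$. For uniqueness up to a global constant, if $\Fd$ and $\wt{\Fd}$ are two such collections, specializing to $D = \C$ yields two whole-plane metrics which must agree up to a constant $c > 0$ by Theorem~\ref{th:whole_plane_metric}; the locally determined property, applied to each open $U$ with $\ol{U} \subseteq D$, then propagates this same $c$ to all $D$. Conformal covariance is a consequence of the conformal covariance in Theorem~\ref{th:whole_plane_metric}: for a conformal $\varphi \colon D \to \wt{D}$ and an admissible $\gamma \subseteq D$, pick open $U$ with $\gamma \subseteq U$ and $\ol{U} \subseteq D$, and compare $F^U(\Gamma^D\big|_U)$ with $F^{\varphi(U)}(\varphi(\Gamma^D)\big|_{\varphi(U)})$ via the whole-plane conformal covariance to obtain the claimed length identity.

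The main technical obstacle is verifying that $F^U$ can actually be evaluated on $\Gamma^D\big|_U$ and that the resulting object almost surely inherits all the properties derived in the whole-plane case, since the almost-sure event on which $F^U$ yields a valid geodesic metric was \emph{a priori} established only under the law of $\Gamma^\C\big|_U$. This requires local absolute continuity between the laws of $\Gamma^\C\big|_U$ and $\Gamma^D\big|_U$ for $\ol{U} \subseteq D$, at least restricted to those interior clusters whose outermost loops are contained in $U$. Such absolute continuity is available from the convergence $\CLE_{\kappa'}^{D_n} \to \CLE_{\kappa'}^\C$ used in \cite{mww2016extreme} to define the whole-plane $\CLE_{\kappa'}$, and from standard Markov properties of $\CLE_{\kappa'}$ realized through branching $\SLE_{\kappa'}(\kappa'-6)$ exploration trees. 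This is the only place in the argument that requires substantive probabilistic input beyond Theorem~\ref{th:whole_plane_metric}.
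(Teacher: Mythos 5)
Your proposal follows essentially the same route as the paper: both reduce Theorem~\ref{th:metric_general_domain} to Theorem~\ref{th:whole_plane_metric} by evaluating the whole-plane locally determined functions on $\Gamma^D\big|_U$ for $U \Subset D$, justified by mutual absolute continuity of the laws of $\Gamma^D\big|_U$ and $\Gamma^\C\big|_U$, and both obtain uniqueness by passing back to a whole-plane metric and invoking the uniqueness there. Two small points where your write-up should be adjusted. First, the paper obtains the absolute continuity by rerunning the resampling argument from the proof of Lemma~\ref{lem:conf_map_abs_cont}; the convergence of \clekp{} in growing domains to the whole-plane \clekp{} from \cite{mww2016extreme} gives a limit statement, not absolute continuity, so this step should be routed through the resampling/flow-line detection argument rather than through that convergence. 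Second, your prescription of defining the metric on a cluster $\Upsilon_i$ by choosing a single $U \supseteq \CL_i$ with $\ol{U} \subseteq D$ fails for the interior clusters whose outermost loop touches $\partial D$ (which exist almost surely since for $\kappa' \in (4,8)$ the loops intersect the domain boundary); for those clusters one must patch the internal metrics $F^U(\Gamma^D\big|_U)$ over an exhaustion of $D$ by compactly contained sets, using Lemma~\ref{le:internal_metrics_compatibility} to see that the resulting path lengths are consistent --- which is precisely the role that lemma plays in the paper's one-line proof.
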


As part of the proofs we also prove the following concentration estimates for the geodesic \clekp{} metric.

\begin{proposition}
Consider the setup described just above (where either $D = \C$ or $D \subseteq \C$). Let $\alpha$ be the constant in the theorems above. Fix $a>0$ and a compact set $K \subseteq D$. Then
\[
 \p\left[ \sup_{i \in \N} \sup_{\substack{x,y \in \Upsilon_i \cap K\\ \dpath(x,y) < \delta}} \metres{B(x,3\delta)}{x}{y}{\Gamma} \ge \delta^{\alpha-a} \right] = o^\infty(\delta) \quad\text{as } \delta \searrow 0 ,
\]
and
\[
 \p\left[ \inf_{i \in \N} \inf_{\substack{x,y \in \Upsilon_i \cap K\\ \dpath(x,y) \ge \delta}} \met{x}{y}{\Gamma} \le \delta^{\alpha+a} \right] = o^\infty(\delta) \quad\text{as } \delta \searrow 0 .
\]
\end{proposition}

The upper bound has already been shown in \cite[Corollary~5.19]{amy2025tightness}, and the present formulation follows by applying it to the rescaled metric $\met{\delta\cdot}{\delta\cdot}{\delta\Gamma}$ and using that $\median[\delta]{} = c\delta^\alpha$ (where $\median[\delta]{}$ is defined in~\eqref{eq:quantile_def}). The lower bound will follow from Lemma~\ref{le:len_lb_conditional} and the proof of Proposition~\ref{pr:bilipschitz}.

\subsection{Relationship to other work}
\label{subsec:other_work}

We will now review previous work on chemical distance metrics which is related to the present work.

\subsubsection{Supercritical percolation}
There have been a number of works that have studied chemical distance metrics in the context of the percolation model, especially in the case of \emph{supercritical percolation}.  In this regime, it turns out that the behavior of the metric is very strongly Euclidean.  Work in this direction started with the seminal contribution of Grimmett-Marstrand \cite{gm1990supercritical} and increasingly strong concentration bounds have been established by Antal-Pisztora \cite{ap1996chemical} and Garet-Marchand \cite{gm2007chemical}.  The strong Euclidean nature of the supercritical regime is also apparent in the results focused on the behavior of random walk.  In particular, Barlow proved Gaussian heat kernel estimates in \cite{bar-rw-supercritical} and the convergence of the random walk to standard Brownian motion was proved in the independent works \cite{bb-rw-percolation,mp-rw-percolation}.

\subsubsection{Critical percolation}

The results that have been established thus far in the case of critical percolation are much weaker and less is known in this regime. The question whether SLE based techniques can be used to study the chemical distance metric on critical percolation was mentioned in Schramm's ICM contribution \cite[Problem~3.3]{s2007icm}. The present paper is the first that achieves this.

Previous works by Aizenman-Burchard \cite{ab1999holder} have shown that the length of the shortest open crossing of an $n \times n$ box grows faster than $n^\beta$ for some exponent $\beta > 1$, with probability converging to $1$. It was shown by Damron-Hanson-Sosoe \cite{dhs2017chemical,dhs2021chemicalineq} that the length of the shortest open crossing (on the event that a crossing exists) grows with an exponent that is slower than the length of the lowest crossing (which is $4/3$ for site percolation on the triangular lattice), verifying a conjecture of Kesten-Zhang \cite{kz1993conj}. Radial versions of these statements are proved in \cite{sr2022radialchemical}. Numerical simulations \cite{hs1988percnumerical,dzgss2010simulations} suggest that the length should grow with an exponent that is approximately $1.13$.

The relation to random walk on the percolation cluster has been studied in the work \cite{gl2022chemicalsub} which shows its subdiffusivity with respect to the chemical distance.

For percolation in high dimensions, which behaves very differently, there have also been some works studying the chemical distance metric. See \cite{cchs-chemical-high-dim,brbn-hypercube-percolation} (in the latter it is also announced that the metric space scaling limit of the high dimensional clusters will be proved).

\subsubsection{Long-range percolation}

There has also been a substantial amount of work on the metric in the case of long-range percolation, which is defined by setting edges between any pair of vertices $x,y \in \Z^d$ to be open independently with probability $\| x - y\|^{-s}$ where $s > 0$ is a parameter.  The metric associated with this model has been investigated in works of Biskup \cite{b2004chemical,b2011chemical}, Ding-Sly \cite{ds2013longrange}, B\"aumler \cite{b2023chemical}, and \cite{dfh2023chemicallongrange}.

\subsubsection{Uniform spanning tree}

In the case of the uniform spanning tree model in two dimensions, the tightness of the chemical distance metric was proved in \cite{bm2010lerw,bm2011ust}.  The continuum limit can be thought of as the $\CLE_8$ metric and this was constructed in \cite{hs2018euclideanmating}.  In this case, the metric has a very different character as it is a random tree and the geodesics consist of the $\SLE_2$-type curves which define the branches and the length of the branch is defined by the natural parameterization of $\SLE$ developed in \cite{ls2011natural}.  (See also \cite{lv2021natural} for the convergence of LERW to $\SLE_2$ in the natural parameterization.)

\subsubsection{Liouville quantum gravity metric}

Recall that a Liouville quantum gravity (LQG) surface is formally described by the expression
\begin{equation}
\label{eqn:lqg_expression}
 e^{\gamma h(z)} (dx^2 + dy^2)
\end{equation}
where $\gamma \in (0,2]$ is a parameter and $h$ is an instance of (some form of) the Gaussian free field (GFF) on a planar domain $D$.  Since the GFF is a distribution and not a function, it is a non-trivial to make mathematical sense of~\eqref{eqn:lqg_expression}.  The metric associated with~\eqref{eqn:lqg_expression} (the two-point distance function) was first constructed in the case that $\gamma=\sqrt{8/3}$ in \cite{ms2020lqgtbm1,ms2021lqgtbm2,ms2021lqgtbm3} using the $\SLE$-based techniques developed in \cite{s2016zipper,dms2021mating}.  The metric was subsequently constructed for $\gamma \in (0,2)$ using an approach based on first regularizing the GFF $h$ to define approximations for the metric and establishing tightness \cite{dddf2020lqgmetric} and then proving the existence and uniqueness of the limit in \cite{gm2021lqgmetric} by showing that it is uniquely characterized by the list of axioms which were laid out in \cite{mq2020notsle}. The LQG metric in the so-called \emph{supercritical regime} was recently constructed in \cite{dg2023supertightness, dg2023superuniqueness} following a similar strategy. The present work together with \cite{amy2025tightness} is related to \cite{dddf2020lqgmetric, gm2021lqgmetric} but the techniques that we use are very different.

\subsubsection{Random planar maps}

It is shown in \cite{lgm2019largefaces,cmr2023largefaces} that a class of random planar maps with heavy-tailed face degree distributions converge in the scaling limit as metric spaces. Their limits admit explicit descriptions as the $\alpha$-stable gaskets (resp.\ carpets) where $\alpha \in (1,2)$ is a parameter. There are two different regimes of the parameter in this model. The regime $\alpha \in (1,3/2)$ should correspond to \clekp{} on LQG with $\kappa' \in (4,8)$, and the regime $\alpha \in [3/2,2)$ should correspond to \clek{} on LQG with $\kappa \in (8/3,4]$.

\subsubsection{Further results}

Let us finally mention that there are a number of further models where chemical distance metrics have been investigated, but which are less related to the present work.  This includes the model of random interlacements \cite{cp2012interlacements, hpr2023interlacements} and GFF percolation and random walk loop soups \cite{dl2018gffrwk}.

\subsection{Outline}
\label{subsec:outline}

We will now give the outline of the proof of our main theorems. The intuition is that the CLE in a small neighborhood around each point is self-similar in law, and its law approximately the same around each point $z \in \D$. By the Markovian property and the translation invariance of the metric, we can think of it as being i.i.d.\ in small neighborhoods of each point. There is a law of large numbers phenomenon emerging, so that the length of geodesics is approximated by their mean in each of the small neighborhoods that they pass. Making this intuition rigorous, however, requires highly non-trivial techniques. We will make use of the independence across scales technology developed in \cite{amy-cle-resampling}, along with the strong tail estimates for the metric proved in \cite{amy2025tightness}.

In the first step we suppose that we have two weak geodesic \clekp{} metrics $\met{\cdot}{\cdot}{\Gamma}$ and $\mett{\cdot}{\cdot}{\Gamma}$ and aim to show that $\met{\cdot}{\cdot}{\Gamma} = c \mett{\cdot}{\cdot}{\Gamma}$ for a deterministic constant $c$. We first suppose that the typical distances for $\met{\cdot}{\cdot}{\Gamma}$ and $\mett{\cdot}{\cdot}{\Gamma}$ across a set of Euclidean size $\delta$ are of comparable order as $\delta \searrow 0$. Under this hypothesis we show in Section~\ref{sec:bi_lip} that there exist deterministic constants $c_1, c_2$ such that
\begin{equation}
\label{eqn:bi_lip_equivalence}
 c_1 \met{\cdot}{\cdot}{\Gamma}  \leq \mett{\cdot}{\cdot}{\Gamma} \leq c_2 \met{\cdot}{\cdot}{\Gamma}.
\end{equation}
The idea is to cover space by small balls in which the two metrics are comparable, and to show that the probability that the space can be covered by such balls tends to $1$ as the size of the balls go to $0$. By the independence across annuli developed in \cite{amy-cle-resampling}, we can find with overwhelming probability a positive fraction of annuli in which the two metrics have typical behavior.  A significant difficulty which is not present in other settings is that the topology of the \clekp{} gasket may force geodesics pass through the interior ball of an annulus, so that the lengths across the interior ball need to be controlled as well. This problem is addressed in Section~\ref{se:ball_crossing} where we show that with overwhelming probability we find scales on which the \clekp{} metric has typical behavior in the annulus and all subsequent annuli do not add to much length to the geodesics. We use the superpolynomial tail estimates proved in \cite{amy2025tightness} to make up for the lack of independence.

We then show in Section~\ref{sec:metrics_equivalent} that $c_1 = c_2$. We suppose that $c_1 < c_2$ are such that $c_1$ (resp.\ $c_2$) is the largest (resp.\ smallest) deterministic constant so that~\eqref{eqn:bi_lip_equivalence} holds and aim to arrive at a contradiction by showing that they can be improved. To this end, we argue that there must exist regions where $c_1$ (resp.\ $c_2$) are almost attained with positive probability. Then we use a spatial independence argument to show that with overwhelming probability such regions are all over the place, and significant portions of the geodesics pass through them. This means that the ratio of their lengths with respect to $\met{\cdot}{\cdot}{\Gamma}$ and $\mett{\cdot}{\cdot}{\Gamma}$ have to be strictly away from the best-case (resp.\ worst-case) constant $c_1$ (resp.\ $c_2$). This gives the desired contradiction and completes the proof of the uniqueness of the metric. This also shows that the metric is locally determined by applying the uniqueness result to two conditionally independent copies of $\met{\cdot}{\cdot}{\Gamma}$.

Once we have uniqueness, we immediately conclude the scale-covariance and the existence of the exponent $\alpha$. We next argue that the exponent $\alpha$ is the same for every geodesic \clekp{} metric. This also lifts the extra hypothesis on the comparability of the typical distances for $\met{\cdot}{\cdot}{\Gamma}$ and $\mett{\cdot}{\cdot}{\Gamma}$ across a set of Euclidean size $\delta$ which we have imposed earlier. Suppose that the exponents for $\met{\cdot}{\cdot}{\Gamma}$ and $\mett{\cdot}{\cdot}{\Gamma}$ are different and $\alpha' > \alpha$. Then on small scales $\mett{\cdot}{\cdot}{\Gamma}$ is typically much smaller than $\met{\cdot}{\cdot}{\Gamma}$. Using a covering argument similarly to Sections~\ref{sec:bi_lip} and~\ref{sec:metrics_equivalent}, we then conclude that in fact $\mett{\cdot}{\cdot}{\Gamma} = 0$. This is carried out in Section~\ref{se:exponent}.

In Section~\ref{se:general_domains} we extend the definition of the geodesic \clekp{} metric to the interior clusters in a general simply connected domain and in the whole-plane, and prove their conformal covariance. We prove in Section~\ref{sec:conformally_covariant} the conformal covariance in the setup of a single cluster as described at the beginning of Section~\ref{se:main_results}. The proof follows a similar strategy as in the Sections~\ref{sec:bi_lip} and~\ref{sec:metrics_equivalent}. We aim to cover the space with small balls in which the distances associated with $\varphi(\Gamma)$ and $\abs{\varphi'(z)}\Gamma$ are comparable up to a factor close to $1$. The challenge here is that the function that associates to $\Gamma$ the metric $\met{\cdot}{\cdot}{\Gamma}$ is not necessarily continuous, hence might not be stable under a slight conformal perturbation. We will utilize the local continuity in total variation of the (multichordal) \clekp{} law with respect to conformal perturbations proved in \cite{amy-cle-resampling}.

The extension to general domains is carried out in Section~\ref{se:whole_plane_metric} where we prove the Theorems~\ref{th:whole_plane_metric} and~\ref{th:metric_general_domain} using our main Theorems~\ref{thm:uniqueness}--\ref{thm:conformal_covariance}.

\subsection*{Acknowledgements}
J.M. and Y.Y.\ were supported by ERC starting grant SPRS (804116) and from ERC consolidator grant ARPF (Horizon Europe UKRI G120614). Y.Y.\ in addition received support from the Royal Society.

\section{Preliminaries}

Throughout the paper, we fix
\[ \kappa' \in (4,8) ,\quad \kappa = \frac{16}{\kappa'} \in (2,4).\]
We also set
\begin{align}
d_\SLE &= 1+\frac{\kappa}{8} = 1+\frac{2}{\kappa'} \quad\text{(dimension of $\SLE_\kappa$ \cite{rs2005basic,b2008dimension})},\label{eq:dsle}\\
\ddouble &= 2 - \frac{(12-\kappa')(4+\kappa')}{8\kappa'} \quad\text{(double point dimension of $\SLE_{\kappa'}$ \cite{mw2017intersections})},\label{eq:ddouble}\\
\angledouble &= \frac{\pi (\kappa-2)}{2-\kappa/2} \quad\text{(double point angle gap \cite{ms2016ig1,mw2017intersections})}.\label{eq:angledouble}
\end{align}

We will consider nested \clekp{} throughout this paper (even though the metric is defined only on a single cluster). This allow us to use the resampling tools from \cite{amy-cle-resampling} which we recall in Section~\ref{se:mcle}.

For $z\in\C$ and $r_2>r_1>0$, we denote $A(z,r_1,r_2) = B(z,r_2)\setminus\ol{B}(z,r_1)$. Let $\bIn A(z,r_1,r_2) = \partial B(z,r_1)$ and $\bOut A(z,r_1,r_2) = \partial B(z,r_2)$. We call a path $\gamma$ a crossing of $A(z,r_1,r_2)$ if it has one endpoint on each of the two boundary components of $\partial A(z,r_1,r_2)$ and is contained in $A(z,r_1,r_2)$ otherwise. We say that a path $\gamma$ crosses an annulus if it contains a crossing of the annulus.

We write $U \Subset V$ to denote that $U$ is compactly contained in $V$, i.e.\ $\ol{U}$ is compact and $\ol{U} \subseteq V$. We sometimes write $\distE$ (resp.\ $\diamE$) to denote Euclidean distance (resp.\ diameter).

For a function $f$ depending on $\delta \in (0,1]$, we write $f(\delta) = O(\delta^b)$ if $f$ is bounded by a constant times $\delta^b$.  We also write $f(\delta) = o^\infty(\delta)$ if $f(\delta) = O(\delta^b)$ for any fixed $b>0$. We write $a \lesssim b$ if $a \le cb$ for a constant $c>0$, and we write $a \asymp b$ if $a \lesssim b$ and $b \lesssim a$.

\subsection{Geodesic metric spaces}

We record the following lemma about internal metrics in a geodesic metric space.

\begin{lemma}\label{le:internal_metrics_compatibility}
Suppose that $(X,\Fd)$ is a geodesic metric space, and let $\len{\Fd}{\gamma}$ denote the length of a path $\gamma$. For $U \subseteq X$, define
\[ \Fd^U(x,y) = \inf_{\gamma' \subseteq U} \len{\Fd}{\gamma'} ,\quad x,y \in U, \]
where the infimum is taken over all paths $\gamma' \subseteq U$ connecting $x,y$. 
Then for each path $\gamma \subseteq U$ we have
\[ \len{\Fd^U}{\gamma} = \len{\Fd}{\gamma} . \]
\end{lemma}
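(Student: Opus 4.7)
The plan is to prove the two inequalities $\len{\Fd^U}{\gamma} \leq \len{\Fd}{\gamma}$ and $\len{\Fd}{\gamma} \leq \len{\Fd^U}{\gamma}$ separately, working directly at the level of the total-variation definition of length as a supremum of partition sums $\sum_{i=1}^n \Fd(\gamma(t_{i-1}), \gamma(t_i))$ over partitions $0 = t_0 < \cdots < t_n = 1$ of the parameter interval, and analogously for $\Fd^U$.

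The inequality $\len{\Fd}{\gamma} \leq \len{\Fd^U}{\gamma}$ is the routine direction. I would first note that any path $\gamma' \subseteq U$ is in particular a path in $X$, so $\Fd(x,y) \leq \len{\Fd}{\gamma'}$ for all $x,y \in U$ and all such $\gamma'$ joining them; taking the infimum over $\gamma'$ gives the pointwise comparison $\Fd \leq \Fd^U$ on $U$. This comparison lifts immediately to partition sums and, after taking the supremum over partitions, to lengths.

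For the reverse inequality $\len{\Fd^U}{\gamma} \leq \len{\Fd}{\gamma}$, the key observation is that for any partition $0 = t_0 < \cdots < t_n = 1$, each restriction $\gamma|_{[t_{i-1}, t_i]}$ is itself a path contained in $U$ connecting $\gamma(t_{i-1})$ to $\gamma(t_i)$, so by the definition of $\Fd^U$,
\[
 \Fd^U(\gamma(t_{i-1}), \gamma(t_i)) \leq \len{\Fd}{\gamma|_{[t_{i-1}, t_i]}} .
\]
Summing over $i$ and invoking the additivity $\sum_{i} \len{\Fd}{\gamma|_{[t_{i-1}, t_i]}} = \len{\Fd}{\gamma}$ of total-variation length along successive sub-intervals bounds the partition sum by $\len{\Fd}{\gamma}$. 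Taking the supremum over partitions finishes the argument.

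The argument is short and I do not anticipate a real obstacle. The one mildly non-trivial ingredient is the additivity of $\len{\Fd}{\cdot}$ over consecutive sub-intervals, which is a standard property of total-variation length (and reduces to taking joint refinements of partitions on the two halves). The geodesic hypothesis on $(X,\Fd)$ plays no essential role in the proof; it is part of the paper's broader setup, but the statement in fact holds for arbitrary metric spaces with the same argument.
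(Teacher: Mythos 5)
Your proof is correct and follows essentially the same route as the paper's: the non-trivial direction is handled identically, by bounding $\Fd^U(\gamma(t_{i-1}),\gamma(t_i))$ by $\len{\Fd}{\gamma|_{[t_{i-1},t_i]}}$ and summing over the partition. Your justification of the easy inequality $\Fd \le \Fd^U$ (a point-to-point distance is bounded by the length of any connecting path, in an arbitrary metric space) is if anything cleaner than the paper's, which attributes it to the geodesic property, and your closing remark that the geodesic hypothesis is not actually needed is accurate.
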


\begin{proof}
Clearly, we have $\Fd \le \Fd^U$ due to the geodesic property. Therefore we only need to argue that $\len{\Fd^U}{\gamma} \le \len{\Fd}{\gamma}$ for each path $\gamma \subseteq U$. Suppose that $\gamma$ is parameterized on the time interval $[0,1]$, and let $(t_i)_{i=0,1,\ldots,n}$ be any finite partition of $[0,1]$. For each $i$ we have by definition $\Fd^U(\gamma(t_{i}),\gamma(t_{i+1})) \le \len{\Fd}{\gamma\big|_{[t_i,t_{i+1}]}}$, and hence
\[ \sum_i \Fd^U(\gamma(t_{i}),\gamma(t_{i+1})) \le \sum_i \len{\Fd}{\gamma\big|_{[t_i,t_{i+1}]}} = \len{\Fd}{\gamma} . \]
This shows that $\len{\Fd^U}{\gamma} \le \len{\Fd}{\gamma}$.
\end{proof}

\subsection{Schramm-Loewner evolution}

Let $D \subsetneq \C$ be a simply connected domain and $x,y$ two distinct prime ends of $D$. For each $\kappa > 0$, the \emph{chordal \slek{}} is a probability measure on non-crossing curves in $\ol{D}$ between $x$ and $y$. The family of \slek{} laws is conformally invariant in the sense that if $\varphi\colon D \to \wt{D}$ is a conformal transformation, then the pushforward of the \slek{} in $(D,x,y)$ under $\varphi$ is the \slek{} in $(\wt{D},\varphi(x),\varphi(y))$. On $(\h,0,\infty)$, the law is characterized by the \emph{chordal Loewner differential equation} as follows. For each $t \ge 0$, let $\h_t$ be the unbounded connected component of $\h \setminus \eta[0,t]$ and $g_t\colon \h_t \to \h$ be the conformal transformation with $g_t(z) = z+o(1)$ as $z \to \infty$. Then there exists a parameterization such that the family $(g_t(z))$ satisfies
\begin{equation}
\label{eqn:loewner_ode}
\partial g_t(z) = \frac{2}{g_t(z) - U_t} ,\quad z \in \h_t, \quad g_0(z) = z.
\end{equation}
with $U_t = \sqrt{\kappa}B_t$ and $(B_t)$ is a standard Brownian motion. The existence of the \slek{} curve was proved in \cite{rs2005basic,lsw2004lerw} (see also \cite{am2022sle8} for a continuum proof for $\kappa=8$). It is also proved in \cite{rs2005basic} that $\SLE_\kappa$ curves are simple for $\kappa \in (0,4]$, self-intersecting but not space-filling for $\kappa \in (4,8)$, and space-filling for $\kappa \geq 8$ \cite{rs2005basic}.

The \emph{$\SLE_\kappa(\rho)$ processes} are an important variant of $\SLE_\kappa$ where one keeps track of extra marked points \cite[Section~8.3]{lsw2003restriction}, \cite{sw2005coordinate}. Given a finite collection of \emph{force points} $(z^i)$, $z^i \in \ol{\h}$, and \emph{weights} $(\rho_i)$, $\rho_i \in \R$, the \slekr{\rho_1,\rho_2,\ldots} process in $(\h,0,\infty;z^1,z^2,\ldots)$ is given by solving~\eqref{eqn:loewner_ode} with the solution to
\begin{equation}
\label{eqn:sle_kappa_rho}
dW_t = \sqrt{\kappa} dB_t + \sum_i \re\left(\frac{\rho_i}{W_t - V_t^i}\right) dt, \quad dV_t^i = \frac{2}{V_t^i - W_t} dt,\quad V_0^i = z^i
\end{equation}
used in place of $(U_t)$ in~\eqref{eqn:loewner_ode}. It is shown in \cite{ms2016ig1} that~\eqref{eqn:sle_kappa_rho} has a unique solution up until the \emph{continuation threshold}, which is the first time $t$ that the sum of the weights of the force points that have collided with $W_t$ is at most $-2$. Further, the resulting process is generated by a continuous curve. Again, the \slekr{\rho} process in a general simply connected domain is defined by mapping to~$\h$ via a conformal transformation.

\subsubsection{Bichordal \slekp{}}
\label{se:bichordal_sle}

\begin{proposition}\label{pr:bichordal_sle}
For each $\kappa' \in (4,8)$ there is a unique law on pairs of curves $(\eta'_1,\eta'_2)$ such that almost surely there is a unique connected component of $\h \setminus \eta'_1$ (resp.\ $\h \setminus \eta'_2$) to the right of $\eta'_1$ (resp.\ left of $\eta'_2$) with $0,\infty$ on its boundary and the conditional law of $\eta'_2$ given $\eta'_1$ (resp.\ $\eta'_1$ given $\eta'_2$) is that of an \slekp{} in that component.
\end{proposition}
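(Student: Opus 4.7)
The plan is to prove existence via an explicit construction using \slekpr{\rho} processes and uniqueness via a commutation/Girsanov argument on the Loewner driving function. For existence, I would take $\eta'_1$ to be an \slekpr{\rho_L,\rho_R} in $(\h,0,\infty)$ with two force points at $0^-$ and $0^+$, and, conditional on $\eta'_1$, sample $\eta'_2$ as an \slekp{} in the right component of $\h\setminus\eta'_1$ having $0$ and $\infty$ on its boundary. The weights $\rho_L,\rho_R$ are to be chosen so that Dub\'edat's commutation relation holds: this is precisely the condition under which the conditional law of $\eta'_1$ given $\eta'_2$ is again \slekp{} in the corresponding left component. A standard partition function calculation (or equivalently a check at the level of the Loewner driving SDE) pins these weights down. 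An alternative construction is via imaginary geometry: draw the two counterflow lines of a Gaussian free field on $\h$ from $0$ to $\infty$ at two distinct angles, and invoke the Miller--Sheffield flow line machinery to read off that each counterflow line, conditional on the other, is \slekp{} in the appropriate component.

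For uniqueness, suppose that $(\eta'_1,\eta'_2)$ is any pair satisfying the stated property. Parameterize $\eta'_1$ by half-plane capacity and let $W_t$ denote its Loewner driving function. The conditional law of $\eta'_2$ given $\eta'_1|_{[0,t]}$ is \slekp{} in the right component of $\h\setminus\eta'_1|_{[0,t]}$ that has $\infty$ on its boundary; its Radon--Nikodym derivative with respect to a reference chordal \slekp{} law in $\h$ is an explicit partition function ratio. Writing this ratio as a Girsanov density, and integrating out $\eta'_2$, forces a specific drift on $W_t$ and shows that the marginal of $\eta'_1$ must coincide with the \slekpr{\rho_L,\rho_R} from the existence step. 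By symmetry the marginal of $\eta'_2$ is the same (with $L \leftrightarrow R$), and since the conditional law of $\eta'_2$ given $\eta'_1$ is prescribed, the joint law is uniquely determined.

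The main obstacle is handling the non-simple regime $\kappa'\in(4,8)$: both curves are self-intersecting and touch each other and the real line at many points, so one must carefully specify the required right (resp.\ left) component of $\h\setminus\eta'_1$ (resp.\ $\h\setminus\eta'_2$) in a measurable and continuous manner along the Loewner evolution, and verify that the \slekpr{\rho_L,\rho_R} process with the chosen weights never reaches its continuation threshold before the curve exits at $\infty$. This relies on the existence, continuity, and boundary-hitting description of \slekpr{\rho} processes from the imaginary geometry framework, together with the generalized Loewner theory of Rohde--Schramm for non-simple curves.
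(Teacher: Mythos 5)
Your existence step is fine and is essentially the paper's construction: the pair is realized as the two counterflow lines with angles $\pm\lambda'/\chi$ of a GFF on $\h$, exactly as described in the paragraph following the proposition.

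The uniqueness step has a genuine gap. You assert that ``the conditional law of $\eta'_2$ given $\eta'_1|_{[0,t]}$ is \slekp{} in the right component of $\h\setminus\eta'_1|_{[0,t]}$ that has $\infty$ on its boundary'' and build the Girsanov computation on this. That statement does not follow from the hypothesis. The hypothesis only prescribes the conditional law of $\eta'_2$ given the \emph{entire} curve $\eta'_1$; to obtain the conditional law given an initial segment you must average over the conditional law of $\eta'_1|_{[t,\infty)}$ given $\eta'_1|_{[0,t]}$, which is part of the unknown joint law you are trying to identify (and the domain in which $\eta'_2$ lives is carved out by all of $\eta'_1$, not by $\eta'_1[0,t]$). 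This circularity is exactly the difficulty with two-sided resampling (Gibbs-type) characterizations: commutation and partition-function arguments characterize \emph{local} couplings along the Loewner flow, whereas the hypothesis here is global. The paper resolves this by passing to the right outer boundary of $\eta'_1$ and the left outer boundary of $\eta'_2$ --- the conditional law of each curve given the other depends only on these boundaries, and given both boundaries the curves are conditionally independent chordal \slekp{}'s --- and then invoking the bi-chordal uniqueness theorem \cite[Theorem~4.1]{ms2016ig2}, whose proof proceeds via ergodicity/tail-triviality of the associated resampling Markov chain rather than a driving-function computation. Without an argument of that type (or another way to break the circularity), your uniqueness proof does not close.
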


\begin{proof}
 The proof follows from the same argument as \cite[Theorem~4.1]{ms2016ig2} applied to the outer boundaries of $\eta'_1,\eta'_2$.
\end{proof}

The pair of curves $(\eta'_1,\eta'_2)$ in Proposition~\ref{pr:bichordal_sle} can be constructed as follows. Let $h$ be a GFF in $\h$ with boundary values $2\lambda'-\pi\chi = (\kappa-2)\lambda$ (resp.\ $-2\lambda'+\pi\chi = -(\kappa-2)\lambda$) on $\R_+$ (resp.\ $\R_-$). Let $\eta'_1$ (resp.\ $\eta'_2$) be the counterflow line of $h$ with angle $\theta' = \lambda'/\chi$ (resp.\ $-\theta' = -\lambda'/\chi$) from $\infty$ to $0$. Then the law of the pair $(\eta'_1,\eta'_2)$ is exactly the bichordal SLE$_{\kappa'}$. The flow line $\eta_1$ with angle $\theta = (\kappa-2)\lambda/(2\chi)$ from $0$ to $\infty$ agrees with the right boundary of $\eta'_1$, and the flow line $\eta_2$ with angle $-\theta = -(\kappa-2)\lambda/(2\chi)$ from $0$ to $\infty$ agrees with the left boundary of $\eta'_2$. Finally, note that the angle difference between $\eta_1,\eta_2$ is exactly the double point angle for SLE$_{\kappa'}$~\eqref{eq:angledouble}.

\subsection{Conformal loop ensembles}

Let $D \subsetneq \C$ be a simply connected domain. For each $\kappa \in (8/3,8)$, the \emph{conformal loop ensemble} \clek{} is a probability measure on countable collections of non-crossing loops in $\ol{D}$, each of which locally looks like an \slek{} \cite{sw2012cle,s2009cle}. The family of \clek{} laws is conformally invariant in the sense that if $\varphi\colon D \to \wt{D}$ is a conformal transformation, then the pushforward of the \clek{} in $D$ under $\varphi$ is the \clek{} in $\wt{D}$. This was proved for $\kappa \in (8/3,4]$ in \cite{sw2012cle} as a consequence of the conformal invariance of the Brownian loop soup and it was explained in \cite{s2009cle} that the conformal invariance of $\CLE_\kappa$ for $\kappa \in (4,8)$ follows from the reversibility of $\SLE_\kappa$ which was later proved in \cite{ms2016ig3}. Further, the \clek{} possess the following \emph{domain Markov property} \cite{sw2012cle,gmq2021sphere}. Suppose $K \subseteq \ol{D}$ is a closed connected set intersecting $\partial D$. Let $\Gamma$ be a \clek{} in $D$ and let $\Gamma(K) \subseteq \Gamma$ be the loops that intersect $K$. Then given $\Gamma(K)$, the conditional law of $\Gamma \setminus \Gamma(K)$ is given by an independent \clek{} in each connected component of $D \setminus \ol{\bigcup \Gamma(K)}$.

There are two variants of \clek{}, the \emph{nested} and the \emph{non-nested} \clek{}. The non-nested \clek{} is obtained as the collection of outermost loops of a nested \clek{} (i.e.\ the loops that are not surrounded by any of the other loops). Conversely, one obtains a nested \clek{} from a non-nested \clek{} by repeatedly sampling conditionally independent non-nested \clek{} in each of the remaining complementary connected components. The nested \clek{} can also be defined on the whole plane $\C$ \cite{gmq2021sphere}. It is characterized by the facts that it is locally finite and satisfies the domain Markov property described above for each closed set $K \subseteq \wh{\C}$ containing $\infty$.

We review the construction of \clekp{} in the case $\kappa' \in (4,8)$ \cite{s2009cle}. Fix a prime end $x \in \partial D$. The \emph{exploration tree} of a \clekp{} rooted at $x$ is defined as follows. Fix a countable dense set $(w_n)$ in $D$. For each~$n$ we let~$\eta'_n$ be an $\SLE_{\kappa'}({\kappa'}-6)$ process in $D$ from~$x$ to~$w_n$ where the force point is located infinitesimally to the right of $x$ on $\partial D$.  We assume that the $\eta'_n$ are coupled together so as to agree until they separate their target points \cite{sw2005coordinate} and then evolve independently afterwards. For each $z \in D$ we define a loop $\CL_z$ surrounding $z$ as follows. Consider any branch $\eta'_n$ of the exploration tree that surrounds $z$ clockwise. Let $\tau$ be the first time when it surrounds $z$ clockwise, and let $\sigma < \tau$ be the last time when the driving function of $\eta'_n$ has collided with the force point process. Let $\CL_z$ be the loop formed by the segment $\eta'_n[\sigma,\tau]$ followed by concatenating the branches that target the component adjacent to $\eta'_n(\sigma)$. It is proved in \cite{ms2016ig3} that the law of the resulting loop ensemble does not depend on the choice of~$x$ (i.e., is root invariant). This yields the non-nested \clekp{} in $D$. The procedure can be repeated inside the loops (we can use the same exploration tree) to construct the nested \clekp{} in $D$. By the construction, the exploration tree is determined by $\Gamma$ and the root $x$.

It is proved in \cite{ms2017ig4} that the \clek{} is \emph{locally finite}, meaning that for each $\epsilon > 0$ the number of loops with diameter at least $\epsilon$ is finite. This can be made quantitative \cite{ghm2020kpz}.

\begin{lemma}\label{le:loop_crossings_tail}
Let $\Gamma$ be a \clekp{} in $D$. Let $A$ be an annulus contained in $D$, and let $N_A$ be the number of crossings of $A$ made by loops of $\Gamma$. Then $\p[N_A > M] = o^\infty(M^{-1})$ as $M \to \infty$.
\end{lemma}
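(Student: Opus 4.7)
The plan is to decompose $N_A = \sum_{\CL \in \Gamma} N_\CL$ where $N_\CL$ is the number of crossings of $A$ produced by the single loop $\CL$, and to control separately the number of loops that can contribute and the contribution of each loop. Set $r = r_2 - r_1$. Every loop contributing to $N_A$ must have diameter at least $r$ and must meet $\ol{A}$; let $N_{\text{loops}}$ denote the number of such loops.

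For the number of contributing loops, I would invoke the quantitative version of local finiteness of \clekp{} from \cite{ghm2020kpz}, which yields $\p[N_{\text{loops}} > M] = o^\infty(M^{-1})$ (morally, by the resampling/independence-across-scales picture, the events ``there is a loop of diameter $\geq r$ meeting a given sub-disk'' at disjoint scales are approximately independent, giving superpolynomial tails on the count). For the per-loop contribution, I would prove that there exist constants $c_1, c_2 > 0$ depending only on the modulus of $A$ such that for every loop $\CL$ of $\Gamma$,
\[
\p[N_\CL > k] \leq c_1 e^{-c_2 k}.
\]
The idea is that \clekp{} loops locally look like $\SLE_{\kappa'}$ curves via the exploration tree construction of \cite{s2009cle}: after each crossing of $A$ by the corresponding $\SLE_{\kappa'}(\kappa'-6)$ branch, the conditional probability of producing another crossing of $A$ before escaping a slightly larger reference annulus is bounded away from $1$, uniformly in the prior crossings, by a Beurling-type estimate for $\SLE_{\kappa'}$ together with Koebe distortion to reduce to a fixed reference annulus.

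Combining the two bounds by a union bound yields
\[
\p[N_A > M] \leq \p[N_{\text{loops}} > M^{1/2}] + M^{1/2} \cdot \max_{\CL} \p[N_\CL > M^{1/2}],
\]
and both terms are $o^\infty(M^{-1})$. The main technical obstacle is the uniform per-loop crossing bound: although it is morally a classical $\SLE_{\kappa'}$ estimate, it must be established uniformly over all loops of $\Gamma$ meeting $\ol{A}$, and the arcs of a single \clekp{} loop are not themselves an $\SLE_{\kappa'}$ once one conditions on the partial exploration. A clean implementation therefore has to proceed through the exploration tree, invoking the domain Markov property of the $\SLE_{\kappa'}(\kappa'-6)$ branches to inherit exponential annulus-crossing tails.
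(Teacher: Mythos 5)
First, a point of reference: the paper does not actually prove this lemma; it is quoted directly from \cite{ghm2020kpz} (``This can be made quantitative''), where the tail bound for the number of annulus crossings by \clekp{} loops is established via the exploration tree. So your sketch is a from-scratch substitute for a citation, and as such it has one genuine gap. The step $\p[N_A>M]\le \p[N_{\mathrm{loops}}>M^{1/2}]+M^{1/2}\cdot\max_{\CL}\p[N_\CL>M^{1/2}]$ is not a valid union bound: the loops meeting $\ol{A}$ form a \emph{random} countable collection, so $\max_\CL \p[N_\CL>k]$ is not a meaningful quantity without a measurable enumeration, and conditioning on a loop being one of the contributing ones (e.g.\ the largest loop meeting $\ol A$) biases its law away from that of a chordal \slekp{}. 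You correctly flag this uniformity issue at the end, but it is not a technicality to be cleaned up afterwards --- it is essentially the entire content of the lemma, and the per-loop decomposition does not make it easier.

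The standard way to close the gap (and the route taken in \cite{ghm2020kpz}) dispenses with the per-loop decomposition altogether. Every crossing of $A$ by a loop of $\Gamma$ is traced by some branch of the exploration tree, so one orders the crossings chronologically along an iterated \slekpr{\kappa'-6} exploration targeted at the inner disk. After each completed crossing of a slightly thinner concentric annulus $A'\subseteq A$, the domain Markov property of the exploration gives a conditional probability, bounded below uniformly in the past, that the process disconnects $\bIn A'$ from $\bOut A'$ in the unexplored region before completing another crossing; hence the total crossing count of $A'$ is dominated by a geometric random variable, which yields an exponential (a fortiori $o^\infty(M^{-1})$) tail for $N_A$ directly. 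This simultaneously subsumes your first step, since $N_{\mathrm{loops}}\le N_A$. Two smaller remarks: a ``Beurling-type estimate'' is not the right tool for the disconnection probability (that is a harmonic-measure bound; what is needed is the uniformly positive probability that an \slekp{}-type curve closes off an annulus, obtained from the Markov property together with absolute continuity/distortion to a reference configuration); and if you do insist on the two-step route, the second term should be handled through a first-moment bound $\E[\#\{\CL: N_\CL>k\}]$ rather than a maximum over loops.
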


\subsection{Multichordal \clekp{}}
\label{se:mcle}

Suppose $D$ is a simply connected domain, $\ul{x}$ is a finite even collection of distinct prime ends, and $\beta$ is an exterior planar link pattern between the marked points. The \emph{multichordal \clekp{}} in $(D;\ul{x};\beta)$ is a probability measure on the collections of loops and paths in $\ol{D}$ where each path connects a pair of marked points. It arises as the conditional law of the remainder given a \emph{partial exploration} of a \clekp{} (see Theorem~\ref{thm:cle_partially_explored} below). The \emph{interior link pattern} $\alpha$ between the marked points induced by the paths of a multichordal \clekp{} is random (its law depends on the conformal class of $(D;\ul{x};\beta)$), the conditional law of the paths given $\alpha$ is that of a \emph{global multiple \slekp{}}, and the conditional law of the loops given the paths is that of a \clekp{} in each complementary connected component. We refer to \cite{amy-cle-resampling} for the definition of the multichordal \clekp{} law, and will only summarize its main properties here. (See \cite[Theorem~1.6]{amy-cle-resampling}.)

\begin{proposition}
 The multichordal \clekp{} law is conformally invariant in the following sense. If $\Gamma$ is a multichordal \clekp{} in $(D;\ul{x};\beta)$ and $\varphi\colon D \to \wt{D}$ is a conformal transformation, then $\varphi(\Gamma)$ is a multichordal \clekp{} in $(\wt{D};\varphi(\ul{x});\beta)$.
\end{proposition}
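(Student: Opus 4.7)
The plan is to deduce conformal invariance of the multichordal \clekp{} law from the conformal invariance of the ordinary \clekp{} together with the partial exploration characterization of multichordal \clekp{} (Theorem~\ref{thm:cle_partially_explored} referenced in the text). Recall that a multichordal \clekp{} in $(D; \ul{x}; \beta)$ arises as the conditional law of the unexplored portion of a \clekp{} in a larger simply connected domain $D_0 \supseteq D$ given a partial exploration by $\SLE_{\kappa'}(\kappa'-6)$ branches of the exploration tree whose traces create a pattern of arcs on $\partial D$ matching $\beta$.

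First I would realize the configuration $(D; \ul{x}; \beta; \Gamma)$ inside such a $D_0$ equipped with a \clekp{} that has been partially explored to expose strands forming $\beta$ on $\partial D$. Given $\varphi \colon D \to \wt{D}$ conformal, I would then construct a compatible extension $\wt{\varphi} \colon D_0 \to \wt{D}_0$ agreeing with $\varphi$ on $D$ and sending the explored strands to analogous explored strands in $\wt{D}_0$ that create the pattern $\beta$ on $\partial \wt{D}$. Such an extension exists because the conformal class of $(D; \ul{x})$ determines that of $(\wt{D}; \varphi(\ul{x}))$ and one can conformally weld thin pieces along the boundary arcs of $\partial D$ to $D$ (resp.\ $\partial \wt{D}$ to $\wt{D}$) to produce $D_0$ (resp.\ $\wt{D}_0$) so that $\wt{\varphi}$ extends holomorphically across the welded seams. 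Applying conformal invariance of \clekp{} in $D_0$ \cite{sw2012cle,s2009cle,ms2016ig3} and of the $\SLE_{\kappa'}(\kappa'-6)$ branches generating the exploration tree, the pushforward $\wt{\varphi}_*$ of the entire partially explored configuration in $D_0$ has the same law as the analogous partially explored \clekp{} in $\wt{D}_0$. Taking the conditional laws of the respective unexplored parts yields $\varphi_* \Gamma \stackrel{d}{=}$ multichordal \clekp{} in $(\wt{D}; \varphi(\ul{x}); \beta)$.

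The main obstacle is executing the welding construction of $D_0, \wt{D}_0, \wt{\varphi}$ so that the transported exterior link pattern really matches $\beta$ and so that the transported exploration tree is distributed as an \slekpr{\kappa'-6} exploration tree of a \clekp{} in $\wt{D}_0$. The link pattern $\beta$, being a combinatorial/topological datum attached to $\partial D$, is preserved automatically by the boundary homeomorphism induced by $\varphi$; the substantive work is the geometric extension, which can be carried out by attaching a thin collar or tube to each marked arc so that the welding is trivial and $\wt{\varphi}$ extends by Schwarz reflection across the matched boundary pieces. Once this extension is in place, the conclusion is pure bookkeeping of conditional laws using the two conformal invariances cited above.
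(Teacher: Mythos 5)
The paper does not actually prove this proposition: it is quoted from \cite[Theorem~1.6]{amy-cle-resampling}, where the multichordal \clekp{} law is \emph{defined} by conformally invariant ingredients (a law on interior link patterns depending only on the conformal class of $(D;\ul{x};\beta)$, a global multiple \slekp{} given the link pattern, and independent \clekp{}'s in the complementary components), so that conformal invariance is essentially built into the construction. Your proposal instead tries to derive the statement from the partial-exploration description (Theorem~\ref{thm:cle_partially_explored}) plus conformal invariance of the ordinary \clekp{}, and this route has two genuine gaps.

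First, the realization step is a conditioning on a probability-zero event. The partial exploration of a \clekp{} in $D_0$ produces a \emph{random} unexplored domain $V^{*,U}$ with random marked points; to identify the multichordal \clekp{} in a \emph{given} $(D;\ul{x};\beta)$ with "the conditional law of the unexplored part given that the exploration exposed exactly $(D;\ul{x};\beta)$" requires conditioning on an event of probability zero, and such conditional laws are only defined up to null sets of configurations $(D;\ul{x})$. So the argument cannot pin down the law at the specific pair $(D;\ul{x})$ and $(\wt D;\varphi(\ul{x}))$ you care about without a regularity/disintegration argument that is nowhere supplied. Second, and more fatally, the extension $\wt{\varphi}\colon D_0\to\wt{D}_0$ with $\wt{\varphi}|_D=\varphi$ does not exist in general. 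A conformal map defined on $D_0\supseteq D$ restricts on $D$ to a map that extends holomorphically across $\partial D\cap D_0$; a generic conformal map $\varphi\colon D\to\wt D$ admits no such extension (and here $\partial D$ consists of fractal $\SLE$-type strands, so Schwarz reflection is unavailable in any case). The proposed "welding of thin collars" cannot fix this: whatever ambient domains you weld on, the restriction of a conformal map of the ambient domains to $D$ is forced to be one of the (finitely-parametrized family of) maps extending past $\partial D$, not an arbitrary prescribed $\varphi$. Since the whole strategy consists of pushing forward the ambient configuration by $\wt{\varphi}$ and then restricting, the argument collapses at this point. A correct proof should instead go through the definition of the multichordal \clekp{} law (conformal invariance of the link-pattern law, of global multiple \slekp{}, and of \clekp{} in the complementary components), or through a conformally invariant characterization of the law as in Proposition~\ref{pr:bichordal_sle}.
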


\begin{proposition}\label{pr:link_probability}
 For each \emph{interior link pattern} $\alpha$, the probability that the chords of a multichordal \clekp{} in $(\D;\ul{x};\beta)$ induce the link pattern $\alpha$ is positive and depends continuously on $\ul{x}$.
\end{proposition}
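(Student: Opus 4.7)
The plan is to treat the two assertions separately. Continuity follows directly from the local continuity in total variation of the multichordal \clekp{} law under perturbations of $\ul{x}$, established in \cite{amy-cle-resampling}. Positivity requires an explicit construction on the underlying \clekp{} together with the partial exploration characterisation of the multichordal law.

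For continuity, my plan is to apply the local total variation continuity of the multichordal \clekp{} law under small conformal perturbations of the marked points proved in \cite{amy-cle-resampling}; this is precisely the ingredient that the outline in Section~\ref{subsec:outline} already flags as the key input for the later proof of conformal covariance. Given $\ul{x}^{(n)} \to \ul{x}$, one realises the perturbation via a conformal map close to the identity on $\ol{\D}$ and concludes that the laws of the multichordal \clekp{} in $(\D;\ul{x}^{(n)};\beta)$ and $(\D;\ul{x};\beta)$ are close in total variation. Since the interior link pattern of the chords is a measurable function of the configuration taking values in a finite set, the probability that it equals a fixed $\alpha$ is automatically continuous in $\ul{x}$.

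For positivity, I would use the characterisation of the multichordal \clekp{} in $(\D;\ul{x};\beta)$ as the conditional law of a \clekp{} in $\D$ given a partial exploration that realises the exterior link pattern $\beta$. Under this coupling the interior link pattern $\alpha$ of the remaining chords is a deterministic function of the full configuration, so the task reduces to exhibiting, for every valid $\alpha$, a positive-probability event on the \clekp{} (and on its exploration) which forces the chords to realise $\alpha$. I would proceed by induction on the number of marked points: at each step one selects a single chord joining two combinatorially-adjacent marked points of $\alpha$, arranged to have positive probability by the fact that an \slekpr{\rho}-type branch of the exploration tree enters any prescribed small boundary neighbourhood with positive probability, and one then conditions and reduces to a domain with two fewer marked points via the domain Markov property. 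An alternative route would be to identify $\p_{\ul{x}}[\alpha]$ with a constant multiple of the pure partition function of the global multiple \slekp{} with link pattern $\alpha$ and invoke the known positivity of such partition functions from the multiple SLE literature.

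The main obstacle is making the inductive step in the positivity argument rigorous. The delicate point is to show that after restricting to the event that the first explored branch produces a chord of the desired combinatorial type, the conditional law of the remaining chord configuration is again a multichordal \clekp{} in a domain with two fewer marked points, so that the induction can continue. This relies on the domain Markov property of the partial exploration together with the resampling machinery of \cite{amy-cle-resampling}, which is exactly where the tools of that paper play their essential role; the continuity assertion, by contrast, should fall out immediately once the relevant total-variation continuity statement from \cite{amy-cle-resampling} is quoted.
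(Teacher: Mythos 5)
This proposition is not proved in the paper at all: it is imported from the companion work \cite{amy-cle-resampling} (the text just above the statement points to \cite[Theorem~1.6]{amy-cle-resampling}), so there is no in-paper argument to compare yours against. Judged on its own terms, your sketch has genuine gaps in both halves. For continuity, the total variation control actually available (Proposition~\ref{pr:mccle_gasket_tv_convergence}) concerns only the unexplored part $\Gamma_\inside^{*,V,U}$ and the gasket restricted to a region compactly contained in the domain; two multichordal ensembles with distinct marked points cannot be globally close in total variation, since their chords terminate at different boundary points. The interior link pattern $\alpha$ is \emph{not} a measurable function of the configuration in a compact subset --- recovering $\alpha$ requires knowing how the unfinished strands in the explored part near $\partial\D$ match up with the original marked points --- so continuity does not ``fall out immediately'' from quoting the local TV statement. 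One needs an additional coupling or stability argument for the explored part (or an entirely different route, e.g.\ via continuity of partition functions).

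For positivity, the inductive step is precisely where the argument is missing, as you yourself flag. Conditioning on the event that one chord realizes a prescribed connection is not a partial exploration in the sense of Theorem~\ref{thm:cle_partially_explored} (those explore $\Gamma$ from outside a region until hitting a compact set), and the conditional law of the remaining chords given such an event is not a priori a multichordal \clekp{} with two fewer marked points: the conditioning biases the remaining configuration, so the domain Markov property cannot be invoked in the form you describe. Making this step rigorous is essentially the whole content of the positivity claim, and deferring it to ``the resampling machinery of \cite{amy-cle-resampling}'' without specifying the construction leaves the proof incomplete. The alternative route you mention --- identifying link probabilities with ratios of pure partition functions of global multiple \slekp{} and quoting their positivity --- is the more standard path, but for $\kappa'\in(4,8)$ those positivity and regularity inputs are themselves nontrivial and would need to be verified at the required generality rather than merely invoked.
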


We now define \emph{partial explorations} of a \clekp{}. Let $D \subseteq \C$ be a simply connected domain and $\varphi \colon D \to \D$ be a conformal transformation.  Let $\domainpair{D}$ consist of all pairs $(U,V)$ of simply connected sub-domains $U \subseteq V \subseteq D$ with $\dist(\varphi(D\setminus V), \varphi(U)) > 0$.  (Note that $\domainpair{D}$ does not depend on the choice of $\varphi$.)

Suppose $\Gamma$ is a nested \clekp{} in $D$, and $(U,V) \in \domainpair{D}$. The \emph{partial exploration} of $\Gamma$ in $D\setminus V$ until hitting $\ol{U}$, denoted by $\Gamma_\outside^{*,V,U}$, is the collection of maximal segments of loops and strands in $\Gamma$ that intersect $D \setminus V$ and are disjoint from $U$. Let $V^{*,U}$ be the connected component containing $U$ after removing from $D$ all loops and strands of $\Gamma_\outside^{*,V,U}$. This defines a simply connected domain with a finite number of marked points $\ul{x}^*$ on its boundary, corresponding to the unfinished strands in $\Gamma_\outside^{*,V,U}$, which are pairwise linked by an \emph{exterior planar link pattern} $\beta^*$ induced by the strands. We let $\Gamma_\inside^{*,V,U}$ be the collection of loops and strands of $\Gamma$ in $V^{*,U}$, which we call the \emph{unexplored part} of $\Gamma$.

\begin{theorem}[{\cite[Theorem~1.11]{amy-cle-resampling}}]
\label{thm:cle_partially_explored}
Consider the setup described above. The conditional law of $\Gamma_\inside^{*,V,U}$ given $\Gamma_\outside^{*,V,U}$ is a multichordal \clekp{} in $(V^{*,U};\ul{x}^*;\beta^*)$ with marked points $\ul{x}^*$ and exterior link pattern $\beta^*$ given as above.
\end{theorem}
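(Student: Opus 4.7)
The plan is to use the branching $\SLE_{\kappa'}(\kappa'-6)$ exploration tree construction of $\CLE_{\kappa'}$ reviewed in the preliminaries, together with the domain Markov property of the underlying $\SLE$ processes. The strategy is to generate the partial exploration $\Gamma_\outside^{*,V,U}$ as the output of a sequential branch-growing procedure, and then apply $\SLE$/$\CLE$ domain Markov to the continuations to identify the conditional law of the unexplored part.

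First, I would show that $\Gamma_\outside^{*,V,U}$ can be produced as follows. Fix a boundary root $x \in \partial D$ and a countable dense set of target points in $D\setminus\ol{V}$, and iteratively run the branches of the exploration tree toward each target, stopping each branch at the first time it either enters $\ol{U}$ (yielding an unfinished strand whose endpoint becomes a marked point on $\partial V^{*,U}$) or completes a loop entirely contained in $D\setminus V$. Since $(U,V) \in \domainpair{D}$ means there is a conformal separation between $U$ and $D\setminus V$, and $\CLE_{\kappa'}$ is locally finite \cite{ms2017ig4, ghm2020kpz}, only finitely many branches produce unfinished strands, so the resulting configuration of unfinished endpoints is a finite collection of marked points $\ul{x}^*$ on $\partial V^{*,U}$ pairwise connected by the exterior pairing $\beta^*$ induced by the original loops.

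Next, I would apply the domain Markov property of $\SLE_{\kappa'}(\kappa'-6)$ branch by branch. Conditionally on the stopped branches, each unfinished branch continues as an $\SLE_{\kappa'}(\kappa'-6)$-type process inside $V^{*,U}$ with additional force points at $\ul{x}^*$, whose weights are determined by the imaginary geometry framework of \cite{ms2016ig1, ms2016ig2, ms2016ig3, ms2017ig4}: the unfinished strands are realized as counterflow lines of a GFF with specific boundary data, and conditioning on them transfers that boundary data to $\partial V^{*,U}$, from which the $\rho$-weights at the marked points can be read off. Combined with independent $\CLE_{\kappa'}$ resampling in the complementary components of the chords traced by the continuations (which is the standard $\CLE_{\kappa'}$ domain Markov property applied inside each such component), this produces exactly the multichordal $\CLE_{\kappa'}$ in $(V^{*,U};\ul{x}^*;\beta^*)$ as defined in \cite{amy-cle-resampling}.

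The main obstacle will be to ensure that the resulting conditional law does not depend on the choices made in the branch-growing procedure, in particular the root $x$ and the ordering of targets. Independence of the root follows from the root invariance of $\CLE_{\kappa'}$ proved in \cite{ms2016ig3}, while consistency across orderings follows from the commutativity of $\SLE_{\kappa'}(\kappa'-6)$ branching in \cite{sw2005coordinate} together with its imaginary geometry interpretation \cite{ms2016ig1, ms2016ig2}. Once these are in hand, verifying that the output matches the definition of multichordal $\CLE_{\kappa'}$ reduces to checking the characterizing conditions stated in \cite{amy-cle-resampling}: the conditional law of the chords given the interior link pattern is a global multiple $\SLE_{\kappa'}$ arising from the branches, and the conditional law of the loops given the chords consists of independent $\CLE_{\kappa'}$'s in the complementary components, both of which are built into the construction above.
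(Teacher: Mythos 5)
This statement is not proved in the present paper at all: it is imported verbatim as \cite[Theorem~1.11]{amy-cle-resampling}, so the only meaningful comparison is with the proof in that companion paper, which is a substantial standalone argument rather than a routine consequence of the exploration-tree construction.

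The central gap in your proposal is the claim that $\Gamma_\outside^{*,V,U}$ can be generated by running the branches of the $\SLE_{\kappa'}(\kappa'-6)$ exploration tree and ``stopping each branch at the first time it enters $\ol{U}$.'' By definition, $\Gamma_\outside^{*,V,U}$ consists of \emph{all maximal segments} of loops and strands that intersect $D\setminus V$ and are disjoint from $U$; a single loop that enters and leaves the vicinity of $U$ several times contributes several such segments, including ones that occur \emph{after} (in the exploration order) the loop has already passed through $U$. The conditioning is therefore not a stopping-time conditioning of any Markovian exploration: you are conditioning on portions of the future of each branch beyond its first hitting of $\ol{U}$, and the domain Markov property of $\SLE_{\kappa'}(\kappa'-6)$ at a stopping time, applied ``branch by branch,'' does not give you the conditional law of the unexplored strands. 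This two-sided nature of the conditioning is exactly why the cited result requires its own paper: the argument there has to first \emph{construct} the multichordal \clekp{} law (whose interior link pattern is random with a law depending on the conformal modulus, and whose chords form a global multiple \slekp{}) and then identify the conditional law with it via a resampling/characterization argument, not by reading off $\rho$-weights from GFF boundary data after a stopping time. Your final step, ``both of which are built into the construction above,'' assumes precisely what needs to be proved. The peripheral points you raise (finiteness of the strands via local finiteness and the separation condition in $\domainpair{D}$, root invariance, target-ordering consistency) are fine, but they do not address this core difficulty.
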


Suppose that $\CI \subseteq \{1,\ldots,2N\}$, and let $\Gamma$ be a multichordal \clek{} in $(D;\ul{x};\beta)$ where $\ul{x}$ has $2N$ marked points. Let $\Upsilon_{\Gamma;\CI}$ denote the set of points that can be connected to the boundary arc from $x_{i}$ to $x_{i+1 \bmod 2N}$ for some $i \in \CI$ without crossing any loop or chord of $\Gamma$. In the case $N=0$, we let $\Upsilon_{\Gamma;\CI} = \Upsilon_\Gamma$ to be just the gasket of $\Gamma$.

\begin{proposition}[{\cite[Proposition~6.3]{amy-cle-resampling}}]\label{pr:mccle_gasket_tv_convergence}
 Let $N \in \N_0$, $\CI \subseteq \{1,\ldots,2N\}$. Suppose that $((D_n;\ul{x}_n))$ is a sequence that converges to $(D;\ul{x})$ in the Carathéodory topology where $\ul{x}$ (resp.\ $\ul{x}_N$) has $2N$ marked points. Let $\beta$ be an exterior planar link pattern between the marked points. Let $\Gamma$ (resp.\ $\Gamma_n$) have the law of a multichordal \clekp{} in $(D;\ul{x};\beta)$ (resp.\ $(D_n;\ul{x}_n;\beta)$). 
 Let $(U,V)\in\domainpair{D}$ with $V \Subset D$. Then the law of $((\Gamma_n)_\inside^{*,V,U}, \Upsilon_{\Gamma_n;\CI} \cap U)$ converges to the law of $(\Gamma_\inside^{*,V,U}, \Upsilon_{\Gamma;\CI} \cap U)$ in total variation as $n \to \infty$.
\end{proposition}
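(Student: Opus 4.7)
The plan is to reduce the statement to a comparison of multichordal \clekp{} laws on a common reference domain, and then to invoke the domain Markov property together with the continuity of link-pattern probabilities from Proposition~\ref{pr:link_probability} to upgrade weak convergence to total-variation convergence.

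First I would use Carath\'eodory convergence to pass to a common domain. Fix conformal maps $\phi_n \colon D_n \to D$ that converge to the identity locally uniformly on $D$ and send the marked tuples $\ul{x}_n$ to some $\ul{x}_n' \to \ul{x}$ on $\partial D$. By the conformal invariance of multichordal \clekp{}, $\phi_n(\Gamma_n)$ has the law of a multichordal \clekp{} in $(D;\ul{x}_n';\beta)$. Since $V \Subset D$ and $\phi_n \to \mathrm{id}$ uniformly on compacts, one can choose $V \Subset V' \Subset D$ with $\phi_n(V) \Subset V'$ for all large $n$. The relevant functionals pulled back through $\phi_n$ are continuous in $\phi_n$ in a neighborhood of the identity on $V'$, so it suffices to prove total-variation convergence in the case where the underlying domain is fixed to $D$ and only the marked tuple $\ul{x}_n'$ varies.

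Second I would decompose the exploration into two stages at the intermediate scale $V'$. Pick $U \Subset U' \Subset V'$ with $(U',V') \in \domainpair{D}$. By Theorem~\ref{thm:cle_partially_explored} applied first with $(U',V')$ and then with $(U,V)$ inside the unexplored region, the partial exploration of $\Gamma$ in $D \setminus V$ until hitting $\ol{U}$ is the composition of (i) an outer exploration in $D \setminus V'$ until hitting $\ol{U'}$, producing random marked data $(\ul{y}^*,\gamma^*)$ on $\partial V'$, and (ii) an inner exploration entirely inside the fixed set $V'$ whose conditional law given $(\ul{y}^*,\gamma^*)$ is that of a multichordal \clekp{}. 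The pair $(\Gamma_\inside^{*,V,U}, \Upsilon_{\Gamma;\CI} \cap U)$ is a measurable function of $(\ul{y}^*,\gamma^*)$ together with the inner exploration, so it suffices to show that the joint law of the outer exploration and $(\ul{y}^*,\gamma^*)$ converges in total variation.

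Finally, I would establish total-variation convergence at the intermediate scale as follows. The outer exploration can be realized as a branching SLE-type process whose marginal laws, when disintegrated over the final interior link pattern, are locally absolutely continuous under perturbations of $\ul{x}_n'$, with Radon--Nikodym derivatives given by ratios of the link-pattern weights that depend continuously on $\ul{x}_n'$ by Proposition~\ref{pr:link_probability}. Combining this with the locally finite tail bound of Lemma~\ref{le:loop_crossings_tail}, which permits truncation to a finite collection of macroscopic loops contributing to the outer exploration, yields the desired total-variation convergence. The main obstacle is this last step: weak convergence is immediate from conformal invariance and Carath\'eodory convergence, but upgrading to total variation requires careful absolute-continuity control of rare configurations in which a single loop has macroscopic diameter and is affected discontinuously by the movement of the marked points on $\partial D$.
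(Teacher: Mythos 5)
The paper does not prove this proposition; it is imported verbatim from the companion paper \cite[Proposition~6.3]{amy-cle-resampling}, so there is no in-paper argument to compare against and your attempt has to stand on its own. Your first two steps (conformal uniformization onto a fixed domain, and a two-stage exploration via Theorem~\ref{thm:cle_partially_explored}) are reasonable reductions, modulo the technicality that after applying $\phi_n$ the window becomes $\phi_n(U)\ne U$, so you still owe an argument that $\Upsilon_{\Gamma;\CI}\cap\phi_n(U)$ and $\Upsilon_{\Gamma;\CI}\cap U$ are close in total variation.

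The genuine gap is in the third step, and you have in fact reduced the problem to a false statement. The joint law of the outer exploration together with $(\ul{y}^*,\gamma^*)$ does \emph{not} converge in total variation: for $\ul{x}_n'\ne\ul{x}$ the exploration strands emanate from different boundary points, so the laws of the explored curves are mutually \emph{singular}, not absolutely continuous with a controllable Radon--Nikodym derivative. Ratios of link-pattern weights (Proposition~\ref{pr:link_probability}) control the law of the discrete link pattern, not the law of the continuum curves, and Lemma~\ref{le:loop_crossings_tail} only bounds the number of macroscopic crossings; neither supplies a coupling of the configurations. The whole content of the proposition is precisely that the law of the \emph{inner} data converges in total variation even though the outer data does not, so reducing to TV convergence of the outer stage is circular. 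What is needed (and what the companion paper supplies) is a coupling mechanism that forgets the outer exploration: e.g.\ matching the conditional laws of the unexplored part, which by Theorem~\ref{thm:cle_partially_explored} depend on the explored part only through the marked domain and link pattern, together with a local absolute-continuity input (such as the GFF coupling restricted to a compact subset) and a separation argument. A further unaddressed point: $\Upsilon_{\Gamma;\CI}\cap U$ is not a measurable function of $(\ul{y}^*,\gamma^*)$ and the inner exploration alone, since membership in $\Upsilon_{\Gamma;\CI}$ requires admissible connectivity all the way to the boundary arcs of $D$ indexed by $\CI$; the coupling must also match this accessibility data, which is part of the outer configuration.
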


An important tool are \emph{local resamplings} of a \clekp{}. Suppose that we select small regions $(U,V)$ randomly and resample the \clekp{} in the unexplored part (which by Theorem~\ref{thm:cle_partially_explored} is a multichordal \clekp{}). Then there is a positive chance that we change the linking pattern of the loops that cross the region. This helps us to compare events for the \clekp{} with additional constraints on the global linking behavior of the loops. We recall a precise result below.

Let $\Gamma$ be a nested \clekp{} in $D$. A resampling of $\Gamma$ within a region $W \subseteq D$ is defined as follows. Suppose that we select $(U,V) \in \domainpair{D}$ with $V \subseteq W$ randomly in a way that is independent of the CLE configuration within $W$, and let $\wt{\Gamma}$ be the \clekp{} such that $\wt{\Gamma}_\outside^{*,V,U} = \Gamma_\outside^{*,V,U}$ and the remainder $\wt{\Gamma}_\inside^{*,V,U}$ is sampled from its conditional law given $\wt{\Gamma}_\outside^{*,V,U}$ (independently of $\Gamma_\inside^{*,V,U}$). Let $\Gamma^\resampled$ be the \clekp{} coupled with $\Gamma$ arising from repeating this procedure a random number of times.

\begin{lemma}[{\cite[Lemma~5.9]{amy-cle-resampling}}]
\label{lem:break_loops}
Suppose that $\Gamma$ is a nested \clekp{} in a simply connected domain $D$. For each $a\in(0,1)$, $b>0$ there exist $p > 0$ and $c>0$ such that the following holds. 

Let $z\in D$ and $j_0\in\Z$ be such that $B(z,2^{-j_0})\subseteq D$. There exists a resampling $\Gamma^\resampled_{z,j}$ of $\Gamma$ within $A(z,2^{-j},2^{-j+1})$ for each $j>j_0$ with the following property. Let $F_{z,j}$ be the event that
\begin{itemize}
 \item no loop in $\Gamma^\resampled_{z,j}$ crosses the annulus $A(z,2^{-j},2^{-j+1})$,
 \item denoting $\CC$ the loops in $\Gamma$ that cross $A(z,2^{-j},2^{-j+1})$, the collection of loops of $\Gamma$ and $\Gamma^\resampled_{z,j}$ remain the same in each connected component of $\C \setminus \bigcup\CC$ that is not surrounded by a loop in $\CC$ and intersects $B(z,2^{-j})$, and the gasket of $\Gamma$ in these components is contained in the gasket of $\Gamma^\resampled_{z,j}$.
\end{itemize}
For $k\in\N$, let $\wt G_{z,j_0,k}$ be the event that the number of $j=j_0+1,\ldots,j_0+k$ so that
\[ \p[ F_{z,j} \mid \Gamma ] \geq p\]
is at least $(1-a)k$.  Then
\[ \p[(\wt G_{z,j_0,k})^c] \le c e^{-b k} . \]
\end{lemma}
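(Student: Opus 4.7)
My plan is to define $\Gamma^\resampled_{z,j}$ using the partial exploration / multichordal \clekp{} framework of Section~\ref{se:mcle}, show that $F_{z,j}$ holds with conditional probability $\ge p$ on a good exterior event of probability $\ge 1/2$ at each scale $j$, and extract the exponential concentration via the across-scales quasi-independence from \cite{amy-cle-resampling} combined with a Chernoff bound.

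\textbf{Construction and positivity.} Fix scale-invariant pairs $(U_j,V_j)\in\domainpair{D}$ straddling $A_j=A(z,2^{-j},2^{-j+1})$, chosen so that $V_j$ is large enough to contain the relevant crossing loops while $U_j$ sits deep inside $B(z,2^{-j})$. Apply the partial exploration of $\Gamma$ from $D\setminus V_j$ until hitting $\ol{U_j}$, yielding $(\Gamma_\outside^{*,V_j,U_j},\Gamma_\inside^{*,V_j,U_j})$; by Theorem~\ref{thm:cle_partially_explored} the inside is a multichordal \clekp{} in $(V_j^{*,U_j};\ul{x}^*;\beta^*)$. Define $\Gamma^\resampled_{z,j}$ by keeping the outside part and sampling the inside afresh from its conditional law. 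Let $E_{z,j}$ be a good exterior event (bounded number of marked points on $\partial U_j$ and controlled shape of $V_j^{*,U_j}$); by Lemma~\ref{le:loop_crossings_tail}, $\p[E_{z,j}]\ge 1/2$ uniformly in $j$ for a suitable bound. On $E_{z,j}$, Proposition~\ref{pr:link_probability} combined with standard SLE localisation (each chord stays close to $\partial U_j$ and never reaches $\partial B(z,2^{-j+1})$) and a standard CLE annulus non-crossing estimate yield conditional probability $\ge p_0>0$ that no loop of $\Gamma^\resampled_{z,j}$ crosses $A_j$. The loop-preservation clause requires a two-layer refinement: a second application of Theorem~\ref{thm:cle_partially_explored} inside $V_j^{*,U_j}$ freezes the \clekp{} loops in the unsurrounded components of $\C\setminus\bigcup\CC$ that meet $B(z,2^{-j})$, while still leaving the chord curves (and loops in the surrounded components) available to be resampled; the gasket-containment clause is then automatic from the no-crossings condition, since removing $A_j$-crossings can only enlarge the gasket of an inner cluster.

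\textbf{Concentration and main obstacle.} To pass from \emph{``$\p[F_{z,j}\mid\Gamma]\ge p$ on an event of probability $\ge 1/2$ at each scale''} to the claimed exponential tail, I use the across-scales quasi-independence developed in \cite{amy-cle-resampling} (the same machinery behind Proposition~\ref{pr:mccle_gasket_tv_convergence}): events supported at scales $\ge K$ apart are within $O(e^{-cK})$ in total variation of being independent. Thinning scales by a constant factor $K=K(a,b)$ sufficiently large, the thinned indicators $(\one_{E_{z,j}})$ stochastically dominate a Bernoulli product with success probability close to $1/2$, and a Chernoff bound yields $\p[(\wt G_{z,j_0,k})^c]\le ce^{-bk}$. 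The main technical obstacle is the combined positivity argument on $E_{z,j}$: the choice of $(U_j,V_j)$ has competing requirements — $V_j$ too small leaves the crossing loops in the frozen exterior part (so no resampling of the inside can kill them), while $V_j$ too large forces the frozen outside strands themselves to cross $A_j$ (and the resampling of the inside cannot kill them either). The correct compromise — together with the two-layer refinement of the resampling — and verifying that the resulting conditional law still assigns positive mass, uniformly over good exterior data, to the desired combined event (no $A_j$-crossings together with loop preservation in the unsurrounded components) is the heart of the argument.
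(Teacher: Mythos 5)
This lemma is not proved in the present paper: it is imported verbatim (as Lemma~5.9) from the companion paper \cite{amy-cle-resampling}, so there is no internal proof to compare your argument against. Reviewing your proposal on its own merits, I see two concrete problems plus one place where you yourself concede the key step is unverified.

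First, your geometry is inconsistent with the notion of resampling that the lemma requires. A ``resampling within $A(z,2^{-j},2^{-j+1})$'' means, per Section~\ref{se:mcle}, that the selection pairs $(U,V)\in\domainpair{D}$ satisfy $V\subseteq A(z,2^{-j},2^{-j+1})$ and hence $U\subseteq V\subseteq A(z,2^{-j},2^{-j+1})$. You take $U_j$ ``deep inside $B(z,2^{-j})$'', which is disjoint from the annulus, so $(U_j,V_j)$ as you have chosen them do not define a resampling within the annulus. Using such a pair would rewrite the configuration inside $B(z,2^{-j})$, which is precisely what the second bullet of $F_{z,j}$ forbids. The resampling region must sit inside $A_{z,j}$, with the annulus crossings then broken because the chords of the multichordal \clekp{} in $V^{*,U}$ can, with positive conditional probability, reconnect the frozen strands in a topology that closes up all loops on a single side of the annulus.

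Second, your concentration step cannot give the stated bound. You set up $E_{z,j}$ with $\p[E_{z,j}]\ge 1/2$ and then invoke a Chernoff bound on a thinned Bernoulli product with success probability near $1/2$. But the event $\wt G_{z,j_0,k}$ demands that at least $(1-a)k$ of the $k$ scales be good, with $a\in(0,1)$ possibly small; a Bernoulli$(\tfrac12)$ string has, with probability bounded away from $0$, far fewer than $(1-a)k$ successes, so you cannot possibly get an $e^{-bk}$ tail this way. Moreover, thinning by a factor $K$ tracks only $k/K$ scales and simply gives up on the rest, which again fails the $(1-a)k$ count. What is needed is that the per-scale good event have probability at least $1-\epsilon(a)$ with $\epsilon(a)$ small (Lemma~\ref{le:loop_crossings_tail} lets you push the probability as close to $1$ as you like by increasing the bound $M$), combined with the one-sided conditional-probability form of across-scales independence — as in the application of \cite[Proposition~4.9]{amy-cle-resampling} in Lemma~\ref{le:loopchainreg_event} — so that the number of failures is stochastically dominated by a sum of rare indicators with the requisite exponential tail.

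Finally, you explicitly flag that the positivity of the combined event (no $A_{z,j}$-crossings of $\Gamma^\resampled_{z,j}$ together with the loop-preservation and gasket-containment clauses) hinges on ``the correct compromise'' and a two-layer refinement that you describe but do not carry out. Since you identify this as ``the heart of the argument,'' the proposal is, by your own account, incomplete precisely where it matters.
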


\subsection{Imaginary geometry}

We review some aspects of the coupling of $\SLE$ with the GFF developed in \cite{dub2009gff,s2016zipper,ms2016ig1, ms2016ig2, ms2016ig3, ms2017ig4} which will be used in this work. This coupling is useful because we can use it to localize certain events for SLE curves.

Suppose that $D \subseteq \C$ is a domain, and $h$ is a \emph{Gaussian free field} (GFF) on $D$. Recall that the GFF is the distribution valued Gaussian field with covariance function given by the Green's function $G$ for $\Delta$ on $D$, see \cite{s2007gff}.

Let $\kappa \in (0,4)$ and
\[ \chi = \frac{2}{\sqrt{\kappa}} - \frac{\sqrt{\kappa}}{2} . \]
A \emph{flow line} $\eta$ of $h$ refers to the solutions of the formal equation $\partial_t\eta(t) = e^{i h(\eta(t)) / \chi}$. It is shown in \cite{s2016zipper,dub2009gff,ms2016ig1} that formal solutions exist in a sense which we will review just below. For each $\kappa \in (0,4)$ we let
\[ \kappa' = \frac{16}{\kappa},\quad \lambda = \frac{\pi}{\sqrt{\kappa}},\quad\text{and}\quad \lambda' = \frac{\pi}{\sqrt{\kappa'}}.\]

\subsubsection{Flow lines}

Suppose that $h$ is a GFF on $\h$ with piecewise constant boundary values. The flow line $\eta$ of $\h$ from $0$ is characterized by the following property. Let $\tau$ be any stopping time for $\eta$. Then the conditional law of $h$ given $\eta\big|_{[0,\tau]}$ is that of a GFF in $\h \setminus \eta[0,\tau]$ with \emph{flow line boundary values}. The latter means that if $g_\tau\colon \h \setminus \eta[0,\tau] \to \h$ is the conformal transformation with $g_\tau(z) = z+o(1)$ as $z \to \infty$, then the boundary values of the field $h \circ g_\tau^{-1} - \chi \arg (g_\tau^{-1})'$ are $-\lambda$ (resp.\ $\lambda$) on the image of the left (resp.\ right) side of $\eta[0,\tau]$.

It is shown in \cite{ms2016ig1} that there exists a unique coupling of $h$ with $\eta$ as a flow line which is a certain \slekr{\ul{\rho}} process (depending on the boundary values of $h$) up until the continuation threshold is reached. Moreover, in the coupling $\eta$ is a.s.\ determined by $h$.

For each $x, \theta \in \R$ the flow line of $h$ with angle $\theta$ from $x$ is defined to be the flow line of $h(\cdot + x) + \theta \chi$. If $h$ is a GFF on a simply connected domain $D \subseteq \C$, then its flow line from $x$ to $y$ is defined as image under $\varphi$ of the flow line of the GFF $h \circ \varphi - \chi \arg \varphi'$ on $\h$ from $0$ to $\infty$ where $\varphi \colon \h \to D$ is a conformal transformation with $\varphi(0) = x$ and $\varphi(\infty) = y$.

We will use the following consequence of the reversibility of \slekr{\rho} \cite{ms2016ig2}. See \cite[Lemma~2.15]{amy2025tightness} for details. Recall the definition of $\angledouble$~\eqref{eq:angledouble}.

\begin{lemma}\label{le:reflected_fl_law}
Let $\kappa \in (2,4)$. 
Suppose $h$ is a GFF on $\h$ with boundary values $-\lambda(1+\rho_L)$  on $\R_-$ and $\lambda-\pi\chi = \lambda(\kappa/2-1)$ on $\R_+$ where $\rho_L > -2$. Let $\eta_1$ (resp.\ $\eta_2$) be the angle $0$ (resp.\ $-\angledouble$) flow line of $h$ from $0$ to $\infty$. Given $\eta_1$, let $\ol{\eta}_2$ be the angle $0$ flow line from $\infty$ to $0$ of the restriction of $h$ to the components of $\h\setminus\eta_1$ to the right of $\eta_1$ (i.e.\ $\ol{\eta}_2$ is reflected off $\eta_1$). Then the pair $(\eta_1,\eta_2)$ has the same law as $(\eta_1,\ol{\eta}_2)$ (modulo time reversal).
\end{lemma}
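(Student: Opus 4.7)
The plan is to condition on $\eta_1$ and identify the conditional laws of $\eta_2$ and the time reversal of $\ol{\eta}_2$ as the same $\SLE_\kappa(\rho)$ process in the right connected component of $\h\setminus\eta_1$, then integrate over $\eta_1$.

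Since $\kappa \in (2,4)$, $\eta_1$ is a.s.\ simple, so $\h\setminus\eta_1$ has two connected components; let $C_R$ denote the one with $\R_+$ on its boundary. The conditional law of $h$ given $\eta_1$ is a GFF in $\h\setminus\eta_1$ with flow-line boundary values $\pm\lambda$ on the two sides of $\eta_1$ together with the original boundary values $-\lambda(1+\rho_L)$ on $\R_-$ and $\lambda(\kappa/2-1)$ on $\R_+$. Since the angle gap $\angledouble$ between $\eta_1$ and $\eta_2$ equals by \eqref{eq:angledouble} the critical angle at which two flow lines bounce off each other without crossing, $\eta_2$ stays in $\ol{C_R}$. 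Using the identity $\angledouble\chi = \lambda(\kappa-2)$, a direct boundary value computation of $h - \angledouble \chi$ restricted to $C_R$ shows that the conditional law of $\eta_2$ given $\eta_1$ is that of an $\SLE_\kappa(\kappa-4;-\kappa/2)$ from $0$ to $\infty$ in $C_R$, with force points immediately next to $0$ on the $\eta_1$-side and the $\R_+$-side respectively. Both weights are strictly greater than $-2$ since $\kappa \in (2,4)$, so the reversibility theorem for $\SLE_\kappa(\rho)$ from \cite{ms2016ig3} applies and yields that the time reversal has the law of an $\SLE_\kappa(-\kappa/2;\kappa-4)$ process from $\infty$ to $0$ in $C_R$, with force points now placed immediately next to $\infty$.

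It remains to identify this reversed process with $\ol{\eta}_2$. By definition, $\ol{\eta}_2$ is the angle-$0$ flow line from $\infty$ to $0$ in $C_R$ of the restricted field, whose boundary values are $\lambda$ on $\eta_1$ and $\lambda(\kappa/2-1)$ on $\R_+$. Via the imaginary geometry characterization of flow lines targeted at a boundary point (using a conformal map $\varphi$ sending $\infty\mapsto 0$ and $0\mapsto\infty$ together with the field transformation $h \mapsto h\circ\varphi^{-1} - \chi\arg(\varphi^{-1})'$), another direct boundary-value calculation identifies $\ol{\eta}_2$ with precisely the $\SLE_\kappa(-\kappa/2;\kappa-4)$ process from $\infty$ to $0$ obtained above. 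Integrating over $\eta_1$ yields $(\eta_1,\eta_2) \stackrel{d}{=} (\eta_1,\ol{\eta}_2)$ up to the time-reversal parameterization of $\ol{\eta}_2$.

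The main obstacle is carefully tracking the orientation conventions and the associated $\pi\chi$ shifts when switching between flow lines parameterized from $0$ to $\infty$ and from $\infty$ to $0$; the identities $\pi\chi = \lambda(2-\kappa/2)$ and $\angledouble\chi = \lambda(\kappa-2)$ are precisely what make the two weight triples align under the $\SLE_\kappa(\rho)$ reversibility map.
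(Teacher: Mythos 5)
Your proof is correct and is exactly the argument the paper intends: the paper gives no proof of its own, stating only that the lemma is ``a consequence of the reversibility of $\SLE_\kappa(\rho)$'' and deferring details to \cite[Lemma~2.15]{amy2025tightness}, and your computation of the conditional weights $(\kappa-4;-\kappa/2)$ and their behaviour under reversal is the content of that reference. One small correction: the reversibility theorem you need is from Imaginary Geometry~II \cite{ms2016ig2} (not \cite{ms2016ig3}, which treats $\kappa\in(4,8)$), and since $\kappa-4<\kappa/2-2$ for $\kappa\in(2,4)$ the curve $\eta_2$ does hit the $\eta_1$-side of the boundary, so you must invoke the version of the theorem valid for all $\rho_1,\rho_2>-2$ (boundary-intersecting case) rather than the non-boundary-hitting one.
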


Similarly, it is shown in \cite{ms2017ig4} that flow lines from interior points can be defined. Let $z \in D$. The flow line $\eta$ of $h$ from $z$ is again characterized by the property that for any stopping time $\tau$, the conditional law of $h$ given $\eta\big|_{[0,\tau]}$ is that of a GFF in $D \setminus \eta[0,\tau]$ with flow line boundary values, only that in this case the values along $\eta$ are only specified up to a multiple of $2\pi\chi$. Again, $\eta$ is a.s.\ determined by $h$ in this coupling.

\subsubsection{Counterflow lines}

The GFF $h$ can also be coupled with an \slekp{} type process $\eta'$. In this case $\eta'$ is called a \emph{counterflow line} of $h$. It is characterized by the following property. Let $\tau$ be any stopping time for $\eta'$, and let $\h_\tau$ be the unbounded connected component of $\h \setminus \eta'[0,\tau]$. Then the conditional law given $\eta'|_{[0,\tau]}$ of the field $h \circ g_\tau^{-1} - \chi \arg (g_\tau^{-1})'$ is that of a GFF with boundary values given by $\lambda'$ (resp.\ $-\lambda'$) on the image of the left (resp.\ right) side of $\eta'[0,\tau]$.

It is shown in \cite{ms2016ig1} that there exists a unique coupling of $h$ with $\eta'$ as a counterflow line which is a certain \slekpr{\ul{\rho}} process (depending on the boundary values of $h$) up until the continuation threshold is reached. Moreover, in the coupling $\eta'$ is a.s.\ determined by $h$.

For each $x, \theta \in \R$ the counterflow line of $h$ with angle $\theta$ from $x$ is defined to be the counterflow line of $h(\cdot + x) + \theta \chi$. If $h$ is a GFF on a simply connected domain $D \subseteq \C$, then its counterflow line from $x$ to $y$ is defined as image under $\varphi$ of the counterflow line of the GFF $h \circ \varphi - \chi \arg \varphi'$ on $\h$ from $0$ to $\infty$ where $\varphi \colon \h \to D$ is a conformal transformation with $\varphi(0) = x$ and $\varphi(\infty) = y$.

\subsubsection{Interaction}

The manner in which the flow and counterflow lines of the GFF interact with each other is described in \cite{ms2016ig1,ms2017ig4}.  Let us first describe how the flow lines interact with each other.  Suppose that $\eta_1,\eta_2$ are flow lines with angles $\theta_1, \theta_2 \in \R$, respectively, which can start from boundary or from interior points. Suppose that $\eta_1$ intersects the right side of $\eta_2$. Then the angle difference between $\eta_1$ and $\eta_2$ is a value in $\theta_2-\theta_1+2\pi\chi\Z$. The following scenarios can occur. (i) The angle difference is in $(-\pi\chi,0)$, and $\eta_1$ crosses $\eta_2$ upon intersecting without subsequently crossing back. (ii) The angle difference is $0$, and $\eta_1$ merges with $\eta_2$. (iii) The angle difference is in $(0,2\lambda-\pi\chi)$, and $\eta_1$ bounces off $\eta_2$ without crossing.

The interaction between a flow line and a counterflow line is as follows. Suppose that $\eta'$ is a counterflow line of $h$ from $\infty$ to $0$.  Then the left (resp.\ right) boundary of $\eta'$ is equal to the flow line of $h$ from $0$ to $\infty$ with angle $\pi/2$ (resp.\ $-\pi/2$), where the points are visited in the reverse order.  More generally, any flow line of $h$ with angle in $[-\pi/2,\pi/2]$ is contained in the range of $\eta'$.

\subsubsection{Coupling with $\CLE_{\kappa'}$}

In the framework of imaginary geometry, the \clekp{} exploration tree is naturally coupled with a GFF as follows. Suppose that $h$ is a GFF on $\h$ with boundary conditions given by $\lambda'-\pi \chi$.  For each $x \in \R$ we let $\eta_x'$ be the counterflow line of $h$ from $\infty$ to $x$.  Then the collection $(\eta_x')$ is coupled together in exactly the same way as in the definition of the $\CLE_{\kappa'}$ exploration tree. Here each $\eta_x'$ is an $\SLE_{\kappa'}(\kappa'-6)$ in $\h$ from $\infty$ to $0$ where the force point is located infinitesimally to the right of $\infty$ (when standing at $\infty$ and looking towards $0$). This yields a natural coupling of $\CLE_{\kappa'}$ with $h$.

\subsection{Setup and notation for the proofs}
\label{se:setup_pf}

\newcommand*{\Esep}{E^{\mathrm{sep}}}
\newcommand*{\dsep}{r_{\mathrm{sep}}}

\newcommand*{\Ebreak}{E^{\mathrm{break}}}
\newcommand*{\Fbreak}{F^{\mathrm{break}}}
\newcommand*{\pbreak}{p_{\mathrm{break}}}

We discuss some of the setup and notation that we will repeatedly use in the proofs of this paper.

For $z \in \D$, $j\in\N$ we write
\[ A_{z,j} = A(z,2^{-j-1},2^{-j}) . \]
Recall the definition of the partially explored CLE $\Gamma_\outside^{*,B(z,3\cdot 2^{-j}),B(z,2\cdot 2^{-j})}$ and the region $B(z,3\cdot 2^{-j})^{*,B(z,2\cdot 2^{-j})}$ defined in Section~\ref{se:mcle}. Let $\alpha^*_{z,j}$ be the interior linking pattern in $B(z,3\cdot 2^{-j})^{*,B(z,2\cdot 2^{-j})}$ induced by $\Gamma_\inside^{*,B(z,3\cdot 2^{-j}),B(z,2\cdot 2^{-j})}$. We let
\begin{itemize}
 \item $\wt{\CF}_{z,j}$ be the $\sigma$-algebra generated by $\Gamma_\outside^{*,B(z,3\cdot 2^{-j}),B(z,2\cdot 2^{-j})}$.
 \item $\CF_{z,j}$ be the $\sigma$-algebra generated by $\Gamma_\outside^{*,B(z,3\cdot 2^{-j}),B(z,2\cdot 2^{-j})}$, $\alpha^*_{z,j}$, and $\metres{\C \setminus B(z,3\cdot 2^{-j})}{\cdot}{\cdot}{\Gamma}$.
\end{itemize}

The following lemma is a consequence of Theorem~\ref{thm:cle_partially_explored} and the Markovian property of the \clekp{} metric. It will be crucial for establishing independence across scales for the internal metrics.
\begin{lemma}
Let $U \subseteq B(z,2^{-j})$. The conditional law of $\metres{U}{\cdot}{\cdot}{\Gamma}$ given $\CF_{z,j}$ is given by sampling $U^*$ according to the law of a multichordal \clekp{} in $B(z,3\cdot 2^{-j})^{*,B(z,2\cdot 2^{-j})}$ conditionally on $\alpha^*_{z,j}$ and then sampling the internal metrics via the Markov property given $U^*$.
\end{lemma}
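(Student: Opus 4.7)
The plan is to combine the partial exploration structure of the \clekp{} (Theorem~\ref{thm:cle_partially_explored}) with the Markovian property of the weak geodesic \clekp{} metric. Write $V = B(z, 3 \cdot 2^{-j})$ and $V' = B(z, 2 \cdot 2^{-j})$, so that $U \subseteq V' \subseteq V$ with $U$ and $V$ simply connected and $\ol U \subseteq V$.

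First, I would observe that the Markovian property applied to the simply connected set $U$ implies $\p[\metres{U}{\cdot}{\cdot}{\Gamma} \in \cdot \mid \Gamma] = F(U^*)$ for a deterministic measurable kernel $F$: the Markovian property gives the same identity under the finer conditioning on $(\Gamma \setminus \Gamma_{U^*}, \metres{\C \setminus \ol U}{\cdot}{\cdot}{\Gamma})$, and marginalizing the latter variable yields the identity conditional on $\Gamma$. Combined with the fact that $\metres{\C\setminus V}{\cdot}{\cdot}{\Gamma}$ is measurable with respect to $\metres{\C \setminus \ol U}{\cdot}{\cdot}{\Gamma}$ (via Lemma~\ref{le:internal_metrics_compatibility}, since $\C\setminus V\subseteq \C\setminus\ol U$), this shows that $\metres{U}{\cdot}{\cdot}{\Gamma}$ is conditionally independent of $\CF_{z,j}$ given $\Gamma$. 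Applying the tower property,
\[
 \p[\metres{U}{\cdot}{\cdot}{\Gamma} \in \cdot \mid \CF_{z,j}] = \E\bigl[F(U^*) \bigm| \CF_{z,j}\bigr],
\]
so the task reduces to identifying the conditional law of $U^*$ given $\CF_{z,j}$.

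Since $U^*$ is a measurable function of $\Gamma = \Gamma_\outside^{*,V,V'} \cup \Gamma_\inside^{*,V,V'}$ and $\Gamma_\outside^{*,V,V'}$ is $\CF_{z,j}$-measurable, this further reduces to the conditional law of $\Gamma_\inside^{*,V,V'}$ given $\CF_{z,j}$. By Theorem~\ref{thm:cle_partially_explored}, the conditional law of $\Gamma_\inside^{*,V,V'}$ given $\Gamma_\outside^{*,V,V'}$ alone is a multichordal \clekp{} in $V^{*,V'}$; since $\alpha^*_{z,j}$ is a measurable function of $\Gamma_\inside^{*,V,V'}$, further conditioning on $\alpha^*_{z,j}$ gives the multichordal \clekp{} conditioned on the interior linking pattern, which is the first part of the assertion.

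The main obstacle, which is the crux of the argument, is to show that additionally conditioning on $\metres{\C\setminus V}{\cdot}{\cdot}{\Gamma}$ does not alter this distribution; equivalently, that $\Gamma_\inside^{*,V,V'}$ and $\metres{\C\setminus V}{\cdot}{\cdot}{\Gamma}$ are conditionally independent given $(\Gamma_\outside^{*,V,V'}, \alpha^*_{z,j})$. The Markovian property is stated only for simply connected sets and gives directly only the ``inside-metric-determined-by-inside-CLE'' direction; the annular region $\C\setminus\ol V$ is not simply connected, so the property cannot be invoked to it in one shot. I would instead cover $\C \setminus \ol V$ by a countable collection of simply connected open sets $W_i$ and apply the Markovian property in each, so that the conditional law of $\metres{W_i}{\cdot}{\cdot}{\Gamma}$ is a deterministic function of $W_i^*$, which is in turn determined by $\Gamma$ in $\ol{W_i} \subseteq \C \setminus V$ and hence by $\Gamma \setminus \Gamma_{V^*}$. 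Assembling these local metrics into $\metres{\C \setminus \ol V}{\cdot}{\cdot}{\Gamma}$ via Lemma~\ref{le:internal_metrics_compatibility} realizes the latter as a measurable functional of $\Gamma \setminus \Gamma_{V^*}$ together with auxiliary randomness conditionally independent of $\Gamma_\inside^{*,V,V'}$ given $\Gamma_\outside^{*,V,V'}$; combined with the multichordal description above, this yields the desired conditional independence and completes the proof.
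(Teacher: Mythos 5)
The paper states this lemma without proof, saying only that it ``is a consequence of Theorem~\ref{thm:cle_partially_explored} and the Markovian property,'' so you are supplying details the authors omitted. Your skeleton --- tower property, then Theorem~\ref{thm:cle_partially_explored} conditioned on $\alpha^*_{z,j}$, then a conditional independence between the unexplored CLE and the far-away metric --- is the right one, and you correctly isolate the conditional independence as the crux. But there are two problems. First, the intermediate claim $\p[\metres{U}{\cdot}{\cdot}{\Gamma} \in \cdot \mid \Gamma] = F(U^*)$ is false and does not follow by ``marginalizing'': the pair $(\Gamma \setminus \Gamma_{U^*}, \metres{\C\setminus\ol{U}}{\cdot}{\cdot}{\Gamma})$ does \emph{not} refine $\sigma(\Gamma)$, since $\Gamma_{U^*}$ is missing from it; and for a locally determined metric the conditional law given all of $\Gamma$ is the point mass at $F^U(U^*,\Gamma_{U^*})$, which genuinely depends on $\Gamma_{U^*}$. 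Your displayed identity $\p[\metres{U}{\cdot}{\cdot}{\Gamma} \in \cdot \mid \CF_{z,j}] = \E[F(U^*)\mid\CF_{z,j}]$ is nonetheless correct, but the way to get it is to verify $\CF_{z,j} \subseteq \sigma(\Gamma\setminus\Gamma_{U^*}, \metres{\C\setminus\ol{U}}{\cdot}{\cdot}{\Gamma})$ (the CLE components of $\CF_{z,j}$ do not see loops contained in $U$, and the metric component is determined by $\metres{\C\setminus\ol{U}}{\cdot}{\cdot}{\Gamma}$ via Lemma~\ref{le:internal_metrics_compatibility}) and tower directly, never passing through conditioning on $\Gamma$.

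Second, and more seriously, the covering argument does not prove the crux. The Markovian property applied to a chart $W_i \subseteq \C\setminus\ol{V}$ describes the conditional law of $\metres{W_i}{\cdot}{\cdot}{\Gamma}$ given $(\Gamma\setminus\Gamma_{W_i^*}, \metres{\C\setminus\ol{W_i}}{\cdot}{\cdot}{\Gamma})$ --- a $\sigma$-algebra that already contains $\Gamma_\inside^{*,V,V'}$ and the metric inside $V$ --- and it does so one region at a time; it gives no control on the \emph{joint} conditional law of the family $(\metres{W_i}{\cdot}{\cdot}{\Gamma})_i$, which is what is needed to ``assemble'' $\metres{\C\setminus V}{\cdot}{\cdot}{\Gamma}$ as a functional of the explored CLE plus auxiliary randomness conditionally independent of $\Gamma_\inside^{*,V,V'}$. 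That representation is precisely the statement to be proved, not a consequence of quoting the axiom once per chart. There is also a mismatch in the conditioning you end up with: $\Gamma\setminus\Gamma_{V^*}$ contains the excursions into $V^{*,V'}$ of boundary-touching loops, i.e.\ part of $\Gamma_\inside^{*,V,V'}$, so a functional of $\Gamma\setminus\Gamma_{V^*}$ plus independent noise would still not give the independence of $\metres{\C\setminus V}{\cdot}{\cdot}{\Gamma}$ from $\Gamma_\inside^{*,V,V'}$ given $(\Gamma_\outside^{*,V,V'},\alpha^*_{z,j})$. A complete proof must establish the conditional independence $U^* \perp \metres{\C\setminus V}{\cdot}{\cdot}{\Gamma}$ given $(\Gamma_\outside^{*,V,V'},\alpha^*_{z,j})$ by an honest iteration over the covering, tracking exactly which $\sigma$-algebra each application of the Markovian property conditions on; as written, this step is asserted rather than proved.
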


We will define several events that will be labeled $E^i_{z,j}$, $F^i_{z,j}$, $G^i_{z,j}$, etc. where $z\in\D$, $j\in\N$. We use $E^i_{z,j}$ to denote CLE events that are likely to occur, $F^i_{z,j}$ to denote events for resampled CLE that occur with positive probability given $E^i_{z,j}$, and $G^i_{z,j}$ to denote the events appearing in lemma statements. 
The implicit constants in the notation $O(\delta^b)$, $o^\infty(\delta)$, etc., never depend on $z,j$.

For $\dsep > 0$, let $\Esep_{z,j}$ denote the event that the marked points of $\Gamma_\outside^{*,B(z,3\cdot 2^{-j}),B(z,2\cdot 2^{-j})}$ are $\dsep$-separated in the sense that $\abs{y_i-y_{i'}} \ge \dsep 2^{-j}$ for each distinct pair $y_i,y_{i'}$ of its marked points. Recall that Proposition~\ref{pr:link_probability} implies that the conditional probability given $\wt{\CF}_{z,j}$ that $\alpha^*_{z,j}$ takes any given link pattern is bounded from below uniformly among $\dsep$-separated marked point configurations.

The following is proved in \cite{amy-cle-resampling}. (Strictly speaking, we have stated it there for sets of the form $K = \{ j_0,j_0+1,\ldots \}$, but it is easily seen that the proof also works in this general formulation.)

\begin{lemma}\label{le:separation_event}
For any $b>1$ there exists $\dsep>0$ and $c>0$ such that the following is true. Let $z\in\D$, and $K\subseteq\N$ a finite set such that $B(z,2^{-\min K}) \subseteq \D$. Then the probability that more than $1/10$ fraction of the events $(\Esep_{z,j})^c$ where $j \in K$ occur is at most $ce^{-b|K|}$.
\end{lemma}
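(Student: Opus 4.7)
The plan is to combine a uniform per-scale smallness estimate for $\p[(\Esep_{z,j})^c]$ (obtainable by taking $\dsep$ small) with a near-independence property of these events across well-separated scales, supplied by the resampling machinery of \cite{amy-cle-resampling}, and then to apply a Chernoff-type concentration bound.

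First I would establish that for any $\epsilon > 0$ one can choose $\dsep > 0$ so that $\p[(\Esep_{z,j})^c] \leq \epsilon$ uniformly in $z, j$ with $B(z, 2^{-j+1}) \subseteq \D$. The event $(\Esep_{z,j})^c$ says that two marked points of $\Gamma_\outside^{*,B(z,3\cdot 2^{-j}),B(z,2\cdot 2^{-j})}$ land on $\partial B(z, 2\cdot 2^{-j})$ within Euclidean distance $\dsep \cdot 2^{-j}$. By Lemma~\ref{le:loop_crossings_tail} the number $N$ of loop strands crossing the annulus $A(z, 2\cdot 2^{-j}, 3\cdot 2^{-j})$ has super-polynomial tail, so it is enough to bound, uniformly on $\{N \le M\}$, the probability that some pair of landing points is $\dsep \cdot 2^{-j}$-close. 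By scale and rotational invariance of \clekp{} one can work in a reference configuration on the unit annulus, where the joint law of the landing points on the inner circle is atomless and continuous in the conformal data (distinct \slekp{}-strands do not have prescribed coincidence points with positive probability). Making $\dsep$ small therefore makes the single-scale failure probability as small as desired.

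Next I would upgrade the per-scale estimate to a concentration statement via the resampling framework behind Lemma~\ref{lem:break_loops}. Observe that $\Esep_{z,j}$ is measurable with respect to the restriction of $\Gamma$ to a neighborhood of $A_{z,j-1}$. For widely-spaced scales $j_1 < j_2 < \cdots$, resampling $\Gamma$ in intermediate annular shells produces breakpoints (shells uncrossed by any loop of the resampled configuration) with uniformly positive conditional probability at each scale, which makes the events at different scales approximately independent. Partition $K$ into $L$ disjoint subsets, each $L$-separated, where $L$ is chosen depending on $b$; within each subset the events $(\Esep_{z,j})$ can be treated as approximately i.i.d.\ Bernoulli with success probability $\ge 1-\epsilon$. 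Taking $\epsilon$ small enough (via $\dsep$) and $L$ large enough (via the geometric decay of the decoupling), a Chernoff bound on each subset gives that the fraction of failures in that subset exceeds $1/10$ with probability at most $\wt c e^{-\wt b |K|}$ for some $\wt b > b$. A union bound over the $L$ subsets yields the stated estimate.

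The hard part is making the ``near-independence across scales'' quantitative enough to power a Chernoff bound with any prescribed exponent $b$. This is precisely what the iterated resample-and-break argument of \cite{amy-cle-resampling} delivers: each resampling step independently has a positive chance of producing a breakpoint that decouples inside from outside, and a large-deviations estimate on the number of breakpoints drives the exponent. The extension from $K = \{j_0, j_0+1, \ldots\}$ to a general finite $K \subseteq \N$ is purely cosmetic, since one simply applies the resampling argument to the scales appearing in $K$ after relabelling.
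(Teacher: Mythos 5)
The paper does not actually prove this lemma here; it simply cites its companion paper \cite{amy-cle-resampling} and notes that the extension from consecutive sets $K=\{j_0,j_0+1,\dots\}$ to general finite $K$ is routine, so I am assessing your sketch on its own merits against the likely structure of that proof.

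Your overall strategy (uniform per-scale smallness, decoupling across scales via resampling, Chernoff on $L$ interleaved subsequences) is the right template and is indeed what the independence-across-scales arguments in this paper and its companion do. But there are two genuine gaps. First, your Step~1 establishes \emph{marginal} smallness $\p[(\Esep_{z,j})^c]\le\epsilon$, and justifies it by "atomlessness of the landing points on the unit annulus." This is not really available as stated: the marked points of $\Gamma_\outside^{*,B(z,3\cdot 2^{-j}),B(z,2\cdot 2^{-j})}$ are determined by the entire partially explored configuration coming in from $\partial\D$, not by the restriction of $\Gamma$ to the annulus, so one cannot simply pass to a "reference configuration on the unit annulus." The correct mechanism is total-variation continuity of the multichordal \clekp{} law in the Carath\'eodory topology (Proposition~\ref{pr:mccle_gasket_tv_convergence}) together with compactness over $\dsep$-separated marked-domain classes, which is how similar uniform estimates (e.g.\ in Lemma~\ref{le:distorted_metric}) are obtained in the paper. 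Second and more importantly, a Chernoff bound over scales requires a \emph{conditional} per-scale estimate of the form $\p[(\Esep_{z,j})^c\mid\wt{\CF}_{z,j-\ell}]\le\epsilon$ uniformly over the conditioning, and your Step~2 does not deliver this. You invoke the break-loops resampling of Lemma~\ref{lem:break_loops}, but that lemma decouples the \emph{internal} CLE from the outside; the event $(\Esep_{z,j})^c$ is a property of the exploration structure (where unfinished strands land on $\partial B(z,2\cdot 2^{-j})$), and producing a broken shell between scales $j-\ell$ and $j$ does not by itself control where the strands of the multichordal CLE, restarted at scale $j-\ell$ with its own marked points, terminate at scale $j$. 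What is actually needed is a bound on the conditional law of the new marked points given the $\sigma$-algebra $\wt{\CF}_{z,j-\ell}$, using that this conditional law is again a multichordal \clekp{} and applying the TV-continuity plus compactness uniformly. Without this conditional estimate, "approximately i.i.d.\ Bernoulli with success probability $\ge 1-\epsilon$" is an assertion rather than a consequence of what you have shown, and the Chernoff bound does not follow. The $L$-interleaving device and the relabelling argument for general finite $K$ are fine.
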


For $\pbreak > 0$, we let $\Ebreak_{z,j}$ be the event described in Lemma~\ref{lem:break_loops} that for a resampling procedure within $A_{z,j-1}$ we have
\[ \p[ \Fbreak_{z,j} \mid \Gamma ] \one_{\Ebreak_{z,j}} \ge \pbreak \one_{\Ebreak_{z,j}} \]
where $\Fbreak_{z,j}$ is the event that all crossings of $A_{z,j-1}$ are broken up as described in Lemma~\ref{lem:break_loops}.

\newcommand*{\pmed}{p_{\mathrm{int}}}
\newcommand*{\intpts}[1]{\CX^{\mathrm{int}}_{#1}}

We recall the median $\median[\delta]{}$ and the quantiles $\quant[\delta]{q}{}$, $q \in (0,1)$, of particular random variables $D_\delta$, $\delta > 0$, defined in \cite[Section~3.1]{amy2025tightness}. The exact setup is not important, and we chose a concrete setup that is easy to work with.

Let $h$ be a GFF on $\h$ with boundary conditions given by $-\lambda - \angledouble \chi$ on~$\R_-$ and by $\lambda$ on $\R_+$ where $\angledouble$ is the double point angle~\eqref{eq:angledouble}. 
Let $\eta_1$ (resp.\ $\eta_2$) be the flow line of $h$ from $0$ to $\infty$ with angle $\angledouble$ (resp.\ $0$). Finally, we let $\Gamma$ be the collection of $\CLE_{\kappa'}$ that are generated by the restriction of $h$ to the components of $\h \setminus (\eta_1 \cup \eta_2)$ between $\eta_1$, $\eta_2$.

For $\delta \in (0,1)$ let $\varphi_\delta \colon \h \to \delta\D$ be a conformal map that takes $0$ to $-i \delta$ and $\infty$ to $i \delta$.  Let $h_\delta = h \circ \varphi_\delta^{-1} - \chi \arg (\varphi_\delta^{-1})'$, $\eta_i^\delta = \varphi_\delta(\eta_i)$ for $i=1,2$, let $\Gamma_\delta = \varphi_\delta(\Gamma)$.
Let
\begin{equation}\label{eq:pmed_def}
 \pmed = \p[ \eta_1^\delta \cap \eta_2^\delta \cap B(0,\delta/2) \neq \varnothing ] 
\end{equation}
which by scaling does not depend on $\delta$. Let $\intpts{\delta}$ be the set of pairs $(x,y) \in (\eta_1^\delta \cap \eta_2^\delta)^2$ such that the segments of $\eta_1^\delta,\eta_2^\delta$ between $x$ and $y$ are contained in $B(0,3\delta/4)$. Let $D_\delta$ have the law of the random variable
\begin{equation}\label{eq:quantile_def}
 \sup_{(x,y)\in\intpts{\delta}}\met{x}{y}{\Gamma_\delta} 
\quad\text{conditioned on the event that $\eta_1^\delta \cap \eta_2^\delta \cap B(0,\delta/2) \neq \varnothing$.}
\end{equation}
We let $\quant[\delta]{q}{}$, $q \in (0,1)$, be the $q$-quantile of $D_\delta$, and write $\median[\delta]{} = \quant[\delta]{1/2}{}$ for its median.

The following results have been proved in \cite{amy2025tightness}. The constants $\ddouble$, $d_\SLE$ are defined in~\eqref{eq:ddouble}, \eqref{eq:dsle}.

\begin{proposition}\label{pr:quantiles_metric}
We have
\[ \lambda^{d_\SLE+o(1)}\median[\delta]{} \le \median[\lambda\delta]{} \le \lambda^{\ddouble+o(1)}\median[\delta]{} \]
for any $\lambda,\delta \in (0,1)$. Further,
\[ \sup_{\delta\in (0,1)} \frac{\quant[\delta]{q}{}}{\quant[\delta]{q'}{}} < \infty \]
for any $q,q' \in (0,1)$.
\end{proposition}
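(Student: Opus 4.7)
Since Proposition~\ref{pr:quantiles_metric} is explicitly cited as having been proved in \cite{amy2025tightness}, my first step would be to match the definitions of $D_\delta, \median[\delta]{}, \quant[\delta]{q}{}$ above with those there, and then quote the statements directly. If I had to reprove them from the ingredients at hand, my approach would be the following. The scaling of the median is driven by the fact that $(h_{\lambda\delta}, \eta_1^{\lambda\delta}, \eta_2^{\lambda\delta}, \Gamma_{\lambda\delta})$ is a conformal rescaling of $(h_\delta, \eta_1^\delta, \eta_2^\delta, \Gamma_\delta)$ while the weak geodesic \clekp{} metric does not a priori scale, so the mismatch must be quantified via the two fractal exponents of the curves. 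For the upper bound $\median[\lambda\delta]{} \le \lambda^{\ddouble+o(1)} \median[\delta]{}$, I would cover a typical pair $(x,y) \in \intpts{\delta}$ by a chain of $\lesssim \lambda^{-\ddouble+o(1)}$ intermediate double points of $\eta_1^\delta \cap \eta_2^\delta$ separated by Euclidean distance of order $\lambda\delta$, so that by translation invariance and the Markovian property the consecutive segments contribute, up to negligible corrections, copies of $D_{\lambda\delta}$; summing gives the bound, using the $\ddouble$-dimensional Minkowski content of the intersection set. The matching lower bound $\lambda^{d_\SLE+o(1)} \median[\delta]{} \le \median[\lambda\delta]{}$ follows by the dual covering argument, using that any admissible path of diameter $\delta$ contains pieces of SLE$_\kappa$-type flow lines whose length scales as $\lambda^{d_\SLE + o(1)}$ when passing from scale $\delta$ to scale $\lambda\delta$.

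For the second claim, that $\quant[\delta]{q}{}/\quant[\delta]{q'}{}$ is uniformly bounded in $\delta$, I would invoke independence across scales. Applying Lemma~\ref{lem:break_loops} in a dyadic sequence of annuli between scales $\lambda\delta$ and $\delta$, together with the superpolynomial tail bounds from \cite{amy2025tightness}, one decomposes $D_\delta$ (up to logarithmic factors) as a sum of nearly-independent contributions from disjoint scale-$\lambda\delta$ subregions along a geodesic realising the supremum in \eqref{eq:quantile_def}. This sum enjoys concentration that is uniform in $\delta$, which forces every quantile $\quant[\delta]{q}{}$ to lie within a $\delta$-independent multiplicative factor (depending only on $q$) of $\median[\delta]{}$; in particular $\quant[\delta]{q}{} \asymp \quant[\delta]{q'}{}$ for any $q,q' \in (0,1)$.

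The main obstacle I expect is making the chain-of-double-points construction rigorous. One has to ensure that the consecutive pairs along the chain lie in the appropriate local analogue of $\intpts{}$ at scale $\lambda\delta$, so that the Markov property actually produces conditional copies of the random variable defining $D_{\lambda\delta}$ rather than some other distribution; and one has to deal with the rare scenarios in which admissibility of the concatenated path is destroyed by loops of $\Gamma$ crossing the scale-$\lambda\delta$ balls. This last point is exactly what Lemma~\ref{lem:break_loops} is designed to handle: by resampling within each relevant annulus one can guarantee with a positive conditional probability that no loops cross, at the price of controlling the Radon--Nikodym cost of the resampling via the total variation estimates from \cite{amy-cle-resampling}. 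Balancing these estimates against the Minkowski content count so that the $o(1)$ errors in both bounds are indeed negligible is the most delicate part of the argument.
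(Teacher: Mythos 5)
The paper contains no proof of this proposition---it is imported verbatim from \cite{amy2025tightness} (``The following results have been proved in \cite{amy2025tightness}'')---so your primary move of matching the definitions of $D_\delta$, $\median[\delta]{}$, $\quant[\delta]{q}{}$ with those in that reference and quoting the statements is exactly what the paper does. One caution about your fallback sketch (which cannot be checked against the paper): as written, concatenating $\lesssim\lambda^{-\ddouble+o(1)}$ segments each contributing a copy of $D_{\lambda\delta}$ would yield $\median[\lambda\delta]{}\gtrsim\lambda^{\ddouble+o(1)}\median[\delta]{}$, the \emph{opposite} of the stated upper bound; the natural pairing is the reverse of yours, namely that the subadditive concatenation argument (with at most $\lambda^{-d_\SLE+o(1)}$ links, controlled by the dimension of the carrying curve) gives the lower bound $\median[\lambda\delta]{}\ge\lambda^{d_\SLE+o(1)}\median[\delta]{}$, while the superadditive count of forced double-point necks that every admissible path must traverse (at least $\lambda^{-\ddouble+o(1)}$ of them) gives the upper bound $\median[\lambda\delta]{}\le\lambda^{\ddouble+o(1)}\median[\delta]{}$.
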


Note that it will follow from the main result of this paper that $\median[\delta]{} = c\delta^\alpha$ for some $c>0$ where $\alpha$ is the exponent in Theorem~\ref{thm:exponent}.

We will sometimes use interior flow lines to ``detect'' regions that are bounded between \clekp{} loops. By this we mean the following (see, for instance, \cite[Section~4.2.2]{amy2025tightness} for more details). Suppose $U$ is a region (possibly consisting of infinitely many connected components) that is bounded between two flow lines $\eta_1,\eta_2$ where we allow the case when $\eta_2$ is reflected off $\eta_1$ in the opposite direction. Let $w_1,w_2 \in \C$ and let $\eta_{w_1},\eta_{w_2}$ be the flow lines with the same angles as $\eta_1$ (resp.\ $\eta_2$) starting from $w_1$ (resp.\ $w_2$) (we allow the case when $\eta_{w_2}$ is reflected off $\eta_{w_1}$ in the opposite direction). We then say that $\eta_{w_1},\eta_{w_2}$ \emph{detect $U$} if they merge into $\eta_1$ (resp.\ $\eta_2$) before they trace $\partial U$, and $\eta_{w_1},\eta_{w_2}$ do not intersect with any other angle differences than $\eta_1,\eta_2$ do.

\section{Ball crossing estimate}
\label{se:ball_crossing}

The purpose of this section is to prove Proposition~\ref{pr:ball_crossing} which we will repeatedly use in all proofs in this paper. Roughly speaking, we show that with high probability, there is a scale $2^{-j}$ where geodesic crossings of $B(z,2^{-j})$ are not much longer than expected.

\begin{proposition}\label{pr:ball_crossing}
Suppose we have the setup described at the beginning of Section~\ref{se:main_results} and $\met{\cdot}{\cdot}{\Gamma}$ is a weak geodesic \clekp{} metric. For every $b>1$ there exists $M>1$ and $c>0$ such that the following is true. Let $z\in\D$, and $K\subseteq\N$ a finite set such that $B(z,2^{-\min K}) \subseteq \D$. Let $G$ be the event that for at least a $9/10$ fraction of $j\in K$ it holds that for every admissible path $\gamma \subseteq B(z,2^{-j})$ and any $w_1,w_2 \in \gamma$ there exists an admissible path $\wt{\gamma} \subseteq B(z,2^{-j+1})$ from $w_1$ to $w_2$ with $\lmet{\wt{\gamma}} \le M\median[2^{-j}]{}$. Then
\[ \p[G^c] \le ce^{-b|K|} . \]
\end{proposition}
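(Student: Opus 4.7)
At each scale $j \in K$, I would introduce a ``good event'' $H_{z,j}$ in a neighborhood of the annulus $A_{z,j-1}$ with two key features: (i) on $H_{z,j}$, the scale-$j$ crossing property from the proposition is satisfied, and (ii) the conditional probability $\p[H_{z,j}^c \mid \CF_{z,j-1}]$ is at most $\epsilon(M)$ on $\Esep_{z,j-1}$, where $\epsilon(M) \to 0$ as $M \to \infty$. Given such events, the number of scales in $K$ for which $H_{z,j}$ fails is stochastically dominated by a sum of conditional Bernoulli$(\epsilon(M))$ random variables, and a Chernoff bound yields an exponential rate $\sim \log(1/\epsilon(M))$ that diverges as $M \to \infty$. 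Combined with Lemma~\ref{le:separation_event} (to control the fraction of scales on which separation fails), this gives $\p[G^c] \le c e^{-b|K|}$ for any $b>1$ upon choosing $M = M(b)$ sufficiently large.

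I would take $H_{z,j}$ to combine the following: (a) every loop of $\Gamma$ contained in $B(z, 2^{-j})$ has internal $\met{\cdot}{\cdot}{\Gamma}$-diameter at most $M\median[2^{-j}]{}$; (b) there exists in $A_{z,j-1}$ a ``beltway'' --- a chain of flow-line intersection-point pairs of the type used in the median construction~\eqref{eq:quantile_def} --- that surrounds $B(z, 2^{-j})$ and has total $\met{\cdot}{\cdot}{\Gamma}$-length at most $M\median[2^{-j}]{}$; (c) every admissible component of $B(z, 2^{-j}) \cap \Upsilon_\Gamma$ that is not enclosed in a loop from (a) extends into $A_{z,j-1}$ and is joined to the beltway from (b) by admissible segments of length at most $M\median[2^{-j}]{}$.

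On $H_{z,j}$, the crossing property is then verified as follows: given $\gamma \subseteq B(z, 2^{-j})$ admissible and $w_1, w_2 \in \gamma$, either $\gamma$ is enclosed in some loop from (a), in which case a geodesic inside this loop gives $\wt\gamma$ of length at most $M\median[2^{-j}]{}$ by the geodesic property; or $\gamma$ lies in an admissible component as in (c), and concatenating the routing segments from (c) with a traversal of the beltway from (b) produces $\wt\gamma \subseteq B(z,2^{-j+1})$ of length at most (a constant times) $M\median[2^{-j}]{}$. The conditional probability bounds in (a)--(c) are supplied by the superpolynomial metric tail estimates of \cite{amy2025tightness}, the quantile comparability of Proposition~\ref{pr:quantiles_metric} (which in particular yields $\quant[2^{-j}]{1-\epsilon(M)}{} \le M\median[2^{-j}]{}$ with $\epsilon(M) \to 0$), the Markov property for multichordal \clekp{} in separated configurations (Theorem~\ref{thm:cle_partially_explored}), and the flow-line detection arguments developed in \cite{amy2025tightness}.

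The main obstacle is the structural content of (b) and (c): upgrading the mere positive-probability existence of beltway and routing structures to probability $1-\epsilon(M)$ on the separation event. The naive flow-line construction gives only a fixed positive probability per candidate beltway, so to obtain probability tending to $1$ one considers many candidate beltways placed around disjoint sub-arcs of $A_{z,j-1}$ and argues that at least one succeeds, using approximate independence of candidates across well-separated starting regions via the Markov/resampling framework of \cite{amy-cle-resampling} (in particular Lemma~\ref{lem:break_loops}, which allows one to ``break'' crossing loops that would otherwise obstruct a candidate). The same framework, together with the flow-line detection of \cite{amy2025tightness}, is used to control the admissible connections in (c) from every admissible component of $B(z,2^{-j})$ to the beltway without running into obstructing loops of~$\Gamma$.
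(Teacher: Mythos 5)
There is a genuine gap, and it sits exactly at the point the paper identifies as the main difficulty of this section. Your good event $H_{z,j}$ is advertised as living ``in a neighborhood of $A_{z,j-1}$'' so that the failures over $j\in K$ can be dominated by independent Bernoullis via conditioning on $\CF_{z,j-1}$; but conditions (a) and (c) quantify over \emph{all} loops contained in $B(z,2^{-j})$ and \emph{all} admissible components of $B(z,2^{-j})\cap\Upsilon_\Gamma$, i.e.\ over the configuration at every scale $j'\ge j$ down to the center. Such an event is not measurable with respect to $\CF_{z,j+k}$ for any fixed $k$, so the iteration ``condition on $\CF_{z,j-1}$, reveal $H_{z,j}$, move to the next scale in $K$'' is not available: revealing $H_{z,j}$ leaks information about all smaller scales and contaminates the conditional laws used at later $j$. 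Conversely, if you localize (a) and (c) to a bounded neighborhood of the annulus, the event no longer implies the conclusion, because $w_1,w_2$ are arbitrary points of $\gamma$ and may lie in, say, $B(z,2^{-j-100})$; no event depending only on the configuration near $A_{z,j-1}$ can bound $\met{w_1}{w_2}{\Gamma}$ for such points. Relatedly, your verification step is incomplete even granting (a)--(c): to build $\wt{\gamma}$ you must route from $w_1$ itself to the beltway, and (c) as stated only attaches \emph{some} point of each admissible component to it; in the strong reading (every point of the ball is within $M\median[2^{-j}]{}$ of the beltway), condition (c) \emph{is} essentially the whole-ball statement you are trying to prove.

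The paper's proof is organized precisely around this obstruction. It first reduces to a single successful scale (Lemma~\ref{le:ball_crossing_one}) by a union bound over the $2^{|K|}$ possible failure sets. Then, for each candidate $j$, it introduces a family of events $G^j_{j'}$ for \emph{all} $j'\ge j+3$, demanding shortcuts of length $M2^{-\zeta(j'-j)}\median[2^{-j}]{}$ (geometrically decaying in $j'$) for crossings of $A(z,2^{-j'-1},2^{-j'+1})$ together with a bound $M2^{a(j'-j)}$ on the number of loop crossings; the superpolynomial conditional tails (Lemmas~\ref{le:annulus_crossing_conditional} and~\ref{le:loop_crossings_cond}) make $\p[(G^j_{j'})^c\mid\CF_{z,j}]=O(M^{-b}e^{-b(j'-j)})$ summable over $j'$, which substitutes for the missing independence between the ball event and the annulus filtration. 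A deterministic decomposition of $\gamma$ into annulus crossings, with the loop-crossing count controlling how many disconnected pieces must be bypassed separately, then gives total length $\lesssim M^2\median[2^{-j}]{}$ on $\bigcap_{j'}G^j_{j'}$, and a restart argument (upon first failure at $j'$, retry inside $B(z,2^{-j'-3})$, with a union bound over failure sequences) recovers the exponential rate in $|K|$. Your beltway-plus-routing picture is essentially the content of the already-proved annulus estimate quoted here as Lemma~\ref{le:annulus_crossing_cle}; the new work in this section, which your proposal does not supply, is the propagation from annuli to the full ball.
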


This proposition will be used as follows. Suppose that $b>1$, and $\dsep>0$ is chosen according to Lemma~\ref{le:separation_event}. Then with probability $1-O(e^{-bJ})$, for at least a $4/5$ fraction of scales $j = j_0,\ldots.,j_0+J$ both $\Esep_{z,j}$ and the event from Proposition~\ref{pr:ball_crossing} occur. Let $K \subseteq \N$, and suppose that we have another family of events $G_{z,j} \in \CF_{z,j+3}$ such that
\[ \p[G_{z,j}^c \mid \CF_{z,j}]\one_{\Esep_{z,j}} \le e^{-3b} \quad\text{for every}\quad z\in\D, j\in K.\]
Then
\[ \p\!\left[ \bigcap_{j\in K} (G_{z,j}^c \cap \Esep_{z,j}) \right] \le e^{-b|K|} . \]
Taking a union bound over all possible choices of $K$, we conclude that for every $z,\delta$ with $B(z,\delta) \subseteq \D$, with probability $1-O(\delta^{b-1})$ there exists (some random) $j_1 \in \{\log_2(\delta^{-1}),\ldots,\log_2(\delta^{-2})\}$ such that
\begin{itemize}
\item $G_{z,j_1}$ occurs, and
\item for every admissible path $\gamma \subseteq B(z,2^{-j_1})$ and any $w_1,w_2 \in \gamma$ there exists an admissible path $\wt{\gamma} \subseteq B(z,2^{-j_1+1})$ from $w_1$ to $w_2$ with $\lmet{\wt{\gamma}} \le M\median[2^{-j_1}]{}$.
\end{itemize}

We observe that it suffices to prove a variant of Proposition~\ref{pr:ball_crossing} where we only require the good event to hold on at least one scale $j$ (instead of $9/10$ fraction of scales).

\begin{lemma}\label{le:ball_crossing_one}
Suppose we have the setup described at the beginning of Section~\ref{se:main_results} and $\met{\cdot}{\cdot}{\Gamma}$ is a weak geodesic \clekp{} metric. For every $b>1$ there exists $M>1$ and $c>0$ such that the following is true. Let $z\in\D$, and $K\subseteq\N$ a finite set such that $B(z,2^{-\min K}) \subseteq \D$. Let $G^1$ be the event that there exists $j\in K$ such that for every admissible path $\gamma \subseteq B(z,2^{-j})$ and any $w_1,w_2 \in \gamma$ there exists an admissible path $\wt{\gamma} \subseteq B(z,2^{-j+1})$ from $w_1$ to $w_2$ with $\lmet{\wt{\gamma}} \le M\median[2^{-j}]{}$. Then
\[ \p[(G^1)^c] \le ce^{-b|K|} . \]
\end{lemma}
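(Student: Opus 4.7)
\emph{Plan.} For each scale $j \in K$ I construct an event $F^1_{z,j}$ that, on the separation event $\Esep_{z,j}$, has conditional probability at least $p_0(M)$ given $\CF_{z,j}$ with $p_0(M) \to 1$ as $M \to \infty$, and on which the shortcut property defining $G^1$ at scale $j$ holds. Combining the $F^1_{z,j}$ across $K$ via conditional independence across well-spaced scales, together with the separation bound of Lemma~\ref{le:separation_event}, then yields $\p[(G^1)^c] \le c e^{-b|K|}$.

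\emph{Construction of the scale-$j$ event.} Conditional on $\CF_{z,j}$ on the event $\Esep_{z,j}$, the CLE configuration inside the explored domain $B(z,3 \cdot 2^{-j})^{*,B(z,2 \cdot 2^{-j})}$ is a multichordal \clekp{} with well-separated marked points and a prescribed interior link pattern. Using the imaginary geometry coupling, I reveal two auxiliary flow lines emanating from prescribed boundary points with the double-point angle gap $\angledouble$. By Proposition~\ref{pr:link_probability} and the flow-line interaction rules reviewed in Section~\ref{se:setup_pf}, with positive conditional probability these flow lines trace arcs $\eta_1,\eta_2$ that correspond to the paired boundaries of a \clekp{} loop enclosing $B(z,2^{-j})$ inside $B(z,2^{-j+1})$, producing a double-point structure mirroring the canonical configuration used to define $\median[2^{-j}]{}$. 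By the total-variation proximity of Proposition~\ref{pr:mccle_gasket_tv_convergence} together with the Markovian property of the weak geodesic metric and the quantile comparison of Proposition~\ref{pr:quantiles_metric}, the $\Fd^{B(z,2^{-j+1})}$-diameter of the region bounded by the cage is at most $(M/2)\median[2^{-j}]{}$ with conditional probability at least $1-q(M)$, where $q(M) \to 0$ as $M \to \infty$. Setting $F^1_{z,j}$ to be the intersection of the cage event and this diameter bound, any admissible path $\gamma \subseteq B(z,2^{-j})$ is contained in the bounded region on $F^1_{z,j}$, so any $w_1,w_2 \in \gamma$ admit an admissible path $\wt{\gamma} \subseteq B(z,2^{-j+1})$ from $w_1$ to $w_2$ with $\lmet{\wt{\gamma}} \le M\median[2^{-j}]{}$.

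\emph{Combining the scales.} Applying Lemma~\ref{le:separation_event} with a parameter $b' > b$, the separation events $\Esep_{z,j}$ fail on more than a $1/10$ fraction of $j \in K$ with probability at most $c e^{-b'|K|}$. Outside of this failure, I extract a spacing-$C_0$ subset $K' \subseteq K$ with $|K'| \ge |K|/(10C_0)$ on which every $\Esep_{z,j}$ holds, choosing $C_0$ large enough that the events $F^1_{z,j}$ for $j \in K'$ are, modulo a small total-variation cost, conditionally independent in the filtration $(\CF_{z,j})_{j\in K'}$; the required cross-scale decoupling is provided by Lemma~\ref{lem:break_loops}. An iterated conditional-probability estimate then gives
\[
 \p\!\left[\, (G^1)^c \cap \{\Esep_{z,j} \text{ holds on } \ge 9|K|/10 \text{ scales of } K\} \,\right] \le (1-p_0(M))^{|K|/(10 C_0)} .
\]
Choosing $M$ large enough that $(1-p_0(M))^{1/(10C_0)} \le e^{-b}$, and $b' > b$, yields the required bound.

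\emph{Main obstacle.} The principal challenge is the construction of the flow-line cage so that it simultaneously (i) topologically encloses $B(z,2^{-j})$, forcing every admissible path in $B(z,2^{-j})$ to lie in a single bounded region, and (ii) exhibits a double-point structure whose local law is close in total variation to the canonical setup defining $\median[2^{-j}]{}$, so that the quantile comparison transfers via the Markovian property of the metric. This requires a delicate exploration by flow lines of the GFF coupling with the correct angle configurations, together with a careful topological analysis of how the cage confines the gasket, and precise coordination with the resampling framework of Lemma~\ref{lem:break_loops} so as to preserve the filtration $(\CF_{z,j})$ across scales.
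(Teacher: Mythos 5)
Your per-scale event is not the right object, and the cross-scale independence you invoke does not hold. Those are two distinct gaps, and the second one is exactly the difficulty this lemma exists to address.

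\emph{Gap 1: the diameter bound at a single scale.}  You assert that, given $\CF_{z,j}$ and $\Esep_{z,j}$, with probability $1-q(M)$ the $\Fd^{B(z,2^{-j+1})}$-diameter of the region enclosed by your flow-line cage is at most $(M/2)\median[2^{-j}]{}$, citing Proposition~\ref{pr:mccle_gasket_tv_convergence}, the Markovian property, and Proposition~\ref{pr:quantiles_metric}.  None of these give a diameter bound on a full gasket region.  Proposition~\ref{pr:quantiles_metric} only compares quantiles of the specific two-point distance $D_\delta$ between double points of two flow lines in a fixed canonical configuration; it says nothing about the supremum of distances over an entire cluster, which involves the metric at \emph{all} scales inside the cage.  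Proposition~\ref{pr:mccle_gasket_tv_convergence} only gives total-variation proximity of laws of gaskets; it transports nothing unless you already have a quantitative estimate in the canonical setup.  What you actually need here is precisely the superpolynomial tail estimate of Lemma~\ref{le:annulus_crossing_cle} (imported from \cite[Proposition~5.17]{amy2025tightness}), applied to all annuli $A_{z,j'}$ with $j' \ge j$, with bounds that decay geometrically in $j'-j$; that is not something one conjures from the three results you cite.

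\emph{Gap 2: the claimed cross-scale decoupling.}  You propose that, after thinning $K$ to a spacing-$C_0$ subset $K'$, the events $F^1_{z,j}$ for $j\in K'$ are conditionally independent modulo a small total-variation error, with the decoupling supplied by Lemma~\ref{lem:break_loops}.  This cannot work, no matter how large $C_0$ is: the event $F^1_{z,j}$ (that the gasket region inside the cage at scale $j$ has small $\Fd$-diameter) is a statement about the metric on the \emph{entire} ball $B(z,2^{-j})$, hence it depends on the CLE configuration and metric at every scale $j' > j$, including all other $j' \in K'$.  Lemma~\ref{lem:break_loops} breaks loops crossing a single annulus $A_{z,j-1}$; it decouples the CLE \emph{across} a thin shell, but the ball-diameter event at scale $j$ reaches arbitrarily deep into the interior and is therefore not measurable with respect to any finite window.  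The paper flags exactly this obstruction in Section~\ref{subsec:outline} (``the topology of the $\CLE_{\kappa'}$ gasket may force geodesics to pass through the interior ball of an annulus $\ldots$ we use the superpolynomial tail estimates $\ldots$ to make up for the lack of independence'').  Its proof of Lemma~\ref{le:ball_crossing_one} works around this by decomposing the ball into all sub-annuli $A_{z,j'}$, $j'\ge j+3$, controlling each one with a bound $M 2^{-\zeta(j'-j)}\median[2^{-j}]{}$ that decays geometrically (via Lemmas~\ref{le:annulus_crossing_conditional} and~\ref{le:loop_crossings_cond}), summing, and then using a ``retry from the first failing sub-scale'' argument together with a union bound over possible failure sequences $(n_l)$---not a product of per-scale probabilities.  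Your proposal replaces this with a product estimate $(1-p_0(M))^{|K|/(10C_0)}$ of events that are genuinely dependent, so the step does not go through.
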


\begin{proof}[Proof of Proposition~\ref{pr:ball_crossing} assuming Lemma~\ref{le:ball_crossing_one}]
If $G$ does not occur, there exists $K' \subseteq K$ with $|K'| \ge |K|/10$ such that the event fails in each $j\in K'$. There are at most $2^{|K|}$ possible choices for such $K'$. Since we can suppose $b$ is large, the result follows by applying Lemma~\ref{le:ball_crossing_one} to each possible choice of $K'$ and taking a union bound.
\end{proof}

In the remainder of this section, we prove Lemma~\ref{le:ball_crossing_one}. The proof involves several steps. If the event $G$ only involved a single annulus $A_{z,j_1}$, then the statement would follow from an independence across scales argument. However, we need to bound the lengths of crossings of the ball $B(z,2^{-j_1})$ and not just the annulus $A_{z,j_1}$, and these paths may cross also subsequent annuli. We need to argue that the crossings of the subsequent annuli are very unlikely to contribute much more to the length. For this, we use the superpolynomial upper tail for the metric proved in \cite{amy2025tightness}.

Recall the $\sigma$-algebra $\CF_{z,j}$ and the events $\Esep_{z,j}$, $\Ebreak_{z,j}$ (depending on $\dsep>0$, $\pbreak>0$) defined in Section~\ref{se:setup_pf}.

\begin{lemma}\label{le:annulus_crossing_conditional}
For each $\dsep>0$, $\pbreak>0$, and $b>1$ there exists a constant $c>0$ such that the following is true. Let $z \in \D$, $j < j_1 < j_2 \le \infty$ such that $B(z,2^{-j+2}) \subseteq \D$. Let $G$ denote the event that for any $w_1,w_2 \in A(z,2^{-j_2},2^{-j_1}) \cap \Upsilon_{\Gamma}$ that are connected in $A(z,2^{-j_2},2^{-j_1}) \cap \Upsilon_{\Gamma}$ there is an admissible path $\gamma \subseteq A(z,2^{-j_2-1},2^{-j_1+1})$ from $w_1$ to $w_2$ with $\lmet{\gamma} \le M\median[2^{-j_1}]{}$. Then
\[
 \p[ G^c \cap \Ebreak_{z,j_1-2} \mid \CF_{z,j} ] \one_{\Esep_{z,j}} \le cM^{-b} .
\]
\end{lemma}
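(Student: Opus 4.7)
The plan is to combine (i) a resampling argument using $\Ebreak_{z,j_1-2}$ to pass to a cleaner CLE configuration, (ii) a multi-scale dyadic covering of the annulus, and (iii) the superpolynomial upper-tail bound for the internal CLE metric diameter from \cite{amy2025tightness}, together with the median scaling from Proposition~\ref{pr:quantiles_metric}. The underlying heuristic is that a gasket component of $A(z, 2^{-j_2}, 2^{-j_1})$ should have internal metric diameter dictated by the largest scale $2^{-j_1}$, i.e.\ of order $\median[2^{-j_1}]{}$, because $\median[2^{-k}]{} \lesssim 2^{-\ddouble(k-j_1)+o(k-j_1)} \median[2^{-j_1}]{}$ and $\ddouble > 0$ throughout $\kappa' \in (4,8)$, so contributions from smaller scales sum geometrically.

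First, since the resampling $\Gamma^\resampled$ from Lemma~\ref{lem:break_loops} is conditionally independent of $(\Gamma, \met{\cdot}{\cdot}{\Gamma})$ given $\Gamma$, the event $G^c$ (which depends only on $\Gamma$ and the metric) and $\Fbreak_{z,j_1-2}$ are conditionally independent given $\Gamma$, and thus
\[
 \p[G^c \cap \Ebreak_{z,j_1-2} \mid \CF_{z,j}] \one_{\Esep_{z,j}} \le \pbreak^{-1}\, \p[G^c \cap \Fbreak_{z,j_1-2} \mid \CF_{z,j}] \one_{\Esep_{z,j}} .
\]
On $\Fbreak_{z,j_1-2}$ no loop of $\Gamma^\resampled$ crosses $A(z,2^{-j_1+2},2^{-j_1+3})$ and the gasket of $\Gamma$ inside $B(z,2^{-j_1+2})$ is contained in that of $\Gamma^\resampled$, which structurally decouples the interior from the boundary data carried by $\CF_{z,j}$. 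Next, for each dyadic scale $r_k = 2^{-k}$ with $k \in \{j_1,\ldots,j_2\}$, cover $A_{z,k}$ by $O(1)$ balls $B(y,r_k/10)$, and for a scale-dependent threshold $M_k = M \cdot 2^{\ddouble(k-j_1)/2}$ let $E_{y,r_k}$ be the event that the internal metric diameter of $B(y,r_k/10) \cap \Upsilon_\Gamma$ within $B(y,r_k/5)$ is at most $CM_k \median[r_k]{}$ for a constant $C$ to be chosen. The superpolynomial tail bound of \cite{amy2025tightness}, together with the scale covariance of the $\CLE_{\kappa'}$ law, the separation event $\Esep_{z,j}$, and Proposition~\ref{pr:link_probability} (which yields a uniform positive lower bound on link-pattern probabilities and hence a bounded Radon-Nikodym factor between the multichordal conditioning and the unconditioned CLE in $B(z,2\cdot 2^{-j})$), gives
\[
 \p[E_{y,r_k}^c \cap \Fbreak_{z,j_1-2} \mid \CF_{z,j}]\one_{\Esep_{z,j}} \le c_1 M_k^{-b_1}
\]
for any $b_1 > 0$, uniformly in $y$. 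Summing the $O(1)$ balls per scale together with the geometric decay in $k - j_1$ gives a total failure probability $\le c_2 M^{-b_1}$, and choosing $b_1 = b$ yields the desired bound.

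On the good event $\Fbreak_{z,j_1-2} \cap \bigcap_{y,k} E_{y,r_k}$, any pair $w_1, w_2 \in A(z,2^{-j_2},2^{-j_1}) \cap \Upsilon_\Gamma$ connected within that annulus can be joined by an admissible path in $A(z,2^{-j_2-1},2^{-j_1+1})$ by concatenating short crossings through a chain of overlapping balls at each scale visited. Its length is bounded by
\[
 \sum_{k=j_1}^{j_2} O(1) \cdot CM_k \median[r_k]{} \le C' M \median[2^{-j_1}]{} \sum_{k \ge j_1} 2^{-\ddouble(k-j_1)/2} \le C'' M \median[2^{-j_1}]{} ,
\]
where the scale-dependent threshold $M_k$ is calibrated precisely so that this sum converges while still fitting the union bound above. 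Absorbing $C''$ into $M$ by adjusting $C$ yields $\lmet{\gamma} \le M\median[2^{-j_1}]{}$ as required.

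The main obstacle is the second step: transferring the upper-tail estimate from the unconditioned setting in \cite{amy2025tightness} to the conditional law given $\CF_{z,j}$, which is a multichordal $\CLE_{\kappa'}$ with possibly many boundary marked points. The separation event $\Esep_{z,j}$ together with Proposition~\ref{pr:link_probability} provides the uniformly bounded Radon-Nikodym factor needed, and $\Fbreak_{z,j_1-2}$ completes the decoupling of the interior from the outside. A secondary delicate point is that when $j_2 = \infty$ there are infinitely many scales; this is absorbed by the slowly-growing threshold $M_k$, which is chosen to balance the geometric decay of $\median[r_k]{}$ against the summability needed in the union bound.
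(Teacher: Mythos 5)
Your overall reduction structure parallels the paper's: use the resampling event $\Fbreak_{z,j_1-2}$ to decouple the interior ball from the conditioning in $\CF_{z,j}$, handle the link-pattern conditioning via $\Esep_{z,j}$ and Proposition~\ref{pr:link_probability}, and then invoke a CLE upper-tail estimate. Two steps need attention.

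First, after your inequality $\p[G^c\cap\Ebreak\mid\CF_{z,j}]\le\pbreak^{-1}\p[G^c\cap\Fbreak\mid\CF_{z,j}]$ you continue to estimate the event $G^c$ which is an event for $(\Gamma,\Fd)$, yet the decoupling furnished by $\Fbreak$ is a statement about the resampled configuration $\Gamma^\resampled$. The paper's proof of Lemma~\ref{le:annulus_crossing_conditional_wo_lp} explicitly passes to the event $\wt G^c$ defined in the same way but for $\Gamma^\resampled$, using the gasket containment $B(z,2^{-j_1+1})\cap\Upsilon_\Gamma\subseteq B(z,2^{-j_1+1})\cap\Upsilon_{\Gamma^\resampled}$ together with a coupling of the internal metrics via the Markovian property so that $G^c$ forces $\wt G^c$ on $\Fbreak$, and only then applies the CLE estimate to $\Gamma^\resampled$. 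You gesture at this (``structurally decouples the interior'') but never make the transfer precise; as written you would be applying a decoupling statement for $\Gamma^\resampled$ to an event about $\Gamma$. (Also a small slip: $\Fbreak_{z,j_1-2}$ concerns crossings of $A_{z,j_1-2}=A(z,2^{-j_1+1},2^{-j_1+2})$, not of $A(z,2^{-j_1+2},2^{-j_1+3})$.)

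Second, the paper's core tail estimate is Lemma~\ref{le:annulus_crossing_cle}, which is cited directly from (the proof of) \cite[Proposition~5.17]{amy2025tightness}. You instead attempt to re-derive it by a dyadic covering with growing thresholds $M_k=M\,2^{\ddouble(k-j_1)/2}$. The arithmetic in your union bound and in the geometric sum of $M_k\median[r_k]{}$ is correct, but the construction of the connecting path is not justified. The set $B(y,r_k/10)\cap\Upsilon_\Gamma$ (and hence the ``internal metric diameter'' in your event $E_{y,r_k}$) is typically not connected: it is cut by $\CLE_{\kappa'}$ loops, and an admissible path from $w_1$ to $w_2$ cannot freely switch between components of a ball. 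To make a ``chain of overlapping balls'' argument work you would additionally need control on the number of loop crossings of each ball/annulus to bound the number of distinct gasket components visited (the paper confronts exactly this topological difficulty elsewhere, in the proof of Lemma~\ref{le:ball_crossing_one}, using Lemma~\ref{le:loop_crossings_tail}). As it stands your covering sketch skips the main content of the cited proposition and is not a complete replacement for it.
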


Using a resampling argument, we reduce this to the case of a \clekp{}. Let $D \subseteq \D$ be a simply connected domain, and let $\Gamma_D$ be a \clekp{} in $D$. We consider the gasket of $\Gamma_D$ (here we mean the cluster adjacent to the boundary $\partial D$, in contrast to the setup of the main theorems). For every $U \Subset D$, the internal metric $\metres{U}{\cdot}{\cdot}{\Gamma_D}$ is defined due to absolute continuity. (In fact, our domains of interest will be complementary connected components of \clekp{} loops, so in that case the metric on all $\ol{D} \cap \Upsilon_{\Gamma_D}$ is defined.)

\begin{lemma}\label{le:annulus_crossing_cle}
For each $b>1$ there exists a constant $c>0$ such that the following is true. Let $D \subseteq \D$ be open, simply connected, and $z \in \D$, $j_1 < j_2 \le \infty$ such that $B(z,2^{-j_1+2}) \subseteq D$. Let $\Gamma_D$ be a \clekp{} in $D$. Given $\Gamma_D$, sample the internal metric $\metres{B(z,2^{-j_1+1})}{\cdot}{\cdot}{\Gamma_D}$. Let $G$ denote the event that for any $w_1,w_2 \in A(z,2^{-j_2},2^{-j_1}) \cap \Upsilon_{\Gamma_D}$ that are connected in $A(z,2^{-j_2},2^{-j_1}) \cap \Upsilon_{\Gamma_D}$ there is an admissible path $\gamma \subseteq A(z,2^{-j_2-1},2^{-j_1+1})$ from $w_1$ to $w_2$ with $\lmet{\gamma} \le M\median[2^{-j_1}]{}$. Then
\[ \p[G^c] \le cM^{-b} . \]
\end{lemma}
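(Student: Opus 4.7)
The plan is to chain estimates across dyadic scales. For each scale $j \in \{j_1, j_1+1, \ldots, j_2\}$ and threshold $M_j > 1$, let $E_j(M_j)$ be the event that every pair of points in $\Upsilon_{\Gamma_D} \cap A(z, 2^{-j-1}, 2^{-j})$ that are connected within this sub-annulus can be joined by an admissible path in the enlarged sub-annulus $A(z, 2^{-j-2}, 2^{-j+1})$ of length at most $M_j \median[2^{-j}]{}$. The first step is to establish the per-scale tail bound $\p[E_j(M_j)^c] \le c M_j^{-b'}$ for any prescribed $b'>0$. This is an application of the superpolynomial upper tail of $D_\delta/\median[\delta]{}$ proved in \cite{amy2025tightness} to the \clekp{} restricted to $B(z, 2^{-j+1})$; the hypothesis $B(z, 2^{-j_1+2}) \subseteq D$ ensures that we can use translation and scale invariance (together with the local total-variation continuity from Proposition~\ref{pr:mccle_gasket_tv_convergence}) to reduce to the canonical bichordal setup in which $\median[\delta]{}$ is defined. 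To translate the $D_\delta$ tail (which concerns specific marked pairs in a bichordal configuration) into a diameter bound on all connected components of the annular gasket in the restricted metric, one uses interior flow lines to detect bichordal slots bounded between pairs of CLE loops (in the sense described at the end of Section~\ref{se:setup_pf}), together with the multichordal resampling of \cite{amy-cle-resampling}, to express the maximal restricted diameter as a supremum over an $O(1)$ collection of $D_\delta$-type quantities on $\dsep$-separated configurations.

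Given this per-scale bound, set $M_j = M \cdot 2^{\epsilon(j-j_1)}$ for a sufficiently small $\epsilon > 0$, and take a union bound over $j$: the good event $\bigcap_j E_j(M_j)$ fails with probability at most $\sum_{j \ge j_1} c(M \cdot 2^{\epsilon(j-j_1)})^{-b'} \le c'M^{-b'}$ provided $b'$ is chosen so that $\sum_{j \ge 0} 2^{-\epsilon b' j}$ converges. On this good event, construct the desired path between $w_1$ and $w_2$ as follows. Take any continuous admissible path $\gamma$ joining $w_1$ to $w_2$ within $A(z, 2^{-j_2}, 2^{-j_1}) \cap \Upsilon_{\Gamma_D}$, which exists by the connectedness hypothesis. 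For each sub-annulus $A(z, 2^{-j-1}, 2^{-j})$ visited by $\gamma$, replace each excursion of $\gamma$ through a connected component of the annular gasket by a short admissible path in $A(z, 2^{-j-2}, 2^{-j+1})$ of length at most $M_j \median[2^{-j}]{}$, and concatenate. The total length is bounded by a constant multiple of $\sum_{j \ge j_1} M_j \median[2^{-j}]{}$, and by Proposition~\ref{pr:quantiles_metric}, $\median[2^{-j}]{} \le 2^{-(j-j_1)(\ddouble - o(1))} \median[2^{-j_1}]{}$, so this sum is at most $C M \median[2^{-j_1}]{}$ provided $\epsilon < \ddouble/2$. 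Choosing $b' \ge b$ and absorbing constants into $M$ yields the claim.

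The main obstacle is the per-scale tail. Translating the $D_\delta$ tail into a diameter bound on arbitrary connected components of the annular gasket in the restricted metric requires constructing a cover by $O(1)$ bichordal slots whose endpoints form a net of the gasket on the two boundary circles $\partial B(z, 2^{-j})$ and $\partial B(z, 2^{-j-1})$. A secondary complication is controlling the number of excursions of $\gamma$ through distinct annular gasket components at each scale; here one imposes a further good event using flow-line arguments together with Lemma~\ref{le:loop_crossings_tail} (whose $o^\infty(M^{-1})$ tail on annulus crossings is more than enough) to ensure that $\gamma$ can be chosen to visit at most $O(1)$ components per scale. The uniformity of the per-scale tail in the exterior configuration of $\Gamma_D$ outside $B(z, 2^{-j+1})$ is crucial and relies on the hypothesis $B(z, 2^{-j_1+2}) \subseteq D$ providing the buffer needed for the absolute-continuity reduction to the canonical setup.
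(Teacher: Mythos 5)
The paper's own ``proof'' of this lemma is a one-line citation: it observes that the statement follows from the proof of \cite[Proposition~5.17]{amy2025tightness}, which is exactly this kind of uniform annulus-crossing diameter bound, and only remarks that that proof goes through for clusters inside a general $D$ away from $\partial D$. Your proposal instead reconstructs the argument from scratch, and its architecture --- per-scale diameter bounds with superpolynomial tails in $M_j$, the choice $M_j = M2^{\epsilon(j-j_1)}$ with $\epsilon$ below the decay exponent of $\median[2^{-j}]{}$ from Proposition~\ref{pr:quantiles_metric}, a union bound over scales, and chaining of shortcuts along a witnessing path --- is sound and closely mirrors how the present paper itself handles the analogous multi-scale bookkeeping in the proof of Lemma~\ref{le:ball_crossing_one}. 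So the route is legitimate, but you are proving (a version of) the cited external proposition rather than using it.

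The genuine weak point is the per-scale tail bound, which is where all the content sits, and there your two $O(1)$ claims are not correct as stated. First, the number of bichordal ``slots'' needed to cover the connected components of the annular gasket is not deterministically $O(1)$: it is a random finite quantity (cf.\ the separating-points statement \cite[Lemma~5.14]{amy-cle-resampling} used in Lemma~\ref{le:len_lb}), and to get a tail in $M_j$ that is superpolynomial you must include the event that this count exceeds a threshold, with its own tail estimate, rather than treating it as a fixed finite supremum. Second, the number of excursions of $\gamma$ through distinct gasket components at scale $j$ is likewise not $O(1)$; Lemma~\ref{le:loop_crossings_tail} only gives that it is at most $M2^{a(j-j_1)}$ off an event of probability $o^\infty(M^{-1})$, which is exactly how the paper handles this in the proof of Lemma~\ref{le:ball_crossing_one} (leading to a final bound $\lesssim M^2\median[2^{-j_1}]{}$ that is then absorbed by relabelling $M$). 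Your construction survives these corrections because you left the margin $\epsilon < \ddouble/2$, but as written the ``$O(1)$ components per scale'' step would fail; you need the polynomially growing counts and the corresponding extra factor of $M$. With those repairs the argument is essentially a re-derivation of \cite[Proposition~5.17]{amy2025tightness}, which is what the paper invokes directly.
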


\begin{proof}
 This follows from the proof of \cite[Proposition~5.17]{amy2025tightness}. Note that although \cite[Proposition~5.17]{amy2025tightness} is stated for the setup described described at the beginning of Section~\ref{se:main_results}, the proof applies simultaneously to each cluster and in domains $D \subseteq \D$ when we stay away from the boundary of $D$.
\end{proof}

\begin{lemma}\label{le:annulus_crossing_conditional_wo_lp}
 In the setup of Lemma~\ref{le:annulus_crossing_conditional} we have
 \[
 \p[ G^c \cap \Ebreak_{z,j_1-2} \mid \wt{\CF}_{z,j} ] \le cM^{-b} .
 \]
\end{lemma}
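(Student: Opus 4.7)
The plan is to reduce to Lemma~\ref{le:annulus_crossing_conditional} by descending to an intermediate scale at which the separation event $\Esep$ holds with overwhelming conditional probability. I would fix a parameter $k = k(b) \in \N$, to be taken of order $\log M$, and set $K = \{j+1, \ldots, j+k\}$. The main case is when $j + k < j_1$; the opposite bounded-scale regime $j_1 \le j + k$ will be handled separately.

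For the main case, I would apply (the conditional version of) Lemma~\ref{le:separation_event} to the unexplored multichordal \clekp{} in $B(z, 2 \cdot 2^{-j})^{*,\cdot}$ to obtain
\[
\p\!\left[\bigcap_{j' \in K} (\Esep_{z,j'})^c \,\middle|\, \wt{\CF}_{z,j}\right] \le c e^{-bk}.
\]
Decomposing
\[
G^c \cap \Ebreak_{z,j_1-2} \subseteq \left(\bigcup_{j' \in K} G^c \cap \Ebreak_{z,j_1-2} \cap \Esep_{z,j'}\right) \cup \bigcap_{j' \in K} (\Esep_{z,j'})^c,
\]
I would then apply Lemma~\ref{le:annulus_crossing_conditional} at each scale $j' \in K$, which is valid since $j' < j_1$. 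Since $\wt{\CF}_{z,j} \subseteq \wt{\CF}_{z,j'} \subseteq \CF_{z,j'}$ (the partial exploration from outside only grows as the reference ball shrinks), and since $\Esep_{z,j'}$ is $\wt{\CF}_{z,j'}$-measurable, the tower property gives
\[
\p[G^c \cap \Ebreak_{z,j_1-2} \cap \Esep_{z,j'} \mid \wt{\CF}_{z,j}] = \E\!\left[\p[G^c \cap \Ebreak_{z,j_1-2} \mid \CF_{z,j'}] \one_{\Esep_{z,j'}} \,\middle|\, \wt{\CF}_{z,j}\right] \le cM^{-b}.
\]
A union bound over $j' \in K$ combined with the separation bound yields
\[
\p[G^c \cap \Ebreak_{z,j_1-2} \mid \wt{\CF}_{z,j}] \le c k M^{-b} + c e^{-bk}.
\]
Taking $k$ of order $\log M$ and running the argument with a slightly larger exponent $b' = b + 1$ in place of $b$ absorbs the polylogarithmic $k$ factor into a clean $cM^{-b}$ bound.

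For the bounded-scale regime $j_1 \le j + k$, the number of relevant scales between $j$ and $j_1$ is at most a constant depending only on $b$; the required bound then follows from Lemma~\ref{le:annulus_crossing_cle} (applied inside the simply connected multichordal domain, using the uniform absolute continuity between multichordal \clekp{} and \clekp{} away from the marked points, as in \cite{amy-cle-resampling}), at the cost of enlarging the implicit constant $c$.

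The main obstacle is establishing the conditional version of Lemma~\ref{le:separation_event}: one needs that conditionally on $\wt{\CF}_{z,j}$, the failure of separation at $k$ consecutively refined scales decays exponentially in $k$. This should follow because the conditional law of the unexplored region is a multichordal \clekp{} in a simply connected domain, to which the proof of Lemma~\ref{le:separation_event} from \cite{amy-cle-resampling} applies essentially verbatim; the exterior marked points do not interfere with the local separation analysis deep inside the domain.
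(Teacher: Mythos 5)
Your proposal has a genuine circularity problem. You take Lemma~\ref{le:annulus_crossing_conditional} as an input, but in the paper that lemma is \emph{derived from} Lemma~\ref{le:annulus_crossing_conditional_wo_lp}: the whole point of the ``wo\_lp'' version is to serve as the intermediate step, after which one passes to the $\CF_{z,j}$-conditioning by noting that on $\Esep_{z,j}$ the extra conditioning on the link pattern $\alpha^*_{z,j}$ costs only a constant factor (Proposition~\ref{pr:link_probability}). If you unwind your reduction, the bound conditional on $\wt{\CF}_{z,j}$ is deduced from the bound conditional on $\CF_{z,j'}$ at finer scales $j'$, which in turn would have to come from the bound conditional on $\wt{\CF}_{z,j'}$ --- the regression toward $j_1$ never terminates at anything established independently. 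A telltale sign is that your argument never actually \emph{uses} the event $\Ebreak_{z,j_1-2}$; it is only carried along as an intersection. That event is in the statement precisely to enable the paper's key mechanism, which your proposal is missing: one resamples $\Gamma$ within $A_{z,j_1-2}$ as in Lemma~\ref{lem:break_loops}, so that with conditional probability at least $\pbreak$ on $\Ebreak_{z,j_1-2}$ all loop crossings of that annulus are broken and the gasket near $B(z,2^{-j_1+1})$ is preserved. On this success event, conditioning on the loops meeting the complement of $B(z,2^{-j_1+2})$ leaves a genuine \clekp{} in a random simply connected domain containing $B(z,2^{-j_1+1})$, whose internal metric there is decoupled from the conditioning; the unconditional Lemma~\ref{le:annulus_crossing_cle} then applies, and the factor $\pbreak$ transfers the bound back. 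This is how the conditioning on $\wt{\CF}_{z,j}$ is removed without ever needing a separation event at an intermediate scale.

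Two secondary gaps compound this. First, the conditional version of Lemma~\ref{le:separation_event} that you need (exponential decay of simultaneous separation failure given $\wt{\CF}_{z,j}$) is asserted rather than proved, and it is not a statement available in the paper. Second, your treatment of the regime $j_1\le j+k$ via ``uniform absolute continuity between multichordal \clekp{} and \clekp{} away from the marked points'' does not work: under $\wt{\CF}_{z,j}$ alone the marked points of the partially explored configuration can be arbitrarily badly separated, so there is no uniform control on the relevant Radon--Nikodym derivative --- this degeneracy is exactly what the resampling argument is designed to circumvent.
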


\begin{proof}
 Let $\Gamma^\resampled$ denote the \clekp{} arising from the resampling procedure in $A_{z,j_1-2}$ in Lemma~\ref{lem:break_loops}. Let $\Fbreak_{z,j_1-2}$ be the event that all crossings of $A_{z,j_1-2}$ are broken up in $\Gamma^\resampled$, and that $B(z,2^{-j_1+1}) \cap \Upsilon_\Gamma \subseteq B(z,2^{-j_1+1}) \cap \Upsilon_{\Gamma^\resampled}$. By the definition of $\Ebreak_{z,j_1-2}$, we have
 \[ \p[\Fbreak_{z,j_1-2} \mid \Gamma] \one_{\Ebreak_{z,j_1-2}} \ge \pbreak \one_{\Ebreak_{z,j_1-2}} . \]
By the Markovian property of the \clekp{} metric, we can couple the internal metrics so that $\metres{B(z,2^{-j_1+1})}{\cdot}{\cdot}{\Gamma} = \metres{B(z,2^{-j_1+1})}{\cdot}{\cdot}{\Gamma^\resampled}$ on $B(z,2^{-j_1+1}) \cap \Upsilon_\Gamma$. Hence, if we let $\wt{G}$ be defined in the same way as $G$ but for $\Gamma^\resampled$, then
 \[ \p[\wt{G} \cap \Fbreak_{z,j_1-2} \mid \wt{\CF}_{z,j}] \ge \pbreak \p[G \cap \Ebreak_{z,j_1-2} \mid \wt{\CF}_{z,j}] . \]
 On the other hand, on the event $\Fbreak_{z,j_1-2}$, if we condition on the CLE loops that intersect the complement of $B(z,2^{-j_1+2})$, then the remainder is a \clekp{} in a random region $D$ containing $B(z,2^{-j_1+1})$, and the internal metric within $B(z,2^{-j_1+1})$ is conditionally independent of the internal metric outside $D$. Therefore by Lemma~\ref{le:annulus_crossing_cle} we have
 \[ \p[\wt{G} \cap \Fbreak_{z,j_1-2} \mid \wt{\CF}_{z,j}] \lesssim M^{-b} \]
 concluding the proof.
\end{proof}

\begin{proof}[Proof of Lemma~\ref{le:annulus_crossing_conditional}]
 By the Markovian property of the metric, conditioning on the internal metric outside $B(z,3\cdot 2^{-j})$ does not affect the conditional law, so the only extra conditioning compared to Lemma~\ref{le:annulus_crossing_conditional_wo_lp} is on the link pattern $\alpha^*_{z,j}$. But on the event $\Esep_{z,j}$, by Proposition~\ref{pr:link_probability}, the conditional probability of each link pattern is bounded from below by a positive constant that depends only on $\dsep$. The result follows.
\end{proof}

\begin{lemma}\label{le:loop_crossings_cond}
Suppose that $B(z,2^{-j+2}) \subseteq \D$. Let $j' \ge j+2$, and $G$ denote the event that the total number of crossings of $A_{z,j'}$ by all loops of $\Gamma$ is at most $M$. Then $\p[G^c] = o^\infty(M^{-1})$ and $\p[ G^c \cap \Ebreak_{z,j'-2} \mid \CF_{z,j} ] \one_{\Esep_{z,j}} = o^\infty(M^{-1})$ where the implicit constants do not depend on $z,j,j'$. 
\end{lemma}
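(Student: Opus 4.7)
The unconditional estimate $\p[G^c] = o^\infty(M^{-1})$ is an immediate application of Lemma~\ref{le:loop_crossings_tail} to $\Gamma$: although $\Gamma$ is nested rather than a single copy of $\CLE_{\kappa'}$, the annulus $A_{z,j'}$ sits inside $\D$, and applying Lemma~\ref{le:loop_crossings_tail} level-by-level through the nesting (a finite union bound of superpolynomially decaying tails is itself superpolynomial) gives the claim. So the real work is the second bound.

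For the conditional estimate I will mirror the strategy used in Lemmas~\ref{le:annulus_crossing_conditional_wo_lp}--\ref{le:annulus_crossing_conditional}. The event $G^c \cap \Ebreak_{z,j'-2}$ is measurable with respect to $\Gamma$, so by the Markovian property of $\met{\cdot}{\cdot}{\Gamma}$ the conditioning on $\metres{\C\setminus B(z,3\cdot 2^{-j})}{\cdot}{\cdot}{\Gamma}$ in $\CF_{z,j}$ plays no role, and by Proposition~\ref{pr:link_probability} the conditioning on $\alpha^*_{z,j}$ can be dropped on the event $\Esep_{z,j}$ at the cost of a multiplicative constant depending only on $\dsep$. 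Thus it suffices to show
\[
\p[G^c \cap \Ebreak_{z,j'-2} \mid \wt{\CF}_{z,j}] = o^\infty(M^{-1}).
\]
To exploit the break event I next consider the resampling $\Gamma^\resampled$ within $A_{z,j'-3}$ from Lemma~\ref{lem:break_loops}. Since $\p[\Fbreak_{z,j'-2} \mid \Gamma]\one_{\Ebreak_{z,j'-2}} \ge \pbreak \one_{\Ebreak_{z,j'-2}}$, one obtains $\pbreak\p[G^c \cap \Ebreak_{z,j'-2} \mid \wt{\CF}_{z,j}] \le \p[G^c \cap \Fbreak_{z,j'-2} \mid \wt{\CF}_{z,j}]$, and it is enough to bound the right-hand side.

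On $\Fbreak_{z,j'-2}$, no loop of $\Gamma^\resampled$ crosses $A_{z,j'-3}$, and the loops of $\Gamma$ agree with those of $\Gamma^\resampled$ in every connected component of $\C\setminus\bigcup\CC$ that is not surrounded by a loop in $\CC$ and meets $B(z,2^{-j'+2})$, where $\CC$ denotes the loops of $\Gamma$ that cross $A_{z,j'-3}$. I split the loops of $\Gamma$ that cross $A_{z,j'}$ into three contributions: (i) loops in $\CC$, which contribute at most $|\CC|$; (ii) loops in a non-surrounded component meeting $B(z,2^{-j'+2})$, which coincide with loops of $\Gamma^\resampled$ there and therefore by the domain Markov property form a fresh (nested) $\CLE_{\kappa'}$ in a random simply connected domain containing $B(z,2^{-j'+2})$, to which Lemma~\ref{le:loop_crossings_tail} applies; and (iii) loops in a component surrounded by some $\ell \in \CC$, which are nested CLE loops inside $\ell$, again conditionally independent and controlled by Lemma~\ref{le:loop_crossings_tail}. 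Finally $|\CC|$ itself carries an $o^\infty(M^{-1})$ tail either by running the same break/resampling argument one scale further out, or by using the multichordal $\CLE_{\kappa'}$ description of $\Gamma$ conditional on $\wt{\CF}_{z,j}$ provided by Theorem~\ref{thm:cle_partially_explored}.

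The principal obstacle is the bookkeeping in step (iii): after summing over $\ell\in\CC$, the overall tail is the tail of a product-like combination of $|\CC|$ with the per-$\ell$ crossing counts, and I need to show the result is still $o^\infty(M^{-1})$ uniformly in $z,j,j'$. This works because all of the participating tails are superpolynomial (so one can afford to split $M$ into a slowly growing power such as $M^{1/2}$ for each factor and take a union bound), and the implicit constants are controlled by Lemma~\ref{le:loop_crossings_tail} independently of the positions of the loops in $\CC$.
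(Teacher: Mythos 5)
Your reduction steps --- the unconditional bound from Lemma~\ref{le:loop_crossings_tail}, dropping the metric conditioning via the Markovian property, dropping the link-pattern conditioning on $\Esep_{z,j}$ via Proposition~\ref{pr:link_probability}, and then using the break/resampling event to reduce to a plain \clekp{} in a random domain containing $B(z,2^{-j'+2})$ --- are exactly the paper's argument (the paper simply refers back to the proofs of Lemmas~\ref{le:annulus_crossing_conditional_wo_lp} and~\ref{le:annulus_crossing_conditional}). Where you go astray is in the transfer from $\Gamma$ to $\Gamma^\resampled$. Your case (i) is handled incorrectly: a single loop of $\CC$ can cross $A_{z,j'}$ arbitrarily many times, so the crossings of $A_{z,j'}$ contributed by loops of $\CC$ are not bounded by $\abs{\CC}$, and a tail bound on $\abs{\CC}$ does not control this contribution. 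As written, that step is a genuine gap, and your treatment of case (iii) is only a sketch.

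The reason the paper needs none of your cases (i)--(iii) is that the transfer is exact. The resampling for $\Ebreak_{z,j'-2}$ takes place inside the annulus $A_{z,j'-3}=A(z,2^{-j'+2},2^{-j'+3})$: each resampling step replaces $\Gamma_\inside^{*,V,U}$, which lives in $V^{*,U}\subseteq V\subseteq A_{z,j'-3}$, and leaves $\Gamma_\outside^{*,V,U}$ untouched. Hence the loop traces of $\Gamma$ and of $\Gamma^\resampled$ restricted to $\ol{B}(z,2^{-j'})\supseteq\ol{A}_{z,j'}$ coincide arc by arc; only the way these arcs are hooked together inside $A_{z,j'-3}$ (and hence the global loop structure) changes. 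Since the total number of crossings of $A_{z,j'}$ is a function of the collection of loop arcs in $\ol{A}_{z,j'}$ alone, it is the same random variable for $\Gamma$ and for $\Gamma^\resampled$; that is, $G^c$ for $\Gamma$ coincides with $G^c$ for $\Gamma^\resampled$, so your displayed inequality already reduces everything to $\Gamma^\resampled$. On $\Fbreak_{z,j'-2}$, conditioning on the loops of $\Gamma^\resampled$ that meet the complement of $B(z,2^{-j'+3})$ leaves a \clekp{} in a random domain containing $B(z,2^{-j'+2})\supseteq A_{z,j'}$, and Lemma~\ref{le:loop_crossings_tail} (which counts all nested loops) finishes the proof. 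The distinction in Lemma~\ref{lem:break_loops} between shared and non-shared loops matters for events about gasket connectivity and admissible paths, as in Lemma~\ref{le:annulus_crossing_conditional_wo_lp}, but not for the pure crossing count considered here.
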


\begin{proof}
 Without the conditioning, this is a consequence of Lemma~\ref{le:loop_crossings_tail}. The generalization to the conditional probability goes by the exact same argument as in the proof of Lemma~\ref{le:annulus_crossing_conditional}.
\end{proof}

\begin{proof}[Proof of Lemma~\ref{le:ball_crossing_one}]
Let $b > 1$ be given. By Lemmas~\ref{lem:break_loops} and~\ref{le:separation_event}, we can choose $\dsep>0$, $\pbreak > 0$, and $c_0>0$ such that the probability that $\Esep_{z,j} \cap \Ebreak_{z,j}$ occurs for less than $1/2$ fraction of the annuli $A_{z,j}$, $j\in K$, is $O(e^{-b\abs{K}})$. There are at most $2^{|K|}$ choices for the scales $K' \subseteq K$ on which the events occur. Since we can take a union bound over all choices, we will fix a particular choice $K' \subseteq K$ and restrict to the event that $E_{K'} = \bigcap_{j\in K'} \Esep_{z,j} \cap \Ebreak_{z,j}$ occurs.

Fix $\zeta \in (0,\ddouble/2)$. For $M>0$ and each $j' \ge j+3$, let $G^{j}_{j'}$ be the event that
\begin{itemize}
\item for each admissible path $\gamma \subseteq A(z,2^{-j'-1},2^{-j'+1})$ and $w_1,w_2 \in \gamma$ there exists an admissible path $\wt{\gamma}$ from $w_1$ to $w_2$ with $\wt{\gamma} \subseteq A(z,2^{-j'-2},2^{-j'+2})$ and
\[ \lmett{\wt{\gamma}} \le M2^{\zeta(j'-j)}\median[2^{-j'}]{} \le M2^{-\zeta(j'-j)}\median[2^{-j}]{} \]
(where the last inequality follows from Proposition~\ref{pr:quantiles_metric}),
\item the total number of crossings of $A_{z,j}$ by all loops is at most $M2^{a(j'-j)}$.
\end{itemize}
Further, let
\[ \ol{G}^{j} := \bigcap_{j' \ge j+3} G^{j}_{j'} . \]
By Lemmas~\ref{le:annulus_crossing_conditional} and~\ref{le:loop_crossings_cond}, for each $j \in K'$ we have
\begin{equation}\label{eq:subsequent_annulus}
\p[(G^{j}_{j'})^c \cap E_{K'} \mid \CF_{z,j}] = O(M^{-b}e^{-b(j'-j)}) .
\end{equation}
Hence, by taking a union bound over $j' \geq j$ hence summing the right hand side of~\eqref{eq:subsequent_annulus} over $j' \geq j$, we can choose $M$ so that
\begin{equation*}
\p[(\ol{G}^{j})^c \cap E_{K'} \mid \CF_{z,j}] \le e^{-b} .
\end{equation*}
We claim that
\begin{equation}\label{eq:successful_streak}
\p\!\left[ \left( \bigcup_{j\in K'} \ol{G}^{j} \right)^c \cap E_{K'} \right] = O(e^{-b|K'|}) .
\end{equation}
We argue that on the event $\ol{G}^{j}$ the statement of Lemma~\ref{le:ball_crossing_one} holds with $j+4$ in place of $j$ and $K+4$ in place of $K$. Indeed, let $\gamma \subseteq B(z,2^{-j-4})$ be an admissible path. We decompose $\gamma$ into its initial segment, its final segment, and inbetween segments $\gamma_l$ where each $\gamma_l$ goes from $\partial B(z,2^{-j'})$ for some $j' > j+4$ to its next hitting point of $\partial A(z,2^{-j'-1},2^{-j'+1})$. For each such $j'$ and each corresponding segment $\gamma_l$, there is an admissible path $\wt{\gamma}_l$ with the same starting point and endpoint such that $\lmett{\wt{\gamma}_l} \le M2^{-\zeta(j'-j)}\median[2^{-j}]{}$. Similarly, if two such segments $\gamma_l$, $\gamma_{l'}$, $l\le l'$, are connected within $A(z,2^{-j'-1},2^{-j'+1}) \cap \Upsilon_\Gamma$, then there is an admissible path $\wt{\gamma}_l$ going from the first point of $\gamma_l$ to the last point of $\gamma_{l'}$ with $\lmett{\wt{\gamma}_l} \le M2^{-\zeta(j-j_0)}\median[2^{-j_0}]{}$. This bypasses all the segments of $\gamma$ between $\gamma_l$ and $\gamma_{l'}$. Note that if both $\gamma_l$, $\gamma_{l'}$ go from $\partial B(z,2^{-j'})$ to $\partial B(z,2^{-j'-1})$ (resp.\ to $\partial B(z,2^{-j'+1})$), then they are connected within $A_{z,j'}$ (resp.\ $A_{z,j'-1}$) unless there is a loop crossing the annulus that separates them. Since on the event $\ol{G}^{j}$ for each $j' \ge j+3$ the total number of crossings of $A_{z,j'}$ by loops is at most $M2^{a(j'-j)}$, the number of such segments $\gamma_l$ that are not connected is at most $2M2^{a(j-j_0)}$. Hence, we can find for each $j' \ge j+4$ a collection of at most $O(M2^{a(j'-j)})$ such paths $\wt{\gamma}_l$, and concatenating them gives us an admissible path $\wt{\gamma}$ going from the first to the last point of $\gamma$ with total length at most
\[ 
\lmett{\wt{\gamma}} 
\lesssim \sum_{j'\ge j+3}M2^{a(j'-j)}M2^{-\zeta(j'-j)}\median[2^{-j}]{}
\lesssim M^2\median[2^{-j}]{} .
\]
To complete the proof, we need to show~\eqref{eq:successful_streak}. For fixed $j$, if $\ol{G}^{j}$ does not occur, then there is a first $j'\ge j+3$ where $G^{j}_{j'}$ fails. The event $G^{j}_{j'}$ is measurable with respect to $\CF_{z,j'+3}$. We will retry inside $B(z,2^{-j'-3})$. Concretely, if we let $j_1 \ge j'+3$ be the next integer that is in $K'$, then
\[ \p[ (\ol{G}^{j_1})^c \cap E_{K'} \mid \CF_{z,j'+3} ] \le e^{-b} . \]
If we do not succeed for any $j \in K'$, there must be $n_1,n_2,\ldots$ with $n_1+n_2+\cdots \ge |K'|/10$ and $j_1,j_2,\ldots \in K'$ with $j_l \ge j_{l-1}+n_l+3$ for each $l$ such that $(G^{j_l}_{j_l+n_{l+1}})^c$ occurs (since we have skipped at most $2$ scales in $K'$ upon each failure). For any such collection of failure points, by~\eqref{eq:subsequent_annulus},
\[ \p\!\left[ \bigcap_l (G^{j_l}_{j_l+n_{l+1}})^c \cap E_{K'} \right] \le e^{-b(n_1+n_2+\ldots)} \le e^{-b|K'|} . \]
Since there are at most $2^{|K'|}$ choices for such sequences $(n_l)$, we obtain~\eqref{eq:successful_streak} via a union bound (since we can assume $b$ is large).
\end{proof}

\section{Bi-Lipschitz equivalence}
\label{sec:bi_lip}

In this section we suppose that we have the setup described at the beginning of Section~\ref{se:main_results}, and $\met{\cdot}{\cdot}{\Gamma}$ and $\mett{\cdot}{\cdot}{\Gamma}$ are two weak geodesic \clekp{} metrics coupled so that they are conditionally independent given $\Gamma$. The conditional independence is useful so that (in the setup of Section~\ref{se:setup_pf}) the conditional law of $\met{\cdot}{\cdot}{\Gamma}$ given $\CF_{z,j}$ is not affected by further conditioning on $\mettres{\C \setminus B(z,3\cdot 2^{-j})}{\cdot}{\cdot}{\Gamma}$, and vice versa. Note that since $\met{\cdot}{\cdot}{\Gamma}$ takes values in a Polish space, its regular conditional law given $\Gamma$ exists, hence we can couple $\met{\cdot}{\cdot}{\Gamma}$, $\mett{\cdot}{\cdot}{\Gamma}$ this way.

We assume further that there is a constant $c>1$ such that
\begin{equation}\label{eq:same_medians_ass}
c^{-1}\median[\delta]{} \le \mediant[\delta]{} \le c\,\median[\delta]{}
\quad\text{for all } \delta \in (0,1]
\end{equation}
where $\median[\delta]{}, \mediant[\delta]{}$ are the medians for the two metrics recalled above Proposition~\ref{pr:quantiles_metric}. Our goal is to show they are a.s.\ bi-Lipschitz equivalent with deterministic constants.

\begin{proposition}\label{pr:bilipschitz}
In the setup above, assume~\eqref{eq:same_medians_ass}. There exists a deterministic constant $M$ (depending on the laws of $\met{\cdot}{\cdot}{\Gamma}$, $\mett{\cdot}{\cdot}{\Gamma}$) such that almost surely
\[ \mett{x}{y}{\Gamma} \le M\met{x}{y}{\Gamma} \quad\text{for all}\quad x,y \in \Upsilon_\Gamma .\]
\end{proposition}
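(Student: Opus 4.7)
The plan is to cover the CLE gasket by Euclidean balls at scales where both metrics $\Fd$ and $\wt{\Fd}$ simultaneously exhibit typical behavior, and to replace a $\Fd$-geodesic piece by piece with $\wt{\Fd}$-short paths obtained from those good scales. The three main inputs are Proposition~\ref{pr:ball_crossing} applied separately to $\Fd$ and to $\wt{\Fd}$, the conditional independence of $\Fd$ and $\wt{\Fd}$ given $\Gamma$ (which lets us combine the two separate good events into a single one), and the median comparability~\eqref{eq:same_medians_ass} (which renders both upper bounds proportional to a common quantity $\median[2^{-j}]{}$).

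First I would fix $b > 2$ and apply Proposition~\ref{pr:ball_crossing} with this $b$ to each metric. For every $z \in \D$ and every finite $K \subseteq \N$ with $B(z,2^{-\min K}) \subseteq \D$, each application produces an event of probability $1 - O(e^{-b|K|})$ on which the short crossing property holds for at least a $9/10$ fraction of $j \in K$. Using the conditional independence of $\Fd$ and $\wt{\Fd}$ given $\Gamma$, their intersection still has probability $1 - O(e^{-b|K|})$, and on this intersection at least a $4/5$ fraction of $j \in K$ satisfy a combined good event $G_{z,j}$: any admissible crossing of $B(z,2^{-j})$ admits replacements of length at most $M_1 \median[2^{-j}]{}$ simultaneously for both metrics (with $M_1$ absorbing the constant from~\eqref{eq:same_medians_ass}).

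Next, take a $2^{-J}$-net $Z_J \subseteq \D$ of size $O(2^{2J})$ and set $K_J = \{J+1,\ldots,2J\}$. A union bound over $Z_J$ and Borel--Cantelli along $J = 1,2,\ldots$ then give, almost surely for all $J$ large enough, that every $z \in Z_J$ admits some scale $j(z) \in K_J$ with $G_{z,j(z)}$ occurring. Given a $\Fd$-geodesic $\gamma$ from $x$ to $y$, I would then construct a $\wt{\Fd}$-admissible alternative iteratively: starting from $\gamma(0) = x$, pick the nearest net point $z_1$, let $j_1 = j(z_1)$, let $\tau_1$ be the first exit time of $\gamma$ from $B(z_1, 2^{-j_1})$, and replace $\gamma|_{[0,\tau_1]}$ with a $\wt{\Fd}$-admissible path in $B(z_1, 2^{-j_1+1})$ of $\wt{\Fd}$-length at most $M_1 \median[2^{-j_1}]{}$ provided by $G_{z_1,j_1}$. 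Iterating from $\gamma(\tau_1)$ and concatenating yields a $\wt{\Fd}$-admissible path from $x$ to $y$.

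The main obstacle is to bound the total $\wt{\Fd}$-length $M_1 \sum_i \median[2^{-j_i}]{}$ by a deterministic constant times $\Fd(x,y) = \lmet{\gamma}$. Since any subpath of a $\Fd$-geodesic is again a $\Fd$-geodesic, the $\Fd$-length of each replaced segment equals $\Fd(\gamma(\tau_{i-1}), \gamma(\tau_i))$, so it suffices to establish a matching \emph{lower} bound $\Fd(\gamma(\tau_{i-1}), \gamma(\tau_i)) \gtrsim \median[2^{-j_i}]{}$ at each good scale. I expect this to come from augmenting the definition of $G_{z,j}$ with the analogous lower-quantile estimates for $\Fd$-distances across annuli from \cite{amy2025tightness}, using the same independence-across-scales and resampling machinery that underlies Proposition~\ref{pr:ball_crossing}. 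With this lower bound in hand, the telescoping comparison $\sum_i \median[2^{-j_i}]{} \lesssim \sum_i \Fd(\gamma(\tau_{i-1}), \gamma(\tau_i)) = \lmet{\gamma}$ yields the proposition with a deterministic constant $M$ proportional to $M_1$.
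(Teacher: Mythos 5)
Your proposal follows essentially the same route as the paper: Proposition~\ref{pr:ball_crossing} for the upper bound on $\wt{\Fd}$-lengths of detours at a positive fraction of scales, a matching lower bound on $\Fd$-lengths of annulus crossings at good scales, a union bound over a net plus Borel--Cantelli, and then chaining along a $\Fd$-geodesic (the paper runs the chaining as a supremum/contradiction argument rather than an explicit iteration, but this is cosmetic; also, applying Proposition~\ref{pr:ball_crossing} to $\Fd$ as well as $\wt{\Fd}$ is harmless but unnecessary). Two caveats. First, the step you flag as the main obstacle --- the lower bound $\met{\gamma(\tau_{i-1})}{\gamma(\tau_i)}{\Gamma} \gtrsim \median[2^{-j_i}]{}$ --- is not available off the shelf from \cite{amy2025tightness}: the paper has to prove it in this section (Lemmas~\ref{le:len_lb_conditional}--\ref{le:lb_fl_refl}), via a resampling and flow-line detection argument that reduces an arbitrary annulus crossing to the two-flow-line configuration in which the quantile comparison of Proposition~\ref{pr:quantiles_metric} applies. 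You have correctly identified the needed statement and the right toolkit, but this is the bulk of the work preceding the proposition, not a citation. Second, your parameters are inconsistent: with a $2^{-J}$-net and scales $j \in \{J+1,\ldots,2J\}$, the nearest net point $z_1$ can lie at distance of order $2^{-J}$ from the current point of the geodesic, which exceeds every ball radius $2^{-j_1} \le 2^{-J-1}$ you use; the current point need not lie in $B(z_1,2^{-j_1-1})$, so the segment up to the exit time need not cross the annulus $A_{z_1,j_1}$ and the lower bound does not apply. The fix is the paper's choice: a net whose spacing is much smaller than the smallest admissible scale (a $\delta^3$-net with scales $2^{-j}\in[\delta^2,\delta]$), with $b$ taken large enough to beat the union bound over the correspondingly larger net.
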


The idea to prove Proposition~\ref{pr:bilipschitz} is that with overwhelming probability, for a positive fraction of sufficiently small scales the lengths of geodesic crossings of small balls under both metrics are comparable to their median. We have already proved an upper bound for the geodesic lengths in Proposition~\ref{pr:ball_crossing}. We now give a corresponding lower bound. Recall the $\sigma$-algebra $\CF_{z,j}$ and the events $\Esep_{z,j}$, $\Ebreak_{z,j}$ (depending on $\dsep>0$, $\pbreak>0$) defined in Section~\ref{se:setup_pf}.

\begin{lemma}\label{le:len_lb_conditional}
Suppose that $\met{\cdot}{\cdot}{\Gamma}$ is a weak geodesic \clekp{} metric. Let $d_{z,j}$ denote the infimum of $\lmet{\gamma}$ over all admissible crossings $\gamma$ of $A_{z,j}$. 
For every $\dsep>0$, $\pbreak>0$, and $q > 0$ there exists $m > 0$ such that
\[ \p[ d_{z,j} < m\,\median[2^{-j}]{} ,\, \Ebreak_{z,j-1} \mid \CF_{z,j} ] \one_{\Esep_{z,j}} \le q \]
for each $z,j$ with $B(z,2^{-j+2}) \subseteq \D$.
\end{lemma}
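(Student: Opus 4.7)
The argument is the lower-bound counterpart to Lemma~\ref{le:annulus_crossing_conditional} and follows the same three-step outline: an unconditional CLE$_{\kappa'}$ estimate, a resampling reduction, and a link-pattern adjustment.

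First I would establish the unconditional analog of Lemma~\ref{le:annulus_crossing_cle}: for a weak geodesic CLE$_{\kappa'}$ metric coupled with a CLE$_{\kappa'}$ in a simply connected domain $D \subseteq \D$ with $B(z,2^{-j+1}) \subseteq D$, and any $q>0$, there exists $m = m(q)>0$ such that
\[
\p[d_{z,j} < m\,\median[2^{-j}]{}] \le q .
\]
This is the natural lower-tail companion of the upper-tail estimate used in Lemma~\ref{le:annulus_crossing_cle}: the median $\median[2^{-j}]{}$ is set up in \cite{amy2025tightness} precisely so that scale-$2^{-j}$ crossings have metric length of order $\median[2^{-j}]{}$ with positive probability (see Proposition~\ref{pr:quantiles_metric} and the definition of the quantiles $\quant[\delta]{q}{}$). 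I would obtain it from an immediate variant of the arguments used for \cite[Proposition~5.17]{amy2025tightness}.

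Second, apply Lemma~\ref{lem:break_loops} to get the resampling $\Gamma^\resampled$ in $A_{z,j-2}$ associated with $\Ebreak_{z,j-1}$, together with the event $\Fbreak_{z,j-1}$ that all crossings of $A_{z,j-2}$ are broken; on $\Ebreak_{z,j-1}$ we have $\p[\Fbreak_{z,j-1}\mid\Gamma]\ge\pbreak$. Couple the two weak geodesic metrics via the Markovian property so that $\metres{B(z,2^{-j+1})}{\cdot}{\cdot}{\Gamma} = \metres{B(z,2^{-j+1})}{\cdot}{\cdot}{\Gamma^\resampled}$ on the common gasket. On $\Fbreak_{z,j-1}$, Lemma~\ref{lem:break_loops} gives loop-agreement and gasket inclusion in the component of $\C\setminus\bigcup\CC$ containing $B(z,2^{-j+1})$, so every $\Gamma$-admissible crossing of $A_{z,j}$ is also $\Gamma^\resampled$-admissible with the same length; hence $d_{z,j}^\resampled \le d_{z,j}$, where $d_{z,j}^\resampled$ is defined analogously for $\Gamma^\resampled$. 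Moreover, on $\Fbreak_{z,j-1}$, conditional on $\Gamma^\resampled$ outside $B(z,2^{-j+1})$, the restriction of $\Gamma^\resampled$ inside is a CLE$_{\kappa'}$ in a random simply connected $D \supseteq B(z,2^{-j+1})$, so Step~1 applies and yields
\[
\p[d_{z,j}^\resampled < m\,\median[2^{-j}]{},\,\Fbreak_{z,j-1} \mid \wt{\CF}_{z,j}] \le q\,\p[\Fbreak_{z,j-1} \mid \wt{\CF}_{z,j}].
\]
Combining with $d_{z,j}^\resampled \le d_{z,j}$ and the lower bound $\p[\Fbreak_{z,j-1}\mid\Gamma]\one_{\Ebreak_{z,j-1}} \ge \pbreak\one_{\Ebreak_{z,j-1}}$, this translates to $\p[d_{z,j}<m\,\median[2^{-j}]{},\,\Ebreak_{z,j-1}\mid\wt{\CF}_{z,j}] \le q/\pbreak$.

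Third, $\CF_{z,j}$ refines $\wt{\CF}_{z,j}$ only by the link pattern $\alpha^*_{z,j}$. On $\Esep_{z,j}$, Proposition~\ref{pr:link_probability} bounds each link pattern probability below by a constant $c(\dsep)>0$, so the bound from the previous step passes to $\CF_{z,j}$ at the cost of a factor $c(\dsep)^{-1}$. Choosing $q$ sufficiently small at the outset produces the claimed bound. The main obstacle will be Step~1—verifying (or extracting from \cite{amy2025tightness}) the unconditional lower-tail estimate at the level of generality needed here, with the modest buffer $B(z,2^{-j+1}) \subseteq D$. A secondary delicate point is confirming that the gasket inclusion in Lemma~\ref{lem:break_loops} really gives the inequality $d_{z,j}^\resampled\le d_{z,j}$ in the correct direction, i.e.\ that replacing crossing loops by the resampling does not insert new obstacles in $A_{z,j}$ that would invalidate $\Gamma$-admissible paths.
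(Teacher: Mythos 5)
Your Steps 2 and 3 (the resampling reduction via $\Ebreak_{z,j-1}$, $\Fbreak_{z,j-1}$ and the link-pattern adjustment on $\Esep_{z,j}$) are exactly how the paper reduces the conditional statement to an unconditional one, and your worry about the direction of the inequality $d_{z,j}^\resampled \le d_{z,j}$ is resolved by the gasket-inclusion clause of Lemma~\ref{lem:break_loops}, as in Lemma~\ref{le:annulus_crossing_conditional_wo_lp}. The problem is Step 1, which you correctly identify as the main obstacle but then dismiss as "an immediate variant of the arguments used for \cite[Proposition~5.17]{amy2025tightness}". It is not. Proposition~5.17 of \cite{amy2025tightness} is an upper-tail statement proved by \emph{constructing} short admissible paths; the lower bound requires showing that \emph{every} admissible crossing of $A_{z,j}$ is long, which is a different kind of statement and is where essentially all the work in the paper's proof lies (Lemmas~\ref{le:lb_fl}, \ref{le:lb_fl_refl} and the proof of Lemma~\ref{le:len_lb}).

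The missing ideas are the following. First, by \cite[Lemma~5.14]{amy-cle-resampling} there is a.s.\ a finite set of pivotal intersection points $w_i$ separating $\bIn A_{z,j}$ from $\bOut A_{z,j}$ in $A_{z,j}\cap\Upsilon_{\Gamma_D}$, so every admissible crossing must traverse one of finitely many regions $U_i$ bounded between two intersecting loop strands; this reduces the claim to a lower bound on $\metres{\ol U_i}{u_i}{v_i}{\Gamma_D}$ for each $i$. Second, to bound these quantities one cannot directly quote the quantiles: the random variable $D_\delta$ of~\eqref{eq:quantile_def} lives in a specific reference configuration (a pair of GFF flow lines with angle gap $\angledouble$ in $\delta\D$), so one must detect each $U_i$ by auxiliary interior flow lines started from randomly sampled points, transfer to the reference law by local absolute continuity of the GFF (with the attendant Cauchy--Schwarz loss, cf.~\eqref{eq:lb_pf}), and only then invoke Proposition~\ref{pr:quantiles_metric} to get $\quant[\delta]{q'}{} \ge m\,\median[\delta]{}$. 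Third, the uniformity in $(z,j,D)$ of the constants $n, p_1, p_2$ in this detection scheme uses Proposition~\ref{pr:mccle_gasket_tv_convergence} together with the two-sided constraint $\distE(z,\partial D)\in[2^{-j+1},2^{-j+2}]$ (your formulation only imposes $B(z,2^{-j+1})\subseteq D$, which loses the compactness in the Carath\'eodory topology needed for this uniformity). Without these ingredients the unconditional lower bound is unproved, so the proposal as written has a genuine gap at its foundational step.
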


By the same argument as in the proof of Lemma~\ref{le:annulus_crossing_conditional}, it suffices to prove the unconditional version of Lemma~\ref{le:len_lb_conditional}.

\begin{lemma}\label{le:len_lb}
 For every $q > 0$ there exists $m > 0$ such that the following is true. Let $D \subseteq \D$ be open, simply connected, and $z \in \D$, $j \in \N$ such that $\distE(z,\partial D) \in [2^{-j+1},2^{-j+2}]$. Let $\Gamma_D$ be a \clekp{} in $D$. Given $\Gamma_D$, sample the internal metric $\metres{B(z,2^{-j})}{\cdot}{\cdot}{\Gamma_D}$. Let $d_{z,j}$ denote the infimum of $\lmet{\gamma}$ over all admissible crossings $\gamma$ of $A_{z,j}$. Then
 \[ \p[ d_{z,j} < m\,\median[2^{-j}]{} ] \le q . \]
\end{lemma}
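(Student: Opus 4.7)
By translation invariance of the metric and of the $\CLE_{\kappa'}$ law, I reduce to the case $z=0$. The goal is to show $\p[d_{0,j} < m \median[2^{-j}]] \to 0$ as $m \to 0$, uniformly in $j$ and $D$ satisfying the hypothesis.

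The first step is to establish a positive-probability lower bound: I aim to produce constants $c_0, p_0 > 0$, independent of $j$ and $D$, such that $\p[d_{0,j} \geq c_0 \median[2^{-j}]] \geq p_0$. To this end I use the imaginary geometry coupling of $\CLE_{\kappa'}$ with the GFF. Pick a point $w_0 \in A(0, 5\cdot 2^{-j-3}, 3\cdot 2^{-j-2})$ and consider interior flow lines $\eta_1, \eta_2$ started at $w_0$ whose angles differ by $\angledouble$. By the bichordal $\SLE_{\kappa'}$ characterization (Proposition~\ref{pr:bichordal_sle}) combined with the absolute continuity between GFF laws in different domains, on a positive-probability event these flow lines realize, after a conformal map of $B(w_0, 2^{-j-c})$ to the disk, the precise configuration used in the definition of $\median[2^{-j-c}]$ for a fixed constant $c$. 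By the definition of $\median[\cdot]$ and the quantile comparison in Proposition~\ref{pr:quantiles_metric} (which yields $\median[2^{-j-c}] \asymp \median[2^{-j}]$), on a further positive-probability event there is a pair of intersection points of $\eta_1, \eta_2$ whose internal distance is at least $c_0 \median[2^{-j}]$. These intersection points are double points of a $\CLE_{\kappa'}$ loop $\CL$ produced by the coupling. Arranging the construction so that $\CL$ fully crosses $A_{0,j}$ forces any admissible crossing of $A_{0,j}$ to pass through the self-intersection structure of $\CL$ between the two double points, giving the stated lower bound on $d_{0,j}$.

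The second step amplifies the probability from $p_0$ to $1-q$ by running $N$ approximately independent copies of the construction at starting points $w_1, \ldots, w_N$ arranged on a circle inside $A_{0,j}$. Approximate independence is obtained via the resampling of Lemma~\ref{lem:break_loops} together with the partial-exploration and multichordal $\CLE_{\kappa'}$ machinery of Section~\ref{se:mcle}, which controls the local configurations around each $w_i$ conditionally on the CLE outside. The $w_i$ are chosen so that the union of the associated constructed loops separates the inner and outer circles of $A_{0,j}$, guaranteeing that any admissible crossing of $A_{0,j}$ passes through at least one of the local configurations. The probability that all $N$ attempts fail is then at most $(1-p_0')^N$ for a constant $p_0' > 0$ uniform in $j$ and $D$, which is below $q$ for $N$ large enough; one then sets $m = c_0$.

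The main obstacle is the topological claim in the first step: admissible paths in the non-simple $\CLE_{\kappa'}$ gasket can navigate through self-intersections of loops, so the construction must block \emph{every} short admissible crossing of $A_{0,j}$ rather than merely crossings passing near $w_0$. Coupling the interior flow lines $\eta_1, \eta_2$ with the global GFF flow line structure is central to producing a loop $\CL$ with the correct topological signature across $A_{0,j}$, and the definition of the median in terms of the supremum over all intersection pairs in $B(0,3\delta/4)$ is important here because it furnishes the needed distance bound at whichever pair of double points the admissible crossing is forced through. An analogous subtlety appears in the amplification step, where the many local constructions must be coordinated into a genuine global obstacle while retaining the approximate independence supplied by the multichordal $\CLE_{\kappa'}$ framework.
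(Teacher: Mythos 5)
There is a genuine gap, and it lies exactly where you flagged "the main obstacle": the topological claim in Step~1 is not just delicate but false as stated, and Step~2's amplification runs in the wrong direction. A single loop $\CL$ whose range crosses $A_{0,j}$ does not force every admissible crossing of the annulus through a prescribed pair of double points of $\CL$: the complement in $A_{0,j}$ of the crossing strands of one loop still contains simply connected pieces touching both $\bIn A_{0,j}$ and $\bOut A_{0,j}$, so admissible paths can simply go around $\CL$, or pass through other pinch points of the gasket. The paper does not construct an obstacle with positive probability; it uses the almost sure structure of the gasket itself, namely \cite[Lemma~5.14]{amy-cle-resampling}: a.s.\ there is a \emph{finite} set of points $w_i$ separating $\bIn A_{z,j}$ from $\bOut A_{z,j}$ in $A_{z,j} \cap \Upsilon_{\Gamma_D}$, and every admissible crossing must traverse, from end to end, a region $U_i$ bounded between the two strands meeting at some $w_i$. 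The number of such points is then bounded by $n$ except on an event of probability $q$, uniformly in $z,j,D$, via Proposition~\ref{pr:mccle_gasket_tv_convergence}.

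Given that every crossing is routed through \emph{some} bottleneck $U_i$ of its choosing, one has roughly $d_{z,j} \gtrsim \min_i \metres{\ol{U}_i}{u_i}{v_i}{\Gamma_D}$, so the event $\{d_{z,j} < m\,\median[2^{-j}]{}\}$ requires only \emph{one} bottleneck to be short. Hence you must make the failure probability of each individual bottleneck small (of order $q/n$) and take a union bound; independent repetition of a positive-probability success cannot achieve this, and your bound $(1-p_0')^N$ controls the wrong event. The paper gets the small per-bottleneck failure probability from the quantile comparison in Proposition~\ref{pr:quantiles_metric}: for any $q'>0$ one has $\quant[\delta]{q'}{} \ge m(q')\,\median[\delta]{}$ uniformly in $\delta$, so after detecting $U_i$ by a pair of interior flow lines and transferring to the reference configuration by local absolute continuity (as in Lemmas~\ref{le:lb_fl} and~\ref{le:lb_fl_refl}, at the cost of a square root and constant factors), the probability that $\metres{\ol{U}_i}{u_i}{v_i}{\Gamma_D} < m\,\median[2^{-j}]{}$ is as small as desired. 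You invoke Proposition~\ref{pr:quantiles_metric} only to produce a positive-probability lower bound, which is the wrong use of it here.
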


We prove Lemma~\ref{le:len_lb} by transferring it to the setup of \cite[Section~3.1]{amy2025tightness}, recalled in Section~\ref{se:setup_pf}, where we can use the comparability of quantiles to conclude. To keep the argument clean, we carry out the argument step by step although each step is very similar to the others.

\begin{lemma}\label{le:lb_fl}
 Let $h$ be a GFF in $\D$ with boundary values so that the following flow lines are defined. Let $\eta_1$ (resp.\ $\eta_2$) be the flow line of $h$ with angle $0$ (resp.\ $-\angledouble$) from $-i$ to $i$. For each $c>0$ and $q>0$ there is $m>0$ such that the following is true for each $\delta \in (0,1]$. Let $h^\delta = h(\delta^{-1}\cdot)$ and $\eta_1^\delta = \delta\eta_1$, $\eta_2^\delta = \delta\eta_2$. Sample a \clekp{} $\Gamma^\delta$ and the internal metric in the regions bounded between $\eta_1^\delta,\eta_2^\delta$. Let $G$ be the event that for every $x,y \in \eta_1^\delta \cap \eta_2^\delta$, if the segments of $\eta_1^\delta,\eta_2^\delta$ from~$x$ to~$y$ are contained in $B(0,\delta/2)$ and have Euclidean diameter at least $c\delta$, then
 \[
  \met{x}{y}{\Gamma^\delta} \ge m\,\median[\delta]{} .
 \]
 Then
 \[
  \p[G^c] \le q .
 \]
\end{lemma}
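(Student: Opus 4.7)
The plan is to reduce Lemma~\ref{le:lb_fl} to the quantile comparability from Proposition~\ref{pr:quantiles_metric} by coupling our flow-line setup in $\D$ to the quantile setup on $\h$ via absolute continuity.

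\emph{Setup comparison.} The scaled field $h^\delta$ has flow lines $(\eta_1^\delta,\eta_2^\delta)=(\delta\eta_1,\delta\eta_2)$ in $\delta\D$ from $-i\delta$ to $i\delta$ with angle difference $\angledouble$, which matches the quantile setup, where the corresponding flow lines come from a GFF on $\h$ mapped via $\varphi_\delta$. By the Markov property of the GFF together with a standard comparison of harmonic extensions of the different boundary data, the joint law of $(\eta_1^\delta,\eta_2^\delta,\Gamma^\delta)$ restricted to $B(0,3\delta/4)$ in our setup is mutually absolutely continuous with the corresponding restriction in the quantile setup, with Radon--Nikodym derivative bounded uniformly in $\delta$ (the two sets of boundary values differ by a bounded harmonic function on $B(0,3\delta/4)$ after rescaling, so the usual Cameron--Martin shift is of bounded $L^2$-norm). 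Absorbing the resulting constant into the choice of $m$, I may work entirely in the quantile setup.

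\emph{From supremum to infimum.} In the quantile setup, Proposition~\ref{pr:quantiles_metric} gives $\quant[\delta]{q'}{}\ge c(q')\median[\delta]{}$ for a positive constant $c(q')$, hence $\p[D_\delta\ge c(q')\median[\delta]{}]\ge 1-q'$. This controls the supremum of $\met{x}{y}{\Gamma^\delta}$ over $\intpts{\delta}$, whereas the lemma requires a lower bound on the infimum over pairs with Euclidean diameter $\ge c\delta$. To bridge this gap, I cover $B(0,\delta/2)$ by a bounded number $N(c)$ of sub-balls $B_1,\ldots,B_N$ of radius $c\delta/10$, chosen so that any two intersection points at Euclidean distance $\ge c\delta$ lie in distinct sub-balls with centers at Euclidean distance $\asymp c\delta$. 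Using translation invariance of the weak geodesic \clekp{} metric and re-applying the absolute continuity argument at scale $\asymp c\delta$ around each pair $(B_i,B_j)$, I invoke the quantile lower bound on auxiliary quantile setups to show that with probability $\ge 1-q$ the metric distance between any intersection point in $B_i$ and any intersection point in $B_j$ is at least $m\,\median[\delta]{}$ for a suitable $m=m(c,q)$. A union bound over the $O(N(c)^2)$ sub-ball pairs completes the argument.

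\emph{Main obstacle.} The delicate step is the passage from the supremum-type bound on $D_\delta$ to the infimum-type bound required by the lemma: the quantile comparability controls $D_\delta$ directly, not individual pair distances. My plan is to convert this into a pointwise lower bound by re-applying the quantile estimate to localized setups centered around each pair of sub-balls. For this to work, the Radon--Nikodym factors from the absolute continuity argument must remain bounded uniformly in both the scale and the location of centering, and the number of localized setups to union-bound over must depend only on $c$ and not on $\delta$. Verifying these uniformities, while keeping track of the comparability $\median[c\delta/10]{}\asymp\median[\delta]{}$ that follows from Proposition~\ref{pr:quantiles_metric}, is the main technical content of the argument.
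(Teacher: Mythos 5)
Your overall strategy---transfer to the quantile setup by absolute continuity and then invoke Proposition~\ref{pr:quantiles_metric}---is the right starting point, and you correctly identify the central difficulty: $D_\delta$ is a \emph{supremum} over $\intpts{\delta}$, while the lemma demands a lower bound for \emph{every} pair. However, your proposed bridge does not close this gap. Covering $B(0,\delta/2)$ by sub-balls and ``invoking the quantile lower bound on auxiliary quantile setups'' at scale $\asymp c\delta$ only yields, in each auxiliary setup, a lower bound on the supremum of distances over the intersection pairs \emph{of that setup}; it says nothing about the distance between a \emph{particular} pair $(x,y)$ of intersection points of $\eta_1^\delta,\eta_2^\delta$ in your original configuration. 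The missing mechanism---and the actual content of the paper's proof---is to make a given bad pair become the \emph{extremal} pair of an auxiliary quantile configuration. The paper does this by sampling interior flow lines $\eta^1_{z_1},\eta^2_{z_2}$ from randomly chosen points which, with conditional probability bounded below, detect the region $U_{x,y}$, and then resampling the field outside them with the quantile-setup boundary data so that the boundary-emanating flow lines merge into the detectors. Since each intermediate intersection point is a cut point of the region between the two flow lines, the distance between the extremal pair dominates every term in the supremum defining $D_\delta$; hence a single bad pair forces $D_\delta < m\,\median[\delta]{}$ in the resampled configuration, an event of probability at most $q'$ by Proposition~\ref{pr:quantiles_metric}. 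Without some version of this detection-and-resampling step, your union bound over sub-ball pairs has nothing to apply at each pair.

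Two further issues. First, your claim that the joint law of $(\eta_1^\delta,\eta_2^\delta,\Gamma^\delta)$ \emph{restricted to} $B(0,3\delta/4)$ is mutually absolutely continuous between the two setups with Radon--Nikodym derivative bounded uniformly in $\delta$ is not justified as stated: the flow lines emanate from the boundary of $\delta\D$ and are not local functions of the field on $B(0,3\delta/4)$, so absolute continuity of the restricted fields does not directly transfer to the curves. The paper sidesteps this by applying absolute continuity only to the field, after the region of interest has been detected by \emph{interior} flow lines, and settles for a Cauchy--Schwarz bound $\p[G^\bad]\lesssim\wt{\p}[G^\bad]^{1/2}$ rather than a bounded density. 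Second, your covering reduction only treats pairs with $\abs{x-y}\gtrsim c\delta$, whereas the hypothesis is that the \emph{segments} of $\eta_1^\delta,\eta_2^\delta$ from $x$ to $y$ have Euclidean diameter at least $c\delta$; the endpoints may be Euclidean-close while the connecting segments are large, and such pairs escape your sub-ball dichotomy. This is repairable via the monotonicity $\met{x}{y}{\Gamma^\delta}\ge\met{x'}{y'}{\Gamma^\delta}$ for nested pairs of intersection points, but it needs to be addressed.
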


\begin{proof}
 The setup is very similar to \cite[Section~3.1]{amy2025tightness}, and we will use a resampling procedure to reduce it to that setup so that the claim will follow from the comparability of the quantiles.
 
 Let $q>0$ be given. First, let us notice that there are only finitely many crossings of $B(0,\delta/2)$ by $\eta_1^\delta,\eta_2^\delta$ of Euclidean diameter at least $c\delta$, therefore we can choose $N$ large enough so that the probability that there are more than $N$ such crossings is at most $q$ (which is independent of $\delta$ by scale invariance). Next, let us sample $z_1,z_2 \in B(0,\delta/2)$ randomly according to Lebesgue measure and let $\eta^1_{z_1}$ (resp.\ $\eta^2_{z_2}$) be the flow line with angle $0$ (resp.\ $-\angledouble$) starting from $z_1$ (resp.\ $z_2$) and stopped at a random stopping time before exiting $B(0,\delta/2)$ (e.g.\ sampling $Y \in \N$ randomly and stopping after it has created $Y$ subsegments of diameter $(c/2)\delta$).
 
 For each $x,y \in \eta_1^\delta \cap \eta_2^\delta$, let $U_{x,y}$ be the regions bounded between the segments of $\eta_1^\delta,\eta_2^\delta$ from $x$ to $y$. For some $p_1,p_2>0$ let $E_\delta$ be the event that for each $x,y \in \eta_1^\delta \cap \eta_2^\delta$ with $U_{x,y} \subseteq B(0,\delta/2)$ and $\diamE(U_{x,y}) \ge (c/2)\delta$, the conditional probability of the following is at least $p_1$:
 \begin{itemize}
  \item $\eta^1_{z_1},\eta^2_{z_2}$ detect $U_{x,y}$ (in the sense described at the end of Section~\ref{se:setup_pf}).
  \item If we let $\wt{h}^\delta$ be a GFF in $\delta\D \setminus (\eta^1_{z_1} \cup \eta^2_{z_2})$ with boundary values on $\partial(\delta\D)$ as described before~\eqref{eq:pmed_def} and the same values as $h^\delta$ along $\eta^1_{z_1},\eta^2_{z_2}$, and let $\wt{\eta}_1^\delta,\wt{\eta}_2^\delta$ be its corresponding flow lines from $-i\delta$ to $i\delta$, then the probability that they merge into $\eta^1_{z_1},\eta^2_{z_2}$ and do not intersect in $B(0,\delta/2)$ except on $\eta^1_{z_1},\eta^2_{z_2}$ is at least $p_2$.
 \end{itemize}
 Since all these conditions are scale-invariant, we can choose $p_1>0$ small so that $\p[E_\delta^c] \le q$ for each~$\delta$.
 
 Let $G^\bad$ be the event that if we sample $\eta^1_{z_1},\eta^2_{z_2}$ as described above and sample the internal metric in the region $\wt{U}$ bounded between $\eta^1_{z_1},\eta^2_{z_2}$, then the event in the second bullet point occurs and $\metres{\wt{U}}{\wt{x}}{\wt{y}}{\wt{\Gamma}} < m\,\median[\delta]{}$ where $\wt{x},\wt{y}$ are the first (resp.\ last) intersection point. Due to the Markovian property of the metric, we can define the internal metric in $\wt{U}$ and couple it so that it agrees with $\metres{U_{x,y}}{\cdot}{\cdot}{\Gamma}$ on the event $U_{x,y} \subseteq \wt{U}$. Indeed, the conditional law of $\metres{U_{x,y}}{\cdot}{\cdot}{\Gamma}$ depends only on $\eta_1^\delta,\eta_2^\delta$ and not on whether $\eta^1_{z_1},\eta^2_{z_2}$ detect $U_{x,y}$. Moreover, the conditional law of $\metres{\wt{U}}{\cdot}{\cdot}{\Gamma}$ depends only on the values of the GFF within $B(0,\delta/2)$. We therefore have, by the first item in the definition of $E_\delta$,
 \[ \p[G^\bad] \ge p_1\p[G^c \cap E_\delta] . \] 
 Let $\wt{\p}$ denote the law of a GFF on $\delta\D$ with boundary values as described before~\eqref{eq:pmed_def}. By local absolute continuity, $\p[G^\bad] \lesssim \wt{\p}[G^\bad]^{1/2}$ where the implicit constant does not depend on $\delta$ (see \cite[Lemma~2.18]{amy2025tightness} where it is argued that the local absolute continuity extends to the case when the metric is random). Finally, if we let $\intpts{\delta}$ be as defined above~\eqref{eq:quantile_def}, we have
 \begin{equation}\label{eq:lb_pf}
  \wt{\p}\!\left[ \sup_{(x_0,y_0)\in\intpts{\delta}}\met{x_0}{y_0}{\Gamma_\delta} < m\,\median[\delta]{} \right] \ge p_2\wt{\p}[G^\bad] .
 \end{equation}
 Let $q' = p_1^2 p_2 q^2$. By Proposition~\ref{pr:quantiles_metric}, we can find $m > 0$ so that $\inf_{\delta\in(0,1]} \frac{\quant[\delta]{q'}{}}{\median[\delta]{}} \ge m$. In particular, the left-hand side of~\eqref{eq:lb_pf} is at most $q'$, which combined with the previous comparisons leads to
 \[ \p[G^c \cap E_\delta] \lesssim p_1^{-1}p_2^{-1/2}(q')^{1/2} \lesssim q , \]
 and finally
 \[ \p[G^c] \lesssim 2q . \]
\end{proof}

\begin{lemma}\label{le:lb_fl_refl}
 Let $h$ be a GFF in $\D$ with boundary values so that the following flow lines are defined. Let $\eta_1$ be the flow line of $h$ from $-i$ to $i$. Given $\eta_1$, let $\eta_2$ be the flow line of $h$ in the components of $\D \setminus \eta_1$ to the right of $\eta_1$ from $i$ to $-i$, reflected off $\eta_1$ in the opposite direction. For each $c>0$ and $q>0$ there is $m>0$ such that the following is true for each $\delta \in (0,1]$. Let $h^\delta = h(\delta^{-1}\cdot)$ and $\eta_1^\delta = \delta\eta_1$, $\eta_2^\delta = \delta\eta_2$. Sample a \clekp{} $\Gamma^\delta$ and the internal metric in the regions bounded between $\eta_1^\delta,\eta_2^\delta$. Let $G$ be the event that for every $x,y \in \eta_1^\delta \cap \eta_2^\delta$, if the segments of $\eta_1^\delta,\eta_2^\delta$ from $x$ to $y$ are contained in $B(0,\delta/2)$ and have Euclidean diameter at least $c\delta$, then
 \[
  \met{x}{y}{\Gamma^\delta} \ge m\,\median[\delta]{} .
 \]
 Then
 \[
  \p[G^c] \le q .
 \]
\end{lemma}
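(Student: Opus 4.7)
The plan is to deduce Lemma~\ref{le:lb_fl_refl} directly from Lemma~\ref{le:lb_fl} via the duality between the two descriptions of a reflected/non-reflected pair of flow lines given by Lemma~\ref{le:reflected_fl_law}. The key observation is that the event $G$, the law of $\Gamma^\delta$, and the law of the internal metric on the region bounded between $\eta_1^\delta$ and $\eta_2^\delta$ depend only on the joint law of the \emph{traces} of $(\eta_1^\delta, \eta_2^\delta)$ (viewed as unparameterized curves together with the bounded complementary regions they enclose), and not on the direction in which these curves are traversed.

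I would first transfer the setup from $\D$ to $\h$ via a conformal transformation sending $-i \mapsto 0$ and $i \mapsto \infty$, and then apply Lemma~\ref{le:reflected_fl_law}. That lemma asserts that, modulo time reversal, the pair $(\eta_1, \ol{\eta}_2)$ consisting of the angle $0$ flow line from $0$ to $\infty$ together with the angle $0$ flow line from $\infty$ to $0$ reflected off $\eta_1$ has the same joint law as the pair $(\eta_1, \eta_2)$ consisting of the angle $0$ and angle $-\angledouble$ flow lines, both from $0$ to $\infty$. Pulling back to $\D$, the reflected pair of Lemma~\ref{le:lb_fl_refl} has the same joint trace law as the non-reflected pair of Lemma~\ref{le:lb_fl}.

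Once this identification is in place, since the connected components of $\D \setminus (\eta_1^\delta \cup \eta_2^\delta)$ are determined by the joint traces, and since $\Gamma^\delta$ together with the internal metric on the regions between $\eta_1^\delta$ and $\eta_2^\delta$ is then sampled in an identical manner in both setups, the joint law of $(\eta_1^\delta, \eta_2^\delta, \Gamma^\delta, \met{\cdot}{\cdot}{\Gamma^\delta})$ coincides across the two setups. The event $G$ is a measurable functional of this joint object that is symmetric in the roles of $x,y$ and insensitive to the orientation of $\eta_2^\delta$, so $\p[G^c]$ agrees in both setups, and the conclusion $\p[G^c]\le q$ follows from Lemma~\ref{le:lb_fl} with the same value of $m$.

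The only subtlety to verify is the compatibility of boundary conditions: in both lemma statements the boundary values of $h$ on $\partial\D$ are left implicit (``so that the following flow lines are defined''), so we may choose them to be the images under our conformal transformation of the boundary values on $\partial\h$ required for Lemma~\ref{le:reflected_fl_law} to apply (which in turn match the boundary conditions in Lemma~\ref{le:lb_fl} up to the addition of a piecewise constant harmonic function that does not affect the flow lines' traces or the induced CLE/metric). Once this bookkeeping is confirmed, the proof reduces to a one-line invocation of Lemmas~\ref{le:reflected_fl_law} and~\ref{le:lb_fl}.
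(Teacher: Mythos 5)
Your reduction via Lemma~\ref{le:reflected_fl_law} is indeed the first half of the paper's argument: for the \emph{specific} boundary data of Lemma~\ref{le:reflected_fl_law} (transferred to $\D$), the reflected pair $(\eta_1,\eta_2)$ has the same trace law as the angle-$(0,-\angledouble)$ pair of Lemma~\ref{le:lb_fl}, the event $G$ and the sampling of $\Gamma^\delta$ and the internal metric are trace functionals, and the bound follows. Up to that point your argument is fine.

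The gap is in your treatment of "the only subtlety". Lemma~\ref{le:lb_fl_refl} is stated for \emph{arbitrary} boundary values for which the flow lines are defined, and this generality is actually used later (in the proof of Lemma~\ref{le:len_lb} the outer boundary data lives on $\partial B(z,2^{-j+2})$ together with flow-line boundary data along interior flow lines $\eta_u,\eta_{u,v}$), so you are not free to "choose" the boundary values to be the ones required by Lemma~\ref{le:reflected_fl_law}. Moreover, your justification that the two choices of boundary data differ by "a piecewise constant harmonic function that does not affect the flow lines' traces" is false: the boundary data determines the weights $\ul{\rho}$ of the $\SLE_\kappa(\ul{\rho})$ flow lines, so changing it changes their laws; the laws for different admissible boundary data are only \emph{locally mutually absolutely continuous} away from the boundary, not equal. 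The missing step is precisely the transfer from the special boundary data to general boundary data, which the paper carries out by repeating the argument of the proof of Lemma~\ref{le:lb_fl}: sample interior points, grow flow lines from them that detect the relevant regions $U_{x,y}$ with uniformly positive conditional probability, and compare the resulting law with the reference law via local absolute continuity (with the attendant Cauchy--Schwarz loss, absorbed by adjusting $m$ through the quantile comparison of Proposition~\ref{pr:quantiles_metric}). Without that quantitative absolute-continuity step your proof only establishes the lemma for the one-parameter family of boundary conditions covered by Lemma~\ref{le:reflected_fl_law}.
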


\begin{proof}
 In case the boundary values of $h$ are as in Lemma~\ref{le:reflected_fl_law}, then this follows from Lemma~\ref{le:lb_fl} by the reversal symmetry described in Lemma~\ref{le:reflected_fl_law}. To transfer the result to general boundary values, we can use an analogous argument as in the proof of Lemma~\ref{le:lb_fl}.
\end{proof}

\begin{proof}[Proof of Lemma~\ref{le:len_lb}]
 The argument is similar to the proof of Lemma~\ref{le:lb_fl}, so we will be brief. Let $q>0$ be given.
 
 By \cite[Lemma~5.14]{amy-cle-resampling}, there almost surely exists a finite set of points $w_i$ that separate $\bIn A_{z,j}$ from $\bOut A_{z,j}$ in $A_{z,j} \cap \Upsilon_{\Gamma_D}$. Sample two points $u,v \in A_{z,j}$ randomly according to Lebesgue measure. Let $\eta_{u}$ be the flow line starting from $u$, and let $\eta_{u,v}$ be the flow line starting from $v$, reflected off $\eta_{u}$ in the opposite direction. For $n \in \N$, $M,p_1,p_2 > 0$, let $E$ be the event that there are a collection of at most $n$ separating points $w_i$ as described above and for each $w_i$ there is a region $U_i$ bounded between the two strands intersecting at $w_i$ such that the following hold:
 \begin{itemize}
  \item The two endpoints of $U_i$ have Euclidean distance at least $M^{-1}2^{-j}$.
  \item The conditional probability given $\Gamma_D$ is at least $p_1$ that
  \begin{itemize}
   \item $\eta_{u},\eta_{u,v}$ detect $U_i$.
   \item If we sample a GFF in $B(z,2^{-j+2}) \setminus (\eta_{u} \cup \eta_{u,v})$ with boundary values on $\partial B(z,2^{-j+2})$ as in Lemma~\ref{le:lb_fl_refl} and same values as $h$ along $\eta_{u},\eta_{u,v}$, then the probability that the flow lines of $\wt{h}$ from $z-i2^{-j+2}$ (resp.\ $z+i2^{-j+2}$) merge into $\eta_{u},\eta_{u,v}$ is at least $p_2$.
  \end{itemize}
 \end{itemize}
 Note that the event $E$ depends only on $\Gamma_D$ and not on the metric $\met{\cdot}{\cdot}{\Gamma_D}$. By Proposition~\ref{pr:mccle_gasket_tv_convergence} (and using that $\distE(z,\partial D) \in [2^{-j+1},2^{-j+2}]$), we can choose $n \in \N$, $M>0$ large enough and $p_1,p_2 > 0$ small enough so that $\p[E^c] \le q$ uniformly in the choice of $z,j$, and $D$.
 
 Following the exact same argument as in the proof of Lemma~\ref{le:lb_fl} we can find, now using Lemma~\ref{le:lb_fl_refl} as input, some $m>0$ such that
 \[ \p[ d_{z,j} < m\,\median[2^{-j}]{} ,\,E ] \le q , \]
 and hence
 \[ \p[ d_{z,j} < m\,\median[2^{-j}]{} ] \le 2q . \]
\end{proof}

\begin{proof}[Proof of Proposition~\ref{pr:bilipschitz}]
We are going to use Proposition~\ref{pr:ball_crossing} and Lemma~\ref{le:len_lb_conditional} to find a deterministic constant $M$ such that almost surely the plane is covered by small balls in which the geodesics with respect to $\met{\cdot}{\cdot}{\Gamma}$ and $\mett{\cdot}{\cdot}{\Gamma}$ have comparable length.

Applying Proposition~\ref{pr:ball_crossing} with $b=8$ (say), there is $M>1$ such that with probability $1-O(\delta^8)$ the event from Proposition~\ref{pr:ball_crossing} occurs for $\mett{\cdot}{\cdot}{\Gamma}$ on at least $9/10$ fraction of scales $j \in \{\log_2(\delta^{-1}),\ldots,\log_2(\delta^{-2})\}$. By the same argument (c.f.\ the paragraph below Proposition~\ref{pr:ball_crossing}), letting $d_{z,j}$ as in Lemma~\ref{le:len_lb_conditional}, there is $m>0$ such that with probability $1-O(\delta^8)$ we have $d_{z,j} \ge m\,\median[2^{-j}]{}$ on at least $9/10$ fraction of scales. In particular, on at least $4/5$ fraction of scales $j \in \{\log_2(\delta^{-1}),\ldots,\log_2(\delta^{-2})\}$ we have
\begin{itemize}
\item $d_{z,j} \ge m\,\median[2^{-j}]{}$, and
\item for every admissible path $\gamma \subseteq B(z,2^{-j})$ and any $w_1,w_2 \in \gamma$ there exists an admissible path $\wt{\gamma}$ from $w_1$ to $w_2$ with $\lmett{\wt{\gamma}} \le M\,\median[2^{-j}]{}$.
\end{itemize}
Consequently, with probability $1$ there is a random $\delta_0 > 0$ such that this holds for all $\delta < \delta_0$ and $z\in \delta^3\Z^2 \cap \D$. Suppose that we are on this event.

Let $\gamma\colon [0,1] \to \Upsilon_\Gamma$ be a geodesic of $\met{\cdot}{\cdot}{\Gamma}$. Let
\[ T = \sup\{ t\in [0,1] : \mett{\gamma(0)}{\gamma(t)}{\Gamma} \le \frac{M}{m}\met{\gamma(0)}{\gamma(t)}{\Gamma} \} . \]
We claim that $T=1$. Indeed, suppose that $T<1$. Then let $\delta < |\gamma(1)-\gamma(T)| \wedge \delta_0$, and let $z \in \delta^3\Z^2$ be the point closest to $\gamma(T)$. Since we are on the event described above, we can find some $\delta'=2^{-j'} \in [\delta^2,\delta]$ that satisfies both items above. Let $u = \inf\{ t \ge T : \gamma(t) \notin B(z,\delta')\}$. Since $|z-\gamma(T)| < \delta^3 < \delta'/10$, the segment $\gamma[T,u]$ crosses the annulus $A(z,\delta'/2,\delta')$, hence
\[ \mett{\gamma(T)}{\gamma(u)}{\Gamma} \le M\median[\delta']{} \le \frac{M}{m}\met{\gamma(T)}{\gamma(u)}{\Gamma} . \]
This implies $\mett{\gamma(0)}{\gamma(u)}{\Gamma} \le \frac{M}{m}\met{\gamma(0)}{\gamma(u)}{\Gamma}$, a contradiction. Therefore we have $T=1$, and we have shown that almost surely
\[ \mett{\gamma(0)}{\gamma(1)}{\Gamma} \le \frac{M}{m}\met{\gamma(0)}{\gamma(1)}{\Gamma} \]
for every $\met{\cdot}{\cdot}{\Gamma}$-geodesic $\gamma$.
\end{proof}

\section{Metrics are equivalent}
\label{sec:metrics_equivalent}

We assume the same setup as in Section~\ref{sec:bi_lip}. The goal of this section is to prove the following result.

\begin{proposition}\label{pr:uniqueness}
Suppose we have the setup above, and assume~\eqref{eq:same_medians_ass}. Then there exists a deterministic constant $c_1 > 0$ such that $\met{\cdot}{\cdot}{\Gamma} = c_1 \mett{\cdot}{\cdot}{\Gamma}$ almost surely. Moreover, $\met{\cdot}{\cdot}{\Gamma}$ is almost surely a measurable function of $\Gamma$.
\end{proposition}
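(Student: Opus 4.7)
My first step would be to apply Proposition~\ref{pr:bilipschitz} both to the pair $(\met{\cdot}{\cdot}{\Gamma},\mett{\cdot}{\cdot}{\Gamma})$ and to the reverse pair; since the hypothesis~\eqref{eq:same_medians_ass} is symmetric, both applications are valid, and together they produce the optimal deterministic constants
\[
c_1 := \sup\{c \ge 0 : c\,\met{\cdot}{\cdot}{\Gamma} \le \mett{\cdot}{\cdot}{\Gamma} \text{ a.s.}\}, \qquad c_2 := \inf\{c \ge 0 : \mett{\cdot}{\cdot}{\Gamma} \le c\,\met{\cdot}{\cdot}{\Gamma} \text{ a.s.}\},
\]
which lie in $(0,\infty)$ with $c_1 \le c_2$, both extrema being attained by countable monotone sequences. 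The first statement reduces to $c_1 = c_2$, so I argue by contradiction: suppose $c_1 < c_2$ and fix $\epsilon > 0$ so small that $c_1 + 4\epsilon < c_2$.

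\textbf{Localization and independence across scales.} For $z \in \D$ and $j \in \N$ with $B(z,2^{-j+2}) \subseteq \D$, define $G^-_{z,j}$ to be the event that (i) $\Esep_{z,j} \cap \Ebreak_{z,j}$ holds; (ii) the crossing-length estimates of Proposition~\ref{pr:ball_crossing} and Lemma~\ref{le:len_lb_conditional} hold for both $\met{\cdot}{\cdot}{\Gamma}$ and $\mett{\cdot}{\cdot}{\Gamma}$ across $A_{z,j}$, so infima of admissible crossing lengths are $\asymp \median[2^{-j}]{}$; and (iii) there exists an admissible crossing $\sigma$ of $A_{z,j}$ with $\lmett{\sigma} \le (c_1 + \epsilon)\,\lmet{\sigma}$ and $\lmet{\sigma} \asymp \median[2^{-j}]{}$. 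The crucial input is a uniform lower bound $\p[G^-_{z,j} \mid \CF_{z,j}]\,\one_{\Esep_{z,j}} \ge p_0 > 0$ for all admissible $z,j$. Items (i)--(ii) contribute uniform positive probability by the cited results. For (iii), the optimality of $c_1$ implies that the event $\{\exists x,y : \mett{x}{y}{\Gamma} < (c_1 + \epsilon/2)\,\met{x}{y}{\Gamma}\}$ has positive probability; the $\mett$-geodesic between such $x,y$ has $\lmett{\gamma}/\lmet{\gamma} \le c_1 + \epsilon/2$, and a dyadic partition of $\gamma$ furnishes a subsegment crossing some annulus with local ratio below $c_1 + \epsilon$. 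Combining translation invariance with the total-variation continuity of the multichordal \clekp{} law under domain perturbations (Proposition~\ref{pr:mccle_gasket_tv_convergence}), together with the conditional independence of $\met{\cdot}{\cdot}{\Gamma}$ and $\mett{\cdot}{\cdot}{\Gamma}$ given $\Gamma$ and the Markov property of both metrics, this event can be transferred to any target annulus $A_{z,j}$ with uniform positive conditional probability given $\CF_{z,j}$ on $\Esep_{z,j}$. With $p_0$ in hand, the martingale/union-bound argument used just after Proposition~\ref{pr:ball_crossing} (invoking Lemmas~\ref{le:separation_event} and~\ref{lem:break_loops}) yields: with probability $1 - o^\infty(\delta)$, for every $z \in \delta^3 \Z^2 \cap \D$ there is a scale $2^{-j} \in [\delta^2, \delta]$ at which $G^-_{z,j}$ holds.

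\textbf{Contradiction.} Given any $\met{\cdot}{\cdot}{\Gamma}$-geodesic $\gamma$ between $x,y$, at each good scale traversed I would replace the crossing of $\gamma$ through $A_{z,j}$ by the short crossing $\sigma$ from item (iii), saving in $\mett{\cdot}{\cdot}{\Gamma}$-length at least $(c_2 - c_1 - 2\epsilon)\,\lmet{\sigma} \gtrsim (c_2 - c_1 - 2\epsilon)\,m\,\median[2^{-j}]{}$ relative to the trivial bound $\lmett{\gamma} \le c_2\,\lmet{\gamma}$. Iterating exactly as in the final paragraph of the proof of Proposition~\ref{pr:bilipschitz} and accumulating these savings over a positive fraction of the traversed scales produces a competitor $\wt{\gamma}$ with $\lmett{\wt{\gamma}} \le (c_2 - \epsilon')\,\lmet{\gamma}$ for some deterministic $\epsilon' > 0$. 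Hence $\mett{x}{y}{\Gamma} \le (c_2 - \epsilon')\,\met{x}{y}{\Gamma}$ for all $x,y$ a.s., contradicting the minimality of $c_2$. Thus $c_1 = c_2$ and $\met{\cdot}{\cdot}{\Gamma} = c_1\,\mett{\cdot}{\cdot}{\Gamma}$ a.s.

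\textbf{Measurability and main obstacle.} To obtain $\Gamma$-measurability, take two metrics $\Fd_1, \Fd_2$ each distributed as $\met{\cdot}{\cdot}{\Gamma}$ but conditionally independent given $\Gamma$. Both are weak geodesic \clekp{} metrics satisfying~\eqref{eq:same_medians_ass} with constant $c = 1$, so the uniqueness statement just proved supplies a deterministic $c > 0$ with $\Fd_1(x,y;\Gamma) = c\,\Fd_2(x,y;\Gamma)$ a.s.\ for all $x,y$. Fixing a pair of points chosen measurably from $\Gamma$ (e.g.\ in a countable dense sequence in $\Upsilon_\Gamma$), the values $\Fd_1(x,y;\Gamma)$ and $\Fd_2(x,y;\Gamma)$ are conditionally i.i.d.\ given $\Gamma$ with deterministic ratio $c$, forcing $c = 1$ and each value to be a.s.\ constant given $\Gamma$. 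Hence $\met{\cdot}{\cdot}{\Gamma}$ is a.s.\ a measurable function of $\Gamma$. The principal obstacle throughout is the uniform conditional lower bound $p_0 > 0$ in item (iii), which requires converting the global optimality of $c_1$ into a scale-uniform local positive-probability statement using Proposition~\ref{pr:mccle_gasket_tv_convergence} together with the Markov property and conditional independence of the two metrics.
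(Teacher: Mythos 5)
Your skeleton (reduce to optimal constants $c_1\le c_2$ via Proposition~\ref{pr:bilipschitz}, derive a contradiction from $c_1<c_2$ by finding annuli where crossings admit shortcuts, then get measurability from two conditionally independent copies) is the paper's strategy, and your measurability argument at the end is essentially the intended one. But there are two genuine gaps in the core step, and they are precisely where the bulk of Section~\ref{sec:metrics_equivalent} lives.

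First, your item (iii) only produces \emph{some} admissible crossing $\sigma$ of $A_{z,j}$ with a good ratio, and in the contradiction step you propose to ``replace the crossing of $\gamma$ through $A_{z,j}$ by $\sigma$.'' This does not work: an arbitrary $\met{\cdot}{\cdot}{\Gamma}$-geodesic crossing the annulus need not come anywhere near $\sigma$, and the gasket topology gives no cheap admissible connection from $\gamma$'s entry and exit points to the endpoints of $\sigma$. What is actually needed is the much stronger event $\Gshortcut_{z,j}$ of Lemma~\ref{lem:shortcut_event}: \emph{every} admissible crossing $\gamma$ of $A_{z,j}$ contains points $u,v\in\gamma$ with $\mettres{A_{z,j}}{u}{v}{\Gamma}\le (c_2-\epsilon)\lmet{\gamma'}$ and $\lmet{\gamma'}\ge\epsilon\,\median[2^{-j}]{}$. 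The paper obtains this from the topological event $\Eloopchainreg_{z,j}$: finitely many points $u_i$ separate $\bIn A_{z,j}$ from $\bOut A_{z,j}$ in the gasket, so every crossing is forced through one of the regions $U_i$ bounded between intersecting loop strands, and the shortcut is built inside those unavoidable regions. This forcing mechanism is absent from your argument.

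Second, your uniform bound $p_0>0$ is not enough for the union bound. The scheme following Proposition~\ref{pr:ball_crossing} requires the conditional \emph{failure} probability to be at most $e^{-3b}$ with $b$ large, because one must union over the $\asymp\delta^{-6}$ centers $z\in\delta^3\Z^2\cap\D$ (and over the $2^{|K|}$ choices of good scales); a fixed small $p_0$ gives failure probability only $(1-p_0)^{|K|}=\delta^{O(1)\cdot\log_2(1/(1-p_0))}$, which need not beat $\delta^{6}$. Lemma~\ref{lem:shortcut_event} is stated with ``for any $p\in(0,1)$'' exactly for this reason, and the amplification from ``some positive probability'' (Corollary~\ref{co:shortcut_exists}) to ``probability arbitrarily close to $1$'' is the hardest part of the proof: one rewrites the shortcut event in terms of a bichordal \slekp{} in the domain between two intersecting loop strands (Lemma~\ref{le:domain_bichordal_shortcut}), re-expresses it via flow-line intersections of the GFF (Lemma~\ref{le:shortcut_flow_line}), and then uses the GFF's independence across scales to run $\ntrials$ nearly independent trials inside each such domain (Lemma~\ref{le:shortcut_loop_intersection}). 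Your appeal to Proposition~\ref{pr:mccle_gasket_tv_convergence} and translation invariance delivers only the ``some positive probability somewhere'' statement, not this amplified, all-crossings version; without it the contradiction step does not close.
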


We have shown in Proposition~\ref{pr:bilipschitz} that the two metrics are bi-Lipschitz equivalent. We can therefore make the following assumption.\begin{assumption}
\label{assump:weak_cle_metrics}
Suppose that we have the setup described at the beginning of Section~\ref{se:main_results}. Suppose that $\met{\cdot}{\cdot}{\Gamma}$, $\mett{\cdot}{\cdot}{\Gamma}$ are two weak geodesic \clekp{} metrics coupled with $\Gamma$ and that they are conditionally independent given $\Gamma$. We assume the following
\begin{itemize}
 \item There is a constant $c>1$ such that $c^{-1}\median[\delta]{} \le \mediant[\delta]{} \le c\,\median[\delta]{}$ for all $\delta \in (0,1]$.
 \item Let $c_1$ (resp.\ $c_2$) be the largest (resp.\ smallest) deterministic constant so that $c_1 \met{\cdot}{\cdot}{\Gamma} \leq \mett{\cdot}{\cdot}{\Gamma} \leq c_2 \met{\cdot}{\cdot}{\Gamma}$ almost surely. Assume that $0 < c_1 < c_2 < \infty$.
\end{itemize}
\end{assumption}

Our aim is to show that if Assumption~\ref{assump:weak_cle_metrics} holds then there exists $\epsilon > 0$ so that $\met{\cdot}{\cdot}{\Gamma}$, $\mett{\cdot}{\cdot}{\Gamma}$ satisfy $c_1 \met{\cdot}{\cdot}{\Gamma} \leq \mett{\cdot}{\cdot}{\Gamma} \leq (c_2-\epsilon) \met{\cdot}{\cdot}{\Gamma}$, contradicting the minimality of $c_2$.

The main work will be to show that at many places and on many scales, it is likely that we can find ``shortcuts'' for all crossings of an annulus, meaning that the ratio of the $\mett{\cdot}{\cdot}{\Gamma}$-length to the $\met{\cdot}{\cdot}{\Gamma}$-length is strictly bounded away either from $c_1$ or from $c_2$. We make the following definition which distinguishes the scales on which the ratio is more likely to be bounded away from $c_1$ resp.\ $c_2$. (The reason for using bichordal \slekp{} in the definition below will be explained below Lemma~\ref{le:loopchainreg_event}.)

\newcommand*{\shortcutscales}{\CJ}

Let $\shortcutscales \subseteq \N$ be the set of $j$ such that the following holds. Suppose that $(\eta'_1,\eta'_2)$ is a bichordal \slekp{} in $(2^{-j}\D, -i2^{-j}, i2^{-j})$ as described in Section~\ref{se:bichordal_sle}. Let $I_j$ be the event that $\eta'_1 \cap \eta'_2 \cap B(0,1/2) \neq \varnothing$, and note that by scale-invariance $\p[I_j]$ does not depend on $j$. Then
\begin{equation}\label{eq:shortcut_scale_def}
 \p\left[ \frac{\mett{u_j}{v_j}{\Gamma}}{\met{u_j}{v_j}{\Gamma}} \le \sqrt{c_1 c_2} \mmiddle| I_j \right] \ge 1/2
\end{equation}
where $u_j$ (resp.\ $v_j$) denotes the first (resp.\ last) intersection point of $\eta'_1 \cap \eta'_2 \cap B(0,1/2)$.

The main step in the proof is to show the following. Recall the $\sigma$-algebra $\CF_{z,j}$ and the events $\Esep_{z,j}$ (depending on $\dsep>0$) defined in Section~\ref{se:setup_pf}.

\newcommand*{\Gshortcut}{G^0}

\begin{lemma}
\label{lem:shortcut_event}
Suppose that Assumption~\ref{assump:weak_cle_metrics} holds. Let~$\shortcutscales$ be as defined above~\eqref{eq:shortcut_scale_def}, and suppose additionally that the number of $n \in \N$ such that
\begin{equation}\label{eq:shortcut_density}
\frac{\abs{\shortcutscales \cap \{ n,\ldots,2n \}}}{n} \ge \frac{1}{2}
\end{equation}
is infinite.
For any $p\in (0,1)$ and $\dsep>0$ there exists $\epsilon > 0$ such that the following is true.
Let $\Gshortcut_{z,j}$ be the event that the following holds. For every admissible path $\gamma$ crossing the annulus $A_{z,j}$ there exist $u,v \in \gamma$ such that if $\gamma' \subseteq \gamma$ is the segment from $u$ to $v$, then
\[
\mettres{A_{z,j}}{u}{v}{\Gamma} \le (c_2-\epsilon)\lmet{\gamma'}
\quad\text{and}\quad
\lmet{\gamma'} \ge \epsilon\,\median[2^{-j}]{}
. \]
For every $0<\delta_0\le 1$ there exists $0 < \delta \le \delta_0$ such that for each $z$ with $B(z,\delta) \subseteq \D$, at least a $1/4$ fraction of $j=\log_2\delta^{-1},\ldots,2\log_2\delta^{-1}$ satisfy
\[ \p[ \Gshortcut_{z,j} \mid \CF_{z,j} ] \one_{\Esep_{z,j}}  \geq p \one_{\Esep_{z,j}} . \]
\end{lemma}

We begin by defining a notion of ``good'' annulus in which it is easy to find shortcuts.

\newcommand*{\Eloopchainreg}{E^2}
\newcommand*{\distloopchainreg}{r_2}

\begin{figure}[ht]
\centering
\includegraphics[page=1,width=0.45\textwidth]{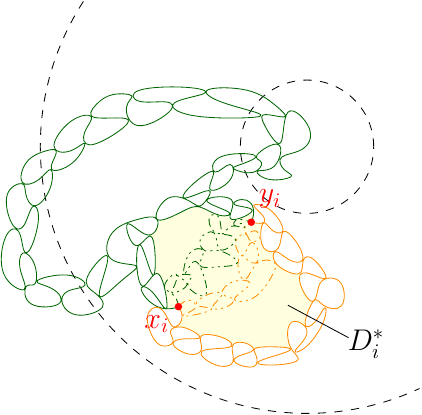}
\hspace{0.05\textwidth}
\includegraphics[page=2,width=0.45\textwidth]{chain_regions}
\caption{Illustration of the regions between the intersecting loops in the definition of the event $\Eloopchainreg_{z,j}$. On the left, $D_i^*$ is shown in yellow and on the right $U_i$ is shown in purple.}
\label{fi:regions_between_chain}
\end{figure}

For $n \in \N$, $\distloopchainreg > 0$, let $\Eloopchainreg_{z,j}$ be the event that the following hold.
\begin{itemize}
\item There is a collection of at most $n$ points $u_i \in A_{z,j}$ that separate $\bIn A_{z,j}$ from $\bOut A_{z,j}$ in $A_{z,j} \cap \Upsilon_{\Gamma}$.
\item For each $i$, let $\CL_{i,1},\CL_{i,2} \in \Gamma$ be the loop(s) passing through $u_i$ (where $\CL_{i,1}=\CL_{i,2}$ is possible). There are two non-overlapping strands $\ell_{i,1} \subseteq \CL_{i,1}$, $\ell_{i,2} \subseteq \CL_{i,2}$ and further intersection points $v_i,x_i,y_i \in \ell_{i,1} \cap \ell_{i,2}$ such that
\begin{itemize}
\item the strands $\ell_{i,1},\ell_{i,2}$ start and end at $x_i,y_i$, respectively, and pass through $u_i,v_i$,
\item $\abs{u_i-z}-\abs{v_i-z} \ge \distloopchainreg 2^{-j}$,
\item if $U_i$ denotes the regions bounded between the two segments from $u_i$ to $v_i$, and $D^*_i$ denotes the connected component of $\C \setminus ((\CL_{i,1} \cup \CL_{i,2}) \setminus (\ell_{i,1} \cup \ell_{i,2}))$ containing $U_i$, then $D^*_i \subseteq A_{z,j}$ and $\distE(U_i,\partial D^*_i) \ge \distloopchainreg 2^{-j}$.
\end{itemize}
\end{itemize}
See Figure~\ref{fi:regions_between_chain} for an illustration of the second bullet point. Note that it is allowed that $B(z,2^{-j-1}) \cap \Upsilon_\Gamma = \varnothing$ in which case the collection $\{u_i\}$ is empty.

\begin{lemma}\label{le:loopchainreg_event}
For any $b>1$ there exist $n\in\N$, $\distloopchainreg>0$, and $c>0$ such that for each $z,j_0$ with $B(z,2^{-j_0}) \subseteq \D$ and $K \in \N$, the probability that less than a $9/10$ fraction of $\Eloopchainreg_{z,j}$ where $j=j_0,\ldots,j_0+K$ occur is at most $ce^{-bK}$.
\end{lemma}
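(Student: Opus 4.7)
The plan is to follow the architecture used in Lemma~\ref{le:ball_crossing_one}: first prove a single-scale conditional lower bound of the form
\[
\p[\Eloopchainreg_{z,j} \mid \wt{\CF}_{z,j}] \one_{\Esep_{z,j}} \ge p_0 \one_{\Esep_{z,j}}
\]
for some $p_0 > 0$ depending on $n$, $\distloopchainreg$, and $\dsep$, and then aggregate across scales using the resampling from Lemma~\ref{lem:break_loops} together with Lemma~\ref{le:separation_event} to obtain exponential decay in $K$ of the probability that more than a $1/10$ fraction of scales fail.

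For the single-scale bound, I would first choose $n$ large enough via Lemma~\ref{le:loop_crossings_tail} so that the number of loops crossing $A_{z,j}$ (which controls the number of separating points $u_i$ in $A_{z,j} \cap \Upsilon_\Gamma$) is at most $n$ except on an event of conditional probability at most $p_0 / 10$; uniformity in the conditioning is obtained by the same argument that transfers Lemma~\ref{le:annulus_crossing_conditional_wo_lp} to Lemma~\ref{le:annulus_crossing_conditional}. Next, Theorem~\ref{thm:cle_partially_explored} identifies the conditional law inside $B(z, 3 \cdot 2^{-j})^{*, B(z, 2 \cdot 2^{-j})}$ given $\wt{\CF}_{z,j}$ as a multichordal \clekp{}; on $\Esep_{z,j}$ the marked boundary points are $\dsep$-separated, and Proposition~\ref{pr:link_probability} gives that each interior link pattern occurs with positive conditional probability bounded from below uniformly in the configuration. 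Proposition~\ref{pr:mccle_gasket_tv_convergence} combined with scale invariance then reduces the problem to lower bounding $\p[\Eloopchainreg]$ for a compact family of reference configurations, up to small total variation error.

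For each such reference configuration, I would construct the event $\Eloopchainreg$ explicitly via imaginary geometry. Each separating point $u_i \in \Upsilon_\Gamma$ is an intersection of two CLE loop strands meeting at the double-point angle $\angledouble$, and by the flow line description of \clekp{} recalled in Section~\ref{se:mcle} these strands appear as flow lines of the underlying GFF with a prescribed angle difference. A positive-probability GFF event forces these two flow lines to reintersect at a second point $v_i$ at Euclidean distance at least $\distloopchainreg 2^{-j}$ closer to $z$, enclosing the region $U_i$; an additional flow line event isolates $U_i$ from the neighboring CLE strands so as to ensure $\distE(U_i, \partial D^*_i) \ge \distloopchainreg 2^{-j}$. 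Since there are at most $n$ candidates $u_i$ and $n$ is deterministic, local independence of the GFF on disjoint regions allows the events for distinct $u_i$ to be enforced simultaneously with positive probability.

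The main obstacle is precisely this imaginary geometry construction: one must show that regardless of where the $u_i$ lie in $A_{z,j}$ and of the geometry of the surrounding strands inherited from the outside configuration, a pocket of the required size and separation can be created with probability bounded below by $p_0$, uniformly across the compact family of reference domains. Once the single-scale bound is secured, the aggregation proceeds as in Lemma~\ref{le:ball_crossing_one}: $\Eloopchainreg_{z,j}$ is measurable with respect to $\Gamma$ restricted to $B(z, 2^{-j+2})$, so on $\Fbreak_{z,j-2}$ (which has conditional probability at least $\pbreak$ on $\Ebreak_{z,j-2}$, by Lemma~\ref{lem:break_loops}) the events at well-separated scales decouple. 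A Chernoff-type bound over choices of the failure set $K' \subseteq \{j_0, \ldots, j_0+K\}$ with $|K'| \ge K/10$ gives joint failure probability at most $(1-p_0)^{c|K'|}$, and a union bound over the at most $2^{K+1}$ choices of $K'$ yields the desired $ce^{-bK}$ bound for any prescribed $b$, provided $n$ and $\distloopchainreg$ are chosen so that $p_0$ is sufficiently large.
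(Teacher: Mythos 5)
Your overall architecture (a single-scale estimate followed by an independence-across-scales aggregation) matches the paper's, but the single-scale step is approached in a way that cannot deliver what the aggregation requires. To conclude that fewer than a $1/10$ fraction of scales fail with probability $O(e^{-bK})$, you need the per-scale success probability $p_0$ to be arbitrarily close to $1$: your own union bound over the roughly $2^{K}$ choices of the failure set forces $1-p_0\le 2^{-10}e^{-10b}$. You acknowledge this at the end, but the mechanism you propose --- a ``positive-probability GFF event'' that \emph{forces} the two flow lines to reintersect and enclose a pocket $U_i$ of the required size --- inherently produces a probability bounded away from $1$, not tending to $1$. The point being missed is that no construction is needed: the event $\Eloopchainreg_{z,j}$ holds \emph{almost surely} once the parameters are removed, because (i) a finite separating set exists a.s.\ (this is \cite[Lemma~5.14]{amy-cle-resampling}; it does not follow from Lemma~\ref{le:loop_crossings_tail}, since bounding the number of loop crossings of the annulus does not by itself give finiteness of the separating set), and (ii) whenever two strands of CLE loops intersect they intersect at infinitely many points and their outer boundaries are disjoint, so points $x_i,u_i,v_i,y_i$ and regions $U_i$, $D_i^*$ with the stated properties exist for some random positive separation. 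Hence $\p[\Eloopchainreg_{z,j}]\to 1$ as $n\to\infty$ and $\distloopchainreg\searrow 0$ by monotone convergence, uniformly by scale and translation invariance, which is exactly the input the aggregation needs. What you flag as ``the main obstacle'' --- creating a pocket with probability at least $p_0$ uniformly over configurations --- dissolves once one observes the pockets are already there almost surely.

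On the aggregation: the paper invokes the independence across scales of \cite[Proposition~4.9]{amy-cle-resampling} directly, since the event is measurable with respect to the CLE alone and no metric is involved; your route through a conditional bound given $\wt{\CF}_{z,j}$ on $\Esep_{z,j}$, decoupled via Lemma~\ref{lem:break_loops}, is workable in principle but adds bookkeeping (the separation events themselves only hold on a fraction of scales) for no gain here.
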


\begin{proof}
 By the independence across scales proved in \cite[Proposition~4.9]{amy-cle-resampling}, the statement follows once we argue that for a given annulus $A_{z,j}$, the probability of $\Eloopchainreg_{z,j}$ can be made as close to $1$ as we like by choosing the parameters suitably. We now argue the latter. The existence of a finite separating set as required in the first bullet point follows from \cite[Lemma~5.14]{amy-cle-resampling}. To see that we can choose the points $x_i,u_i,v_i,y_i$ as required in the second bullet point, note that when two strands of (one or two) loops intersect, they intersect at infinitely many points, and their outer boundaries do not intersect.
\end{proof}

Consider an annulus $A_{z,j}$ where $\Eloopchainreg_{z,j}$ occurs. Our goal is to find shortcuts in each of the regions $U_i$, so that there is a shortcut for each geodesic crossing the annulus $A_{z,j}$. Recall that the conditional law of a strand of a CLE loop given its remainder is given by a chordal SLE. In particular, for a pair of intersecting CLE loops, the conditional law of the pair of intersecting strands given their remainders is given by a bichordal SLE as described in Section~\ref{se:bichordal_sle}. We make this precise below.

\newcommand*{\Gexpl}{G^2}
\newcommand*{\nexpl}{n_2}

\begin{figure}[ht]
\centering
\includegraphics[width=0.6\textwidth]{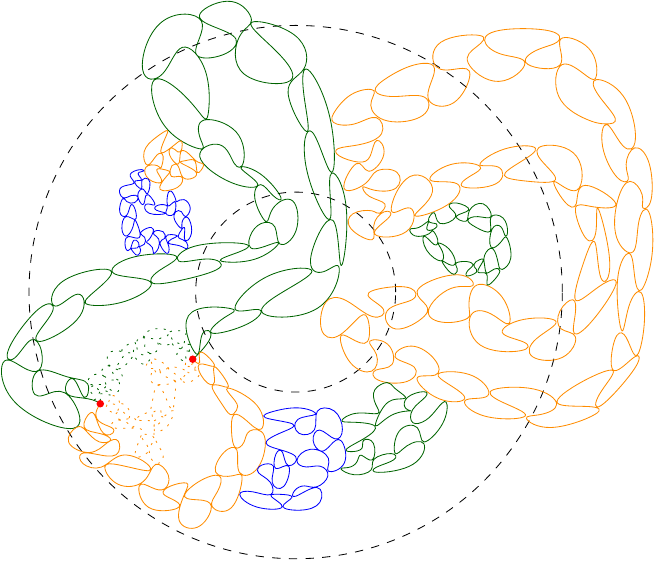}
\caption{The conditional law of the pair of dotted strands is a bichordal \slekp{}.}
\end{figure}

Consider the following countable collection of partial explorations of $\Gamma$. Let $W,V$ be open, simply connected subsets with $W \Subset V \Subset A_{z,j}$ such that $\partial W, \partial V$ are polygonal paths with rational vertices. Let $\Gamma_\outside^{*,V,W}$ be the partially explored \clekp{} defined in Section~\ref{se:mcle}. Suppose that we are on the event that there are exactly $4$ marked points in $\Gamma_\outside^{*,V,W}$, and let us further condition on a link pattern of the two unfinished strands $(\eta'_1,\eta'_2)$. Let $x$ (resp.\ $y$) be the first (resp.\ last) points where $\eta'_1,\eta'_2$ intersect, and let $D^*$ denote the connected component containing the remainder of $(\eta'_1,\eta'_2)$. Then, by Proposition~\ref{pr:bichordal_sle}, the conditional law of the remainder of $(\eta'_1,\eta'_2)$ is that of a bichordal \slekp{} in $(D^*,x,y)$.

Let us enumerate the collection of explorations described above. Suppose we are on the event $\Eloopchainreg_{z,j}$, and let $U_i$ be one of the regions in the definition of the event. We have that for at least one of the explorations, the strands adjacent to $U_i$ are exactly the remaining strands $(\eta'_1,\eta'_2)$ from the exploration. For $\nexpl \in \N$, we consider the first $\nexpl$ explorations in the enumeration. Let $\Gexpl_{z,j}$ be the event that every $U_i$ in the definition of $\Eloopchainreg_{z,j}$ is between the remaining strands for one of the first $\nexpl$ explorations. By choosing $\nexpl$ large, we can make the probability of $\Gexpl_{z,j}$ as close to $1$ as we want. By the (local) total variation continuity of the multichordal \clekp{} law (Proposition~\ref{pr:mccle_gasket_tv_convergence}), we can choose $\nexpl \in \N$ so that this holds uniformly for all $z,j$ which we record in the following lemma.
\begin{lemma}\label{le:expl_trials}
 For each $\dsep>0$ and $q>0$ there exists $\nexpl \in \N$ such that
 \[ \p[(\Gexpl_{z,j})^c \cap \Eloopchainreg_{z,j} \mid \wt{\CF}_{z,j}] \one_{\Esep_{z,j}} \le q \]
 for each $z,j$.
\end{lemma}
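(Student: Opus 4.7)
The plan is to reduce to a uniform control problem over a compact family of multichordal \clekp{} configurations, and then invoke the local total variation continuity in Proposition~\ref{pr:mccle_gasket_tv_convergence}. By scale and translation invariance of \clekp{}, we may assume $z=0$ and $j=0$, so we work with the fixed annulus $A_{0,0}=A(0,1/2,1)$. By Theorem~\ref{thm:cle_partially_explored}, on $\Esep_{0,0}$ the conditional law of $\Gamma$ inside $B(0,3)^{*,B(0,2)}$ given $\wt{\CF}_{0,0}$ is that of a multichordal \clekp{} with marked points $\ul{x}^*$ on $\partial B(0,3)$ and an exterior link pattern $\beta^*$. Since the marked points are pairwise $\dsep$-separated on a circle of fixed radius, their number is bounded by some $k_0=k_0(\dsep)$, and the space of admissible configurations $(\ul{x}^*,\beta^*)$ is compact in the natural topology.

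The next step is a pointwise statement: for each fixed configuration and each realization of $\Gamma$ on $\Eloopchainreg_{0,0}$, the conditional probability of $(\Gexpl_{0,0})^c$ tends to $0$ as $\nexpl\to\infty$. The number of regions $U_i$ is bounded by $n$, so it suffices to handle each $U_i$ separately. For a fixed $U_i$, the geometric separations built into $\Eloopchainreg$ (namely $\distE(U_i,\partial D^*_i)\ge\distloopchainreg$ and $D^*_i\subseteq A_{0,0}$) together with local finiteness of \clekp{} imply that $\ol{D^*_i\cup\ell_{i,1}\cup\ell_{i,2}}$ has positive distance from every loop of $\Gamma$ except the pieces of $\CL_{i,1},\CL_{i,2}$ meeting $D^*_i$ transversally near $x_i$ and $y_i$. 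One can therefore choose a small open neighborhood $V$ of this set whose boundary intersects $\Gamma$ in exactly four points (two on each of $\CL_{i,1},\CL_{i,2}$, one near $x_i$ and one near $y_i$), with exterior link pattern identifying $\ell_{i,1},\ell_{i,2}$ as the remaining strands. A suitable $W\Subset V$ containing $U_i$ exists by the same argument. By rational approximation we may take $V,W$ polygonal with rational vertices without changing the topology of the partial exploration, so $(V,W)$ belongs to the enumerated collection. Taking $\nexpl$ large enough to include all $n$ such pairs gives the pointwise convergence.

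To upgrade to uniform convergence over configurations, observe that both events $\Eloopchainreg_{0,0}$ and $\Gexpl_{0,0}$ are measurable with respect to $\Gamma$ restricted to a compactly contained subset (say $\ol{B(0,5/2)}\Subset B(0,3)^{*,B(0,2)}$). By Proposition~\ref{pr:mccle_gasket_tv_convergence}, the law of the multichordal \clekp{} restricted to this compact subset depends continuously in total variation on the marked points $\ul{x}^*$ (for a fixed link pattern $\beta^*$), so the probability of $(\Gexpl_{0,0})^c\cap\Eloopchainreg_{0,0}$ is continuous in $(\ul{x}^*,\beta^*)$. Combining this with the compactness of admissible configurations from the first paragraph upgrades the pointwise convergence to uniform convergence, and we may choose $\nexpl=\nexpl(\dsep,q)$ so that the desired bound holds uniformly. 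Unwinding the scaling recovers the statement for all $z,j$.

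The main obstacle is the construction of the polygonal rational-vertex $V$ in the second paragraph. One must simultaneously separate $D^*_i$ and the strands $\ell_{i,1},\ell_{i,2}$ from the rest of $\Gamma$ while ensuring that $\partial V$ meets the loops $\CL_{i,1},\CL_{i,2}$ transversally in exactly two points each near $x_i,y_i$, so that the partial exploration yields precisely four marked points in the correct link pattern $\beta^*$. This relies crucially on the geometric separation conditions imposed in the definition of $\Eloopchainreg$ together with the local finiteness of the \clekp{}.
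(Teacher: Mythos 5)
Your overall architecture is the same as the paper's (whose proof of this lemma is essentially the paragraph preceding its statement): on $\Eloopchainreg_{z,j}$ one shows that each region $U_i$ is a.s.\ captured by some member of the countable family of explorations, so that $\p[(\Gexpl_{z,j})^c\cap\Eloopchainreg_{z,j}\mid\wt{\CF}_{z,j}]$ decreases to $0$ as $\nexpl\to\infty$, and one then upgrades to a uniform choice of $\nexpl$ via the total variation continuity of Proposition~\ref{pr:mccle_gasket_tv_convergence} together with the Carath\'eodory-compactness of the family of $\dsep$-separated marked domains (exactly as in the proof of Lemma~\ref{le:distorted_metric}). Your uniformity step is fine, modulo the minor point that the compact family consists of marked \emph{domains} (the random domain $B(z,3\cdot 2^{-j})^{*,B(z,2\cdot 2^{-j})}$ with marked points on $\partial B(z,2\cdot 2^{-j})$), not marked points on a fixed circle, and that ``$\Gamma$ restricted to a compact subset'' should be read as a further partial exploration in the sense of Proposition~\ref{pr:mccle_gasket_tv_convergence}.

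The genuine gap is in your justification of the pointwise capture step, where two of your geometric assertions are false. First, $\ol{D^*_i\cup\ell_{i,1}\cup\ell_{i,2}}$ does \emph{not} have positive distance from the other loops of $\Gamma$: for $\kappa'\in(4,8)$ the loops intersect one another and their nested children, so other loops of $\Gamma$ touch $\CL_{i,1}$ and $\CL_{i,2}$ at a dense set of points and in particular accumulate on $\ell_{i,1},\ell_{i,2}$ from the sides opposite to $U_i$. Second, no Jordan curve $\partial V$ of positive diameter inside the cluster can meet $\Gamma$ in exactly four points, since it necessarily crosses infinitely many small loops; so the $V$ you describe does not exist. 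The relevant criterion is different: the marked points of $\Gamma_\outside^{*,V,W}$ sit on $\partial W$ and are contributed precisely by loops that intersect $W$ \emph{and} are not contained in $V$, so one needs that the only such loops are $\CL_{i,1},\CL_{i,2}$, each with exactly one maximal excursion outside $\ol{W}$ reaching $\C\setminus V$. Loops crossing $\partial V$ but disjoint from $\ol{W}$ are explored in their entirety and contribute nothing, and loops meeting $\ol{W}$ but contained in $V$ remain part of the unexplored multichordal configuration. This is then arranged using local finiteness (only finitely many loops have diameter at least $\dist(\ol{W},\C\setminus V)$), the fact that no third loop can enter $U_i$ (loops cannot cross $\ell_{i,1},\ell_{i,2}$ and can only touch $\partial U_i$ from outside), the a.s.\ absence of a third loop through the contact points $u_i,v_i$, and the separation $\distE(U_i,\partial D^*_i)\ge\distloopchainreg 2^{-j}$ built into $\Eloopchainreg_{z,j}$, which keeps the returning parts of $\CL_{i,1},\CL_{i,2}$ away from $W$. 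As written, your construction cannot be carried out, so the capture step remains unproved in your argument.
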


Now, suppose that $c_1 < c_2$ in Assumption~\ref{assump:weak_cle_metrics}. Consider a scale $j \in \shortcutscales$ as defined above~\eqref{eq:shortcut_scale_def}. Then by definition, a shortcut exists with positive probability in $2^{-j}\D$. In order to prove Lemma~\ref{lem:shortcut_event}, we need to make the probability as high as we want and find shortcuts in each of the regions $U_i$ in the definition of the event $\Eloopchainreg_{z,j}$. This will be done by repeatedly trying in many disjoint areas of the annulus and using the spatial independence (at the cost of changing the scale and the value of $\epsilon>0$ by a fixed factor). To achieve this, we will phrase the positive probability event in~\eqref{eq:shortcut_scale_def} in terms of flow line intersections. Recall from Section~\ref{se:bichordal_sle} that we can construct the region bounded between $\eta'_1,\eta'_2$ by flow lines $\eta_1,\eta_2$ with angle difference $\angledouble$ from $-i2^{-j}$ to $i2^{-j}$.

\newcommand*{\Gshortcutfl}{G^3}
\newcommand*{\pshortcutfl}{p_3}

\begin{lemma}\label{le:shortcut_flow_line}
There exist $\epsilon > 0$, $\pshortcutfl>0$ such that the following is true. Suppose that $j \in \shortcutscales$, let $h$ be a GFF in $2^{-j+1}\D$, and $w_1,w_2 \in 2^{-j}\D$ sampled independently according to Lebesgue measure. Let $\wt{\eta}_1$, $\wt{\eta}_2$ be flow lines of $h$ with angles $\pm\theta = \pm (\kappa-2)\lambda/(2\chi)$ starting at $w_1$ (resp.\ $w_2$) and stopped upon exiting $2^{-j}\D$. Let $\wt{\Gamma}$ be a conditionally independent CLE$_{\kappa'}$ in each of the components bounded between $\wt{\eta}_1,\wt{\eta}_2$, also coupled with $h$.

Let $\Gshortcutfl_j$ be the event that the right side of $\wt{\eta}_1$ intersects the left side of $\wt{\eta}_2$ with angle difference $2\theta$ and there exist some intersection points $\wt{u},\wt{v} \in \wt{\eta}_1\cap\wt{\eta}_2$ such that if $\wt{U}$ denotes the regions bounded between the segments of $\wt{\eta}_1$, $\wt{\eta}_2$ from $\wt{u}$ to $\wt{v}$, then
\[
\mettres{\ol{\wt{U}}}{\wt{u}}{\wt{v}}{\wt{\Gamma}} \le \sqrt{c_1 c_2}\,\metres{\ol{\wt{U}}}{\wt{u}}{\wt{v}}{\wt{\Gamma}}
\quad\text{and}\quad
\metres{\ol{\wt{U}}}{\wt{u}}{\wt{v}}{\wt{\Gamma}} \ge \epsilon\,\median[2^{-j}]{}
. \]
Then $\p[\Gshortcutfl_j] \ge \pshortcutfl$.
\end{lemma}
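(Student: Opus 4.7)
The plan is to transfer the positive-probability shortcut event of Lemma~\ref{le:domain_bichordal_shortcut} (posed in the bichordal \slekp{} setup inside a random simply connected domain $D^*$) to the interior-flow-line setup of Lemma~\ref{le:shortcut_flow_line}. The strategy parallels the proofs of Lemmas~\ref{le:lb_fl} and~\ref{le:len_lb}: realize the bichordal \slekp{} as flow lines of a GFF, sample interior points and use imaginary geometry merging to let interior flow lines detect the shortcut region, and then transfer the event via local absolute continuity of the GFF.

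First, by the construction recalled after Proposition~\ref{pr:bichordal_sle}, the bichordal \slekp{} pair $(\eta_1', \eta_2')$ in $(D^*, x, y)$ arises as the boundaries of flow lines $\eta_1, \eta_2$ of angles $\pm\theta$ of a GFF $h^*$ on $D^*$ from $x$ to $y$; in particular, the shortcut region $U$ is bounded by segments of $\eta_1, \eta_2$ between $u$ and $v$. I would augment Lemma~\ref{le:domain_bichordal_shortcut} by also requiring the symmetric condition that \emph{before} hitting $u$ (on the way from $x$), each of $\eta_1', \eta_2'$ disconnects a ball of radius $\distloopchainreg^3 2^{-j}$ at Euclidean distance at least $\distloopchainreg^2 2^{-j}$ from $\partial D^*$ and inside $2^{-j-1}\D$. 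This event is measurable with respect to the flow lines alone, and its probability is bounded below uniformly in $j$ by scale invariance and Carathéodory continuity, so intersecting with the event of Lemma~\ref{le:domain_bichordal_shortcut} still yields a positive lower bound. We may also strengthen the conditions on $U$ to ensure $U \subseteq B(0, \tfrac{1}{2}\cdot 2^{-j-1})$ by adjusting the constants.

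Now sample $w_1, w_2$ uniformly in $2^{-j-1}\D$. With positive probability bounded below uniformly in $j$, each $w_i$ lands inside the ``before $u$'' disconnected ball for $\eta_i$; conditionally on this, by standard imaginary geometry merging arguments for flow lines of matching angles, the interior flow line $\wt\eta_i$ of angle $\pm\theta$ from $w_i$ merges with $\eta_i$ before $\eta_i$ reaches $u$, then traces $\eta_i$ through $U$ and past $v$ while remaining inside $2^{-j-1}\D$. Setting $\wt u := u$, $\wt v := v$, the region $\wt U$ bounded between $\wt\eta_1, \wt\eta_2$ between $\wt u$ and $\wt v$ coincides with $U$, and by the Markov property of \clekp{} and of its metric, the conditional law of $\wt\Gamma$ and its internal metric in $\wt U$ given $(\wt\eta_1, \wt\eta_2)$ agrees with that of $\Gamma$ and its metric in $U$ given $(\eta_1, \eta_2)$. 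Hence both the shortcut inequality $\mettres{\ol{U}}{u}{v}{\Gamma} \le (c_2 - \epsilon)\metres{\ol{U}}{u}{v}{\Gamma}$ and the size bound $\metres{\ol{U}}{u}{v}{\Gamma} \ge \epsilon\,\median[2^{-j}]{}$ transfer to $(\wt U, \wt u, \wt v)$.

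Finally, since the above is carried out with the GFF $h^*$ on $D^*$ (with bichordal boundary data) whereas Lemma~\ref{le:shortcut_flow_line} is formulated in terms of a GFF $h$ on $2^{-j}\D$, I would invoke local absolute continuity of the GFF (cf.\ \cite[Lemma~2.18]{amy2025tightness}) to transfer the positive probability: the shortcut event is measurable with respect to the GFF restricted to a neighborhood of $U \cup \wt\eta_1 \cup \wt\eta_2$, which lies at distance $\ge \distloopchainreg 2^{-j}$ from $\partial D^*$, so the Radon--Nikodym derivative between the two laws restricted to this region is bounded uniformly in $j$ by scale invariance. The main obstacle is producing the uniform-in-$j$ positive-probability merging event, which requires carefully combining scale invariance, the Carathéodory continuity of the multichordal \clekp{} law (Proposition~\ref{pr:mccle_gasket_tv_convergence}), and the standard imaginary geometry merging estimates, while also ensuring the geometric inclusions needed to keep the interior flow lines inside $2^{-j-1}\D$.
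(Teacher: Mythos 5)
Your proposal follows essentially the same route as the paper's proof: realize the bichordal \slekp{} as the flow lines $\eta_1,\eta_2$ of a GFF on $D^*$, sample $w_1,w_2$ so that the interior flow lines started there merge into $\eta_1,\eta_2$ before tracing the shortcut region $U$, and transfer the resulting positive-probability event to the GFF on $2^{-j}\D$ by local absolute continuity. Two remarks. First, the extra ``before hitting $u$'' disconnection condition you propose to graft onto Lemma~\ref{le:domain_bichordal_shortcut} is unnecessary: the fourth bullet of the event $G$ there (each of $\eta_1',\eta_2'$ disconnects a suitable ball within short traveling distance after hitting $v$) is precisely the condition designed for this purpose, because the flow lines $\eta_1,\eta_2$ traverse the outer boundaries of the counterflow lines in the reverse order, so ``after $v$'' for $\eta_1',\eta_2'$ means ``before tracing $U$'' for $\eta_1,\eta_2$. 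Moreover, your justification for the augmented event---that each of two events has uniformly positive probability, hence so does their intersection---is not valid as stated; if the extra condition were genuinely needed, it would have to be built into the proof of Lemma~\ref{le:domain_bichordal_shortcut} itself. Second, the Radon--Nikodym derivative between the two GFF laws restricted to a region at macroscopic distance from the boundaries is in general not bounded in $L^\infty$; the transfer of the positive lower bound should instead go through an $L^2$ bound on the derivative together with Cauchy--Schwarz, exactly as in the proof of Lemma~\ref{le:lb_fl}. Neither point affects the viability of the argument, and once they are repaired your proof coincides with the paper's.
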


\begin{proof}
Suppose that $(\eta'_1,\eta'_2)$ are coupled with a GFF in $2^{-j}\D$ as described in Section~\ref{se:bichordal_sle}. Let
\[ p_0 = \p[I_j] . \]
Let $s_1>0$ and let $G^{1,s_1}$ be the event that both the right boundary of $\eta'_1$ and the left boundary of $\eta'_2$ between $u_j$ and $v_j$ are contained in $B(0,1-s_1)$. By scaling invariance, $\p[G^{1,s_1}]$ does not depend on $j$, and we can choose $s_1$ so that $\p[(G^{1,s_1})^c] < p_0/8$. Next, let $G^{2,s_2}$ be the event that each $\eta'_1,\eta'_2$ disconnect a ball of radius $s_2 2^{-j}$ within traveling distance $2^{-j}/4$ after visiting $u_j$. Again, $\p[G^{2,s_2}]$ does not depend on $j$, and we can choose $s_2$ so that $\p[(G^{2,s_2})^c] < p_0/8$. Finally, by Lemma~\ref{le:len_lb}, we can choose $\epsilon > 0$ such that
\[
 \p[ \met{u_j}{v_j}{\Gamma} < \epsilon\,\median[2^{-j}]{} ] \le p_0/8 .
\]
We then have
\[
 \p\left[ \frac{\mett{u_j}{v_j}{\Gamma}}{\met{u_j}{v_j}{\Gamma}} \le \sqrt{c_1 c_2} ,\ \met{u_j}{v_j}{\Gamma} \ge \epsilon\,\median[2^{-j}]{} ,\, I_j ,\ G^{1,s_1} ,\, G^{2,s_2} \right] \ge p_0/8
\]
Now if we sample $w_1,w_2 \in 2^{-j}\D$ so that they are within the two respective balls as in the event $G^{2,s_2}$, then the flow lines starting from $w_1$ (resp.\ $w_2$) will merge with $\eta_1$ (resp.\ $\eta_2$) before they trace the segments from $u_j$ to $v_j$. Therefore the event for the flow lines occurs before they get within distance $s_1 2^{-j}$ to $\partial \D$. Hence, by absolute continuity there exists $\pshortcutfl>0$ such that the same is true for the flow lines $\wt{\eta}_1,\wt{\eta}_2$ of a GFF in $2^{-j+1}\D$.
\end{proof}

\newcommand*{\ntrials}{n_3}

As mentioned earlier, we argue that if a shortcut occurs with positive probability, then by spacial independence it occurs somewhere with high probability.

\begin{lemma}\label{le:shortcut_loop_intersection}
For every $p\in (0,1)$ there exist $\ntrials\in\N$ and $\epsilon > 0$ such that the following is true. Suppose that $D^*$ is a simply connected domain, let $\varphi\colon \h \to D^*$ be a conformal transformation, and let $x = \varphi(0)$, $y = \varphi(\infty)$. Let $s_1 > s_2 > 0$ be such that $s_1 = \distE(\varphi(i), \partial D^*)$, and suppose that
\[ \abs{\{j \in \shortcutscales : s_2 < 2^{-j} < s_1 \}} \ge \ntrials . \]
Let $(\eta'_1,\eta'_2)$ be a bichordal SLE$_{\kappa'}$ in $(D^*,x,y)$ and $\Gamma$ a conditionally independent CLE$_{\kappa'}$ in each of the complementary components. Let $G_D^*$ denote the event that there exist $u,v \in \eta'_1\cap\eta'_2$ such that
\[
\mett{u}{v}{\Gamma} \le \sqrt{c_1 c_2}\,\met{u}{v}{\Gamma}
\quad\text{and}\quad
\met{u}{v}{\Gamma} \ge \epsilon\,\median[s_2]{}
. \]
Then $\p[G_D^*] \ge p$.
\end{lemma}

\begin{proof}
Let $h$ be a GFF in $\h$ with boundary values $2\lambda'-\pi\chi = (\kappa-2)\lambda$ (resp.\ $-2\lambda'+\pi\chi = -(\kappa-2)\lambda$) on $\R_+$ (resp.\ $\R_-$). Let $\CG_m$ denote the $\sigma$-algebra generated by the restriction of $h$ to $B(0,2^m)\cap\h$. We use the independence across scales property of the GFF, see \cite[Section~4.1.6, boundary case]{amy-cle-resampling} where the notion of $M$-good scales is defined.

Let $\CK = \{ m\in\Z : s_2 < 2^{-j} < \distE(\varphi(i2^m), \partial D^*)/100 \le 2^{-j+1} < s_1 \text{ for some } j \in \shortcutscales \}$. By our assumption, we have $\abs{\CK} \ge \ntrials$. By \cite[Lemma~4.2]{amy-cle-resampling}, there exists $M>0$ such that the probability that less than $1/2$ fraction of $m\in\CK$ are $M$-good for $h$ is $O(e^{-|\CK|}) \le O(e^{-\ntrials})$. Let $\CK' \subseteq \CK$ denote the $M$-good scales. Pick $\ntrials$ large enough so that
\begin{equation}\label{eq:bichordal_good_scales}
\p[|\CK'| \ge \ntrials/2] \ge 1-(1-p)/2 .
\end{equation}

Let $m\in\CK$ and $j = \lceil -\log_2(\distE(\varphi(i2^m), \partial D^*)) \rceil$. Let $h_{D^*} = h\circ\varphi^{-1}-\chi\arg(\varphi^{-1})'$. By Koebe's $1/4$-theorem, $\varphi(B(i2^m,2^m/16))$ contains a ball of radius $2^{-j}/64$. Let $G^{*,1}_{m}$ denote the event that the event from Lemma~\ref{le:shortcut_flow_line} occurs for the restriction of $h_{D^*}$ to $B(\varphi(i2^m),2^{-j}/100)$. By translation invariance and absolute continuity, there is $\pshortcutfl'>0$ (depending only on $\pshortcutfl$) such that $\p[G^{*,1}_{m}] \ge \pshortcutfl'$.

Recall the angle $\theta = (\kappa-2)\lambda/(2\chi)$ defined above, and let $\wt{\eta}_1,\wt{\eta}_2$ be the flow lines of $h_{D^*}$ in the event $G^{*,1}_{m}$. For some $p_4 > 0$, let $G^{*,2}_m$ denote the event that if we sample the points $w_1,w_2 \in B(\varphi(i2^m),2^{-j}/100)$ as in the definition of the event $G^{*,1}_{m}$, and additionally consider the flow lines $\wt{\eta}_{1,L},\wt{\eta}_{2,R}$ with angles $\theta+\pi$ (resp.\ $-\theta-\pi$) starting from $w_1$ (resp.\ $w_2$), then the conditional probability that $\wt{\eta}_{1,L}$ (resp.\ $\wt{\eta}_{2,R}$) exits $B(\varphi(i2^m),2^{-j}/2)$ without intersecting $\wt{\eta}_{2},\wt{\eta}_{2,R}$ (resp.\ $\wt{\eta}_{1},\wt{\eta}_{1,L}$) is at least $p_4$. By scale invariance, we can choose $p_4 > 0$ such that $\p[G^{*,2}_m] > 1-\pshortcutfl'/2$ for every $m$.

Next, for some $p_5>0$, let $G^{*,3}_m$ denote the event that for every pair of flow lines $\eta_{\theta+\pi},\eta_{-\theta-\pi}$ of $h$ with angles $\theta+\pi$ (resp.\ $-\theta-\pi$) crossing out of the annulus $A(i2^m,2^m/16,2^m/8)$, the conditional probability given $h\big|_{B(i2^m,2^m/8)}$ that $\eta_{\theta+\pi}$ hits $\R_-$ and $\eta_{-\theta-\pi}$ hits $\R_+$ before exiting $A(0,2^{m-1},2^{m+1})\cap\h$ and without intersecting each other is at least $p_5$. By scale invariance, $\p[G^{*,3}_m]$ does not depend on $m$, and we can choose $p_5>0$ such that $\p[G^{*,3}_m] > 1-p_4\pshortcutfl'/4$.

Now, let $G^{*,4}_{m}$ denote the event that $G^{*,1}_{m} \cap G^{*,2}_m \cap G^{*,3}_m$ occurs and that the two flow lines $\wt{\eta}_{1,L},\wt{\eta}_{2,R}$ hit the respective boundary parts before exiting $\varphi(A(0,2^{m-1},2^{m+1})\cap\h)$. This event is measurable with respect to $h\big|_{A(0,2^{m-1},2^{m+1}) \cap \h}$ and the internal metrics within $A(0,2^{m-1},2^{m+1}) \cap \h$. By the choices of the parameters, we have
\[ \p[G^{*,4}_{m}] \ge p_5 p_4 \pshortcutfl'/4 \]
for every $m\in\CK$. If additionally $m$ is an $M$-good scale for $h$, then
\[ \p[G^{*,4}_{m} \mid \CG_{m-1}]\one_{m\in\CK'} \ge p_6\one_{m\in\CK'} \]
where $p_6>0$ depends on $\pshortcutfl',p_4,p_5,M$.

By picking $\ntrials$ larger if necessary, we can assume that
\[ (1-p_6)^{\ntrials/2} < (1-p)/2 . \]
Combining this with~\eqref{eq:bichordal_good_scales}, we conclude
\[ \p\!\left[ \bigcup_m G^{*,4}_{m} \right] \ge p . \]
To obtain the conclusion of the lemma, observe that if $G^{*,4}_{m}$ occurs, then necessarily $\eta'_1$ and $\eta'_2$ merge into the flow lines involved, so that the region from the event $G^{*,1}_{m}$ is as desired.
\end{proof}

\begin{proof}[Proof of Lemma~\ref{lem:shortcut_event}]
Let $\dsep>0$, $p \in (0,1)$ be given, and let $q=(1-p)/2$. By Lemmas~\ref{le:loopchainreg_event} and~\ref{le:expl_trials}, there exist $\distloopchainreg>0$, $\nexpl \in \N$ such that
\[ \p[(\Gexpl_{z,j} \cap \Eloopchainreg_{z,j})^c \mid \wt{\CF}_{z,j}] \one_{\Esep_{z,j}} \le q \]
for each $z,j$.

Consider the explorations described above Lemma~\ref{le:expl_trials}. On the event $\Gexpl_{z,j} \cap \Eloopchainreg_{z,j}$, we need to consider at most $\nexpl$ of them. Apply Lemma~\ref{le:shortcut_loop_intersection} with $1-q/\nexpl$ in place of $p$, and let $\ntrials \in \N$ be as in the statement of Lemma~\ref{le:shortcut_loop_intersection}.

Note that if $n \in \N$ is sufficiently large and~\eqref{eq:shortcut_density} holds, then the number of $j \in \{n,\ldots,2n\}$ with
\[ \abs{\shortcutscales \cap \{j,\ldots,j+5\ntrials\}} \ge \ntrials \]
is at least $n/4$.

Let $j \in \N$ be such that $\abs{\{ j' \in \shortcutscales : \distloopchainreg 2^{-j-5\ntrials} \le 2^{-j'} \le \distloopchainreg 2^{-j} \} } \ge \ntrials$. We apply Lemma~\ref{le:shortcut_loop_intersection} to each of the regions $D^*$ arising from the $\nexpl$ explorations. By a union bound we get that with conditional probability at least $1-2q = p$ the statement of Lemma~\ref{lem:shortcut_event} holds with $\median[\distloopchainreg 2^{-j-5\ntrials}]{}$ in place of $\median[2^{-j}]{}$. By Proposition~\ref{pr:quantiles_metric}, we have $\median[\distloopchainreg 2^{-j-5\ntrials}]{} \asymp \median[2^{-j}]{}$ with an implicit constant depending only on $\distloopchainreg, \ntrials$ (and therefore on $p,\dsep$).
\end{proof}

\begin{proof}[Proof of Proposition~\ref{pr:uniqueness}]
Suppose that $c_1<c_2$ in Assumption~\ref{assump:weak_cle_metrics}, and suppose that the number of $n \in \N$ such that~\eqref{eq:shortcut_density} holds is infinite. We argue that there is a deterministic constant $\epsilon' > 0$ and a sequence $(\delta_n)$ with $\delta_n \searrow 0$ such that the following holds. For every $n$ and $z \in \delta_n^3 \Z^2 \cap\D$ there exists some random $2^{-j'} \in [\delta_n^2,\delta_n]$ such that for every $\met{\cdot}{\cdot}{\Gamma}$-geodesic $\gamma \subseteq B(z,2^{-j'})$ that crosses the annulus $A_{z,j'}$ we have $\mett{\gamma(0)}{\gamma(1)}{\Gamma} \le (c_2-\epsilon')\lmet{\gamma}$. By the same argument as in the proof of Proposition~\ref{pr:bilipschitz}, this implies that $\mett{\cdot}{\cdot}{\Gamma} \le (c_2-\epsilon')\met{\cdot}{\cdot}{\Gamma}$, contradicting the minimality of $c_2$. On the other hand, if the number of $n \in \N$ such that~\eqref{eq:shortcut_density} holds is finite, swapping the roles of $\met{}{}{},\mett{}{}{}$ shows that $\met{\cdot}{\cdot}{\Gamma} \le (c_1^{-1}-\epsilon')\mett{\cdot}{\cdot}{\Gamma}$, contradicting the maximality of $c_1$.

Let $\delta > 0$ be such that the conclusion of Lemma~\ref{lem:shortcut_event} holds. By Proposition~\ref{pr:ball_crossing} and the discussion after, applied to the event $\Gshortcut_{z,j}$ defined in Lemma~\ref{lem:shortcut_event}, we see that there are constants $M>1$, $\epsilon>0$ such that the following holds. Let $G^{\delta,z}$ be the event that there exists a (random) scale $j' \in \{\log_2\delta^{-1},\ldots,2\log_2\delta^{-1}\}$ such that for every $\met{\cdot}{\cdot}{\Gamma}$-geodesic $\gamma \subseteq B(z,2^{-j'})$ that crosses the annulus $A_{z,j'}$ we have
\begin{itemize}
\item $\lmet{\gamma} \le M\median[2^{-j'}]{}$, and
\item there are points $u,v \in \gamma$ with $\mett{u}{v}{\Gamma} \le (c_2-\epsilon)\met{u}{v}{\Gamma}$ and $\met{u}{v}{\Gamma} \ge \epsilon\,\median[2^{-j'}]{}$.
\end{itemize}
Then $\p[(G^{\delta,z})^c] = O(\delta^8)$.

Suppose that we are on the event $G^{\delta,z}$ and $j'$ is as described above. Let $\gamma \subseteq B(z,2^{-j'})$ be a $\met{\cdot}{\cdot}{\Gamma}$-geodesic that crosses the annulus $A_{z,j'}$. Let $\gamma' \subseteq \gamma$ be the subsegment between $u,v$. Then
\[ \mett{u}{v}{\Gamma} \le (c_2-\epsilon)\lmet{\gamma'} . \]
Moreover, $\lmet{\gamma'} \ge \epsilon\,\median[2^{-j'}]{}$, and hence $\lmet{\gamma'} \ge \frac{\epsilon}{M}\lmet{\gamma}$. Further,
\[ \lmett{\gamma\setminus\gamma'} \le c_2\lmet{\gamma\setminus\gamma'} \]
due to Assumption~\ref{assump:weak_cle_metrics}. Together, this implies
\[ \mett{\gamma(0)}{\gamma(1)}{\Gamma} \le \left(c_2-\frac{\epsilon}{M}\epsilon\right)\lmet{\gamma} , \]
concluding the proof.
\end{proof}

\section{Uniqueness of the exponent}
\label{se:exponent}

In this section we prove the scaling covariance and the uniqueness of the exponent. This will complete the proofs of Theorems~\ref{thm:uniqueness} and~\ref{thm:exponent}.

Consider the same setup as described at the beginning of Section~\ref{se:main_results}. In particular, we let $\Gamma_\D$ be a nested $\CLE_{\kappa'}$ in $\D$, let $\CL$ be the outermost loop of $\Gamma_\D$ that surrounds~$0$, and \emph{condition on the event that $\CL \subseteq \D$}. Let $C$ be the regions surrounded by $\CL$, let $\Gamma_C$ be the loops of $\Gamma_\D$ contained in $\ol{C}$, and let $\Gamma = \{\CL\} \cup \Gamma_C$.

The proofs will be completed upon showing the following lemmas. Recall (see e.g.\ the proof of Lemma~\ref{lem:conf_map_abs_cont} for details) that for $\lambda \in (0,1]$, the law of $\lambda\Gamma$ is absolutely continuous with respect to the law of $\Gamma$ conditioned on the event that $\CL \subseteq \lambda\D$.

\begin{lemma}\label{le:scaling_covariance}
Suppose we have the setup described at the beginning of Section~\ref{se:main_results}. Let $\met{\cdot}{\cdot}{\Gamma}$ be a weak geodesic \clekp{} metric coupled with~$\Gamma$. Then there exists a constant $\alpha > 0$ such that for each $\lambda \in (0,1]$, almost surely $\met{\lambda\cdot}{\lambda\cdot}{\lambda\Gamma} = \lambda^\alpha \met{\cdot}{\cdot}{\Gamma}$.
\end{lemma}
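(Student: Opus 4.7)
The plan is to use the uniqueness result Proposition~\ref{pr:uniqueness} to compare $\met{\cdot}{\cdot}{\Gamma}$ with a spatially rescaled copy of itself, and then to promote the resulting multiplicative constant into a power of $\lambda$.

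Fix $\lambda \in (0,1]$. First I would construct, coupled with $\Gamma$, a second weak geodesic \clekp{} metric $\mett{\cdot}{\cdot}{\Gamma}$ obtained by rescaling. The key observation is that by the \clekp{} domain Markov property, the conditional law of the inner loops of $\lambda^{-1}\Gamma$ given its outer loop $\lambda^{-1}\CL$ is simply a \clekp{} in the Jordan domain bounded by $\lambda^{-1}\CL$, structurally identical to the original setup. Combined with translation invariance from Definition~\ref{def:cle_metric}, which forces the regular conditional law of a weak \clekp{} metric given the loop configuration to be translation-equivariant and hence unambiguously defined on any admissible loop-gasket configuration, this lets me sample a metric $\met{\cdot}{\cdot}{\lambda^{-1}\Gamma}$ from this conditional law, conditionally independently of $\met{\cdot}{\cdot}{\Gamma}$ given~$\Gamma$. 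I then set
\[
 \mett{x}{y}{\Gamma} \defeq \met{\lambda^{-1}x}{\lambda^{-1}y}{\lambda^{-1}\Gamma} ,\quad x,y\in \Upsilon_\Gamma .
\]

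Next I would check that $\mett{\cdot}{\cdot}{\Gamma}$ is a weak geodesic \clekp{} metric coupled with~$\Gamma$: the geodesic, Markovian, and translation invariance axioms are stable under spatial rescaling of both loops and metric, since rescaling commutes with the formation of complementary components $U^*$ and with internal metrics $\metres{U}{\cdot}{\cdot}{\cdot}$. By construction, $\met{\cdot}{\cdot}{\Gamma}$ and $\mett{\cdot}{\cdot}{\Gamma}$ are conditionally independent given~$\Gamma$, and inspecting the definition~\eqref{eq:quantile_def} shows that the median $\mediant[\delta]{}$ of $\mett{\cdot}{\cdot}{\Gamma}$ at scale $\delta$ equals $\median[\delta/\lambda]{}$, which by Proposition~\ref{pr:quantiles_metric} is comparable to $\median[\delta]{}$ uniformly in $\delta$. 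Hypothesis~\eqref{eq:same_medians_ass} of Proposition~\ref{pr:uniqueness} is therefore satisfied, and the proposition supplies a deterministic constant $c(\lambda)>0$ with $\met{\lambda^{-1}\cdot}{\lambda^{-1}\cdot}{\lambda^{-1}\Gamma} = c(\lambda)\met{\cdot}{\cdot}{\Gamma}$ almost surely, equivalently $\met{\lambda\cdot}{\lambda\cdot}{\lambda\Gamma} = c(\lambda)^{-1}\met{\cdot}{\cdot}{\Gamma}$.

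Applying the identity twice, with $\lambda$ to pass from $\Gamma$ to $\lambda^{-1}\Gamma$ and then with $\mu$ to pass from $\lambda^{-1}\Gamma$ to $(\lambda\mu)^{-1}\Gamma$, yields $c(\lambda\mu)=c(\lambda)c(\mu)$ for $\lambda,\mu\in(0,1]$, so $c$ extends uniquely to a multiplicative function on $(0,\infty)$. Because $c(\lambda)$ is controlled by the median ratio $\mediant[\delta]{}/\median[\delta]{}$, Proposition~\ref{pr:quantiles_metric} forces $c(\lambda)$ to lie between $\lambda^{-d_\SLE+o(1)}$ and $\lambda^{-\ddouble+o(1)}$. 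A measurable multiplicative function on $(0,\infty)$ with such bounds is necessarily of the form $c(\lambda)=\lambda^{-\alpha}$ for some constant $\alpha\in[\ddouble,d_\SLE]$, and substituting back yields $\met{\lambda\cdot}{\lambda\cdot}{\lambda\Gamma} = \lambda^\alpha \met{\cdot}{\cdot}{\Gamma}$, as desired.

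The main technical obstacle I anticipate is step one: legitimizing the definition of $\met{\cdot}{\cdot}{\lambda^{-1}\Gamma}$ when the original weak \clekp{} metric is a priori a coupling only for the specific law of~$\Gamma$, not a kernel applicable to arbitrary loop configurations. The clean way is to combine translation invariance — which forces the conditional law of $\met{\cdot}{\cdot}{\Gamma}$ given $\Gamma$ to depend on $\Gamma$ only through its equivalence class modulo translation — with the Markovian property, which localizes that conditional law in terms of internal metrics determined by loops in subdomains. Together these pin down the kernel on the broader class of configurations that includes $\lambda^{-1}\Gamma$ and ensure all three axioms are inherited by $\mett{\cdot}{\cdot}{\Gamma}$. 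The same point also has to be invoked when iterating the identity to derive multiplicativity, since the intermediate loop ensemble $\lambda^{-1}\Gamma$ need not lie in the original support.
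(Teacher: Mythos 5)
Your strategy is the same as the paper's: build a rescaled copy of the metric, check it is a weak geodesic \clekp{} metric with medians comparable to the original via Proposition~\ref{pr:quantiles_metric}, invoke Proposition~\ref{pr:uniqueness} to get a deterministic constant $c(\lambda)$, and then upgrade multiplicativity of $c$ to a power law. The multiplicativity step and the conclusion $c(\lambda)=\lambda^{\alpha}$ are both fine (the paper uses monotonicity of $c$, you use the two-sided median bounds plus measurability; either closes Cauchy's functional equation).

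The one genuine issue is the direction of your rescaling. You define $\mett{x}{y}{\Gamma}=\met{\lambda^{-1}x}{\lambda^{-1}y}{\lambda^{-1}\Gamma}$, and since $\lambda^{-1}\CL$ need not be contained in $\D$, the configuration $\lambda^{-1}\Gamma$ lies outside the support of the law of $\Gamma$, so $\met{\cdot}{\cdot}{\lambda^{-1}\Gamma}$ is not defined by absolute continuity. You correctly flag this as the main obstacle, but your proposed repair does not hold up: translation invariance in Definition~\ref{def:cle_metric} only gives a kernel $\mu^{U^*}$ that is equivariant under \emph{translations}, so it says nothing about loop configurations that are dilations of typical ones, and gluing the local internal-metric kernels back into a single global metric on a configuration outside the support is exactly the kind of nontrivial construction one wants to avoid here. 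The paper sidesteps this entirely by rescaling in the other direction: it sets $\mett{\cdot}{\cdot}{\Gamma}=\met{\lambda\cdot}{\lambda\cdot}{\lambda\Gamma}$ with $\lambda\le 1$, and the law of $\lambda\Gamma$ \emph{is} absolutely continuous with respect to that of $\Gamma$ (it is the law of $\Gamma$ conditioned on $\CL\subseteq\lambda\D$, up to the scaling map), as recorded just before the lemma statement. With that choice one also gets $\mediant[\delta]{}=\median[\lambda\delta]{}$ with $\lambda\delta\in(0,1]$ always in the range where the medians are defined, whereas your $\median[\delta/\lambda]{}$ leaves that range for $\delta$ near $1$. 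So reverse the direction of the dilation and your argument becomes the paper's proof; as written, the construction of the comparison metric is not justified.
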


\begin{lemma}\label{le:exponent_unique}
Suppose we have the setup described at the beginning of Section~\ref{se:main_results}. Let $\met{\cdot}{\cdot}{\Gamma}, \mett{\cdot}{\cdot}{\Gamma}$ be two geodesic \clekp{} metrics, and let $\alpha, \wt{\alpha}$ be the respective exponents in Lemma~\ref{le:scaling_covariance}. Then $\alpha = \wt{\alpha}$.
\end{lemma}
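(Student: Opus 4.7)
The plan is, assuming without loss of generality that $\wt\alpha > \alpha$, to derive a contradiction by showing that this forces $\mett{\cdot}{\cdot}{\Gamma} \equiv 0$. By scaling covariance (Lemma~\ref{le:scaling_covariance}), $\median[\delta]{} = \delta^\alpha\median[1]{}$ and $\mediant[\delta]{} = \delta^{\wt\alpha}\mediant[1]{}$, so setting $C_0 := \mediant[1]{}/\median[1]{}$,
\[ \frac{\mediant[\delta]{}}{\median[\delta]{}} = C_0\,\delta^{\wt\alpha-\alpha} \to 0 \quad\text{as}\quad \delta\to 0 . \]
Both metrics are locally determined by $\Gamma$, and the single-metric tail inputs of Section~\ref{se:ball_crossing} apply to each. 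Proposition~\ref{pr:ball_crossing} applied to $\mett{\cdot}{\cdot}{\Gamma}$ (with $b=8$, say) provides constants $M,c$ controlling the probability of failure of the shortcut event. Lemma~\ref{le:len_lb_conditional} applied to $\met{\cdot}{\cdot}{\Gamma}$, combined with Lemmas~\ref{le:separation_event} and~\ref{lem:break_loops} as in the discussion following Proposition~\ref{pr:ball_crossing}, gives a corresponding lower bound $d_{z,j}\ge m\,\median[2^{-j}]{}$ on at least $9/10$ of scales with the same type of tail. Intersecting the two good events (losing at most $1/5$ in fraction), taking a union bound over $z\in\delta^3\Z^2$ with $B(z,\delta)\subseteq\D$, and invoking Borel--Cantelli, we obtain that almost surely there is a random $\delta_0>0$ such that for every $\delta\in(0,\delta_0)$ and every such $z$, on at least a $4/5$ fraction of scales $j\in\{\log_2\delta^{-1},\ldots,\log_2\delta^{-2}\}$ both of the following hold:
\begin{itemize}
\item every admissible $\gamma\subseteq B(z,2^{-j})$ and any $w_1,w_2\in\gamma$ admit an admissible $\wt\gamma\subseteq B(z,2^{-j+1})$ from $w_1$ to $w_2$ with $\lmett{\wt\gamma}\le M\,\mediant[2^{-j}]{}$;
\item every admissible crossing $\gamma$ of $A_{z,j}$ satisfies $\lmet{\gamma}\ge m\,\median[2^{-j}]{}$.
\end{itemize}

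The main step is the waiting-time argument of the proof of Proposition~\ref{pr:bilipschitz}, but with the now $\delta$-dependent ratio $K(\delta) := (MC_0/m)\,\delta^{\wt\alpha-\alpha}$. Since $\mett{\cdot}{\cdot}{\Gamma}$ is a genuine metric and $\Upsilon_\Gamma$ is not a single point, fix $x\ne y\in\Upsilon_\Gamma$ with $\mett{x}{y}{\Gamma}>0$, and let $\gamma\colon[0,1]\to\Upsilon_\Gamma$ be a $\met{\cdot}{\cdot}{\Gamma}$-geodesic from $x$ to $y$. Define
\[ T := \sup\{t\in[0,1] : \mett{\gamma(0)}{\gamma(t)}{\Gamma}\le K(\delta)\,\met{\gamma(0)}{\gamma(t)}{\Gamma}\} , \]
attained by continuity of both metrics with respect to $\dpath$. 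If $T<1$, pick $\delta<|\gamma(1)-\gamma(T)|\wedge\delta_0$, let $z\in\delta^3\Z^2$ be closest to $\gamma(T)$, and let $2^{-j'}\in[\delta^2,\delta]$ be a good scale. Setting $u := \inf\{t\ge T : \gamma(t)\notin B(z,2^{-j'})\}$, the segment $\gamma[T,u]$ is a crossing of $A_{z,j'}$ (as $|\gamma(T)-z|<\delta^3\ll 2^{-j'-1}$), whence $\lmet{\gamma[T,u]}\ge m\,\median[2^{-j'}]{}$ and, using the shortcut of length $\le M\,\mediant[2^{-j'}]{}$ together with $2^{-j'}\le\delta$ and the geodesic property,
\[ \mett{\gamma(T)}{\gamma(u)}{\Gamma}\le \frac{M}{m}\,\frac{\mediant[2^{-j'}]{}}{\median[2^{-j'}]{}}\,\lmet{\gamma[T,u]} \le K(\delta)\,\met{\gamma(T)}{\gamma(u)}{\Gamma} . \]
Combined with the defining inequality of $T$, the triangle inequality, and the geodesic property of $\gamma$, this yields $\mett{\gamma(0)}{\gamma(u)}{\Gamma}\le K(\delta)\,\met{\gamma(0)}{\gamma(u)}{\Gamma}$, contradicting maximality of $T$. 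Hence $T=1$, so $\mett{x}{y}{\Gamma}\le K(\delta)\,\met{x}{y}{\Gamma}$ for every $\delta<\delta_0$. Letting $\delta\to 0$ forces $\mett{x}{y}{\Gamma}=0$, contradicting the choice of $x,y$. Hence $\wt\alpha\le\alpha$, and by symmetry $\wt\alpha=\alpha$.

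The conceptual point to overcome is that we can no longer invoke the comparability of medians~\eqref{eq:same_medians_ass} used throughout Sections~\ref{sec:bi_lip}--\ref{sec:metrics_equivalent}; indeed, refuting that comparability is essentially the content of the lemma. What makes the scheme go through is that both inputs we need, Proposition~\ref{pr:ball_crossing} and Lemma~\ref{le:len_lb_conditional}, are single-metric statements involving only each metric's own median, and hence apply unconditionally to both $\met{\cdot}{\cdot}{\Gamma}$ and $\mett{\cdot}{\cdot}{\Gamma}$. The ratio $K(\delta)$ that was a constant under~\eqref{eq:same_medians_ass} now shrinks polynomially in $\delta$, which upgrades the bi-Lipschitz conclusion of Section~\ref{sec:bi_lip} into the forced degeneracy of $\mett{\cdot}{\cdot}{\Gamma}$.
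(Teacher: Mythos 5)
Your proposal is correct and follows essentially the same route as the paper: the paper's proof is a short sketch that applies Proposition~\ref{pr:ball_crossing} to $\mett{\cdot}{\cdot}{\Gamma}$ and Lemma~\ref{le:len_lb_conditional} to $\met{\cdot}{\cdot}{\Gamma}$ and then invokes the covering/waiting-time argument of Proposition~\ref{pr:bilipschitz} verbatim with the ratio $\mediant[2^{-j_1}]{}/\median[2^{-j_1}]{}\to 0$, which is exactly what you spell out, including the key observation that both inputs are single-metric statements not requiring~\eqref{eq:same_medians_ass}. The only cosmetic slip is that you define $T$ via $K(\delta)$ before choosing $\delta$; fix the outer $\delta$ first and note that the good scale $2^{-j'}\le\delta$ gives a ratio at most $K(\delta)$ by monotonicity, and the argument is as in the paper.
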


In particular, this shows that the result of Proposition~\ref{pr:uniqueness} holds also without requiring a priori that $(\median[\delta]{})$ and $(\mediant[\delta]{})$ are comparable~\eqref{eq:same_medians_ass}. This completes the proof of Theorem~\ref{thm:uniqueness}.

\begin{lemma}\label{le:exponent_bounds}
In the setup of Lemma~\ref{le:scaling_covariance} have $\alpha \in [\ddouble \vee 1, d_\SLE]$.
\end{lemma}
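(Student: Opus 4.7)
The strategy is to use Lemma~\ref{le:scaling_covariance} to convert the scaling covariance of the metric into an exact scaling identity for the medians $\median[\delta]{}$, then sandwich $\alpha$ between the quantile bounds of Proposition~\ref{pr:quantiles_metric}, and finally handle the extra lower bound $\alpha \ge 1$ via a subdivision argument combining the geodesic property with the lower crossing estimate of Lemma~\ref{le:len_lb_conditional}.

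First, I would exploit the scale-invariance of the setup defining $D_\delta$ in~\eqref{eq:quantile_def}. The conformal map $\varphi_{\lambda\delta}\colon\h\to\lambda\delta\D$ differs from $\lambda\varphi_\delta$ only by a conformal automorphism of $\h$, and by conformal invariance of the GFF, its flow lines, and the \clekp{}, we have $\Gamma_{\lambda\delta} \stackrel{d}{=} \lambda\Gamma_\delta$ jointly with the coupled flow lines. Applying the scaling covariance $\Fd(\lambda\cdot,\lambda\cdot;\lambda\Gamma) = \lambda^\alpha\Fd(\cdot,\cdot;\Gamma)$ from Lemma~\ref{le:scaling_covariance} (which, although stated for the setup of Section~\ref{se:main_results}, transfers intrinsically to the weak geodesic \clekp{} metric coupled with $\Gamma_\delta$ via the Markov and translation properties) yields $D_{\lambda\delta} \stackrel{d}{=} \lambda^\alpha D_\delta$, whence $\median[\lambda\delta]{} = \lambda^\alpha\median[\delta]{}$ for all $\lambda,\delta \in (0,1]$. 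Comparing with Proposition~\ref{pr:quantiles_metric},
\[ \lambda^{d_\SLE + o(1)}\median[\delta]{} \le \lambda^\alpha \median[\delta]{} \le \lambda^{\ddouble + o(1)}\median[\delta]{} , \]
taking logarithms, dividing by $\log\lambda < 0$, and sending $\lambda\to 0$ immediately gives $\ddouble \le \alpha \le d_\SLE$.

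For the remaining bound $\alpha \ge 1$, I would argue by contradiction. Assume $\alpha < 1$ and fix a geodesic $\gamma\colon[0,L]\to\Upsilon_\Gamma$ of positive $\Fd$-length $L$ and positive Euclidean diameter $R$, parameterized by $\Fd$-arclength. Subdivide it into $N = L/\epsilon$ consecutive sub-arcs of $\Fd$-length $\epsilon$. The key claim is that, with overwhelming probability, every sub-arc has Euclidean diameter at most $C\epsilon^{1/\alpha}$; given this, the triangle inequality yields
\[ R \;\le\; \sum_{i=1}^{N} \bigl|\gamma(t_i) - \gamma(t_{i-1})\bigr| \;\le\; CL\,\epsilon^{1/\alpha - 1}, \]
and since $1/\alpha - 1 > 0$, sending $\epsilon\to 0$ forces $R = 0$, a contradiction. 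The sub-arc bound would follow by inverting Lemma~\ref{le:len_lb_conditional}: any admissible path connecting $x,y$ with $|x-y|\asymp 2^{-j}$ must cross the annulus $A(x,2^{-j-1},2^{-j})$, and combined with the scaling identity $\median[2^{-j}]{}\asymp 2^{-j\alpha}$ from the previous step, this gives $\Fd(x,y;\Gamma) \gtrsim |x-y|^\alpha$, so any two points on the sub-arc, whose $\Fd$-distance is at most $\epsilon$, are at Euclidean distance at most $(\epsilon/c)^{1/\alpha}$.

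The main obstacle is promoting the conditional lower bound in Lemma~\ref{le:len_lb_conditional} into a uniform pointwise bound $\Fd(x,y;\Gamma) \ge c|x-y|^\alpha$ that holds for all relevant pairs along the geodesic simultaneously. The lemma only supplies the bound with positive conditional probability on a separation event, so a small fraction of bad sub-arcs could a priori have unbounded Euclidean diameter and wreck the triangle-inequality sum. Overcoming this will require iterating across a positive fraction of dyadic scales using the independence-across-scales machinery of Lemma~\ref{le:separation_event} together with the scale-selection framework underlying Proposition~\ref{pr:ball_crossing}, in the same spirit as the argument structure in Sections~\ref{se:ball_crossing}--\ref{sec:bi_lip}, most likely by choosing the scale $\epsilon$ adaptively along the geodesic so that at each position one is at a good dyadic scale for the lower crossing bound.
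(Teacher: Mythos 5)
Your first step coincides with the paper's: scale covariance gives $\median[\lambda\delta]{} = \lambda^\alpha\median[\delta]{}$, and Proposition~\ref{pr:quantiles_metric} then sandwiches $\alpha\in[\ddouble,d_\SLE]$. The issue is the bound $\alpha\ge 1$, where your argument is the dual of the paper's and, as you yourself flag, is not closed. Your subdivision-by-arclength scheme needs the uniform two-point bound $\met{x}{y}{\Gamma}\ge c\abs{x-y}^\alpha$ for \emph{all} pairs on the geodesic simultaneously, and this is strictly more than Lemma~\ref{le:len_lb_conditional} plus the scale-selection machinery delivers: the discussion following Proposition~\ref{pr:ball_crossing} only produces, for each lattice point $z$, \emph{some} good scale $2^{-j_1(z)}$ lying in a window $[\delta^2,\delta]$. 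Applied naively, for $\abs{x-y}\asymp\delta$ this yields only $\met{x}{y}{\Gamma}\gtrsim m\,\median[\delta^{2}]{}\asymp\abs{x-y}^{2\alpha}$, so your triangle-inequality sum gives $R\lesssim L\,\epsilon^{1/(2\alpha)-1}$ and hence only $\alpha\ge 1/2$. The adaptive choice of $\epsilon$ along the geodesic that you gesture at is exactly where the remaining work lies, and it is not carried out.

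The paper sidesteps this by reversing the counting so that only \emph{one} common scale is needed. Fix a single large $j_1$ at which the crossing lower bound $\lmet{\gamma}\ge m\,\median[2^{-j_1}]{}$ holds for every admissible crossing of every annulus $A_{z,j_1}$. A geodesic of positive Euclidean diameter crosses at least $c2^{j_1}$ disjoint such annuli (continuous paths have dimension at least $1$), so its length is at least $cm\,2^{j_1}\median[2^{-j_1}]{}\asymp 2^{j_1(1-\alpha)}$; since geodesics have finite length and $j_1$ can be taken arbitrarily large, this forces $\median[2^{-j_1}]{}\lesssim 2^{-j_1}$, i.e.\ $\alpha\ge 1$. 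This requires the crossing bound at a single scale rather than a uniform reverse-H\"older estimate across all scales, which is why it goes through with the tools already established; I would restructure your second step along these lines.
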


\begin{proof}[Proof of Lemma~\ref{le:scaling_covariance}]
Suppose that we have the setup described above, and $\met{\cdot}{\cdot}{\Gamma}$ is a weak geodesic \clekp{} metric. For $\lambda \in (0,1]$, let $\mett{\cdot}{\cdot}{\Gamma} = \met{\lambda\cdot}{\lambda\cdot}{\lambda\Gamma}$. It is easy to see that $\mett{\cdot}{\cdot}{\Gamma}$ is again a weak geodesic \clekp{} metric, and $\mediant[\delta]{} = \median[\lambda\delta]{}$ for $\delta \in (0,1]$ where $\mediant[\delta]{}$ is defined in Section~\ref{se:setup_pf}. In particular, $\mediant[\delta]{}/\median[\delta]{} \in [\lambda^{d_\SLE+o(1)},\lambda^{\ddouble+o(1)}]$ is bounded uniformly in $\delta$ for fixed $\lambda$ by Proposition~\ref{pr:quantiles_metric}. Therefore the condition~\eqref{eq:same_medians_ass} of Proposition~\ref{pr:uniqueness} is satisfied, implying that $\mett{\cdot}{\cdot}{\Gamma} = c(\lambda)\met{\cdot}{\cdot}{\Gamma}$ almost surely for a constant $c(\lambda) > 0$.

Moreover, we have $c(\lambda \lambda') = c(\lambda)c(\lambda')$ since $\met{\lambda \lambda' \cdot}{\lambda \lambda' \cdot}{\lambda \lambda' \Gamma} = c(\lambda) \met{\lambda' \cdot}{\lambda' \cdot}{\lambda' \Gamma} = c(\lambda)c(\lambda')\met{\cdot}{\cdot}{\Gamma}$. Also, $\lambda \mapsto c(\lambda)$ is monotone since otherwise there would exist a sequence of $\lambda_n \searrow 0$ with $c(\lambda_n) \to \infty$ which would contradict the continuity of $\met{\cdot}{\cdot}{\Gamma}$. Hence, there exists $\alpha>0$ such that $c(\lambda) = \lambda^\alpha$ for all $\lambda \in (0,1]$.
\end{proof}

\begin{proof}[Proof of Lemma~\ref{le:exponent_unique}]
Suppose $\met{\cdot}{\cdot}{\Gamma}, \mett{\cdot}{\cdot}{\Gamma}$ are two geodesic \clekp{} metrics with $\median[r]{} = r^\alpha$ and $\mediant[r]{} = r^{\wt{\alpha}}$, and suppose that $\wt{\alpha} > \alpha$. We claim that this would imply $\mett{\cdot}{\cdot}{\Gamma} = 0$. This will follow again by the argument used in the proof of Proposition~\ref{pr:bilipschitz}. Indeed, apply Proposition~\ref{pr:ball_crossing} to $\mett{\cdot}{\cdot}{\Gamma}$ and the discussion that follows with the events $G_{z,j} = \{ d_{z,j} \ge m\,\median[2^{-j}]{} \}$ where $d_{z,j}$ is as in Lemma~\ref{le:len_lb_conditional}. Then, for suitable choices of $m,M > 0$, for each $\delta > 0$ it holds with probability $1-O(\delta^8)$ that there exists $j_1 \in \{ \log_2(\delta^{-1}),\ldots,\log_2(\delta^{-2}) \}$ such that for any $\met{\cdot}{\cdot}{\Gamma}$-geodesic $\gamma \subseteq B(z,2^{-j_1})$ that crosses $A_{z,j_1}$ we have
\begin{equation}
\label{eqn:an_crossings_bound}
\mett{\gamma(0)}{\gamma(1)}{\Gamma} \le M\mediant[2^{-j_1}]{} \ll m\,\median[2^{-j_1}]{} \le \lmet{\gamma}.
\end{equation}
By the same argument as in the proof of Proposition~\ref{pr:bilipschitz}, the inequality~\eqref{eqn:an_crossings_bound} implies that
\[ \mett{\cdot}{\cdot}{\Gamma} \le \frac{M}{m}\frac{\mediant[2^{-j_1}]{}}{\median[2^{-j_1}]{}} \met{\cdot}{\cdot}{\Gamma}\]
for some $j_1$ that can be made arbitrarily large, implying $\mett{\cdot}{\cdot}{\Gamma} = 0$.
\end{proof}

\begin{proof}[Proof of Lemma~\ref{le:exponent_bounds}]
Proposition~\ref{pr:quantiles_metric} implies that $\alpha \in [\ddouble, d_\SLE]$. We argue that $\alpha \ge 1$ which is a consequence of the geodesic property. Indeed, applying Lemma~\ref{le:len_lb_conditional} as before, we see that for some constant $m>0$, there almost surely exists $j_1$ that can be made arbitrarily large and such that $\lmet{\gamma} \ge m\,\median[2^{-j_1}]{}$ for every path $\gamma$ crossing $A_{z,j_1}$ for some $z$. Since continuous paths have dimension at least $1$, every geodesic crosses at least $c2^{j_1}$ such annuli. Therefore we must have $\median[2^{-j_1}]{} \lesssim 2^{-j_1}$, otherwise the geodesics would have infinite length.
\end{proof}

\section{General domains and conformal covariance}
\label{se:general_domains}

In this section we construct the geodesic \clekp{} metric on interior clusters in general domains and in the whole-plane, and prove their conformal covariance.

\subsection{Conformal covariance}
\label{sec:conformally_covariant}

In this subsection we prove the conformal covariance in the setup of Theorem~\ref{thm:conformal_covariance}. We will prove a slightly more general version which is stated as Theorem~\ref{thm:conformal_covariance_gen}. We begin by showing rotational invariance.

\begin{lemma}\label{le:rotational_invariance}
Suppose we have the setup described at the beginning of Section~\ref{se:main_results}. Let $\met{\cdot}{\cdot}{\Gamma}$ be a geodesic \clekp{} metric, and $\theta\in\R$. Then
\[ \met{e^{i\theta}\cdot}{e^{i\theta}\cdot}{e^{i\theta}\Gamma} = \met{\cdot}{\cdot}{\Gamma} \]
almost surely.
\end{lemma}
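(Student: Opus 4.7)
The plan is to verify that the rotated candidate metric is itself a geodesic \clekp{} metric, apply Theorem~\ref{thm:uniqueness} to conclude it differs from $\met{\cdot}{\cdot}{\Gamma}$ by a deterministic scalar $c(\theta)$, and then pin down $c(\theta)=1$ via a rotationally covariant test functional with positive finite expectation.

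First I would set $\mett{\cdot}{\cdot}{\Gamma} := \met{e^{i\theta}\cdot}{e^{i\theta}\cdot}{e^{i\theta}\Gamma}$. Since rotation by $e^{i\theta}$ is a conformal automorphism of $\D$ fixing $0$ and preserving the event $\{\CL \subseteq \D\}$, $e^{i\theta}\Gamma$ has the same law as $\Gamma$, so $\mett{\cdot}{\cdot}{\Gamma}$ is a well-defined random metric on $\Upsilon_\Gamma$. Verifying that $\mett{\cdot}{\cdot}{\Gamma}$ satisfies the axioms of Definition~\ref{def:strong_cle_metric} is routine: the geodesic property transfers because rotations are Euclidean isometries carrying admissible paths for $\Gamma$ to admissible paths for $e^{i\theta}\Gamma$; the locally determined property is inherited by setting $\wt F^U(U^*,\Gamma_{U^*})(x,y) := F^{e^{i\theta}U}((e^{i\theta}U)^*, (e^{i\theta}\Gamma)_{(e^{i\theta}U)^*})(e^{i\theta}x,e^{i\theta}y)$; translation invariance follows from $e^{i\theta}(U+z)=e^{i\theta}U+e^{i\theta}z$; and scale covariance with the same exponent $\alpha$ from Theorem~\ref{thm:exponent} holds because rotation commutes with scaling by $\lambda>0$. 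Theorem~\ref{thm:uniqueness} then produces a deterministic constant $c(\theta)>0$ such that $\mett{\cdot}{\cdot}{\Gamma}=c(\theta)\met{\cdot}{\cdot}{\Gamma}$ almost surely.

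To conclude $c(\theta)=1$, I would evaluate both metrics against a rotationally covariant, positive, integrable test functional. A convenient choice is
\[
R(\Gamma, d) := \int_0^{2\pi} d(p^\Gamma(\phi), p^\Gamma(\phi+\pi)) \, d\phi,
\]
where $p^\Gamma(\phi)\in\CL$ denotes the first intersection of the ray from $0$ at angle $\phi$ with $\CL$. Since $p^{e^{i\theta}\Gamma}(\phi)=e^{i\theta}p^\Gamma(\phi-\theta)$, the substitution $\phi\mapsto\phi+\theta$ together with $2\pi$-periodicity of the integrand gives the identity $R(\Gamma,\mett{\cdot}{\cdot}{\Gamma})=R(e^{i\theta}\Gamma,\met{\cdot}{\cdot}{e^{i\theta}\Gamma})$. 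Taking expectations and using $e^{i\theta}\Gamma\stackrel{d}{=}\Gamma$ together with the fact that the strong \clekp{} metric is a measurable function of the loop ensemble yields $\E[R(\Gamma,\mett{\cdot}{\cdot}{\Gamma})]=\E[R(\Gamma,\met{\cdot}{\cdot}{\Gamma})]$, while linearity of $R$ in the metric gives $R(\Gamma,\mett{\cdot}{\cdot}{\Gamma})=c(\theta)R(\Gamma,\met{\cdot}{\cdot}{\Gamma})$. Since $R(\Gamma,\met{\cdot}{\cdot}{\Gamma})$ is almost surely strictly positive and has finite expectation (finiteness follows from the superpolynomial upper tail estimates for the \clekp{} metric proved in \cite{amy2025tightness}), this forces $c(\theta)=1$.

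The main obstacle is the final step. Theorem~\ref{thm:uniqueness} only determines the two metrics up to a deterministic scalar, so a separate symmetry input is needed to rule out a nontrivial group homomorphism $\theta\mapsto c(\theta)$ from the circle group to $\R_+$; the averaging over angles in $R$ is precisely what makes the test simultaneously rotationally covariant, positive, and integrable, and thereby fixes the scalar to be $1$. The axiom verifications in the second step are routine but require a little care with the interaction of rotations, translations, and scalings.
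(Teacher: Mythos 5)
Your proposal is correct and follows essentially the same route as the paper: rotate the metric, invoke Theorem~\ref{thm:uniqueness} to obtain a deterministic constant $c(\theta)$, and eliminate it with a rotationally invariant (in law) observable. The paper simply takes the observable to be the $\met{\cdot}{\cdot}{\Gamma}$-diameter of $\Upsilon_\Gamma$, which is a.s.\ positive and finite so that $X \stackrel{d}{=} c(\theta)X$ forces $c(\theta)=1$ without needing integrability, and which sidesteps the minor well-definedness questions about whether your boundary points $p^\Gamma(\phi)$ belong to the gasket.
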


\begin{proof}
Let $\mett{\cdot}{\cdot}{\Gamma} = \met{e^{i\theta}\cdot}{e^{i\theta}\cdot}{e^{i\theta}\Gamma}$. By Theorem~\ref{thm:uniqueness}, we have $\mett{\cdot}{\cdot}{\Gamma} = c\,\met{\cdot}{\cdot}{\Gamma}$ for some constant $c$. By considering a rotationally invariant (in law) observable, e.g.\ the $\met{\cdot}{\cdot}{\Gamma}$-diameter of~$\Upsilon_\Gamma$, we conclude $c=1$.
\end{proof}

\begin{lemma}
\label{lem:conf_map_abs_cont}
Consider the setup of Theorem~\ref{thm:conformal_covariance}, i.e.\ let $\varphi \colon U \to \wt{U}$ be a conformal transformation between two simply connected regions $U,\wt{U} \subseteq \D$.  Then for each $U_1 \Subset U$, the law of $\varphi(U_1 \cap \Upsilon_\Gamma)$ is absolutely continuous with respect to the law of $\varphi(U_1) \cap \Upsilon_\Gamma$.
\end{lemma}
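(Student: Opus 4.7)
The plan is to establish the stronger mutual absolute continuity by reducing to the \emph{local absolute continuity of nested $\CLE_{\kappa'}$} on compactly contained subsets, and to chain two applications of this locality via the conformal invariance of $\CLE_{\kappa'}$. First I would pick an intermediate simply connected $V$ with $U_1 \Subset V \Subset U$, so that $\varphi(U_1) \Subset \varphi(V) \Subset \wt{U} \subseteq \D$. Let $\wt{\Gamma}$ be a nested $\CLE_{\kappa'}$ in $\wt{U}$. By conformal invariance of nested $\CLE_{\kappa'}$, the pullback $\Gamma_U \defeq \varphi^{-1}(\wt{\Gamma})$ has the law of a nested $\CLE_{\kappa'}$ in $U$, and since $\varphi$ is a homeomorphism $U \to \wt{U}$ one has $\varphi(\Upsilon_{\Gamma_U}) = \Upsilon_{\wt{\Gamma}}$, whence $\varphi(U_1 \cap \Upsilon_{\Gamma_U}) = \varphi(U_1) \cap \Upsilon_{\wt{\Gamma}}$.

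I would then reduce the lemma to two instances of the following general fact: if $\Gamma_i$ is a nested $\CLE_{\kappa'}$ in a simply connected domain $D_i$ for $i=1,2$, and $K \Subset D_1 \cap D_2$ is compact, then $\mathrm{Law}(K \cap \Upsilon_{\Gamma_1})$ and $\mathrm{Law}(K \cap \Upsilon_{\Gamma_2})$ are mutually absolutely continuous. Granting this, first apply it with $K = \ol{U_1}$, $(D_1,D_2) = (U,\D)$, $(\Gamma_1,\Gamma_2) = (\Gamma_U,\Gamma)$ and push forward by $\varphi$ to obtain
\[ \mathrm{Law}\bigl(\varphi(U_1 \cap \Upsilon_\Gamma)\bigr) \;\sim\; \mathrm{Law}\bigl(\varphi(U_1 \cap \Upsilon_{\Gamma_U})\bigr) \;=\; \mathrm{Law}\bigl(\varphi(U_1) \cap \Upsilon_{\wt{\Gamma}}\bigr); \]
then apply it with $K = \varphi(\ol{U_1})$, $(D_1,D_2) = (\wt{U},\D)$, $(\Gamma_1,\Gamma_2) = (\wt{\Gamma},\Gamma)$ to get mutual absolute continuity of the right-hand side with $\mathrm{Law}\bigl(\varphi(U_1) \cap \Upsilon_\Gamma\bigr)$, which would finish the proof.

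The main obstacle is the general local absolute continuity claim. To prove it I would pick $V'$ simply connected with $K \Subset V' \Subset D_1 \cap D_2$ and perform the partial exploration of each $\Gamma_i$ that discovers all loops and strands intersecting $D_i \setminus V'$ until reaching $\ol{K}$. By Theorem~\ref{thm:cle_partially_explored} the unexplored remainder is a multichordal $\CLE_{\kappa'}$ in a random subdomain $(V')^*_i$ of $V'$ containing $K$ with marked points $\ul{x}^*_i$ and exterior link pattern $\beta^*_i$. The hard step is to show that the joint laws of $((V')^*_i,\ul{x}^*_i,\beta^*_i)$ for $i=1,2$ are mutually absolutely continuous; this is a locality statement that I would deduce from the imaginary-geometry construction of the exploration tree as flow/counterflow lines of a GFF with appropriate boundary data, together with the standard local absolute continuity of the GFF on subdomains compactly containing $V'$. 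Proposition~\ref{pr:link_probability} gives positivity of link-pattern probabilities, and Proposition~\ref{pr:mccle_gasket_tv_convergence} provides the continuity of the gasket in $K$ as a function of the boundary data; combined, these transfer the absolute continuity of $((V')^*_i,\ul{x}^*_i,\beta^*_i)$ to absolute continuity of $K \cap \Upsilon_{\Gamma_i}$. The extra conditioning $\CL \subseteq \D$ in the definition of $\Gamma$ is a positive-probability event and therefore preserves mutual absolute continuity throughout.
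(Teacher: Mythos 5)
Your high-level strategy (conformal invariance plus a local absolute continuity statement for \clekp{} restrictions) is in the right spirit, but the proof you sketch for your key ``general fact'' has a genuine gap, and it is exactly the gap that the paper's resampling argument is designed to fill. The partial exploration $\Gamma_\outside^{*,V',K}$ is \emph{not} a local function of the GFF near $V'$: the branches of the exploration tree are counterflow lines started from $\partial D_i$, so the strands entering $V'$, the resulting marked domain $(V')^{*}_i$, its marked points $\ul{x}^*_i$, and the link pattern $\beta^*_i$ all depend on the field (equivalently, on the loop ensemble) globally. Hence ``standard local absolute continuity of the GFF on a neighbourhood of $V'$'' does not yield mutual absolute continuity of the laws of $((V')^*_i,\ul{x}^*_i,\beta^*_i)$ for $i=1,2$; no such statement is available off the shelf, and Proposition~\ref{pr:mccle_gasket_tv_convergence} only gives total variation \emph{continuity} along a convergent sequence of marked domains, not absolute continuity between two fixed, different domains. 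The paper circumvents this by first running the resampling procedure of \cite{amy-cle-resampling} to produce, with positive probability, a loop surrounding a region $V$ with $U_1 \subseteq V \subseteq U$ while leaving $U_1 \cap \Upsilon_\Gamma$ unchanged, and then detecting $V$ with a pair of interior flow lines; only after this localization is the configuration in $V$ a measurable function of the GFF restricted to $V$, at which point GFF local absolute continuity and conformal invariance can be applied.

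There is a second, related gap in your initial reduction. Here $\Upsilon_\Gamma$ is the gasket of one specific, globally selected cluster (the one bounded by the outermost loop of $\Gamma_\D$ surrounding the origin), so $U_1\cap\Upsilon_\Gamma$ is not a measurable function of the loop ensemble restricted to a neighbourhood of $U_1$: whether a point of $U_1$ belongs to this cluster depends on which loop is outermost around $0$ and on loops far from $U_1$. Your general fact, even if granted, compares restrictions of nested \clekp{}'s in two domains and does not identify which nesting level or cluster corresponds to $\Upsilon_\Gamma$ on either side; this is precisely why the paper's argument first obtains absolute continuity only ``in some level of nesting'' and then needs a second application of the resampling procedure to pass to the gasket $\varphi(V)\cap\Upsilon_\Gamma$. (A minor further point: conditioning on the positive-probability event $\{\CL\subseteq\D\}$ preserves absolute continuity in one direction only, but the lemma indeed only claims one-sided absolute continuity.)
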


In the setting of Lemma~\ref{lem:conf_map_abs_cont}, let
\[ \mettres{U}{\cdot}{\cdot}{\Gamma} \defeq \metres{\wt{U}}{\varphi(\cdot)}{\varphi(\cdot)}{\varphi(\Gamma)}.\]
We note that it follows from Lemma~\ref{lem:conf_map_abs_cont} that this is well-defined by absolute continuity. Indeed, the metric $\mettres{U_1}{\cdot}{\cdot}{\Gamma}$ is well-defined for each $U_1 \Subset U$, and by Lemma~\ref{le:internal_metrics_compatibility} it extends to a metric on all of $U \cap \Upsilon_\Gamma$.

\begin{proof}[Proof of Lemma~\ref{lem:conf_map_abs_cont}]
We perform a partial resampling procedure of the \clekp{} as described in \cite[Section~5]{amy-cle-resampling}. We select regions $(A,B) \in \domainpair{\D}$ with $B \subseteq \D \setminus U_1$ in some random fashion, and resample the part $\Gamma_\inside^{*,B,A}$ according to its conditional law given $\Gamma_\outside^{*,B,A}$. We repeat this procedure a random number of times, yielding a coupling of $\Gamma$ with $\Gamma^\resampled$ whose law is the same as that of $\Gamma$. As a consequence of Proposition~\ref{pr:link_probability} (see \cite[Section~5.4]{amy-cle-resampling} for details), the probability is positive that the gasket $U_1 \cap \Upsilon_\Gamma$ is unchanged and $\Gamma^\resampled$ contains a loop $\CL^\resampled$ that disconnects a region $V$ with $U_1 \subseteq V \subseteq U$ from $\partial\D$. Consider the exploration of $\Gamma^\resampled$. On the event that there exists a region $U_1 \subseteq V \subseteq U$ as above, we can detect it with positive probability by sampling a pair of flow lines $\eta_u,\eta_{u,v}$ from random starting points where $\eta_{u,v}$ is reflected off $\eta_u$ in the opposite direction. The CLE configuration within $V$ is then determined by the values of the GFF in $V$. By the absolute continuity and conformal invariance of the GFF, the law of $\varphi(V)$ is absolutely continuous with respect to the law of a region detected by a pair of flow lines sampled in $\varphi(U)$. This shows that the law of the image of the CLE configuration within $V$ is absolutely continuous with respect to the law of the CLE configuration within $\varphi(U)$ in \emph{some} level of nesting. Applying again the resampling procedure from \cite[Section~5]{amy-cle-resampling} shows that it is absolutely continuous with respect to the law of the gasket $\varphi(V) \cap \Upsilon_\Gamma$.
\end{proof}

In the remainder of the section, we assume that $\varphi\colon U \to \wt{U}$ is fixed. Suppose $\mu\colon U \to \R_+$ is a continuous function (which we can think of as a weighted geometry). For an admissible path $\gamma$ within $U$, we write
\begin{align*}
\lmetconf{\mu}{\gamma} &\defeq \int \mu^\alpha d\lmet{\gamma} ,\\
\lmettconf{\mu}{\gamma} &\defeq \int \mu^\alpha d\lmett{\gamma} = \int (\mu\circ\varphi^{-1})^\alpha d\len{\Fd;\varphi(\Gamma)}{\varphi(\gamma)} ,
\end{align*}
where $\alpha$ is the exponent from Theorem~\ref{thm:exponent}.

We now state the main result of this subsection which is a more general version of Theorem~\ref{thm:conformal_covariance}.
\begin{theorem}\label{thm:conformal_covariance_gen}
Suppose that $U, \wt{U} \subseteq \D$ are simply connected domains, $\varphi \colon U \to \wt{U}$ is a conformal map, and $\mu\colon U \to \R_+$ is a continuous function. Then almost surely
\[ \lmettconf{\mu}{\gamma} = \lmetconf{\abs{\varphi'}\mu}{\gamma} \]
for each admissible path $\gamma\colon [0,1] \to U \cap \Upsilon_\Gamma$.
\end{theorem}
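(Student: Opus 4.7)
The plan is to reduce the identity of length measures to a local comparison between $\mett{\cdot}{\cdot}{\Gamma}$ and the scaled copy $|\varphi'(z_0)|^\alpha \met{\cdot}{\cdot}{\Gamma}$, and then apply the independence-across-scales machinery of Sections~\ref{sec:bi_lip} and~\ref{sec:metrics_equivalent}. By Lemma~\ref{lem:conf_map_abs_cont}, the pullback $\mett{\cdot}{\cdot}{\Gamma} = \metres{\wt{U}}{\varphi(\cdot)}{\varphi(\cdot)}{\varphi(\Gamma)}$ is well defined on $U \cap \Upsilon_\Gamma$, and by continuity of $|\varphi'|$ and $\mu$ a standard partition argument reduces the theorem to the following local statement: for every $z_0 \in U$ and every $\varepsilon > 0$, on every sufficiently small ball $B(z_0, r) \subseteq U$ and every admissible path $\gamma \subseteq B(z_0, r)$ one has
\[
 (1-\varepsilon)\,|\varphi'(z_0)|^\alpha\,\lmet{\gamma} \le \lmett{\gamma} \le (1+\varepsilon)\,|\varphi'(z_0)|^\alpha\,\lmet{\gamma}.
\]

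For the local comparison, on $B(z_0, r)$ introduce the linearization $\psi(z) = \varphi(z_0) + \varphi'(z_0)(z - z_0)$, which differs from $\varphi$ by $O(r^2)$ in $C^1$. Applying Proposition~\ref{pr:mccle_gasket_tv_convergence} to the partial exploration outside $B(z_0, 3 \cdot 2^{-j})$, the conditional law of $\varphi(\Gamma)$ inside $\varphi(B(z_0, 2 \cdot 2^{-j}))$ can be coupled, with total-variation error tending to $0$ as $j \to \infty$, with the conditional law of $\psi(\Gamma)$ inside the corresponding ball. By translation invariance and the scale covariance of Lemma~\ref{le:scaling_covariance}, $\psi(\Gamma)$ has the same law as $\varphi(z_0) + |\varphi'(z_0)|\,\Gamma$, and its geodesic \clekp{} metric equals $|\varphi'(z_0)|^\alpha \met{\cdot}{\cdot}{\Gamma}$ after the translation. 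Because Theorem~\ref{thm:uniqueness} ensures that the metric is a measurable function of the CLE, any coupling of the configurations automatically yields a coupling of the corresponding metrics which agree exactly on the coupled region.

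To upgrade the local coupling to the two-sided inequality above, we follow the independence-across-scales strategy of Section~\ref{sec:bi_lip}: for each $z_0 \in U$ and $j \in \N$ we let $G_{z_0, j}$ be the event that every admissible crossing of $A_{z_0, j}$ obeys both the displayed two-sided bound and the lower bound $d_{z_0, j} \ge m\,\median[2^{-j}]{}$ provided by Lemma~\ref{le:len_lb_conditional}. The TV coupling together with the conditional estimates of Lemma~\ref{le:annulus_crossing_conditional} shows that $\p[G_{z_0, j}^c \mid \CF_{z_0, j}]\one_{\Esep_{z_0, j}}$ can be made as small as we wish by taking $j$ large; Proposition~\ref{pr:ball_crossing} then produces, with overwhelming probability, a random scale $j_1$ on which $G_{z_0, j_1}$ and a uniform ball-crossing upper bound hold simultaneously. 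Running the stopping-time argument of Proposition~\ref{pr:bilipschitz} along any $\met{\cdot}{\cdot}{\Gamma}$-geodesic (and then along any $\mett{\cdot}{\cdot}{\Gamma}$-geodesic for the reverse inequality) propagates this local estimate into the global path comparison, and hence into the weighted identity after integrating against $\mu^\alpha$.

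The central difficulty is that the metric is not a continuous function of $\Gamma$, so TV closeness of the CLE laws does not a priori yield closeness of metric values on a given realization. We resolve this by invoking the measurability statement from Theorem~\ref{thm:uniqueness} (so that the CLE coupling is inherited verbatim by the internal metric on the coupled event) and by absorbing the unlikely uncoupled event via the superpolynomial upper-tail estimates for the metric proved in \cite{amy2025tightness}; the same estimates also control the possible contribution of geodesic segments that exit the good balls, exactly as in the bi-Lipschitz and equivalence proofs.
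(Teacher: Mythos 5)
Your overall architecture matches the paper's: a local comparison between $\mett{\cdot}{\cdot}{\Gamma}$ and $\abs{\varphi'(z)}^\alpha\met{\cdot}{\cdot}{\Gamma}$ at good scales, fed into Proposition~\ref{pr:ball_crossing}, Lemma~\ref{le:len_lb_conditional}, and the stopping-time propagation of Propositions~\ref{pr:bilipschitz} and~\ref{pr:uniqueness}. You also correctly isolate the central difficulty: the map from the configuration to the metric is not continuous. But your resolution of that difficulty has a genuine gap. The total-variation coupling supplied by Proposition~\ref{pr:mccle_gasket_tv_convergence} couples the unexplored part of $\varphi(\Gamma)$ with an \emph{independent resample} $Y$ whose law is that of the unexplored part of $\psi(\Gamma)$; it cannot make $\varphi(\Gamma)$ agree with $\psi(\Gamma)$ itself, since these are images of the same configuration under two distinct injective maps and hence are never literally equal. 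Consequently, on the coupling event the measurability from Theorem~\ref{thm:uniqueness} gives only that the metric of $\varphi(\Gamma)$ equals the metric of $Y$, and the latter has merely the \emph{law} of $\abs{\varphi'(z_0)}^\alpha\met{\cdot}{\cdot}{\Gamma}$ (for a fresh copy of $\Gamma$) rather than being that quantity on the same realization. This yields a comparison of quantiles --- enough for the crude bi-Lipschitz bound with constants $M/m$, which is Step 1 of the paper's proof --- but it does not yield the pathwise two-sided bound $(1\pm\varepsilon)\abs{\varphi'(z_0)}^\alpha\lmet{\gamma}$ that your event $G_{z_0,j}$ demands, any more than two i.i.d.\ copies of a non-degenerate random variable are forced to be within a factor $(1\pm\varepsilon)$ of each other. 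The superpolynomial tail bounds do not repair this either, since the issue is not a rare bad event but the absence of any pathwise link.

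The paper closes exactly this gap with Lemma~\ref{le:distorted_metric}. By Lusin's theorem there is a compact set $\CK$ of embedded metric spaces, of probability at least $1-\epsilon$ uniformly over separated marked domains of bounded conformal radius (uniformity via Proposition~\ref{pr:mccle_gasket_tv_convergence} together with compactness in the Carath\'eodory topology), on which the configuration-to-metric map $F$ is \emph{uniformly continuous}. Since $\Gamma_{z,j}$ and $\Gamma^\varphi_{z,j}$ are deterministically related by the near-identity map $\varphi_{z,j}$, their gaskets are GHf-close on the same realization; uniform continuity of $F$ on $\CK$ then converts this into a pathwise bound on $\sup_{x,y}\abs{\met{x}{y}{\Gamma_{z,j}}-\met{x}{y}{\Gamma^\varphi_{z,j}}}$, which is precisely the ingredient your outline needs and which no combination of total-variation coupling and measurability can produce. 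With your coupling step replaced by this Lusin/uniform-continuity argument, the remainder of your outline goes through as in the paper.
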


The proof of Theorem~\ref{thm:conformal_covariance_gen} follows the same strategy as Propositions~\ref{pr:bilipschitz} and~\ref{pr:uniqueness}. Recall the notation introduced in Section~\ref{se:setup_pf}. For each $z \in U$ and large $j \in \N$ we want to compare the metrics $\met{\cdot}{\cdot}{\varphi'(z)\Gamma}$ and $\mett{\cdot}{\cdot}{\varphi(\Gamma)}$ conditionally on $\CF_{z,j}$. We view them as metric spaces equipped with an embedding into the plane, and recall the GHf metric defined in~\eqref{eq:ghf}. More precisely, consider
\[
\Gamma_{z,j} = 2^j(\Gamma-z)
,\quad 
\Gamma^\varphi_{z,j} = 2^j \varphi'(z)^{-1}(\varphi(\Gamma)-\varphi(z))
\]
Note that
\begin{align*}
\met{\cdot}{\cdot}{\Gamma_{z,j}} &= 2^{j\alpha}\met{z+2^{-j}\cdot}{z+2^{-j}\cdot}{\Gamma} \\
\text{and}\quad \met{\cdot}{\cdot}{\Gamma^\varphi_{z,j}} &= (\abs{\varphi'(z)}^{-1}2^{j})^\alpha \met{\varphi(z)+\varphi'(z)2^{-j}\cdot}{\varphi(z)+\varphi'(z)2^{-j}\cdot}{\varphi(\Gamma)}
\end{align*}
by the scale covariance (Lemma~\ref{le:scaling_covariance}) and rotational invariance (Lemma~\ref{le:rotational_invariance}). 
The conformal transformation between $\Gamma_{z,j}$ and $\Gamma^\varphi_{z,j}$ is given by
\[
\varphi_{z,j} = 2^j \varphi'(z)^{-1} (\varphi(z+2^{-j}\cdot)-\varphi(z)) 
\]
which converges to the identity map as $j \to \infty$ with rate uniformly in $z \in U_1$.

The main step is to prove the following lemma.
\begin{lemma}\label{le:distorted_metric}
Let $\varphi \colon U \to \wt{U}$ be a conformal transformation, $U,\wt{U} \subseteq \D$, and $U_1 \Subset U$. For each $\epsilon > 0$, $\dsep > 0$ there is $j_0 \in \N$ such that
\[
\p\!\left[ \dGHf\left( \metres{B(0,1)}{\cdot}{\cdot}{\Gamma_{z,j}} , \metres{B(0,1)}{\cdot}{\cdot}{\Gamma^\varphi_{z,j}} \right) \ge \epsilon \mmiddle| \CF_{z,j} \right] \one_{\Esep_{z,j}} < \epsilon 
\]
for each $z \in U_1$, $j \ge j_0$.
\end{lemma}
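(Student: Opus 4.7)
The plan is to reduce the problem, via the Markovian and locally determined properties of the metric (the latter following from Theorem~\ref{thm:uniqueness}), to a total-variation comparison of two multichordal \clekp{} laws whose marked-domain data differ only by an asymptotically trivial conformal distortion $\varphi_{z,j}$.

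First I would unpack the conditional description. Given $\CF_{z,j}$, on the event $\Esep_{z,j}$ the rescaled partial exploration cuts out a simply connected domain $D^*_{z,j}\subseteq B(0,3)$ with $\dsep$-separated marked points $\ul{x}^*$ and a link pattern $\beta^*$ recorded in $\alpha^*_{z,j}$; by Theorem~\ref{thm:cle_partially_explored}, the conditional law of $\Gamma_{z,j}\big|_{D^*_{z,j}}$ is that of a multichordal \clekp{} in $(D^*_{z,j},\ul{x}^*,\beta^*)$. By the conformal invariance of multichordal \clekp{} (Section~\ref{se:mcle}), the conditional law of $\Gamma^{\varphi}_{z,j}\big|_{\varphi_{z,j}(D^*_{z,j})}$ is a multichordal \clekp{} in the perturbed triple $(\varphi_{z,j}(D^*_{z,j}),\varphi_{z,j}(\ul{x}^*),\beta^*)$. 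The Koebe distortion theorem applied to $\varphi$ on a neighborhood of $\ol{U}_1$ gives that $\varphi_{z,j}\to\text{id}$ uniformly on $B(0,3)$ at a rate independent of $z\in U_1$; hence $\varphi_{z,j}(D^*_{z,j}),\varphi_{z,j}(\ul{x}^*)$ converge in Carathéodory sense to $D^*_{z,j},\ul{x}^*$, uniformly in the admissible outer configurations.

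With this in hand I would invoke Proposition~\ref{pr:mccle_gasket_tv_convergence} to conclude that the two conditional laws, restricted to the partial exploration and gasket within $B(0,3/2)\supseteq B(0,1)$, become arbitrarily close in total variation as $j\to\infty$. Choosing $j_0$ so large that this TV distance is below $\epsilon/2$ (uniformly) and coupling the two restricted configurations to coincide exactly on an event $\CA$ of conditional probability at least $1-\epsilon/2$, the locally determined property (available thanks to Theorem~\ref{thm:uniqueness}) forces $\metres{B(0,1)}{\cdot}{\cdot}{\Gamma_{z,j}}$ and $\metres{B(0,1)}{\cdot}{\cdot}{\Gamma^{\varphi}_{z,j}}$ to be equal as metric spaces equipped with their embeddings into $\C$, so $\dGHf=0$ on $\CA$; off $\CA$ the conditional probability bound $\epsilon/2<\epsilon$ suffices.

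The main obstacle is promoting Proposition~\ref{pr:mccle_gasket_tv_convergence}, which is a sequential statement for a fixed sequence of boundary configurations, to a bound that is uniform in $z\in U_1$ and in the random outer data produced by $\CF_{z,j}$. This requires a compactness argument exploiting (i) the $\dsep$-separation of the marked points, which renders the space of admissible triples $(D^*_{z,j},\ul{x}^*,\beta^*)$ compact up to the outer geometry, (ii) the continuity in $\ul{x}$ provided by Proposition~\ref{pr:link_probability}, and (iii) the quantitative rate at which $\varphi_{z,j}$ tends to the identity, uniform over $z\in U_1$.
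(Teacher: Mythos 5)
Your proposal correctly identifies several ingredients that the paper also uses (conformal invariance of multichordal $\CLE_{\kappa'}$, Proposition~\ref{pr:mccle_gasket_tv_convergence}, compactness of $\dsep$-separated marked domains, and the uniform rate at which $\varphi_{z,j}\to\operatorname{id}$). However, the central step of your argument has a genuine gap. You propose to choose a total-variation coupling of the two conditional laws under which the restricted configurations agree with conditional probability $\ge 1-\epsilon/2$, and then conclude via local determination that the two internal metrics agree on that event. But the quantity to be bounded, $\p[\dGHf(\metres{B(0,1)}{\cdot}{\cdot}{\Gamma_{z,j}},\metres{B(0,1)}{\cdot}{\cdot}{\Gamma^\varphi_{z,j}})\ge\epsilon\mid\CF_{z,j}]$, is computed under the \emph{actual} joint coupling: on the ambient probability space one has $\Gamma^\varphi_{z,j}=\varphi_{z,j}(\Gamma_{z,j})$, so the two configurations differ by the nontrivial perturbation $\varphi_{z,j}$ and never coincide. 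Total-variation closeness of the two conditional marginals gives you the freedom to \emph{construct} a coupling where they agree, but it says nothing about how the two metrics compare under the coupling that actually exists. (A Bernoulli example makes the point: $X$ and $Y=1-X$ have identical marginals yet $X\ne Y$ a.s.) So ``$\dGHf=0$ on $\CA$'' does not hold for the pair you need to compare.

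The paper deals precisely with the difficulty you are skirting: it flags explicitly that the map $F$ taking the (embedded) gasket $\ol{B(0,1)\cap\Upsilon_\Gamma}$ to the internal metric $\metres{B(0,1)}{\cdot}{\cdot}{\Gamma}$ need not be continuous. It applies Lusin's theorem to produce a compact set $\CK$ in the GHf space on which $F$ is uniformly continuous, uses Proposition~\ref{pr:mccle_gasket_tv_convergence} together with compactness of separated configurations to make $\p[\CK]\ge 1-\epsilon$ uniform over the outer data, and then argues: both $\ol{B(0,1)\cap\Upsilon_{\Gamma_{z,j}}}$ and $\ol{B(0,1)\cap\Upsilon_{\Gamma^\varphi_{z,j}}}$ lie in $\CK$ with high conditional probability, they are GHf-close to each other because $\varphi_{z,j}$ is close to the identity, and uniform continuity of $F$ on $\CK$ then forces the two internal metrics to be GHf-close. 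Notice that the TV estimate is used only to control $\p[\ol{B(0,1)\cap\Upsilon_{\Gamma^\varphi_{z,j}}}\in\CK]$, not to couple the configurations. To fix your argument you would need to supply the continuity-of-$F$ ingredient (via Lusin or otherwise); as written, your last paragraph mis-identifies uniformity over $z$ and the outer data as the main obstacle, whereas the real obstacle is the possible failure of continuity of the configuration-to-metric map.
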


The challenge in proving Lemma~\ref{le:distorted_metric} is that the function that maps $B(0,1) \cap \Upsilon_\Gamma$ to $\metres{B(0,1)}{\cdot}{\cdot}{\Gamma}$ is not necessarily continuous. To show Lemma~\ref{le:distorted_metric}, we apply Lusin's theorem to find a nearly continuous version of the function, and apply the continuity in total variation of the multichordal \clekp{} law (Proposition~\ref{pr:mccle_gasket_tv_convergence}).

\begin{proof}[Proof of Lemma~\ref{le:distorted_metric}]
The conditional law of the remainder of $\Gamma$ given $\CF_{z,j}$ is that of a multichordal \clekp{} in the marked domain $B(z,3\cdot 2^{-j})^{*,B(z,2\cdot 2^{-j})}$ conditionally on the link pattern $\alpha^*_{z,j}$, as recalled in Section~\ref{se:setup_pf}. By the conformal invariance of multichordal \clekp{}, the same is true for the conditional laws of $\Gamma_{z,j}$ and $\Gamma^\varphi_{z,j}$ which are multichordal \clekp{} in the transformed domains.

Let $F$ denote the measurable function that maps $\ol{B(0,1) \cap \Upsilon_\Gamma}$ to $\metres{B(0,1)}{\cdot}{\cdot}{\Gamma}$. Here we view $\ol{B(0,1) \cap \Upsilon_\Gamma}$ as metric spaces embedded in the plane, as described in \cite{amy2025tightness}. Let $\CS$ denote the space of compact metric spaces embedded in the plane, equipped with the GHf topology~\eqref{eq:ghf}.

Let $(D^*,\ul{x}^*)$ be a marked domain, $\alpha^*$ an internal link pattern, and consider the multichordal \clekp{} law in $(D^*,\ul{x}^*)$ conditioned on $\alpha^*$. By Lusin's theorem, for every $\epsilon > 0$ there is a compact subset $\CK \subseteq \CS$ with probability at least $1-\epsilon$ (with respect to this multichordal \clekp{} law) such that the restriction of $F$ to $\CK$ is uniformly continuous. We now explain that for fixed $\dsep > 0$, we can choose the set $\CK$ such that its probability is at least $1-\epsilon$ with respect to every multichordal \clekp{} law in an $\dsep$-separated marked domain with conformal radius between~$2$ and~$3$. Indeed, by Proposition~\ref{pr:mccle_gasket_tv_convergence}, the law of $B(0,1) \cap \Upsilon_\Gamma$ is continuous in total variation with respect to the marked domain $(D^*,\ul{x}^*)$. Due to Proposition~\ref{pr:link_probability}, the same holds for the conditional laws conditioned on the link pattern $\alpha^*$. Since the set of $\dsep$-separated marked domains with conformal radius between~$2$ and~$3$ is compact in the Carathéodory topology, the continuity is uniform on this set. Hence, we can choose the set $\CK$ above such that its probability is at least $1-\epsilon$ with respect to every such multichordal \clekp{} law.

Recall that $\varphi_{z,j}$ converges to the identity map with rate uniformly in $z \in U_1$. Therefore, if $j$ is large enough, on the event $\Esep_{z,j}$ the conditional probability given $\CF_{z,j}$ that both $\ol{B(0,1) \cap \Upsilon_{\Gamma_{z,j}}}$ and $\ol{B(0,1) \cap \Upsilon_{\Gamma^\varphi_{z,j}}}$ are in $\CK$ is at least $1-2\epsilon$. By the uniform continuity of the restriction of $F$ to $\CK$, we obtain the desired result.
\end{proof}

\begin{proof}[Proof of Theorem~\ref{thm:conformal_covariance_gen}]
Fix $U_1 \Subset U$. In the first step we show that there are deterministic constants $0 < c_1 \leq c_2 < \infty$ such that almost surely
\begin{equation}\label{eq:conf_cov_bilipschitz}
c_1 \lmettconf{\mu}{\gamma} \le \lmetconf{\abs{\varphi'}\mu}{\gamma} \le c_2 \lmettconf{\mu}{\gamma}
\end{equation}
for each admissible $\gamma \subseteq U_1$.

Let $b=8$ (say). The proofs of Proposition~\ref{pr:ball_crossing} and Lemma~\ref{le:len_lb_conditional} apply also to the metric $\mettres{U}{\cdot}{\cdot}{\Gamma}$, due to the conformal invariance of multichordal \clekp{}. Moreover, we have $\median[2^{-j}]{} = 2^{-j\alpha}\median[1]{}$ by Lemma~\ref{le:scaling_covariance}. Therefore, there is a constant $M$ such that for every sufficiently small $\delta > 0$ and $z \in U_1$ the probability is $1-O(\delta^b)$ that $9/10$ fraction of scales $j \in \{ \log_2(\delta^{-1}), \ldots, \log_2(\delta^{-2}) \}$ satisfy the following: For every admissible path $\gamma \subseteq B(z,2^{-j})$ that crosses $A_{z,j}$ we have
\begin{alignat}{2}
\metres{B(z,2^{-j+1})}{\gamma(0)}{\gamma(1)}{\Gamma} &\le M2^{-j\alpha} ,
&\qquad \lmet{\gamma} &\ge M^{-1}2^{-j\alpha} , \label{eq:conf_good_scale1}\\
\mettres{B(z,2^{-j+1})}{\gamma(0)}{\gamma(1)}{\Gamma} &\le M(\abs{\varphi'(z)}2^{-j})^\alpha ,
&\qquad \lmett{\gamma} &\ge M^{-1}(\abs{\varphi'(z)}2^{-j})^\alpha . \label{eq:conf_good_scale2}
\end{alignat}

Following the proof of Proposition~\ref{pr:bilipschitz}, we see that almost surely, for every admissible path $\gamma \subseteq U_1$ we can find partitions $(t_i)$ with arbitrarily fine mesh size such that
\[ \sum_i \abs{\varphi'(\gamma(t_i))}^\alpha \mu(\gamma(t_i))^\alpha \, \metres{U}{\gamma(t_i)}{\gamma(t_{i+1})}{\Gamma} \le M^2 \lmettconf{\mu}{\gamma} , \]
implying that
\[ \lmetconf{\abs{\varphi'}\mu}{\gamma} \le M^2 \lmettconf{\mu}{\gamma} . \]
Similarly, we find that
\[ \lmettconf{\mu}{\gamma} \le M^2 \lmetconf{\abs{\varphi'}\mu}{\gamma} . \]
This shows~\eqref{eq:conf_cov_bilipschitz}.

In the second step we show that~\eqref{eq:conf_cov_bilipschitz} holds with $c_1 = c_2 = 1$. Here we additionally use Lemma~\ref{le:distorted_metric}. 
Let $\epsilon > 0$. By the same independence across scales argument, we see that for every sufficiently small $\delta > 0$ and $z \in U_1$ the probability is $1-O(\delta^b)$ that $9/10$ fraction of scales $j \in \{ \log_2(\delta^{-1}), \ldots, \log_2(\delta^{-2}) \}$ satisfy
\begin{equation}\label{eq:conf_good_scale3}
\sup_{x \in \bIn A_{z,j},\ y \in \bOut A_{z,j}} \abs{\met{x}{y}{\Gamma_{z,j}} - \met{x}{y}{\Gamma^\varphi_{z,j}}} < \epsilon . 
\end{equation}
On a scale $j$ where both~\eqref{eq:conf_good_scale1},~\eqref{eq:conf_good_scale2}, and~\eqref{eq:conf_good_scale3} hold, we have
\begin{alignat*}{2}
&&\abs{\varphi'(z)}^\alpha \mu(z)^\alpha \metres{B(z,2^{-j+1})}{\gamma(0)}{\gamma(1)}{\Gamma} &\le (1+\epsilon) \mu(z)^\alpha \lmett{\gamma} \\
\text{and}\quad&& \mu(z)^\alpha \mettres{B(z,2^{-j+1})}{\gamma(0)}{\gamma(1)}{\Gamma} &\le (1+\epsilon)\abs{\varphi'(z)}^\alpha \mu(z)^\alpha \lmet{\gamma}
\end{alignat*}
for every admissible path $\gamma \subseteq \ol{A}_{z,j}$ that crosses $A_{z,j}$.

Following the proof of Proposition~\ref{pr:uniqueness}, we conclude that assuming~\eqref{eq:conf_cov_bilipschitz} with $c_1 < 1 < c_2$, we can find $\epsilon' > 0$ such that~\eqref{eq:conf_cov_bilipschitz} holds with $c_1+\epsilon'$ and $c_2-\epsilon'$. This show that~\eqref{eq:conf_cov_bilipschitz} in fact holds with $c_1 = c_2 = 1$.
\end{proof}

\subsection{General domains and the whole-plane \clekp{} metric}
\label{se:whole_plane_metric}

In this subsection we prove the Theorems~\ref{th:whole_plane_metric} and~\ref{th:metric_general_domain}.

\begin{proof}[Proof of Theorem~\ref{th:whole_plane_metric}]
\emph{Step 1.  Construction.} Let $\CQ$ be the collection of Jordan domains $U$ with $\diam(U) < 1/2$ and whose closure is a union of closed squares with corners in $2^{-n}\Z^2$ for some $n \in \N$. For each $U \in \CQ$, we let $\CK_U$ be the collection of connected components of $\Upsilon_i \cap U$ for each cluster $\Upsilon_i$. For $U \in \CQ$, $z \in \Q^2$, let $K_{U,z} \in \CK_U$ be the outermost component in $\CK_U$ that disconnects~$z$ from $\partial U$. The internal metric on $K_{U,z}$ is defined by translation invariance and absolute continuity (see Section~\ref{sec:conformally_covariant} for details), and by Theorem~\ref{thm:uniqueness} is a measurable function of $\Gamma^\C\big|_U$. Moreover, for each $(U,z)$ and $(U',z')$, it holds a.s.\ that if $\gamma$ is a path contained in $K_{U,z}$ and $K_{U',z'}$, then its length with respect to the two internal metrics agree. This is a consequence of the locality applied to (each component of) $U \cap U'$ and Lemma~\ref{le:internal_metrics_compatibility}. This uniquely defines the length of each path, and the associated length metric $\met{\cdot}{\cdot}{\Gamma^\C}$. Finally, the translation invariance $\met{\cdot}{\cdot}{\Gamma^\C}$ just follows by the translation invariance of the metric in the setup of Theorem~\ref{thm:uniqueness}. The scale covariance and conformal covariance follow from Theorems~\ref{thm:exponent} and~\ref{thm:conformal_covariance}.

\emph{Step 2.  Uniqueness.} Given any such metric $\met{\cdot}{\cdot}{\Gamma^\C}$, we obtain a geodesic \clekp{} metric in the sense considered in Theorem~\ref{thm:uniqueness}. Let $\CL \in \Gamma^\C$ be the outermost loop contained in $\D$, and let $\Upsilon_{i_\CL}$ be the cluster bounded by $\CL$. By the same argument as above, the law of $\CL$ is mutually absolutely continuous with respect to the loop considered in the setup of Theorem~\ref{thm:uniqueness}, and hence the metric on $\Upsilon_{i_\CL}$ induces a metric that satisfies the assumptions in Theorem~\ref{thm:uniqueness}. This shows that at least the metric on $\Upsilon_{i_\CL}$ is uniquely determined up to a deterministic factor. But by Part~1 of the proof, we uniquely recover the full metric $\met{\cdot}{\cdot}{\Gamma^\C}$ from its restriction to $\Upsilon_{i_\CL}$.
\end{proof}

\begin{proof}[Proof of Theorem~\ref{th:metric_general_domain}]
 For each $U \Subset D \subseteq \C$, the laws of $\Gamma^D\big|_U$ and of $\Gamma^\C\big|_U$ are mutually absolutely continuous (by the same argument as given in the proof of Lemma~\ref{lem:conf_map_abs_cont}). Therefore, in view of Lemma~\ref{le:internal_metrics_compatibility}, we can define the metric $\met{\cdot}{\cdot}{\Gamma^D}$ from the whole-plane \clekp{} metric $\met{\cdot}{\cdot}{\Gamma^\C}$ in Theorem~\ref{th:whole_plane_metric}, and the desired properties follow from Theorem~\ref{th:whole_plane_metric}. Conversely, if we are given a collection of metrics as in Theorem~\ref{th:metric_general_domain}, this determines a whole-plane \clekp{} metric as in Theorem~\ref{th:whole_plane_metric}. The uniqueness thus follows from the uniqueness statement in Theorem~\ref{th:whole_plane_metric}.
 \end{proof}

\bibliographystyle{alpha}
\newcommand{\etalchar}[1]{$^{#1}$}
\providecommand{\noopsort}[1]{}

\end{document}